\newtheorem{theorem}{Theorem}[section]
\newtheorem{proposition}[theorem]{Proposition}
\newtheorem{corollary}[theorem]{Corollary}
\newtheorem{lemma}[theorem]{Lemma}
\theoremstyle{definition}
\newtheorem{example}[theorem]{Example}
\begin{document}

\def\SStar{{\bf{SS}}}
\def\Star{{\bf{S}}}
\def\UU{{\mathcal U}}
\def\setmin{{-}}
\def\UU{{\mathcal U}}
\def\Z{{\mathcal Z}}
\def\PPP{{\mathcal P}}
\def\CCC{{\mathcal C}}
\def\XXX{{\mathcal X}}
\def\YYY{{\mathcal Y}}
\def\id{\operatorname{id}}
\def\Lin{\operatorname{Lin}}
\def\Spec{\operatorname{Spec}}
\def\Cl{\operatorname{Cl}}
\def\Idem{\operatorname{Idem}}
\def\Fix{\operatorname{Fix}}
\def\Cld{\operatorname{-Cl}}
\def\Inv{{\operatorname{Inv}}}
\def\Int{\operatorname{Int}}
\def\Pic{\operatorname{Pic}}
\def\SS{\mathcal{SS}}
\def\U{{\textsf{\textup{U}}}}
\def\V{{\textsf{\textup{V}}}}
\def\Inv{\operatorname{Inv}}
\def\Mor{\operatorname{Mor}}
\def\Int{{\textsf{\textup{Int}}}}
\def\I{{\textsf{\textup{I}}}}
\def\G{{\textsf{\textup{G}}}}
\def\K{{\textsf{\textup{K}}}}
\def\T{{\textsf{\textup{T}}}}
\def\R{{\textsf{\textup{R}}}}
\def\M{{\textsf{\textup{M}}}}
\def\FS{{\textsf{\textup{F}}}}
\def\Idl{{\textsf{\textup{Idl}}}}
\def\A{{\textsf{\textup{A}}}}
\def\C{{\textsf{\textup{C}}}}
\def\N{{\textsf{\textup{N}}}}
\def\Fp{{\mathcal F}}
\def\F{{\mathcal K}}
\def\S{\mathcal S}
\def\P{\operatorname{Prin}}
\def\B{\mathcal B}
\def\f{f}
\def\Inver{\operatorname{-Inv}}
\def\Prin{\textup{-}\mathcal{P}}
\def\Hom{\operatorname{Hom}}
\def\TM{t\textup{-Max}}
\def\TS{t\textup{-Spec}}
\def\Max{\operatorname{Max}}
\def\SSpec{\textup{-Spec}}
\def\SGV{\star\textup{-GV}}
\def\core{\operatorname{-core}}
\def\WBF{\operatorname{WBF}}
\def\sKr{\operatorname{sKr}}
\def\Loc{\operatorname{Loc}}
\def\Ass{\operatorname{Ass}}
\def\wAss{\operatorname{wAss}}
\def\ann{\operatorname{ann}}
\def\ind{\operatorname{ind}}
\def\cl{\star}
\def\ZZ{{\mathbb Z}}
\def\CC{{\mathbb C}}
\def\NN{{\mathbb N}}
\def\RR{{\mathbb R}}
\def\QQ{{\mathbb Q}}
\def\FF{{\mathbb F}}
\def\mm{{\mathfrak m}}
\def\nn{{\mathfrak n}}
\def\aaa{{\mathfrak a}}
\def\bbb{{\mathfrak b}}
\def\ppp{{\mathfrak p}}
\def\qqq{{\mathfrak q}}
\def\MM{{\mathfrak M}}
\def\qq{{\mathfrak Q}}
\def\rr{{\mathfrak R}}
\def\DD{{\mathfrak D}}
\def\cc{{\mathfrak S}}

\newcommand\bigsqcapp{\mathop{\mathchoice
	{\vcenter{\hbox{\huge $ \sqcap $}}}	
	{\vcenter{\hbox{\LARGE $ \sqcap $}}}	
	{\vcenter{\hbox{$ \sqcap $}}}		
	{\vcenter{\hbox{\small $ \sqcap $}}}}}	

\newcommand\bigsqcupp{\mathop{\mathchoice
	{\vcenter{\hbox{\huge$ \sqcup $}}}	
	{\vcenter{\hbox{\LARGE$ \sqcup $}}}	
	{\vcenter{\hbox{$ \sqcup $}}}		
	{\vcenter{\hbox{\small $ \sqcup $}}}}}	

\title[Prequantales]{Prequantales and applications to semistar operations and module systems}
\author{Jesse Elliott} \address{California
State University, Channel Islands\\ One University Drive \\ Camarillo, California 93012}
\email{jesse.elliott@csuci.edu}

\begin{abstract}
We show that a generalization of quantales and prequantales provides a noncommutative and nonassociative abstract ideal theoretic setting for the theories of star operations, semistar operations, semiprime operations, ideal systems, and module systems, and conversely the latter theories motivate new results on quantales and prequantales.  Results include representation theorems for precoherent prequantales and multiplicative semilattices; a characterization of the simple prequantales; and a generalization to the setting of precoherent prequantales of the construction of the largest finite type semistar operation and the largest stable semistar operation smaller than a given semistar operation.
\end{abstract}

\maketitle

\keywords{Keywords: magma; ordered magma; pogroupoid; closure operation; quantale; quantic nucleus; multiplicative lattice; multiplicative semilattice; ring; integral domain; star operation; semistar operation; semiprime operation; ideal system; module system; tight closure}


\section{Introduction}

Certain classes of ordered algebraic structures---Boolean algebras, Heyting algebras, multiplicative lattices, residuated lattices, locales, and quantales, to name a few---have led to the unification of aspects of logic, order theory, algebra, and topology.  One such set of unifications was initiated in the mid 1920s by Krull in the study, known as abstract commutative ideal theory, of the ideal lattice of a commutative ring as a multiplicative lattice \cite{and,kru1}.  Independently of the work on multiplicative lattices and residuated lattices \cite{ward}, another set of notions, including Brouwer lattices, Heyting algebras, and frames, eventually led to the study of locales, or ``point-free topological spaces'' \cite{isb}.  These in turn led to a common generalization of multiplicative lattices and locales known as quantales, or ``quantum locales''.  Quantales
were introduced in \cite{mul} by Mulvey in order to provide a lattice theoretic setting for the foundations of quantum mechanics and the theory of $\operatorname{C}^*$-algebras and to develop the concept of noncommutative topology introduced in \cite{gil} by Giles and Kummer.

This paper concerns a generalization of quantales known as {\it prequantales} \cite[Definition 2.4.2]{ros} and their applications to abstract ideal theory and commutative ring theory.   Specifically, we show that the theory of nuclei on ordered magmas \cite[Section 3.4.11]{gal}, and more specifically on suitable generalizations of prequantales, provides a noncommutative and nonassociative abstract ideal theoretic setting for the theories of star operations, semistar operations, ideal systems, and module systems, and conversely the latter theories motivate new results on quantales and prequantales.  Sections \ref{sec:POM} through \ref{sec:stable} provide new results on nuclei, as well as on various classes of ordered magmas both new and old, including prequantales, multiplicative lattices, and multiplicative semilattices.  The last three sections, Sections \ref{sec:AMSIS} through \ref{sec:ICCICTC}, apply these results to star and semistar operations and ideal and module systems.  

This introduction summarizes in some detail our main definitions and results.   First, a {\it magma} is a set $M$ equipped with a binary operation on $M$, which we write multiplicatively.  A magma is {\it unital} if it has an identity element.  An {\it ordered magma} is a magma $M$ equipped with a partial ordering $\leq$ on $M$ such that $x \leq x'$ and $y \leq y'$ implies $xy \leq x'y'$ for all $x,x',y,y' \in M$.  A {\it prequantale} is an ordered magma $Q$ such that the supremum $\bigvee X$ of $X$ exists and $a (\bigvee X) = \bigvee(aX)$ and $(\bigvee X)a = \bigvee(Xa)$ for any $a \in Q$ and any subset $X$ of $Q$.  A {\it quantale} is an associative prequantale; a {\it multiplicative lattice} is a commutative unital quantale; and a {\it frame}, or {\it locale}, is a multiplicative lattice in which multiplication is the operation $\wedge$ of infimum.  A prequantale (resp, quantale, multiplicative lattice) is equivalently a magma object (resp., semigroup object, commutative monoid object) in the {\it monoidal category of sup-lattices}, that is, the category of complete lattices equipped with the tensor product with morphisms as the sup-preserving functions \cite[Section II.5]{joy}. 

\begin{example} \
\begin{enumerate}
\item The power set $2^M$ of a magma $M$ is a prequantale under the operation $(X,Y) \longmapsto XY = \{xy: \ x \in X, y \in Y\}$, and $2^M$ is a quantale if and only if $M$ is a semigroup.
\item The poset of all supremum-preserving self-maps of a complete lattice is a unital quantale under the operation of composition.
\item Any totally ordered complete lattice is a frame, as is any complete Boolean algebra.
\item The complete lattice $\mathcal{O}(X)$ of all open subsets of a topological space $X$ is a frame.
\item The set of all Zariski closed subsets of an affine algebraic variety over a field, partially ordered by the relation $\supset$, is a frame.
\item The complete lattice $\mathcal{I}(R)$ of all ideals of a commutative ring $R$ is a multiplicative lattice under ideal multiplication.
\item For any algebra $A$ over a ring $R$, the complete lattice $\operatorname{Mod}_R(A)$ of all (two-sided) $R$-submodules of $A$ is a quantale under the operation $(I,J) \longmapsto IJ = \{\sum_{x \in X, y \in Y} xy: X \subset I, Y \subset J \mbox{ finite}\}$.
\item The poset  $\operatorname{Max}(A)$ of all closed linear subspaces of a unital $\mbox{C}^*$-algebra $A$ is a unital quantale with involution \cite[Definition 2.4]{krum2} under the operation $(M,N) \longmapsto \overline{MN}$ and by \cite[Theorem 5.4]{krum2} is a complete invariant of $A$.
\end{enumerate}
\end{example} 

Restricting $X$ to be among certain subclasses of subsets of $Q$ in the definition above of prequantales yields more general structures.    We say that a {\it near prequantale} (resp., {\it semiprequantale}) is an ordered magma $Q$ such that $\bigvee X$ exists and $a (\bigvee X) = \bigvee(aX)$ and $(\bigvee X)a = \bigvee(Xa)$ for any $a \in Q$ and any nonempty subset $X$ (resp., any nonempty finite or bounded subset $X$) of $Q$.   A near prequantale is equivalently a semiprequantale $Q$ with a largest element, and a prequantale is equivalently a near prequantale $Q$ with a smallest element $\bigwedge Q$ that annihilates every element of $Q$. Informally we think of a near prequantale as a ``prequantale without annihilator'' and a semiprequantale as an ``unbounded prequantale''.  We define the terms {\it near quantale}, {\it semiquantale}, {\it near multiplicative lattice}, and {\it semimultiplicative lattice} in the obvious way.  A theme of this paper is that many known results about quantales and multiplicative lattices generalize to these more general structures. 

\begin{example} \
\begin{enumerate}
\item If $a$ is an idempotent element of a prequantale $Q$, then the set $Q_{\leq a} = \{x \in Q: x \leq a \}$ is a subprequantale of $Q$, while the set $Q_{\geq a} = \{x \in Q : x \geq a\}$ is a sub near prequantale of $Q$.
\item If $Q$ is a prequantale and $x y = 0$ implies $x = 0$ or $y = 0$ for all $x,y \in Q$, where $0 = \bigwedge Q$,  then $Q\setmin\{0\}$ is a sub near prequantale of $Q$.  In particular, the set $2^M \setmin \{\emptyset\}$ of all nonempty subsets of a magma $M$ is a sub near prequantale of the prequantale $2^M$.  
\item Any complete lattice is a near multiplicative lattice under the operation $\vee$ of supremum. 
\end{enumerate}
\end{example}

For any self-map $\star$ of a set $X$ we write $x^\star = \star(x)$ for all $x \in X$.    If $\star$ is a self-map of a magma $M$, then the binary operation $(x,y) \longmapsto (xy)^\star$ on $M$ will be called {\it $\star$-multiplication}.   A {\it closure operation} on a poset $S$ is a self-map $\star$ of $S$ satisfying the following conditions.
\begin{enumerate}
\item $\star$ is expansive: $x \leq x^\cl$ for all $x \in S$.
\item $\star$ is order-preserving: $x \leq y$ implies $x^\cl \leq y^\cl$ for all $x,y \in S$.
\item $\star$ is idempotent: $(x^\cl)^\cl = x^\cl$ for all $x \in S$.
\end{enumerate}
A closure operation on $S$ is equivalently a self-map $\star$ of $S$ such that $x \leq y^\cl$ if and only if $x^\cl \leq y^\cl$ for all $x, y \in S$.  A {\it nucleus} (resp., {\it strict nucleus}) on an ordered magma $M$ is a closure operation $\star$ on $M$ such that $x^\star y^\star \leq (xy)^\star$ (resp., $x^\star y^\star = (xy)^\star$) for all $x,y \in M$.   Nuclei were first studied in the contexts of ideal lattices and locales and later in the context of quantales as {\it quantic nuclei} \cite{nie}. 

\begin{example}\label{nearexamples}  \
\begin{enumerate}
\item For any commutative ring $R$, a {\it semiprime operation} on $R$ \cite{pet} is equivalently a nucleus on $\mathcal{I}(R)$.  For example, the ideal radical operation $I \longmapsto \sqrt{I}$ is a semiprime operation on $R$, and if $R$ is Noetherian of prime characteristic then the operation of {\it tight closure} \cite{hun} is a semiprime operation on $R$.
\item By \cite[Lemmas 10.1--10.4]{giv}, for any topological space $X$ the operation $\operatorname{reg}: U \longmapsto \operatorname{int}(\operatorname{cl}(U))$ is a strict nucleus on the frame $\mathcal{O}(X)$, where $\operatorname{cl}$ and $\operatorname{int}$ denote the topological closure and interior operations, respectively, on $2^{X}$.  (Open sets $U$ such that $U = U^{\operatorname{reg}}$ are called {\em regular open sets}.)
\item If $X$ is a topological space, then $2^{X}$ is a near multiplicative lattice under the operation of union and topological closure is a strict nucleus on $2^{X}$.
\end{enumerate}
\end{example} 

Star and semistar operations can be characterized as nuclei.  Let $D$ be an integral domain with quotient field $F$, and let $\F(D)$ denote the ordered monoid of all nonzero $D$-submodules of $F$ under the operation of multiplication, which is a near multiplicative lattice.  A {\it semistar operation} on $D$ is a closure operation $\star$ on the poset $\F(D)$ such that $(aI)^\star = aI^\star$ for all nonzero $a \in F$ and all $I \in \F(D)$.  Semistar operations were introduced in \cite{oka} as a generalization of {\it star operations}, which were introduced by Krull in \cite[Section 6.43]{kru}.  The following result, along with a similar characterization of star operations, is proved in Section \ref{sec:ASO}.

\begin{theorem}\label{mainprop1}
A semistar operation on an integral domain $D$ is equivalently a nucleus on the near multiplicative lattice $\F(D)$.  Moreover, the following are equivalent for any self-map $\star$ of $\F(D)$.
\begin{enumerate}
\item $\star$ is a semistar operation on $D$.
\item $\star$ is a closure operation on the poset $\F(D)$ and $\star$-multiplication is associative.
\item $\star$ is a closure operation on the poset $\F(D)$ and $(I^\star J^\star)^\star = (IJ)^\star$ for all $I,J \in \F(D)$.
\item $IJ \subset K^\star$ if and only if $I J^\star \subset K^\star$ for all $I,J,K \in \F(D)$.
\end{enumerate}
\end{theorem}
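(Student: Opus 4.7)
The plan is to first establish the central equivalence ``semistar operation on $D$ is the same thing as a nucleus on $\F(D)$'', and then to derive the chain (1)$\Leftrightarrow$(2)$\Leftrightarrow$(3)$\Leftrightarrow$(4) around it. The guiding observation is that for any nonzero $a \in F$ the principal submodule $aD \in \F(D)$ satisfies $(aD)I = aI$ for every $I \in \F(D)$, so multiplication by the scalar $a$ is realized as multiplication by an element of the near multiplicative lattice $\F(D)$; under this identification the semistar condition $(aI)^\star = aI^\star$ is precisely the ``principal'' case of the nucleus identity $I^\star J^\star \leq (IJ)^\star$, and the content of the theorem is that one can bootstrap freely between these scalar-level and submodule-level versions.

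For nucleus $\Rightarrow$ semistar, I would apply the nucleus inequality to the pair $(aD, I)$ to get $(aD)^\star I^\star \leq ((aD) I)^\star = (aI)^\star$, and use $a \in aD \subseteq (aD)^\star$ to extract $aI^\star \subseteq (aI)^\star$; the reverse inclusion comes by substituting $a^{-1}$ and $aI$. For semistar $\Rightarrow$ nucleus, for each nonzero $c \in J$ one has $cI \subseteq IJ \subseteq (IJ)^\star$, so closure together with the semistar identity gives $cI^\star = (cI)^\star \subseteq (IJ)^\star$; summing over $c$ inside the $D$-submodule $(IJ)^\star$ yields $I^\star J \subseteq (IJ)^\star$, and repeating the argument with $b \in I^\star$ acting on $J^\star$ upgrades this to $I^\star J^\star \subseteq (IJ)^\star$. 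The remaining equivalences then follow quickly: nucleus $\Leftrightarrow$ (3) by applying $\star$ to the nucleus inequality and using idempotence; (3) $\Rightarrow$ (2) by expanding each bracketing of the $\star$-product to $(IJK)^\star$ using (3) and associativity of $\F(D)$; (2) $\Rightarrow$ (3) by setting $K = D$ in the associativity of $\star$-multiplication to deduce $(IJ)^\star = (IJ^\star)^\star$ and then invoking commutativity of $\F(D)$ to star the left argument as well; nucleus $\Rightarrow$ (4) from $IJ \subseteq K^\star \Rightarrow I^\star J^\star \subseteq (IJ)^\star \subseteq K^\star$; and (4) $\Rightarrow$ nucleus by first extracting the three closure axioms from (4) via well-chosen substitutions (expansiveness from $I=D$, $J=K$; idempotence from $I=D$, $J=K^\star$; order-preservation from $I=D$ combined with $J \subseteq J^\star$) and then taking $K = IJ$ together with the commutativity of $\F(D)$.

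The main obstacle is the direction semistar $\Rightarrow$ nucleus, where one must bootstrap from the scalar-level identity $(aI)^\star = aI^\star$ to the full submodule-level inequality $I^\star J^\star \leq (IJ)^\star$. The technique is to decompose each element of $I^\star J^\star$ as a $D$-linear combination of scalar actions, apply the semistar identity termwise, and reassemble inside the $D$-submodule $(IJ)^\star$; the argument has to be staged (first $J \leadsto I^\star J$, then $I^\star J \leadsto I^\star J^\star$) so that at each stage one has a bona fide $D$-submodule on which the scalars can act. By contrast, the converse leans essentially on the invertibility of $a$ in $F$, which has no analogue for a general element of $\F(D)$; it is this contrast that makes the equivalence more than a tautology.
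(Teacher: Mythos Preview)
Your proof is correct and follows essentially the same approach as the paper: the paper observes that the principal submodules $\P(D)$ form a sup-spanning subset of $\F(D)$ contained in $\Inv(\F(D))$, and then invokes general propositions about ordered magmas (Propositions~\ref{closureprop2}, \ref{closureprop3}, \ref{closureprop1}, \ref{closureprop1a}) which, when unpacked for $\F(D)$, are exactly your ``sum over $c \in J$'' argument for semistar $\Rightarrow$ nucleus, your ``use $a^{-1}$'' argument for nucleus $\Rightarrow$ semistar, and your substitution arguments for the equivalences (2)--(4). The only difference is packaging: the paper abstracts each step into a lemma valid for arbitrary ordered magmas, while you work directly in $\F(D)$.
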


In particular, one can characterize a semistar operation on $D$ as a self-map $\star$ of $\F(D)$ satisfying a single axiom, namely, statement (4) of the theorem above.  Moreover, it follows that the set of all semistar operations on $D$ depends only on the ordered magma $\F(D)$.  Most importantly, however, the theorem suggests that there is potential overlap among the theory of semistar operations, abstract ideal theory, and the theory of quantales.  In fact, the connections among these theories suggested by Theorem \ref{mainprop1} are the main inspiration for this paper.

The {\it weak ideal systems} and {\it module systems} of \cite{hal1, hal}, which generalize semiprime operations and semistar operations, respectively, can also be characterized as nuclei.  For any magma $M$, let $M_0$ denote the magma $M \amalg \{0\}$, where $0x = 0 = x0$ for all $x \in M_0$.  A {\it module system} on an abelian group $G$ is a closure operation $r$ on the multiplicative lattice $2^{G_0}$ such that $\emptyset^r = \{0\}$ and $(cX)^r = cX^r$ for all $c \in G_0$ and all $X \subset G_0$.  The following result, along with a similar characterization of weak ideal systems, is proved in Section \ref{sec:AMSIS}.

\begin{theorem}\label{modulesystems}
A module system on an abelian group $G$ is equivalently a nucleus $r$ on the multiplicative lattice $2^{G_0}$ such that $\emptyset^r = \{0\}$.   Moreover, the following are equivalent for any self-map $r$ of $2^{G_0}$ such that $\emptyset^r = \{0\}$.
\begin{enumerate}
\item $r$ is a module system on $G$.
\item $r$ is a closure operation on the poset $2^{G_0}$ and $r$-multiplication is associative.
\item $r$ is a closure operation on the poset $2^{G_0}$ and $(X^r Y^r)^r = (XY)^r$ for all $X,Y \in 2^{G_0}$.
\item $XY \subset Z^r$ if and only if $XY^r \subset Z^r$ for all $X,Y,Z \subset G_0$.
\end{enumerate}
\end{theorem}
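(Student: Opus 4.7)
The plan is to prove the theorem in close parallel with Theorem \ref{mainprop1}, since $2^{G_0}$ is a commutative unital quantale. The equivalences (2) $\Leftrightarrow$ (3) $\Leftrightarrow$ (4) for a closure operation on any commutative associative ordered magma are formal and would transfer essentially verbatim from the proof of Theorem \ref{mainprop1}: condition (3) is just the nucleus condition expressed as an equality; (2) follows from (3) by reassociating in the underlying quantale; and (4) is equivalent, via the alternative characterization of closure operations quoted earlier, to the conjunction of the closure-operation axioms with the nucleus inequality on one side, which by commutativity yields the two-sided inequality. So the substantive new content is to match the module-system axiom $(cX)^r = cX^r$ with condition (3).

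For (1) $\Rightarrow$ (3), assume $r$ is a module system. Decomposing an arbitrary $Y \subseteq G_0$ as a union of singletons gives $YX = \bigcup_{c \in Y} cX$, and then
\[
YX^r = \bigcup_{c \in Y} cX^r = \bigcup_{c \in Y} (cX)^r \subseteq (YX)^r,
\]
by the module-system axiom applied at each singleton. Therefore $(YX^r)^r \subseteq (YX)^r$, while the reverse inclusion follows from $X \subseteq X^r$. Applying this identity a second time on the other variable (using commutativity of $G$) yields $(X^r Y^r)^r = (XY)^r$, which is (3).

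Conversely, I would show that (3) together with $\emptyset^r = \{0\}$ implies the module-system axiom $(cX)^r = cX^r$. The inclusion $cX^r \subseteq (cX)^r$ follows immediately by expansiveness of $r$ and (3), since $cX^r \subseteq (cX^r)^r = (cX)^r$. The reverse inclusion is the main obstacle, because (3) alone yields only $(cX^r)^r = (cX)^r$ and not the desired equality $cX^r = (cX)^r$. The extra leverage comes from group invertibility: for $c \in G$, apply the nucleus inequality (a consequence of (3)) to $\{c^{-1}\}$ and $cX$ to obtain $c^{-1}(cX)^r \subseteq (\{c^{-1}\}(cX))^r = X^r$, whence $(cX)^r \subseteq cX^r$. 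The case $c = 0$ is a direct verification using $\emptyset^r = \{0\}$ and its consequence $\{0\}^r = \{0\}$, so that both sides of $(cX)^r = cX^r$ equal $\{0\}$ when $X \neq \emptyset$ and $\{0\}$ when $X = \emptyset$ as well.
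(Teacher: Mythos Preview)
Your argument is correct and follows essentially the same route as the paper. The paper reduces the equivalences (2)--(4) to Propositions \ref{closureprop1} and \ref{closureprop1a}, and then proves (1) $\Leftrightarrow$ (nucleus with $\emptyset^r=\{0\}$) by observing that the singletons $\{\{c\}:c\in G_0\}$ form a sup-spanning subset of $2^{G_0}$ (so Corollary \ref{joinspan} gives module system $\Rightarrow$ nucleus) and that the nonzero singletons lie in $\Inv(2^{G_0})$ (so Proposition \ref{closureprop3} gives nucleus $\Rightarrow$ transportability for $c\in G$, with the $c=0$ case handled exactly as you do via $\{0\}^r=(\emptyset^r)^r=\{0\}$). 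Your decomposition $YX^r=\bigcup_{c\in Y}cX^r$ is precisely the sup-spanning computation underlying Corollary \ref{joinspan}, and your use of $c^{-1}$ to obtain $(cX)^r\subseteq cX^r$ is exactly the proof of Proposition \ref{closureprop3}; you have simply inlined these general lemmas rather than citing them.
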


The remainder of this paper is organized as follows.  First, in Section \ref{sec:POM} we introduce several classes of ordered magmas.  Characterizations of the classes defined in that section are given in Table 1, and the various relationships among these classes are summarized in the lattice diagrams of Figures 1, 2, and 3.  Within any of these three diagrams the intersection of any two of the classes is the class lying directly above both of them.  In Sections \ref{sec:MCO} and \ref{sec:COCL} we study properties of nuclei and the poset of all nuclei on an ordered magma, with particular attention to the classes of ordered magmas defined in Section \ref{sec:POM}.    In Section \ref{sec:PFN} we examine the poset of all finitary nuclei on an ordered magma, where a closure operation $\star$ on a poset $S$ is said to be {\it finitary} if $(\bigvee \Delta)^\star = \bigvee (\Delta^\star)$ for all directed subsets $\Delta$ of $S$ for which $\bigvee \Delta$ exists.  For example, a  {\it finite type} semistar operation \cite[Definition 3]{oka} on an integral domain $D$ is equivalently a finitary nucleus on $\F(D)$.

\begin{table} 
\caption{Characterizations of ordered magmas (om's)}
\centering  \ \\
\begin{tabular}{l|l|l|l} 
pom $M$ & abbr. & $\bigvee X$ exists & $\bigvee(XY) = \bigvee X \bigvee Y$ \\
\hline\hline
sup-magma & s &	for all $X$ &   \\ \hline
near sup-magma & ns &	for all $X \neq \emptyset$ &   \\ \hline
dcpo magma & d & 	for all directed $X$ &   \\ \hline
bounded complete & bc & 	for all $X \neq \emptyset$  &   \\ 
 & &  bounded above	 &   \\ \hline
bounded above & b & 	for $X = M$ &   \\ \hline
with annihilator & a &	for $X = \emptyset$ &  for $X = \emptyset$ or $Y = \emptyset$ \\ \hline
prequantale & p &  for all $X$ & for all $X,Y$ \\ \hline
near prequantale & np &  for all $X \neq \emptyset$ & for all $X,Y \neq \emptyset$ \\ \hline
semiprequantale & sp &  for all finite or    &  for all (finite or)  \\
&  & bounded $X \neq \emptyset$ & bounded $X,Y \neq \emptyset$ \\ \hline
 prequantic  & ps & for all finite $X$ & for all finite $X,Y$ \\ 
 semilattice & & & \\ \hline 
multiplicative & ms &  for all finite $X \neq \emptyset$   &  for all finite $X,Y \neq \emptyset$ \\
semilattice &  &  & \\ \hline
Scott topological & t &			 & for all directed $X,Y$ \\
	 & 			& &  such that $\bigvee X$, $\bigvee Y$  \\ 
	 	 & 		&	 &  exist \\ \hline
residuated & r & if $\exists x,y : \ X = $ & for all $X,Y$ such that \\ 
	 & & $\{z: zy \leq x\}$ or &  $\bigvee X$, $\bigvee Y$ exist  \\ 
	 & & $X = \{z: yz \leq x\}$ & \\ \hline
near residuated & nr & if $X \neq \emptyset$ and $\exists x,y :$ & for all $X,Y \neq \emptyset$ such \\ 
	 & & $X = \{z: zy \leq x\}$ or &   that $\bigvee X$, $\bigvee Y$ exist  \\ 
	 & & $X = \{z: yz \leq x\}$ &  
\end{tabular}
\end{table}

\begin{figure}
\caption{Relationships among classes of ordered magmas}
\begin{eqnarray*}
\SelectTips{cm}{11}\xymatrix @R=.5cm @C=.6cm {
 && & {\mbox{p}}  \ar@{=>}[dl] \ar@{=>}[dr] & & & \\ 
 && {\mbox{r+ns}} \ar@{=>}[dl] \ar@{=>}[dr] & & {\mbox{np+s}} \ar@{=>}[dl] \ar@{=>}[dr] & & \\ 
 &{\mbox{r+d}} \ar@{=>}[dr] \ar@{=>}[dl] & & {\mbox{np}} \ar@{=>}[dl] \ar@{=>}[dr] & & {\mbox{t+s}}  \ar@{=>}[dr] \ar@{=>}[dl] & \\ 
 {\mbox{r}} \ar@{=>}[dr] & & {\mbox{nr+d}} \ar@{=>}[dr] \ar@{=>}[dl] & & {\mbox{t+ns}} \ar@{=>}[dl] \ar@{=>}[dr] & & {\mbox{s}} \ar@{=>}[dl] \\ 
 & {\mbox{nr}} \ar@{=>}[dr]& & {\mbox{t+d}} \ar@{=>}[dr] \ar@{=>}[dl] & &  {\mbox{ns}} \ar@{=>}[dl] & \\
 &  & {\mbox{t}} \ar@{=>}[dr] &  & {\mbox{d}} \ar@{=>}[dl] & &  \\
 &  &  & {\mbox{om}} & & & }
\end{eqnarray*}
\end{figure}

\begin{figure}
\caption{Further relationships among classes of ordered magmas}
\begin{eqnarray*}
\SelectTips{cm}{11}\xymatrix @R=.5cm @C=.6cm {
   & & {\mbox{p}} \ar@{=>}[dl] \ar@{=>}[dr] \ar@{=>}[d] &  \\ 
 & {\mbox{ps+t}} \ar@{=>}[dl] \ar@{=>}[d] \ar@{=>}[dr] & {\mbox{ps+s}}  \ar@{=>}[dr] \ar@{=>}[dl] & {\mbox{np}} \ar@{=>}[dl] \ar@{=>}[d]   \\ 
 {\mbox{t+a}}  \ar@{=>}[d]  \ar@{=>}[dr] & {\mbox{ps}} \ar@{=>}[dr] \ar@{=>}[dl] & {\mbox{ms+t}} \ar@{=>}[d] \ar@{=>}[dl] & {\mbox{ms+ns}}  \ar@{=>}[dl] \\ 
 {\mbox{a}}  \ar@{=>}[dr]  &  {\mbox{t}}  \ar@{=>}[d]  & {\mbox{ms}}  \ar@{=>}[dl]  &  \\
 & {\mbox{om}} & &  }
\end{eqnarray*}
\end{figure}


\begin{figure}
\caption{Further relationships among classes of ordered magmas}
\begin{eqnarray*}
\SelectTips{cm}{11}\xymatrix @R=.5cm @C=.6cm {
   & & {\mbox{p}} \ar@{=>}[dl] \ar@{=>}[dr] \ar@{=>}[d] &  & \\ 
 & {\mbox{ps+t+b}} \ar@{=>}[dl] \ar@{=>}[d] \ar@{=>}[dr] & {\mbox{ps+s}}  \ar@{=>}[dr] \ar@{=>}[dl] & {\mbox{np}} \ar@{=>}[dl] \ar@{=>}[d] \ar@{=>}[dr] &  \\ 
 {\mbox{t+a+b}} \ar@{=>}[d]  \ar@{=>}[dr] & {\mbox{ps+b}} \ar@{=>}[dr] \ar@{=>}[dl] & {\mbox{ms+t+b}} \ar@{=>}[d] \ar@{=>}[dl] \ar@{=>}[dr] & {\mbox{ms+ns}} \ar@{=>}[dr] \ar@{=>}[dl] &  {\mbox{sp}} \ar@{=>}[dl] \ar@{=>}[d] \\ 
 {\mbox{a+b}}   \ar@{=>}[dr] &  {\mbox{t+b}}  \ar@{=>}[dr] \ar@{=>}[d]  & {\mbox{ms+b}}  \ar@{=>}[dl]  \ar@{=>}[dr] & {\mbox{ms+t}}  \ar@{=>}[dl] \ar@{=>}[d] &  {\mbox{ms+bc}} \ar@{=>}[dl]  \\
 & {\mbox{b}} \ar@{=>}[dr]  & {\mbox{t}} \ar@{=>}[d]  & {\mbox{ms}} \ar@{=>}[dl] & \\
 &  & {\mbox{om}} & &  }
\end{eqnarray*}
\end{figure}

In Section \ref{sec:FN} we generalize the well-known construction of the largest finite type semistar operation $\star_f$ smaller than a semistar operation $\star$, as follows.  An element $x$ of a poset $S$ is said to be {\it compact} if whenever $x \leq \bigvee \Delta$ for some directed subset $\Delta$ of $S$ for which $\bigvee \Delta$ exists one has $x \leq y$ for some $y \in \Delta$.  Generalizing the notion of a precoherent quantale \cite[Definition 4.1.1(2)]{ros}, we say that an ordered magma $M$ is {\it precoherent} if every element of $M$ is the supremum of a set of compact elements of $M$ and the set $\K(M)$ of all compact elements of $M$ is a submagma of $M$.  For example, the near multiplicative lattice $\F(D)$ is precoherent for any integral domain $D$ with quotient field $F$ since $\K(\F(D))$ is the set of all nonzero finitely generated $D$-submodules of $F$. The following result is proved in Section \ref{sec:FN}.

\begin{theorem}\label{precoherentthm}
If $\star$ is any nucleus on a precoherent semiprequantale $Q$, then there is a largest finitary nucleus $\star_f$ on $Q$ that is smaller than $\star$, and one has $x^{\star_f} = \bigvee\{y^\star: \ y \in \K(Q) \mbox{ and } \ y \leq x\}$ for all $x \in Q$.
\end{theorem}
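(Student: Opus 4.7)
My plan is to show that the stated formula defines a closure operation $\star_f$, check it is a nucleus, check finitariness, verify $\star_f \leq \star$, and establish maximality among finitary nuclei below $\star$. For each $x \in Q$ write $T_x = \{y \in \K(Q) : y \leq x\}$. The first task is well-definedness. A short argument shows that $\K(Q)$ is closed under any binary join that exists in $Q$: if $a,b \in \K(Q)$ and $a \vee b \leq \bigvee \Delta$ for a directed $\Delta$ with existing supremum, pick $d_1, d_2 \in \Delta$ with $a \leq d_1$ and $b \leq d_2$, then use directedness of $\Delta$ to find $d_3 \in \Delta$ above both. Since a semiprequantale has all finite suprema, $T_x$ is upward directed. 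Precoherence forces $\bigvee T_x = x$, since any sup-representation of $x$ by compacts is contained in $T_x$, so in particular $T_x$ is nonempty. Consequently $\{y^\star : y \in T_x\}$ is nonempty, directed (because $\star$ is order-preserving), and bounded above by $x^\star$, so its supremum exists in $Q$ by the semiprequantale axiom.

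Closure properties follow directly. Expansivity is $x = \bigvee T_x \leq \bigvee \{y^\star : y \in T_x\} = x^{\star_f}$, and monotonicity comes from $x \leq x' \Rightarrow T_x \subseteq T_{x'}$. For idempotence, given any $z \in T_{x^{\star_f}}$, compactness of $z$ applied to the directed supremum $x^{\star_f} = \bigvee\{y^\star : y \in T_x\}$ produces some $y \in T_x$ with $z \leq y^\star$, and then $z^\star \leq (y^\star)^\star = y^\star \leq x^{\star_f}$; taking the supremum over $z \in T_{x^{\star_f}}$ gives $(x^{\star_f})^{\star_f} \leq x^{\star_f}$.

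Next I would verify the remaining three properties. For the nucleus inequality, the semiprequantale distributivity law applies to the bounded nonempty sets $\{a^\star : a \in T_x\}$ and $\{b^\star : b \in T_y\}$ and yields $x^{\star_f} y^{\star_f} = \bigvee\{a^\star b^\star : a \in T_x,\ b \in T_y\}$; the submagma hypothesis ensures $ab \in \K(Q)$, while $ab \leq xy$ together with the nucleus property of $\star$ gives $a^\star b^\star \leq (ab)^\star \leq (xy)^{\star_f}$. For finitariness, if $\Delta$ is directed with $d = \bigvee \Delta$, then each $y \in T_d$ is compact with $y \leq d$, hence $y \leq e$ for some $e \in \Delta$ and $y^\star \leq e^{\star_f}$; this gives $d^{\star_f} \leq \bigvee_{e \in \Delta} e^{\star_f}$, the reverse inequality being monotonicity. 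That $\star_f \leq \star$ is immediate since each $y^\star$ in the defining supremum is at most $x^\star$. Finally, for maximality, any finitary nucleus $\star' \leq \star$ applied to the directed supremum $x = \bigvee T_x$ yields $x^{\star'} = \bigvee_{y \in T_x} y^{\star'} \leq \bigvee_{y \in T_x} y^\star = x^{\star_f}$.

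No individual step is a serious obstacle; the real care lies in the continual bookkeeping of supremum existence, since a semiprequantale guarantees suprema only of finite or bounded nonempty sets. The argument is arranged so that every supremum appearing is either a finite join or a directed join bounded above by something like $x^\star$ or $(xy)^{\star_f}$, and confirming this at each stage, together with the lemma that $\K(Q)$ is closed under existing binary joins so that $T_x$ is directed, is the principal technical point.
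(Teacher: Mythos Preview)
Your proof is correct and follows essentially the same route as the paper, which first shows (Proposition~\ref{finitaryclosure} and Lemma~\ref{finitetypeprop}) that $\star_f$ is the coarsest finitary closure operation below $\star$ on any algebraic bounded complete join semilattice, and then (Theorem~\ref{klattice}) adds the nucleus check using precoherence. The only minor difference is in that last step: the paper verifies $a\,x^{\star_f} \leq (ax)^{\star_f}$ for compact $a$ only and then invokes the sup-spanning criterion (Proposition~\ref{closureprop2}) with $\Sigma = \K(Q)$, whereas you prove $x^{\star_f} y^{\star_f} \leq (xy)^{\star_f}$ directly via the two-variable distributivity of a semiprequantale over bounded nonempty sets; both arguments use the same ingredients and your careful tracking of which suprema are bounded is exactly what is needed.
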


In Section \ref{sec:RML} we prove that the association $Q \longmapsto \K(Q)$ yields an equivalence between the category of precoherent near prequantales (resp., precoherent prequantales) and the category of multiplicative semilattices (resp., prequantic semilattices).  This result, along with Theorem \ref{precoherentthm} above, provides substantive motivation for the study of precoherent near prequantales.  In Section \ref{sec:DC} we generalize the {\it generalized divisorial closure} semistar operations \cite[Example 1.8(2)]{pic} to the contexts of near prequantales and residuated ordered monoids, and we use this to give a characterization of the simple near prequantales.

In Sections \ref{sec:SAMCO} and \ref{sec:CFN} we examine further the structure of the posets $\N(M)$ and $\N_f(M)$ of all nuclei and all finitary nuclei, respectively, on various classes of ordered magmas $M$.  For example, for any near sup-magma $M$ the poset $\N(M)$ has the structure of a near multiplicative lattice under the operation of supremum.  In particular, since $\F(D)$ for any integral domain $D$ is a near multiplicative lattice, the set $\N(\F(D))$ of all semistar operations on $D$ also has such a structure.  Also, in Section \ref{sec:CFN} we determine the compact elements of the poset $\N_f(M)$ for a specfic class of precoherent near multiplicative lattices $M$.

In Section \ref{sec:stable} we generalize several known results on stable semistar operations to the context of precoherent semimultiplicative lattices, where a semistar operation $\star$ on an integral domain $D$ is said to be {\it stable} if
$(I \cap J)^\star = I^\star \cap J^\star$ for all $I,J \in \F(D)$ \cite{fon0}.   Finally, in Sections \ref{sec:AMSIS} through \ref{sec:ICCICTC} we apply the theory of nuclei to ideal and module systems and star and semistar operations.  There, for example, we prove Theorems \ref{mainprop1} and \ref{modulesystems}, we give a construction of the smallest semistar operation extending a given star operation, and we provide some new examples of semistar operations induced by complete integral closure, plus closure, and tight closure.  The last of these examples leads to two potentially useful definitions of tight closure as a semiprime operation on any commutative ring, not necessarily Noetherian, of prime characteristic.  




\section{Ordered algebraic structures}\label{sec:POM}

If a proof in this paper is omitted then its reconstruction should be routine.  All rings and algebras are assumed unital and all magmas are written multiplicatively.  For any subset $X$ of a poset $S$ we write $\bigvee X = \bigvee_S X$ for the supremum of $X$ and $\bigwedge X = \bigwedge_S X$ for the infimum of $X$ if either exists.  (Note the cases $\bigvee \emptyset = \bigwedge S$ and $\bigwedge \emptyset = \bigvee S$.)

The {\it category of posets} has as morphisms the order-preserving functions.
A poset in which every pair of elements of has a supremum (resp., infimum) is said to be a {\it join semilattice} (resp., {\it meet semilattice}).  A {\it lattice} is a poset that is both a join and meet semilattice.  A poset $S$ is {\it complete} if every subset of $S$ has a supremum in $S$, or equivalently if every subset of $S$ has an infimum $S$.  A complete poset is also known as a {\it complete lattice}, or {\it sup-lattice}.  A poset $S$ is {\it bounded complete} if every nonempty subset of $S$ that is bounded above has a supremum in $S$, or equivalently if every nonempty subset of $S$ that is bounded below has an infimum in $S$. 

We will say that a poset $S$ is {\it near sup-complete}, or a {\it near sup-lattice}, if every nonempty subset of $S$ has a supremum in $S$. A near sup-lattice is equivalently a bounded complete poset with a largest element, or equivalently a poset $S$ such that every subset of $S$ that is bounded below has an infimum in $S$.   A sup-lattice is equivalently a near sup-lattice having a least element.  A map $f: S \longrightarrow T$ between posets is said to be {\it sup-preserving} if $f(\bigvee X) = \bigvee f(X)$ for every subset $X$ of $S$ for which $\bigvee X$ exists.   We will say that a map $f: S \longrightarrow T$ between posets is {\it near sup-preserving} if $f(\bigvee X) = \bigvee f(X)$ for every nonempty subset $X$ of $S$ such that $\bigvee X$ exists.  If $f$ is near sup-preserving, then $f$ is sup-preserving if and only if $f(\bigwedge S) = \bigwedge T$ or $\bigwedge S$ does not exist.  The morphisms in the {\it category of sup-lattices} (resp., {\it category of near sup-lattices}) are the sup-preserving (resp., near sup-preserving) functions. 

A nonempty subset $\Delta$ of a poset $S$ is said to be {\it directed} if every finite subset of $\Delta$ has an upper bound in $\Delta$.   A poset $S$ is {\it directed complete}, or a {\it dcpo}, if each of its directed subsets has a supremum in $S$.  We will say that $S$ is a {\it bdcpo} if every directed subset of $S$ that is bounded above has a supremum in $S$.  A subset $X$ of $S$ is said to be {\it downward closed} (resp., {\it upward closed}) if $y \in X$ whenever $y \leq x$ (resp., $y \geq x$) for some $x \in X$.  The set $X$ is said to be {\it Scott closed} if $X$ is a downward closed subset of $S$ and for any directed subset $\Delta$ of $X$ one has $\bigvee \Delta \in X$ if $\bigvee \Delta$ exists.  The set $X$ is {\it Scott open} if its complement is Scott closed, or equivalently if $X$ is an upward closed subset of $S$ and $X \cap \Delta \neq \emptyset$ for any directed subset $\Delta$ of $S$ with $\bigvee \Delta \in X$.  The Scott open subsets of $S$ form a topology on $S$ called the {\it Scott topology}.  A function $f: S \longrightarrow T$ between posets is said to be {\it Scott continuous} if $f$ is continuous when $S$ and $T$ are endowed with the Scott topologies.  Equivalently, $f$ is Scott continuous if and only if $f(\bigvee \Delta) = \bigvee f(\Delta)$ for every directed subset $\Delta$ of $S$ for which $\bigvee \Delta$ exists.  Every near sup-preserving function is Scott continuous, and every Scott continuous function is order-preserving.

If $S$ and $T$ are posets, then the set $S \times T$ is a poset under the relation $\leq$ defined by $(x,y) \leq (x',y')$ iff $x \leq x'$ and $y \leq y'$. The Scott topology on $S \times T$ may be strictly finer than the product of the Scott topologies on $S$ and $T$ \cite[Exercise II-4.26]{gie}.  The category of posets equipped with the product $\times$ with morphisms as the Scott continuous functions is monoidal.

A magma is {\it unital} (resp., {\it left unital}, {\it right unital}) if it has an identity element (resp., left identity element, right identity element).   A {\it submagma} of a magma $M$ is a subset of $M$ that is closed under multiplication.  A map $f: M \longrightarrow N$ of magmas is a {\it homomorphism} of magmas if $f(xy) = f(x)f(y)$ for all $x,y \in M$.   
The morphisms in the {\it category of ordered magmas} are the order-preserving magma homomorphisms.  We will say that a {\it sup-magma} (resp., {\it near sup-magma}, {\it dcpo magma}, {\it bdcpo magma}) is an ordered magma that is complete (resp., near sup-complete, directed complete, a bdcpo) as a poset.  The morphisms in the {\it category of sup-magmas} (resp., {\it category of near sup-magmas}, {\it category of dcpo magmas},  {\it category of bdcpo magmas}) are the sup-preserving (resp., near sup-preserving, Scott continuous, Scott continuous) magma homomorphisms.

An {\it annihilator} of an ordered magma $M$ is a least element $0 = \bigwedge M$ of $M$ such that $0x = 0 = x0$ for all $x \in M$.  If an annihilator of $M$ exists then we say that $M$ is {\it with annihilator}.

\begin{lemma}\label{scott2}
The following are equivalent for any ordered magma $M$.
\begin{enumerate}
\item The map $M \times M  \longrightarrow M$ of multiplication in $M$ is sup-preserving and $\bigwedge M$ is an annihilator of $M$ if $\bigwedge M$ exists (resp., multiplication in $M$ is near sup-preserving, multiplication in $M$ is Scott continuous).
\item For all $a \in M$, the left and right multiplication by $a$ maps on $M$ are sup-preserving (resp., near sup-preserving, Scott continuous).
\item $a (\bigvee X) = \bigvee(a X)$ and $(\bigvee X) a = \bigvee(Xa)$ for any $a \in M$ and any subset (resp., any nonempty subset, any directed subset) $X$ of $M$ such that $\bigvee X$ exists.
\item $\bigvee (X Y) = \bigvee X \bigvee Y$ for any subset (resp., any nonempty subset, any directed subset) $X$ and $Y$ of $M$ such that $\bigvee X$ and $\bigvee Y$ exist.
\end{enumerate}
\end{lemma}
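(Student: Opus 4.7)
The plan is to establish the circle (2)$\Leftrightarrow$(3)$\Leftrightarrow$(4) and then (1)$\Leftrightarrow$(2), treating the three variants (``sup-preserving,'' ``near sup-preserving,'' ``Scott continuous'') uniformly by letting the word ``admissible'' refer to arbitrary, nonempty, or directed subsets respectively. Nothing in the argument will depend on which variant is in play beyond this convention.

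The equivalence (2)$\Leftrightarrow$(3) is purely a matter of unpacking definitions: the identity $a\bigvee X = \bigvee(aX)$ for admissible $X$ with $\bigvee X$ existing is literally the sup-preservation (resp.\ near sup-preservation, Scott continuity) of the map $L_a: x \mapsto ax$, and symmetrically for $R_a: x \mapsto xa$. For (3)$\Rightarrow$(4), given admissible $X,Y \subseteq M$ with $\bigvee X$ and $\bigvee Y$ existing, I apply (3) twice to obtain $(\bigvee X)(\bigvee Y) = \bigvee_{y \in Y}(\bigvee X)y = \bigvee_{y \in Y}\bigvee(Xy)$. Once this iterated supremum is shown to exist and to equal some $z$, the standard observation that $z$ is an upper bound for $XY = \bigcup_{y \in Y} Xy$ and is dominated by every upper bound of $XY$ forces $z = \bigvee(XY)$ and, in particular, its existence. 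The reverse direction (4)$\Rightarrow$(3) follows by setting $X = \{a\}$ or $Y = \{a\}$, using $\bigvee\{a\} = a$.

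For (1)$\Leftrightarrow$(2), I interpret the phrase ``multiplication $m: M \times M \to M$ is sup-preserving (resp.\ near sup-preserving, Scott continuous)'' in the variable-wise sense, so that $m$ corresponds to a morphism $M \otimes M \to M$ in the monoidal category of sup-lattices; this matches the introduction's description of a prequantale as a magma object in that category. Under this interpretation, partial evaluation $x \mapsto m(x,a) = R_a(x)$ and $x \mapsto m(a,x) = L_a(x)$ gives (1)$\Rightarrow$(2), while (2)$\Rightarrow$(1) is immediate since variable-wise preservation of suprema is precisely what (2) asserts. The one subtlety is the empty-sup case in the strongest variant: sup-preservation of $L_a$ at $X = \emptyset$ reads $a \cdot \bigwedge M = L_a(\bigvee\emptyset) = \bigvee L_a(\emptyset) = \bigwedge M$ whenever $\bigwedge M$ exists, with the symmetric condition for $R_a$, and this is exactly the annihilator clause appended to (1) in that case. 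In the near-sup-preserving and Scott-continuous variants empty sets are excluded a priori, so no such clause is needed. The main conceptual hurdle is thus just fixing the right interpretation of ``sup-preserving'' for the bivariate multiplication; once that is done, all four conditions collapse onto each other essentially by inspection.
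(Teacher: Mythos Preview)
Your proof is correct and aligns with the paper's treatment: the paper omits the proof entirely as routine, and your verification is precisely the routine argument one would supply. You correctly identify and resolve the one genuine subtlety, namely that ``sup-preserving'' (and ``near sup-preserving'') for the bivariate map $M\times M\to M$ must be read in the separately-in-each-variable sense rather than jointly on the product poset; under the joint reading the equivalence (1)$\Leftrightarrow$(2) would actually fail for near sup-preservation (take $M=\{0,1\}$ with $xy=x\wedge y$ and $Z=\{(0,1),(1,0)\}$), so your reading is the intended one and is consistent with the paper's framing of prequantales as magma objects in the tensor-monoidal category of sup-lattices.
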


We will say that an ordered magma $M$ is {\it Scott-topological} if the map $M \times M \longrightarrow M$ of multiplication in $M$ is Scott continuous.  In other words, a Scott-topological ordered magma is a magma object in the monoidal category of posets equipped with the product $\times$ and with morphisms as the Scott continuous functions.  Likewise, a Scott-topological ordered semigroup is equivalently a semigroup object in that monoidal category.  If $M$ is a topological magma (resp., semigroup) when endowed with the Scott topology, then $M$ is a Scott-topological magma (resp., semigroup); the converse appears to be false, although we do not know a counterexample.

A near prequantale is equivalently a magma object in the monoidal category of near sup-lattices equipped with the product $\times$.  They form a full subcategory of the category of near sup-magmas.  A near quantale (resp., near multiplicative lattice) is equivalently a semigroup object (resp., commutative monoid object) in the monoidal category of near sup-lattices.  See Example \ref{nearexamples} for examples.  For a further example, note that the poset of all near sup-preserving self-maps of a near sup-lattice is a unital near quantale under the operation of composition.

An element $a$ of an ordered magma $M$ is said to be {\it residuated} if for all $x \in M$ there exists a largest element $x/a$ of $M$ such that $(x/a)a \leq x$ and a largest element $a \backslash x$ of $M$ such that $a(a \backslash x) \leq x$.  (Some authors denote $x/a$ and $a \backslash x$ by $a \rightarrow x$ and $x \leftarrow a$, respectively.)  An ordered magma $M$ is said to be {\it residuated} if every element of $M$ is residuated.  By \cite[Theorem 3.10]{gal}, if $M$ is residuated, then for all $a \in M$ the left and right multiplication by $a$ maps on $M$ are sup-preserving.

\begin{example}\label{nearresexamples} \
\begin{enumerate}
\item By \cite[Corollary 3.11]{gal} a prequantale is equivalently a complete and residuated ordered magma.
\item Let $L$ be a bounded lattice.  Then $L$ is an ordered monoid under the operation $\wedge$ of infimum; $L$ is residuated if and only if $L$ is a {\it Heyting algebra} \cite[Section 1.1.4]{gal}; and $L$ is a quantale if and only if $L$ is a complete Heyting algebra, also known as a {\it frame} or {\it locale}.
\item If $S$ is a nontrivial complete lattice, then the near multiplicative lattice $S$ under the operation $\vee$ of supremum is $S$ is not residuated since one has $x \vee \bigvee\{y \in S: \ x \vee y \leq 1\} = x \geq 1$ for all $x \in S$.  
\end{enumerate} 
\end{example}

\begin{proposition}\label{quantales}
The following are equivalent for any ordered magma $M$.
\begin{enumerate}
\item $M$ is a prequantale.
\item $M$ is complete and residuated.
\item $M$ is a near prequantale with annihilator.
\item $M$ is complete with annihilator and the multiplication map $M \times M \longrightarrow M$ is sup-preserving.
\item $M$ is complete and the left and right multiplication by $a$ maps on $M$ are sup-preserving for all $a \in M$.
\item The map $2^M \longrightarrow M$ acting by $X \longmapsto \bigvee X$ is a well-defined magma homomorphism.
\item $\bigvee (X Y) = \bigvee X \bigvee Y$ for all subsets $X$ and $Y$ of $M$.
\end{enumerate}
\end{proposition}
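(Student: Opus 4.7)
The plan is to reduce almost every equivalence in the proposition to Lemma~\ref{scott2} together with remarks already made earlier in the excerpt, and to handle (1) $\Leftrightarrow$ (2) by the classical adjoint-functor theorem for complete posets. First, (1) $\Leftrightarrow$ (5) is merely the definition of a prequantale unpacked: completeness of $M$ is the assertion that every $\bigvee X$ exists, and the identities $a(\bigvee X) = \bigvee(aX)$ and $(\bigvee X)a = \bigvee(Xa)$ are precisely sup-preservation of the left and right multiplication by $a$ maps. Second, (1) $\Leftrightarrow$ (3) was already recorded immediately following the definition of a near prequantale: the smallest element $\bigwedge M = \bigvee \emptyset$ of a prequantale acts as an annihilator because $a \cdot \bigwedge M = a \cdot \bigvee\emptyset = \bigvee(a\emptyset) = \bigvee\emptyset = \bigwedge M$, and conversely a near prequantale with annihilator vacuously satisfies the sup-preservation identities when $X = \emptyset$. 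Third, under completeness, Lemma~\ref{scott2}(1) $\Leftrightarrow$ (2) gives (4) $\Leftrightarrow$ (5), while Lemma~\ref{scott2}(2) $\Leftrightarrow$ (4) gives (5) $\Leftrightarrow$ (7). The annihilator clause in (4) matches the clause ``$\bigwedge M$ is an annihilator of $M$ if $\bigwedge M$ exists'' in Lemma~\ref{scott2}(1); in the complete, sup-preserving setting this annihilator clause is automatic by the same calculation just made.

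For (6) $\Leftrightarrow$ (7), well-definedness of the map $X \longmapsto \bigvee X$ from $2^M$ to $M$ is equivalent to completeness of $M$, and its being a magma homomorphism with respect to the operation $XY = \{xy : x \in X, y \in Y\}$ on $2^M$ is precisely the identity $\bigvee(XY) = \bigvee X \cdot \bigvee Y$, which is (7); completeness is in turn implicit in (7), since taking $Y = \{a\}$ forces $(\bigvee X) \cdot a$ to be defined for every $X$, so $\bigvee X$ must exist. For (1) $\Leftrightarrow$ (2), which is essentially Example~\ref{nearresexamples}(1), I would invoke the classical adjoint-functor theorem for posets: a map between complete posets is sup-preserving if and only if it has a right adjoint. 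Applied to the left and right multiplication by $a$ maps, this says---given completeness of $M$---that every element is residuated (with residuals $a\backslash x$ and $x/a$ playing the role of the right adjoints) if and only if every such multiplication map is sup-preserving, i.e., (2) $\Leftrightarrow$ (5). This closes the cycle through all seven conditions.

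The main obstacle, such as it is, consists of careful bookkeeping: one has to keep track of the interaction between completeness, the annihilator condition, and sup-preservation when passing between (4) and (5) via Lemma~\ref{scott2}, and one must read (7) correctly as implicitly asserting existence of the suprema appearing in its statement. The one piece of substantive mathematical content---the adjoint-functor theorem for posets used in (1) $\Leftrightarrow$ (2)---is classical and is already cited in the excerpt at Example~\ref{nearresexamples}(1), so the proposition is indeed of the ``routine reconstruction'' flavor that the author permits himself to omit.
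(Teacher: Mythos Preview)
The paper omits the proof of this proposition, having announced at the start of Section~\ref{sec:POM} that omitted proofs are routine to reconstruct. Your reconstruction is correct and follows precisely the approach implicit in the surrounding material: Lemma~\ref{scott2} for the equivalences among (1), (4), (5), and (7); the remark in the introduction for (1) $\Leftrightarrow$ (3); the definition of the prequantale structure on $2^M$ for (6) $\Leftrightarrow$ (7); and the already-cited Example~\ref{nearresexamples}(1) (i.e., \cite[Corollary 3.11]{gal}) for (1) $\Leftrightarrow$ (2).
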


Although near prequantales are not necessarily residuated, they are ``near residuated''.  We say that an element $a$ of an ordered magma $M$ is {\it near residuated} if for all $a \in M$ such that $z a \leq x$ (resp., $a z \leq x$) for some $z \in M$ there exists a largest such element $z = x/a$ (resp., $z = a \backslash x$) of $M$.  We say that $M$ {\it near residuated} if every element of $M$ is near residuated.  For example, Example \ref{nearresexamples}(3) is near residuated.  If $M$ is near residuated, then for all $a \in M$ the left and right multiplication by $a$ maps on $M$ are near sup-preserving.

\begin{proposition}\label{nearprequantales}
The following are equivalent for any ordered magma $M$.
\begin{enumerate}
\item $M$ is a near prequantale.
\item $M$ is near sup-complete and near residuated.
\item The ordered magma $M_0 = M \amalg \{0\}$, where $0x = 0 = x0$ and $0 \leq x$ for all $x \in M_0$, is a prequantale. 
\item $M$ is near sup-complete and the multiplication map $M \times M \longrightarrow M$ is near sup-preserving.
\item $M$ is near sup-complete and the left and right multiplication by $a$ maps on $M$ are near sup-preserving for all $a \in M$.
\item The map $2^M \setmin\{\emptyset\} \longrightarrow M$ acting by $X \longmapsto \bigvee X$ is a well-defined magma homomorphism.
\item $\bigvee (X Y) = \bigvee X \bigvee Y$ for all nonempty subsets $X$ and $Y$ of $M$.
\end{enumerate}
\end{proposition}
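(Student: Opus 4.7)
The plan is to reduce most of the equivalences to Lemma~\ref{scott2} applied in its ``near sup-preserving'' form (the parenthetical variant), and then separately establish (1)~$\Leftrightarrow$~(2) and (1)~$\Leftrightarrow$~(3). By unwinding the definition, a near prequantale is exactly an ordered magma satisfying condition (5): near sup-completeness together with the property that left and right multiplication by any $a\in M$ is near sup-preserving. The equivalence of (1), (4), (5), (6), and (7) is then immediate from the near-version of Lemma~\ref{scott2}, observing that the map in (6) is a well-defined magma homomorphism precisely when (7) holds.

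For (1)~$\Rightarrow$~(2), fix $a, x \in M$ and set $Z = \{z \in M : za \leq x\}$. If $Z \neq \emptyset$, then $\bigvee Z$ exists by near sup-completeness, and (1) gives $(\bigvee Z) a = \bigvee(Za) \leq x$, so $\bigvee Z$ is the required largest element $x/a$; the construction of $a \backslash x$ is symmetric. For (2)~$\Rightarrow$~(5), let $X \subseteq M$ be nonempty with $\bigvee X$ existing, and set $y := \bigvee(aX)$, which exists since $aX$ is nonempty. Since $az \leq y$ for every $z \in X$, the set $\{z \in M : az \leq y\}$ is nonempty, so near residuation produces $a \backslash y$; from $z \leq a \backslash y$ for all $z \in X$ we conclude $\bigvee X \leq a \backslash y$, and hence $a(\bigvee X) \leq y$. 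The reverse inequality $\bigvee(aX) \leq a(\bigvee X)$ is automatic from order-preservation of multiplication, and the right-multiplication argument is symmetric.

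For (1)~$\Leftrightarrow$~(3), I first observe that $M_0$ is a complete poset: the adjoined $0$ is the least element, and every nonempty subset of $M_0$ either equals $\{0\}$ or has nonempty intersection with $M$, in which case its supremum is computed in $M$. By Proposition~\ref{quantales}(4), $M_0$ is a prequantale iff multiplication on $M_0$ is sup-preserving (the annihilator requirement holds by construction). A case analysis according to whether $a = 0$, $X = \emptyset$, or $X \subseteq \{0\}$ reduces sup-preservation on $M_0$ to the statement that $a(\bigvee X) = \bigvee(aX)$ (and its right-hand analogue) for $a \in M$ and nonempty $X \subseteq M$ with $\bigvee X$ existing in $M$, which is exactly (1).

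The main obstacle is simply bookkeeping with the nonemptiness hypotheses---the role of $X \neq \emptyset$ in the definition of near residuation used in (1)~$\Rightarrow$~(2) and (2)~$\Rightarrow$~(5), and the cases involving the adjoined $0$ in (3); no step requires any substantive new idea beyond what is already packaged in Proposition~\ref{quantales} and Lemma~\ref{scott2}.
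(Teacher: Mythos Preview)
Your proof is correct and complete. The paper omits the proof of this proposition as routine (see the opening sentence of Section~\ref{sec:POM}), and your argument is exactly the kind of reconstruction intended: reduce (1), (4), (5), (6), (7) to the near-variant of Lemma~\ref{scott2}; handle (1)~$\Leftrightarrow$~(2) by the standard residuation argument (which the paper also alludes to just before the proposition); and obtain (1)~$\Leftrightarrow$~(3) by adjoining an annihilator and invoking Proposition~\ref{quantales}. One small phrasing point: your sentence ``$M_0$ is a complete poset'' is only true once $M$ is near sup-complete, so it would be cleaner to state explicitly that completeness of $M_0$ is \emph{equivalent} to near sup-completeness of $M$ (which your subsequent clause in fact establishes), so that the argument reads correctly in both directions of (1)~$\Leftrightarrow$~(3).
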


A {\it multiplicative semilattice} is an ordered magma $M$ such that $M$ is a join semilattice and $a (x \vee y) = ax \vee ay$ and $(x \vee y)a = xa \vee ya$ for all $a,x,y \in M$ \cite[Section XIV.4]{bir}.  We will say that a {\it prequantic semilattice} is a multiplicative semilattice with annihilator.

\begin{example}  The set $\K(2^M)$ of all finite subsets of a magma $M$ is a sub prequantic semilattice of the prequantale $2^M$, and the set $\K(2^M) \setmin \{\emptyset\}$ of all finite nonempty subsets of $M$ is a sub multiplicative semilattice of $\K(2^M)$.
\end{example}

\begin{proposition}\label{semiquantle}
The following are equivalent for any ordered magma $M$.
\begin{enumerate}
\item $M$ is a prequantic semilattice (resp., multiplicative semilattice).
\item $a (\bigvee X) = \bigvee (a X)$ and $(\bigvee X) a = \bigvee(Xa)$ for any $a \in M$ and any finite subset (resp., any finite nonempty subset) $X$ of $M$.
\item The map $\K(2^M) \longrightarrow M$ (resp., $\K(2^M)\setmin\{\emptyset\} \longrightarrow M$) acting by $X \longmapsto \bigvee X$ is a well-defined magma homomorphism.
\item $\bigvee (X Y) = \bigvee X \bigvee Y$ for all finite subsets (resp., all finite nonempty subsets) $X$ and $Y$ of $M$.
\end{enumerate}
\end{proposition}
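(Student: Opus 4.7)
The plan is to prove the equivalences in parallel with the cases of Propositions \ref{quantales} and \ref{nearprequantales}, namely, through the chain (1) $\Rightarrow$ (2) $\Rightarrow$ (4) $\Rightarrow$ (3) $\Rightarrow$ (1), treating the ``prequantic semilattice'' and ``multiplicative semilattice'' versions in tandem. Throughout, statements (2), (3), (4) are read with the implicit assertion that the relevant finite suprema exist, which already forces $M$ to be a join semilattice (or a bounded join semilattice in the prequantic case, via $\bigvee \emptyset = \bigwedge M$).

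For (1) $\Rightarrow$ (2) I would induct on $|X|$. In the multiplicative semilattice case, the base case $|X|=1$ is trivial, and the induction step $X = X' \cup \{x\}$ uses the binary distributivity $a(\bigvee X' \vee x) = a\bigvee X' \vee ax$ together with the induction hypothesis. In the prequantic case I additionally handle $X = \emptyset$: then $\bigvee \emptyset = \bigwedge M$ is the annihilator, so $a \bigvee \emptyset = \bigwedge M = \bigvee(a\emptyset)$, and symmetrically on the right. The implication (2) $\Rightarrow$ (4) is the standard two-step computation
\begin{equation*}
\bigvee(XY) \;=\; \bigvee_{x \in X}\bigvee_{y \in Y} xy \;=\; \bigvee_{x \in X}\bigl(x \bigvee Y\bigr) \;=\; \Bigl(\bigvee X\Bigr)\bigvee Y,
\end{equation*}
valid for finite (nonempty, in the multiplicative case) $X, Y$, where in the prequantic case the empty cases reduce to the identity $0 \cdot (\bigvee Y) = 0 = \bigvee(\emptyset Y)$ (and symmetric) using that $0$ is an annihilator.

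For (4) $\Rightarrow$ (3), the existence of $\bigvee X$ for $X \in \K(2^M)$ (or $\K(2^M)\setmin\{\emptyset\}$) says the map $X \longmapsto \bigvee X$ is well-defined, and (4) is exactly the equation $\bigvee(XY) = \bigvee X \bigvee Y$ asserting that this map preserves multiplication. Finally, for (3) $\Rightarrow$ (1), applying the magma homomorphism property to $X = \{a\}$ and $Y = \{x,y\}$ (and symmetrically) yields the binary distributivity $a(x \vee y) = ax \vee ay$ and $(x \vee y)a = xa \vee ya$; in the prequantic case, applying it with $Y = \emptyset$ yields $a \cdot \bigvee \emptyset = \bigvee \emptyset$, which identifies $\bigvee \emptyset = \bigwedge M$ as a two-sided annihilator, so $M$ is a prequantic semilattice.

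The computations are all routine; the only thing to watch is the bookkeeping between the two parallel cases, since the empty-set edge case is exactly what distinguishes the prequantic semilattice (with annihilator) from the multiplicative semilattice, and one must consistently use $\bigvee \emptyset = \bigwedge M$ to translate the annihilator axiom into the supremum formalism. I do not expect any genuine obstacle beyond keeping the two parallel tracks straight.
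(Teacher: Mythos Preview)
Your proposal is correct and is precisely the routine verification the paper has in mind; in fact the paper omits the proof entirely (per the blanket remark at the start of Section~\ref{sec:POM} that omitted proofs are routine), and your chain (1) $\Rightarrow$ (2) $\Rightarrow$ (4) $\Rightarrow$ (3) $\Rightarrow$ (1) with induction on $|X|$ and the empty-set bookkeeping for the prequantic case is exactly the expected reconstruction.
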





The prequantic semilattices (resp., multiplicative semilattices) form a category, where a morphism is a magma homomorphism $f: M \longrightarrow M'$ such that $f(\bigvee X) = \bigvee f(X)$ for all finite subsets (resp., all finite nonempty subsets) $X$ of $M$.  

\begin{proposition}\label{preq}
A prequantale (resp., near prequantale, semiprequantale) is equivalently a complete (resp., near sup-complete, bounded complete) Scott-topological prequantic semilattice (resp., multiplicative semilattice, multiplicative semilattice).  
\end{proposition}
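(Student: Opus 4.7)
The plan is to reduce each equivalence to the standard observation that, in a join semilattice, any subset $X$ can be ``resolved'' into the directed family $\Delta_X = \{\bigvee F : F \subset X \text{ finite}\}$ (with $F$ ranging over finite or finite nonempty subsets, as appropriate) of finite subjoins. In each of the three cases the identity $\bigvee \Delta_X = \bigvee X$ holds once the relevant completeness hypothesis guarantees that $\bigvee \Delta_X$ exists, and then distributivity of multiplication over $\bigvee X$ decomposes as distributivity over finite joins (the (pre)quantic semilattice axiom) composed with distributivity over the directed join $\Delta_X$ (Scott continuity). Lemma \ref{scott2} is the link between Scott continuity of the multiplication map $M \times M \to M$ and the pointwise identity $a \bigvee \Delta = \bigvee(a\Delta)$ on directed subsets.

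For the prequantale case, the forward direction is immediate: a prequantale is complete by definition, satisfies the prequantic semilattice axiom as the special case of finite $X$ (with the annihilator being $0 = \bigvee \emptyset$, which annihilates by the distribution axiom applied to $X = \emptyset$), and is Scott-topological by Lemma \ref{scott2}. For the converse, given a complete Scott-topological prequantic semilattice $Q$, $a \in Q$, and $X \subset Q$, the set $\Delta_X$ is directed and $\bigvee \Delta_X = \bigvee X$ (using the annihilator to include $F = \emptyset$ if $X = \emptyset$), so
\begin{equation*}
a \bigvee X \ = \ a \bigvee \Delta_X \ = \ \bigvee_{F \subset X \text{ finite}} a \bigvee F \ = \ \bigvee_{F \subset X \text{ finite}} \bigvee(aF) \ = \ \bigvee(aX)
\end{equation*}
by Scott continuity and distributivity over finite joins, with the symmetric identity on the right.

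The near prequantale case proceeds identically after restricting $X$ and each $F$ to be nonempty: $\Delta_X$ is then a nonempty directed subset of $Q$, so $\bigvee \Delta_X$ exists by near sup-completeness; the multiplicative semilattice axiom supplies the finite nonempty $F$ case; and no annihilator bookkeeping is needed. For the semiprequantale case the only extra point is to verify that $\bigvee \Delta_X$ exists when $X$ is nonempty and bounded above. If $u$ is an upper bound of $X$, then $\bigvee F \leq u$ for every finite nonempty $F \subset X$ (existence of $\bigvee F$ coming from the multiplicative semilattice axiom), so $\Delta_X$ is itself a nonempty directed subset bounded above by $u$, and bounded completeness furnishes $\bigvee \Delta_X$; the remaining computation is unchanged.

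The main obstacle is really just the clean bookkeeping across the three parallel statements, in particular handling the annihilator (empty supremum) in the prequantale case and establishing that $\Delta_X$ is bounded above in the semiprequantale case so that its supremum is available. Once the identity $\bigvee \Delta_X = \bigvee X$ is in hand in each setting, Lemma \ref{scott2} and the finite-join axiom do all the work.
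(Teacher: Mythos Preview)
Your argument is correct and is precisely the routine reconstruction the paper has in mind; the paper omits the proof of this proposition entirely (cf.\ the blanket remark that omitted proofs are routine), and your decomposition of an arbitrary $X$ into the directed family of its finite subjoins, combined with Lemma~\ref{scott2} and the (pre)quantic/multiplicative semilattice axiom, is exactly the standard verification.
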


\section{Nuclei}\label{sec:MCO}

Let $\star$ be any self-map of a set $S$. For any $X \subset S$ we let $X^\star = \{x^\star: \ x \in X\}$.  
If $S$ is a poset, then $\star$ is a {\it preclosure} on $S$ if $\star$ is expansive and order-preserving.  Thus a closure operation is equivalently an idempotent preclosure.  An {\it interior operation} on $S$ is a closure operation on the dual poset $S^{\operatorname{op}}$ of $S$.    

\begin{lemma}\label{starlemma}
Let $\star$ be a closure operation on a poset $S$ and let $X \subset S$.
\begin{enumerate}
\item $\bigvee_{S^\star} X^\star = (\bigvee_S X^\star)^\star = (\bigvee_S X)^\star$ if $\bigvee_S X$ exists.
\item $\bigwedge_{S^\star} X^\star =  (\bigwedge_S X^\star)^\star = \bigwedge_S X^\star$ if $\bigwedge_S X^\star$ exists.
\item The map $\star: S \longrightarrow S^\star$ is sup-preserving.
\item If $S$ is complete (resp., near sup-complete, bounded complete, directed complete, a bdcpo, a join semilattice, a meet semilattice, a lattice), then so is $S^\star$.
\end{enumerate}
\end{lemma}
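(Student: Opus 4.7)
The plan is to handle the four parts in order, with (3) and (4) falling out of (1) and (2). First I would note that by idempotence $S^\star$ coincides with the set of fixed points $\{y \in S : y^\star = y\}$, viewed as a subposet of $S$; this is the observation that makes the whole lemma essentially bookkeeping.

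For (1), I would show directly that $(\bigvee_S X)^\star$ is the supremum of $X^\star$ in $S^\star$. The upper-bound property is monotonicity: $x \leq \bigvee_S X$ forces $x^\star \leq (\bigvee_S X)^\star$. For minimality, if $y \in S^\star$ dominates $X^\star$, then the chain $x \leq x^\star \leq y$ shows $y$ also dominates $X$, so $y \geq \bigvee_S X$; applying $\star$ and using $y = y^\star$ yields $y \geq (\bigvee_S X)^\star$. For the middle equality, whenever $\bigvee_S X^\star$ exists I would sandwich $\bigvee_S X \leq \bigvee_S X^\star \leq (\bigvee_S X)^\star$, the right inequality because $(\bigvee_S X)^\star$ is an upper bound of $X^\star$ in $S$ by the previous step; applying $\star$ to both outer terms and using idempotence delivers $(\bigvee_S X^\star)^\star = (\bigvee_S X)^\star$.

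For (2) the crux is to show that $\bigwedge_S X^\star$ already lies in $S^\star$ whenever it exists in $S$. Since $\bigwedge_S X^\star \leq x^\star$ for each $x \in X$ and $(x^\star)^\star = x^\star$, applying $\star$ yields $(\bigwedge_S X^\star)^\star \leq x^\star$, so $(\bigwedge_S X^\star)^\star$ is another lower bound of $X^\star$ in $S$; minimality of the meet forces $(\bigwedge_S X^\star)^\star \leq \bigwedge_S X^\star$, and expansion gives the reverse inequality, so $\bigwedge_S X^\star$ is $\star$-closed. It then coincides with $\bigwedge_{S^\star} X^\star$, since any lower bound in $S^\star$ is a lower bound in $S$ and hence bounded above by $\bigwedge_S X^\star$.

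Part (3) is an immediate corollary of (1) applied to any $X$ with $\bigvee_S X$ existing: $\star(\bigvee_S X) = (\bigvee_S X)^\star = \bigvee_{S^\star} X^\star = \bigvee_{S^\star} \star(X)$. For (4) I would run through the completeness properties one at a time. Any $Y \subset S^\star$ satisfies $Y = Y^\star$, so (1) computes $\bigvee_{S^\star} Y = (\bigvee_S Y)^\star$ and (2) computes $\bigwedge_{S^\star} Y = \bigwedge_S Y$ whenever the corresponding extremum in $S$ exists; cardinality, directedness, and boundedness hypotheses transfer between $S^\star$ and $S$ because the order and the inclusion $S^\star \subseteq S$ are compatible, with the one mild observation that an upper bound of $Y$ in $S^\star$ is in particular an upper bound in $S$, so bounded completeness also transfers. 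The main point deserving care, and essentially the only one, is the middle equality in (1): $\bigvee_S X^\star$ need not in general exist even when $\bigvee_S X$ does, but whenever it exists the sandwich argument above pins down its $\star$-closure.
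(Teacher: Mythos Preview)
Your argument is correct and is exactly the routine verification the paper has in mind; the paper omits the proof entirely, noting at the outset of Section~\ref{sec:POM} that omitted proofs are meant to be reconstructed routinely. Your observation about the middle term in (1) is well taken: the equality $(\bigvee_S X^\star)^\star = (\bigvee_S X)^\star$ should indeed be read under the additional proviso that $\bigvee_S X^\star$ exists, since in a general poset the existence of $\bigvee_S X$ does not force the existence of $\bigvee_S X^\star$ (one can build small finite counterexamples), and your sandwich argument handles precisely that case.
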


A closure operation $\star: S \longrightarrow S$ may not be Scott continuous, even though the corestriction ${_{S^\star}}|\star: S \longrightarrow S^\star$ of $\star$ to $S^\star$ is always sup-preserving.  If $\star: S \longrightarrow S$ is Scott continuous then we will say that $\star$ is {\it finitary}.  (Such closure operations are often said to be {\it algebraic}.)  Equivalently this means that $(\bigvee \Delta)^\star = \bigvee(\Delta^\star)$, or $\bigvee(\Delta^\star) \in S^\star$, for any directed subset $\Delta$ of $S$ such that $\bigvee \Delta$ exists.  This generalizes the notion of a finite type star or semistar operation, ideal system, or module system.

Recall that a {\it nucleus} (resp., {\it strict nucleus}) on an ordered magma $M$ is a closure operation $\star$ on $M$ such that $x^\star y^\star \leq (xy)^\star$ (resp., $x^\star y^\star = (xy)^\star$) for all $x,y \in M$.  The following elementary results give several characterizations of nuclei.

\begin{proposition}\label{closureprop1}
The following conditions are equivalent for any closure operation $\cl$ on an ordered magma $M$.
\begin{enumerate}
\item $\cl$ is a nucleus on $M$.
\item $(x^\cl y^\cl)^\cl = (xy)^\cl$ for all $x,y \in M$.
\item $x y^\cl \leq (xy)^\cl$ and  $x^\cl y \leq (xy)^\cl$ for all $x,y \in M$.
\end{enumerate}
If $M$ is an ordered monoid, then the above conditions are equivalent to the following.
\begin{enumerate}
\item[(4)] $\cl$-multiplication on $M$ is associative.
\end{enumerate}
\end{proposition}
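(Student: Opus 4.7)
The plan is to establish the cycle of implications $(1) \Leftrightarrow (2) \Leftrightarrow (3)$ first, using only the three closure axioms together with the ordered-magma property, and then in the monoid case to prove $(1)\Leftrightarrow(4)$ by sandwiching expressions of the form $((xy)^\star z)^\star$ between $(xyz)^\star$ and itself, and conversely by specializing the associativity identity at the identity element of $M$.

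For $(1) \Leftrightarrow (2)$, the ``easy direction'' comes from order-preservation: since $x \leq x^\star$ and $y \leq y^\star$ in an ordered magma, $xy \leq x^\star y^\star$, so $(xy)^\star \leq (x^\star y^\star)^\star$; if $\star$ is a nucleus, applying $\star$ to $x^\star y^\star \leq (xy)^\star$ and using idempotency yields the reverse inequality. Conversely, $(2)$ obviously implies $x^\star y^\star \leq (x^\star y^\star)^\star = (xy)^\star$. For $(1) \Leftrightarrow (3)$: from $(1)$ we get $xy^\star \leq x^\star y^\star \leq (xy)^\star$ and similarly $x^\star y \leq (xy)^\star$; conversely, assuming $(3)$, apply the second inequality with $x$ replaced by $x^\star$ to obtain $x^\star y^\star \leq (x^\star y)^\star$, and then the first inequality of $(3)$ gives $x^\star y \leq (xy)^\star$, whence $(x^\star y)^\star \leq (xy)^\star$ by order-preservation and idempotency, proving $(1)$.

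For the monoid addendum, I would prove $(1) \Rightarrow (4)$ by computing $((xy)^\star z)^\star$: the inequality $(xy)^\star z \leq ((xy)z)^\star = (xyz)^\star$ follows from $(3)$, and the reverse follows from $xyz \leq (xy)^\star z$ via order-preservation and idempotency, so $((xy)^\star z)^\star = (xyz)^\star$; the symmetric calculation gives $(x(yz)^\star)^\star = (xyz)^\star$, yielding associativity of $\star$-multiplication. Conversely, for $(4) \Rightarrow (3)$, I would plug $z = 1$ into the associativity identity $((xy)^\star z)^\star = (x(yz)^\star)^\star$ to obtain $(xy)^\star = (xy^\star)^\star$, which gives $xy^\star \leq (xy)^\star$; setting $x = 1$ gives symmetrically $x^\star y \leq (xy)^\star$, which is $(3)$.

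No step here is really an obstacle; the only mildly delicate point is the final implication $(4) \Rightarrow (3)$, where one must recognize that the two inequalities of $(3)$ are exactly what falls out of the associativity equation when one coordinate is specialized to the identity---this is the place where the monoid hypothesis is genuinely used, since in a non-unital magma associativity of $\star$-multiplication would not suffice.
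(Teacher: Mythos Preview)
Your proof is correct and essentially what the paper intends; the paper omits the proof of this proposition as routine, and your argument is precisely the kind of routine verification that is expected. One cosmetic slip: in the $(3)\Rightarrow(1)$ step you have swapped the labels ``first'' and ``second''---it is the \emph{first} inequality $xy^\star\le(xy)^\star$ (with $x$ replaced by $x^\star$) that yields $x^\star y^\star\le(x^\star y)^\star$, and the \emph{second} inequality $x^\star y\le(xy)^\star$ that then gives $(x^\star y)^\star\le(xy)^\star$; the mathematics is unaffected.
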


\begin{proposition}\label{closureprop1a}
The following are equivalent for any self-map $\cl$ of a left or right unital  ordered magma $M$.
\begin{enumerate}
\item $\cl$ is a nucleus on $M$.
\item $xy \leq z^\cl  \Leftrightarrow  x y^\cl \leq z^\cl  \Leftrightarrow x^\cl y \leq z^\cl$ for all $x,y,z \in M$.
\item $x \leq x^\star$, and $xy \leq z^\cl \Rightarrow x^\cl y^\cl \leq z^\cl$, for all $x,y,z \in M$.
\end{enumerate}
\end{proposition}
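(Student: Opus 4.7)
The plan is to prove the three implications $(1)\Rightarrow(2)\Rightarrow(3)\Rightarrow(1)$ in order. The unital assumption will only be needed in $(2)\Rightarrow(3)$ and $(3)\Rightarrow(1)$, and I will handle the left-unital and right-unital cases by a symmetric argument, writing out only the left-unital version. Throughout, let $e$ denote a (left or right) identity of $M$.

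For $(1)\Rightarrow(2)$, assume $\cl$ is a nucleus. To get the forward direction $xy\leq z^\cl\Rightarrow xy^\cl\leq z^\cl$, apply the order-preserving and idempotent properties to get $(xy)^\cl\leq z^\cl$, and then combine $x\leq x^\cl$ with the nucleus inequality to obtain $xy^\cl\leq x^\cl y^\cl\leq(xy)^\cl\leq z^\cl$. The reverse direction is immediate from $y\leq y^\cl$, which gives $xy\leq xy^\cl$. The third equivalence $xy\leq z^\cl\Leftrightarrow x^\cl y\leq z^\cl$ is symmetric.

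For $(2)\Rightarrow(3)$, I would first deduce expansiveness. Assuming $e$ is a left identity, apply (2) to the triple $(e,x,x)$: the middle condition $ex^\cl\leq x^\cl$ holds trivially, so the first condition $ex\leq x^\cl$ holds, i.e.\ $x\leq x^\cl$. Next, suppose $xy\leq z^\cl$. Applying the first equivalence in (2) yields $xy^\cl\leq z^\cl$, and then applying the third equivalence in (2) to the triple $(x,y^\cl,z)$ yields $x^\cl y^\cl\leq z^\cl$, as required.

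For $(3)\Rightarrow(1)$, expansiveness is given. For order-preservation, assume $x\leq y$ and use the left identity to rewrite $x=ex\leq y\leq y^\cl$; then (3) yields $e^\cl x^\cl\leq y^\cl$, and since $x^\cl=ex^\cl\leq e^\cl x^\cl$ (using $e\leq e^\cl$ from expansiveness together with the ordered-magma axiom), we conclude $x^\cl\leq y^\cl$. For idempotence, apply (3) to $(x^\cl,e,x^\cl)$: since $x^\cl=ex^\cl$ wait I want a right identity here. Let me redo this with the left identity: apply (3) to $(e,x^\cl,x^\cl)$, using $ex^\cl=x^\cl\leq(x^\cl)^\cl$, to get $e^\cl(x^\cl)^\cl\leq x^\cl$; then $(x^\cl)^\cl=e(x^\cl)^\cl\leq e^\cl(x^\cl)^\cl\leq x^\cl$, which with expansiveness gives idempotence. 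Finally, the nucleus inequality $x^\cl y^\cl\leq(xy)^\cl$ is the instance of (3) with $z=xy$, where the hypothesis $xy\leq(xy)^\cl=z^\cl$ is expansiveness.

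The main obstacle is the $(3)\Rightarrow(1)$ step, since (3) provides only a one-way implication (unlike (2)), so one must carefully exploit the identity element to bootstrap both order-preservation and idempotence; the key trick in each case is that $e\leq e^\cl$ lets one insert $e^\cl$ harmlessly on one side of a product and then strip it off using $ae\leq ae^\cl$ (or $ea\leq e^\cl a$).
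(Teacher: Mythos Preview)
Your argument is essentially correct and follows the natural route (the paper omits the proof as routine), but there is a genuine slip in the idempotence step of $(3)\Rightarrow(1)$. You write: ``apply (3) to $(e,x^\cl,x^\cl)$, using $ex^\cl=x^\cl\leq(x^\cl)^\cl$, to get $e^\cl(x^\cl)^\cl\leq x^\cl$.'' But with $z=x^\cl$, the conclusion of (3) is $e^\cl(x^\cl)^\cl\leq z^\cl=(x^\cl)^\cl$, which is useless. The correct move is to take the triple $(e,x^\cl,x)$: the hypothesis $ex^\cl\leq x^\cl$ is then trivial (equality), and the conclusion is the desired $e^\cl(x^\cl)^\cl\leq x^\cl$. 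After that your last line $(x^\cl)^\cl=e(x^\cl)^\cl\leq e^\cl(x^\cl)^\cl\leq x^\cl$ goes through. So the repair is simply to change the third coordinate from $x^\cl$ to $x$ and to replace the hypothesis check by the trivial inequality $x^\cl\leq x^\cl$.

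Everything else is fine: the $(1)\Rightarrow(2)$ and $(2)\Rightarrow(3)$ steps are clean, and the order-preservation argument in $(3)\Rightarrow(1)$ correctly uses $e\leq e^\cl$ to strip off the extra factor.
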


If $\star$ is a nucleus on an ordered magma $M$, then the set $M^\star$ is an ordered magma under $\star$-multiplication, the corestriction ${_{M^\star}}|\star : M \longrightarrow M^\star$ of $\star$ to $M^\star$ is a sup-preserving morphism of ordered magmas, and if $1$ is an identity element of $M$ then $1^\star$ is an identity element of $M^\star$.   We will say that a subset $\Sigma$ of $M$ is a {\it sup-spanning subset} of $M$ if $$xy = \bigvee\{ay: a \in \Sigma \mbox{ and } a \leq x\} = \bigvee\{xb: b \in \Sigma \mbox{ and } b \leq y\}$$ for all $x,y \in M$.


\begin{proposition}\label{closureprop2}
Let $\cl$ be a closure operation on an ordered magma $M$ and let $\Sigma$ be any sup-spanning subset of $M$.  Then $\cl$ is a nucleus on $M$ if and only if $ax^\cl \leq (ax)^\cl$ and $x^\cl a \leq (xa)^\cl$ for all $a \in \Sigma$ and all $x \in M$, and in that case $\Sigma^\cl$ is a sup-spanning subset of $M^\cl$.
\end{proposition}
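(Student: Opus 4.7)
The plan is to deduce the forward direction immediately from Proposition \ref{closureprop1}(3), and to handle the reverse direction by exploiting the sup-spanning identity in $M$ together with monotonicity of $\cl$. If $\cl$ is a nucleus, then $ax^\cl \leq (ax)^\cl$ and $x^\cl a \leq (xa)^\cl$ hold for all $a,x \in M$ by Proposition \ref{closureprop1}(3), hence in particular for $a \in \Sigma$. For the converse, I would apply the sup-spanning identity to the pair $(x, y^\cl)$ to get $xy^\cl = \bigvee\{a y^\cl : a \in \Sigma,\ a \leq x\}$; by hypothesis each $a y^\cl \leq (ay)^\cl$, and since $a \leq x$ gives $ay \leq xy$ and hence $(ay)^\cl \leq (xy)^\cl$ by monotonicity of $\cl$, taking suprema yields $xy^\cl \leq (xy)^\cl$. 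The inequality $x^\cl y \leq (xy)^\cl$ is symmetric, using sup-spanning applied to $(x^\cl, y)$. Then Proposition \ref{closureprop1}(3) delivers the nucleus property.

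For the last assertion, fix $u, v \in M^\cl$. Sup-spanning of $\Sigma$ in $M$ gives $uv = \bigvee\{av : a \in \Sigma,\ a \leq u\}$, and applying $\cl$ together with Lemma \ref{starlemma}(1) (which computes the supremum of this set in $M^\cl$) yields
\[
(uv)^\cl = \bigvee\nolimits_{M^\cl}\{(av)^\cl : a \in \Sigma,\ a \leq u\}.
\]
Since $\cl$ is now known to be a nucleus, one has $(av)^\cl = (a^\cl v)^\cl$: indeed $a \leq a^\cl$ gives $(av)^\cl \leq (a^\cl v)^\cl$, while Proposition \ref{closureprop1}(3) applied to $a^\cl v$ gives $a^\cl v \leq (av)^\cl$ and hence $(a^\cl v)^\cl \leq (av)^\cl$. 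The reparametrization $a \mapsto a^\cl$ then identifies the indexing set above with $\{c \in \Sigma^\cl : c \leq u\}$, using the elementary equivalence $a \leq u \Leftrightarrow a^\cl \leq u$ valid for $u \in M^\cl$. This produces the first sup-spanning identity for $\Sigma^\cl$ in $M^\cl$; the other is handled symmetrically.

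The main point requiring care is keeping track of the ambient poset in which each supremum is computed, since the inclusion $M^\cl \hookrightarrow M$ is generally not sup-preserving and the supremum in $M^\cl$ of a subset $X^\cl$ equals $(\bigvee_M X)^\cl$ rather than $\bigvee_M X^\cl$. Once this bookkeeping is handled via Lemma \ref{starlemma}(1), the rest of the argument is a direct unwinding of definitions, and no further subtlety is expected.
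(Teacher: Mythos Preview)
Your proof is correct and follows essentially the same approach as the paper: the same sup-spanning expansion of $xy^\cl$ together with the hypothesis and monotonicity of $\cl$ gives the key inequality, and the last assertion is deduced from Lemma~\ref{starlemma}(1) just as the paper indicates (you simply supply the details the paper omits).
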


\begin{proof}
Necessity of the given condition is clear.  Suppose that $ax^\cl \leq (ax)^\cl$ and $x^\cl a \leq (xa)^\cl$ for all $a \in \Sigma$ and all $x \in M$.  Then
$xy^\cl = \bigvee_{a \in \Sigma: \ a \leq x} ay^\cl \leq \left(\bigvee_{a \in \Sigma: \ a \leq x} ay\right)^\cl = (xy)^\cl,$ and by symmetry $x^\cl y \leq (xy)^\cl$, for all $x,y \in M$, so that $\cl$ is a nucleus.  That $\Sigma^\star$ is a sup-spanning subset of $M^\star$ follows easily from Lemma \ref{starlemma}(1).
\end{proof}

For any self-map $\star$ of a magma $M$ we say that $a \in M$ is {\it transportable through $\star$} if $(ax)^\star = ax^\star$ and $(xa)^\star = x^\star a$ for all $x \in M$, and we let $\T^\star(M)$ denote the set of all elements of $M$ that are transportable through $\star$.

\begin{corollary}\label{joinspan}
Any closure operation $\star$ on an ordered magma $M$ such that $\T^\star(M)$ is a sup-spanning subset of $M$ is a nucleus on $M$.
\end{corollary}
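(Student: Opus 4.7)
The approach is to apply Proposition \ref{closureprop2} directly, taking the sup-spanning subset $\Sigma$ to be $\T^\star(M)$ itself. Since the hypothesis of the corollary is precisely that $\T^\star(M)$ is sup-spanning, the only thing left to check is the two inequalities $ax^\star \leq (ax)^\star$ and $x^\star a \leq (xa)^\star$ for every $a \in \T^\star(M)$ and every $x \in M$.

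These inequalities are essentially immediate: by the very definition of $\T^\star(M)$, an element $a$ lies in $\T^\star(M)$ exactly when $(ax)^\star = ax^\star$ and $(xa)^\star = x^\star a$ for all $x \in M$. In particular the required inequalities hold as equalities, so the hypothesis of Proposition \ref{closureprop2} is satisfied and $\star$ is a nucleus on $M$.

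The proof is therefore a one-line application of the preceding proposition, and the only conceptual point is recognizing that transportability is a strictly stronger condition than the inequalities needed in Proposition \ref{closureprop2}. There is no real obstacle; the corollary is best viewed as isolating the useful case of Proposition \ref{closureprop2} in which the test set $\Sigma$ can be described intrinsically in terms of $\star$ rather than being chosen in advance.
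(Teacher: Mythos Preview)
Your proof is correct and matches the paper's intended argument: the corollary is stated without proof precisely because it follows immediately from Proposition~\ref{closureprop2} by taking $\Sigma = \T^\star(M)$, exactly as you do. The observation that transportability gives the required inequalities as equalities is the only point to check, and you have identified it.
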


For any magma $M$ we let $\Inv(M)$ denote the set of all $u \in M$ for which there exists $u^{-1} \in M$ such that the left and right multiplication by $u^{-1}$ maps are inverses, respectively, to the left and right multiplication by $u$ maps.
For any ordered magma $M$ we let $\U(M)$ denote the set of all $u \in M$ such that the left and right multiplication by $u$ maps are poset automorphisms of $M$.  One has $\Inv(M) \subset \U(M)$ for any ordered magma $M$, with equality holding if $M$ is a monoid.

\begin{proposition}\label{closureprop3}
One has $\Inv(M) \subset \T^\star(M)$ for any nucleus $\cl$ on an ordered magma $M$.
\end{proposition}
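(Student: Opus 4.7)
The plan is to show that each $u \in \Inv(M)$ is transportable through $\star$ by combining the one-sided nucleus inequality in Proposition \ref{closureprop1}(3) with the fact that left (resp. right) multiplication by $u$ is an order-preserving bijection whose inverse is left (resp. right) multiplication by $u^{-1}$. I will prove the identity $(ux)^\star = u x^\star$ first, and then invoke symmetry.

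Fix $u \in \Inv(M)$ and choose $u^{-1} \in M$ as in the definition of $\Inv(M)$, so that $u^{-1}(uy) = y$ and $(yu)u^{-1} = y$ for every $y \in M$. Let $x \in M$. One inequality is immediate: Proposition \ref{closureprop1}(3) applied to the pair $(u, x)$ gives $u x^\star \leq (ux)^\star$. For the reverse inequality, apply Proposition \ref{closureprop1}(3) instead to the pair $(u^{-1}, ux)$, obtaining
\[
u^{-1} (ux)^\star \leq (u^{-1}(ux))^\star = x^\star.
\]
Now left-multiply both sides by $u$; since $M$ is an ordered magma, left multiplication by $u$ is order-preserving, so
\[
(ux)^\star = u\bigl(u^{-1}(ux)^\star\bigr) \leq u x^\star,
\]
the first equality using that left multiplication by $u^{-1}$ inverts left multiplication by $u$. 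Combining the two inequalities gives $(ux)^\star = u x^\star$.

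The identity $(xu)^\star = x^\star u$ follows by the mirror argument, applying Proposition \ref{closureprop1}(3) first to $(x,u)$ and then to $(xu, u^{-1})$ and using that right multiplication by $u^{-1}$ inverts right multiplication by $u$. Hence $u \in \T^\star(M)$, as desired.

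I do not anticipate a serious obstacle; the only point that requires care is to apply the correct one-sided version of the nucleus inequality ($a y^\star \leq (ay)^\star$ and $y^\star a \leq (ya)^\star$) rather than the two-sided version $a^\star b^\star \leq (ab)^\star$, and to remember that the definition of $\Inv(M)$ is phrased in terms of left/right multiplication maps being mutual inverses rather than in terms of a two-sided identity element—so the cancellation $u^{-1}(uy) = y$ is available even without assuming $M$ is unital.
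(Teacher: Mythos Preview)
Your proof is correct and follows essentially the same approach as the paper's: both arguments use the one-sided nucleus inequality of Proposition~\ref{closureprop1}(3) twice---once with $u$ and once with $u^{-1}$---together with the fact that left multiplication by $u$ and by $u^{-1}$ are mutually inverse order-preserving maps, and then invoke symmetry for the right-handed identity. The paper compresses this into a single chain $x^\star = u^{-1}(ux^\star) \leq u^{-1}(ux)^\star \leq (u^{-1}(ux))^\star = x^\star$ that collapses to equalities, whereas you separate the two inequalities, but the content is the same.
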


\begin{proof}
If $u \in \Inv(M)$, then $x^\cl = u^{-1}(u x^\cl) \leq u^{-1}(ux)^\cl \leq (u^{-1}(ux))^\cl   = x^\cl$, whence $ux^\cl = (ux)^\cl$, for all $x \in M$.  By symmetry one has $u \in \T^\star(M)$.
\end{proof}

\begin{corollary}
The only nucleus on a partially ordered abelian group is the identity operation.  More generally, the only nucleus on an ordered unital magma $M$ with $M = \Inv(M)$ is the identity operation.
\end{corollary}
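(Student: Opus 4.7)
The plan is to reduce the first assertion to the second: a partially ordered abelian group is an ordered unital magma (in fact an associative commutative monoid) in which every element is invertible, so $M = \Inv(M)$ automatically holds, and the first statement becomes a special case of the second.

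For the second statement, let $\star$ be a nucleus on $M$. The first step is to invoke Proposition~\ref{closureprop3}: the hypothesis $M = \Inv(M) \subset \T^\star(M)$ forces every element of $M$ to be transportable through $\star$. Using transportability of an arbitrary $x$ paired with the identity $1$ then gives
$$x^\star = (x \cdot 1)^\star = x \cdot 1^\star$$
(and symmetrically $x^\star = 1^\star \cdot x$), so the whole problem reduces to showing $1^\star = 1$. Specializing $x = 1^\star$ in this identity gives $(1^\star)^\star = 1^\star \cdot 1^\star$, which by idempotence of $\star$ becomes $1^\star \cdot 1^\star = 1^\star$. Thus $1^\star$ is an idempotent element of $M$.

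Finally, since $1^\star \in M = \Inv(M)$, I would invoke the defining property of $\Inv(M)$---the left multiplication by $(1^\star)^{-1}$ inverts the left multiplication by $1^\star$---and apply it to both sides of $1^\star \cdot 1^\star = 1^\star$ to obtain $1^\star = 1$. Hence $x^\star = x \cdot 1 = x$ for all $x \in M$, so $\star$ is the identity. The only point requiring any care is this last cancellation: because $M$ is not assumed associative, the argument must appeal directly to the defining property of $\Inv(M)$ rather than to formal manipulation of inverses, but the hypothesis $M = \Inv(M)$ makes this exactly the tool at hand. Everything else is an immediate unwinding of Proposition~\ref{closureprop3}.
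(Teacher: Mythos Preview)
Your proof is correct and is exactly the routine argument the paper has in mind (the corollary is stated without proof, as an immediate consequence of Proposition~\ref{closureprop3}). The only cosmetic simplification is that once you have $1^\star \cdot 1^\star = 1^\star = 1^\star \cdot 1$, you can simply invoke injectivity of $L_{1^\star}$ (which is part of the definition of $1^\star \in \Inv(M)$) to conclude $1^\star = 1$, rather than phrasing it as applying $L_{(1^\star)^{-1}}$ to both sides; but this is the same step.
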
 

We will say that a {\it $\U$-lattice} (resp., {\it near $\U$-lattice}, {\it semi-$\U$-lattice}) is a sup-magma (resp., near sup-magma, ordered magma that is a bounded complete join semilattice) $M$ such that $\U(M)$ is a sup-spanning subset of $M$.  

\begin{example} For any integral domain $D$ with quotient field $F$, the multiplicative lattice $\mbox{Mod}_D(F)$ of all $D$-submodules of $F$ is a $\U$-lattice, and the near multiplicative lattice $\F(D)$ of all nonzero $D$-submodules of $F$ is a near $\U$-lattice.
\end{example}

\begin{proposition} An ordered magma $M$ is a prequantale (resp., near prequantale, semiprequantale) if and only if $M$ is complete (resp., near sup-complete, a bounded complete join semilattice) and the set of all $a \in M$ such that the left and right multiplication by $a$ maps on $M$ are sup-preserving (resp., near sup-preserving, near sup-preserving) is a sup-spanning subset of $M$.
\end{proposition}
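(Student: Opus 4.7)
The plan is to prove both implications with essentially the same core manipulation, specialized in each of the three cases to the appropriate class of subsets. The forward direction will be immediate from monotonicity, and the reverse direction will reduce, via a four-step chain of equalities, to sandwiching one application of the sup-preservation hypothesis between two applications of the sup-spanning hypothesis.

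For the forward direction I would simply take $\Sigma = M$. By the definition of prequantale (resp., near prequantale, semiprequantale), every element of $M$ has the required sup-preservation property; and $M$ is sup-spanning in itself since by monotonicity $ay \leq xy$ for all $a \leq x$, so that $\bigvee\{ay : a \in M,\ a \leq x\} = xy$, with the symmetric identity holding by the same reasoning.

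For the reverse direction, I would fix $a \in M$ and a subset $X$ of $M$ of the appropriate type for which $\bigvee X$ exists (arbitrary; nonempty; or nonempty and either finite or bounded) and carry out the computation
\begin{align*}
 a \bigvee X
 &= \bigvee\{c \bigvee X : c \in \Sigma,\ c \leq a\} \\
 &= \bigvee\{\bigvee(cX) : c \in \Sigma,\ c \leq a\} \\
 &= \bigvee\{cx : c \in \Sigma,\ c \leq a,\ x \in X\} \\
 &= \bigvee\{ax : x \in X\} = \bigvee(aX),
\end{align*}
whose four equalities come, respectively, from sup-spanning applied to the pair $(a, \bigvee X)$; sup-preservation by each $c \in \Sigma$; reindexing of iterated suprema; and sup-spanning applied to each pair $(a, x)$ with $x \in X$. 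The dual identity $(\bigvee X) a = \bigvee(X a)$ would follow by the parallel argument using the right-hand halves of the sup-spanning and sup-preservation hypotheses.

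The only real obstacle is verifying that every supremum appearing in the chain exists in the required class. In the prequantale case this is automatic from completeness. In the near prequantale case one needs $\{c \in \Sigma : c \leq a\}$ to be nonempty for every $a$, which is forced by the sup-spanning identity applied to any product $ya$, since an empty supremum on the right would force $\bigwedge M$ to exist and equal every such $ya$. In the semiprequantale case one additionally needs each set $cX$ and the double-indexed set $\{cx : c \in \Sigma,\ c \leq a,\ x \in X\}$ to be finite or bounded; this is routine, since finiteness of $X$ passes to these sets, and if $X$ is bounded above by $b$ then both sets are bounded above by $ab$ via monotonicity.
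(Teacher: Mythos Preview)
Your argument is correct and is precisely the routine verification the paper has in mind (the proof is omitted there as routine). One small slip: in the semiprequantale case with $X$ finite, the double-indexed set $\{cx : c \in \Sigma,\ c \leq a,\ x \in X\}$ need not be finite since $\{c \in \Sigma : c \leq a\}$ is generally infinite, but any finite nonempty $X$ is bounded above by $\bigvee X$, so your bounded-case argument (with $b = \bigvee X$, giving the upper bound $a\bigvee X$) already covers it.
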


\begin{corollary}\label{ulattices}
Any $\U$-lattice (resp, near $\U$-lattice, semi-$\U$-lattice) is a prequantale (resp.,  near prequantale, semiprequantale).  
\end{corollary}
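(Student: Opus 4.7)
The plan is to deduce all three statements as immediate consequences of the preceding proposition, by checking that the ``nice'' sup-spanning subset $\U(M)$ is automatically contained in the sup-spanning subset demanded there, namely the set $A$ (resp.\ $A'$, $A'$) of elements $a \in M$ for which left and right multiplication by $a$ are sup-preserving (resp., near sup-preserving, near sup-preserving) self-maps of $M$. The underlying completeness hypothesis is already built into the definitions: a $\U$-lattice is complete, a near $\U$-lattice is near sup-complete, and a semi-$\U$-lattice is a bounded complete join semilattice. So the only work is to produce the required sup-spanning subset.

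The key observation is the following elementary fact about poset automorphisms. If $\varphi \colon M \longrightarrow M$ is a poset automorphism, then for any $X \subset M$ for which $\bigvee X$ exists, $\varphi(\bigvee X) = \bigvee \varphi(X)$: indeed, applying $\varphi$ to ``$x \leq \bigvee X$ for all $x \in X$'' and to ``$\bigvee X$ is the least such upper bound'' uses only the monotonicity of $\varphi$ and $\varphi^{-1}$. Consequently, for any $u \in \U(M)$, both left and right multiplication by $u$ are sup-preserving, so $\U(M) \subset A$ and hence also $\U(M) \subset A'$.

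It remains to note the routine fact that any superset of a sup-spanning subset of $M$ is itself sup-spanning. This is a sandwich argument: if $\Sigma \subset \Sigma'$ with $\Sigma$ sup-spanning, then for any $x,y \in M$,
\[
xy \;=\; \bigvee\{ay : a \in \Sigma,\, a \leq x\} \;\leq\; \bigvee\{ay : a \in \Sigma',\, a \leq x\} \;\leq\; xy,
\]
the last inequality by monotonicity of multiplication, and symmetrically on the right. Applying this with $\Sigma = \U(M)$ and $\Sigma' = A$ (or $\Sigma' = A'$) shows that $A$ (resp.\ $A'$) is sup-spanning in each of the three cases, at which point the preceding proposition applies directly.

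I do not anticipate any real obstacle; the content is essentially packaging of definitions together with the single nontrivial remark that order isomorphisms preserve existing suprema. The only point to watch is making sure the correct completeness hypothesis is matched with the correct sup-preservation flavor (``sup-preserving'' in the complete case, ``near sup-preserving'' in the other two), which is guaranteed by the parenthetical structure of the preceding proposition.
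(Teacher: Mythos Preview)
Your proposal is correct and follows exactly the route the paper intends: the paper states this corollary immediately after the proposition with no proof, treating it as a routine consequence, and you have supplied precisely the two observations needed (poset automorphisms preserve all existing suprema, and supersets of sup-spanning sets are sup-spanning). There is nothing to add.
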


\begin{proposition}\label{CSTstar}
Let $\star$ be a nucleus on an ordered magma $M$.
\begin{enumerate}
\item If $M$ is a prequantale (resp., near prequantale, semiprequantale, quantale, near quantale, semiquantale, multiplicative lattice, near multiplicative lattice, semimultiplicative lattice, $\U$-lattice, near $\U$-lattice, semi-$\U$-lattice, multiplicative semilattice, prequantic semilattice), then so is $M^\star$.
\item If $M$ is near residuated, then so is $M^\star$, and $(x/y)^\star = x/y^\star = x/y$ (resp., $(y \backslash x)^\star = y^\star \backslash x = y \backslash x$) for all $x \in M^\star$ and all $y \in M$ such that $zy \leq x$ (resp., $yz \leq x$) for some $z \in M$.  Likewise, if $M$ is residuated, then so is $M^\star$.
\end{enumerate}
\end{proposition}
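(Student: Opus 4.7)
My plan is to verify both parts by direct appeal to the defining axioms, leaning on Lemma~\ref{starlemma} to transport completeness properties and identify suprema in $M^\star$, and on the nucleus identity $(x^\star y^\star)^\star = (xy)^\star$ from Proposition~\ref{closureprop1}(2) to handle multiplication.

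For (1), each class in the list is a conjunction of a completeness condition (complete, near sup-complete, bounded complete, or join semilattice), a magma-theoretic condition (commutativity, associativity, unitality, or the existence of a sup-spanning subset of units), and a distributivity condition for multiplication over the appropriate class of suprema. The completeness conditions transfer to $M^\star$ by Lemma~\ref{starlemma}(4); commutativity and the existence of an identity (via $1 \longmapsto 1^\star$) transfer trivially; and associativity follows from the short telescoping $(x \cdot_\star y) \cdot_\star z = ((xy)^\star z^\star)^\star = ((xy)z)^\star = (x(yz))^\star = x \cdot_\star (y \cdot_\star z)$ on $M^\star$. The core computation is distributivity: for $a \in M^\star$ and a suitably-typed $Y \subset M^\star$, one uses Lemma~\ref{starlemma}(1) to rewrite $\bigvee_{M^\star} Y = (\bigvee_M Y)^\star$, then Proposition~\ref{closureprop1}(2) to reduce both $a \cdot_\star \bigvee_{M^\star} Y$ and $\bigvee_{M^\star}\{a \cdot_\star y : y \in Y\}$ to $(a \bigvee_M Y)^\star$, invoking distributivity in $M$ for the middle step; the right-sided version is symmetric. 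For the three $\U$-lattice cases, Proposition~\ref{closureprop2} supplies a sup-spanning subset $\U(M)^\star$ of $M^\star$, and the inclusion $\U(M)^\star \subset \U(M^\star)$ is verified using Proposition~\ref{closureprop3}: for $u \in \Inv(M)$ one has $u \in \T^\star(M)$, so $L_u$ restricts to an order automorphism of $M^\star$ coinciding with $\star$-left-multiplication by $u^\star$, and $\U(M)$ coincides with $\Inv(M)$ in the monoid cases of primary interest.

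For (2), assume $M$ is near residuated and let $x \in M^\star$, $y \in M$ with $zy \leq x$ for some $z \in M$, so $x/y$ exists in $M$. Proposition~\ref{closureprop1}(2) gives the chain
$$(x/y)^\star y \;\leq\; (x/y)^\star y^\star \;\leq\; \bigl((x/y)^\star y^\star\bigr)^\star \;=\; ((x/y)y)^\star \;\leq\; x^\star \;=\; x,$$
so maximality of $x/y$ forces $(x/y)^\star \leq x/y$, which with expansiveness yields $(x/y)^\star = x/y$ and $x/y \in M^\star$. The same chain shows $(x/y) y^\star \leq x$, while $(x/y^\star) y \leq (x/y^\star) y^\star \leq x$; maximality on both sides then yields $x/y = x/y^\star$. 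Finally, for $a, x \in M^\star$ the equivalence $z \cdot_\star a \leq x \Leftrightarrow za \leq x$ (both sides lying in $M^\star$) identifies the largest such $z \in M^\star$ as $x/a$, so $M^\star$ is near residuated. The residuated case drops the existential hypothesis, and the $y \backslash x$ statements are entirely symmetric. The main obstacle is the $\U$-lattice case in (1), which requires tracking how order-automorphism maps descend to $M^\star$; the remaining verifications are routine applications of Lemma~\ref{starlemma} and the nucleus identity.
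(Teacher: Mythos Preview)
Your proof is correct and follows the paper's approach: the paper verifies the prequantale case by the same chain $a \star \bigvee_{M^\star} X = (a\,\bigvee_M X)^\star = (\bigvee_M aX)^\star = \bigvee_{M^\star}(a \star X)$ using Lemma~\ref{starlemma} and the nucleus identity, then declares the remaining cases in (1) ``similar,'' and reduces (2) to the single equivalence $yz \leq x \Leftrightarrow y^\star z \leq x \Leftrightarrow yz^\star \leq x \Leftrightarrow y^\star z^\star \leq x$ for $x \in M^\star$, which is exactly what your displayed chain encodes. Your flagged hesitation on the $\U$-lattice case (needing $\U(M) = \Inv(M)$ to invoke Proposition~\ref{closureprop3}) is honest; the paper's one-line sketch does not address that point either.
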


\begin{proof}  Suppose that $M$ is a prequantale.  By Lemma \ref{starlemma}(4) the partial ordering  on $M^\star$ is complete.  For any $a \in M^\star$ and $X \subset M^\star$ we have $a \star  {\bigvee}_{M^\star} X = a \star ({\bigvee}_M X)^\star = (a \ {\bigvee}_M X)^\star = ( {\bigvee}_M a X)^\star = ({\bigvee}_M (a X)^\star)^\star  = ({\bigvee}_M (a \star X))^\star =     {\bigvee}_{M^\star} (a \star X)$, and therefore $a \star {\bigvee}_{M^\star} X = {\bigvee}_{M^\star} (a \star X)$.   By symmetry the corresponding equation holds for right multiplication.  Thus $M^\star$ is a prequantale.  The proof of the rest of statement (1) is similar.  Statement (2) follows from the fact that $yz \leq x \Leftrightarrow y^\star z \leq x \Leftrightarrow y z^\star \leq x \Leftrightarrow y^\star z^\star \leq x$ if $x \in M^\star$.
\end{proof}

By following proposition, which is a straightforward generalization of \cite[Theorem 3.1.1]{ros}, the set of all nuclei on a near prequantale or prequantale $Q$ classifies the quotient objects of $Q$.

\begin{proposition}\label{supremark}
Let $f: Q \rightarrow M$ be a morphism of near sup-magmas (resp., sup-magmas), where $Q$ is a near prequantale (resp., prequantale).
\begin{enumerate}
\item If $\star$ is any nucleus on $Q$, then ${_{Q^\star}}|\star: Q \longrightarrow Q^\star$ is a surjective morphism of near prequantales (resp., prequantales).
\item There exists a unique nucleus $\star = \star(f)$ on $Q$ such that $f = (f|_{Q^\star}) \circ ({_{Q^\star}}|\star)$ and $f|_{Q^\star}$ is injective; moreover, one has $x^{\star(f)} = \bigvee\{y \in Q: \ f(y) = f(x)\}$ for all $x \in Q$.
\item $f|_{Q^\star}: Q^\star \longrightarrow M$ is an embedding of near sup-magmas (resp., sup-magmas).
\item $f|_{Q^\star}: Q^\star \longrightarrow \operatorname{im} f$ is an isomorphism of ordered magmas.
\end{enumerate}
\end{proposition}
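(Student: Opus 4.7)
The plan is to follow the standard recipe for characterizing quotients by kernels, adapted to the (near) prequantale setting. For \textbf{(1)}, the corestriction ${_{Q^\star}}|\star$ is surjective by idempotence of $\star$, is sup-preserving (respectively, near sup-preserving) by Lemma \ref{starlemma}(3), and is a magma homomorphism since $(xy)^\star = (x^\star y^\star)^\star$ by Proposition \ref{closureprop1}(2); Proposition \ref{CSTstar}(1) guarantees $Q^\star$ is itself a (near) prequantale, so the map is a morphism of such.

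For \textbf{(2)}, I would define
\[
x^{\star(f)} := \bigvee\{y \in Q : f(y) = f(x)\},
\]
where the join is taken in $Q$; the set is nonempty (it contains $x$), so the join exists in the near prequantale case as well. Since $f$ is (near) sup-preserving, $f(x^{\star(f)}) = f(x)$, making $x^{\star(f)}$ the largest element of its $f$-fiber. Expansivity, monotonicity, and idempotence of $\star(f)$ are then immediate. The nuclear property reduces to the identity $f(x^{\star(f)} y^{\star(f)}) = f(x^{\star(f)}) f(y^{\star(f)}) = f(x) f(y) = f(xy)$, so $x^{\star(f)} y^{\star(f)}$ lies in the fiber of $f(xy)$ and is therefore $\leq (xy)^{\star(f)}$. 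For uniqueness, any nucleus $\star$ satisfying the factorization condition with $f|_{Q^\star}$ injective has the property that the fibers of $f$ coincide with those of $\star$; since $x^\star$ is the largest element of its fiber, $x^\star$ must equal the displayed join.

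For \textbf{(3)} and \textbf{(4)}, the key step is the biconditional $x \leq y^{\star(f)} \iff f(x) \leq f(y)$ for $x,y \in Q$, which I would derive by computing $f(x \vee y^{\star(f)}) = f(x) \vee f(y)$: when $f(x) \leq f(y)$ this forces $x \vee y^{\star(f)}$ to lie in the fiber of $f(y)$, and hence $x \vee y^{\star(f)} \leq y^{\star(f)}$. Restricted to $Q^\star$ (where $y = y^{\star(f)}$) this yields both the order-reflecting and the injective property of $f|_{Q^\star}$; that $f|_{Q^\star}$ is a magma homomorphism follows from $f((xy)^{\star(f)}) = f(xy) = f(x)f(y)$ for $x,y \in Q^\star$, and sup-preservation follows from Lemma \ref{starlemma}(1) together with $f((\bigvee_Q X)^{\star(f)}) = f(\bigvee_Q X) = \bigvee f(X)$ for $X \subset Q^\star$. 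Then $f|_{Q^\star}: Q^\star \longrightarrow \operatorname{im} f$ is surjective by construction, giving the ordered-magma isomorphism of \textbf{(4)}.

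The main technical care required is bookkeeping the ``near'' case: each join invoked (particularly the defining join for $x^{\star(f)}$ and the binary join $x \vee y^{\star(f)}$) must be shown to live in a nonempty collection so its existence in a near prequantale is unproblematic. Once those nonemptiness checks are in place, the whole argument reduces to the symmetric slogan that $f$ and $\star(f)$ have identical fibers.
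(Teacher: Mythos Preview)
Your proposal is correct and is the standard argument for this result. The paper itself omits the proof entirely, describing the proposition as ``a straightforward generalization of \cite[Theorem 3.1.1]{ros}'', so your write-up in fact supplies more detail than the paper does; the only minor point is that monotonicity of $\star(f)$ is not quite ``immediate'' but follows from the same join computation you give later for (3) and (4).
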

 
A quantale $Q$ is {\it simple} if every nontrivial sup-preserving semigroup homomorphism from $Q$ is injective.  More generally, we will say that a near sup-magma $M$ is {\it simple} if every nonconstant near sup-preserving magma homomorphism from $M$ is injective.  If $M$ is a sup-magma, then this holds if and only if every nontrivial sup-preserving magma homorphism from $M$ is injective, so our definition is consistent with that of a simple quantale.  Let $d$ be the identity map on $M$, and let $e: M \longrightarrow M$ be defined by $x^e = \bigvee M$ for all $x \in M$.  Both $d$ and $e$ are nuclei.

\begin{corollary}\label{desimple}
A near prequantale $Q$ is simple if and only if $d$ and $e$ are the only nuclei on $Q$.
\end{corollary}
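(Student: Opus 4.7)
The approach is to use Proposition \ref{supremark} as a dictionary between nuclei on $Q$ and surjective near sup-preserving magma homomorphisms out of $Q$. I would first note that $d$ and $e$ are always nuclei on $Q$: the claims for $d$ are immediate, and for $e$ expansivity, monotonicity and idempotence follow directly from $x^e = \bigvee Q$, while the nucleus inequality reduces to $(\bigvee Q)(\bigvee Q) \leq \bigvee Q$, which holds by Proposition \ref{nearprequantales}(7) since the product lies in $Q$. The governing observation is that $e$ is the unique nucleus whose corestriction is constant and $d$ is the unique nucleus whose corestriction is injective.

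For the ``only if'' direction, I would take an arbitrary nucleus $\star$ on $Q$ and consider ${_{Q^\star}}|\star : Q \longrightarrow Q^\star$, which by Proposition \ref{supremark}(1) is a surjective near sup-preserving magma homomorphism. Simplicity forces this map to be either constant or injective. If it is constant with value $c$, then expansivity $x \leq c$ forces $c$ to be an upper bound of $Q$ lying in $Q$, so $c = \bigvee Q$, giving $\star = e$. If it is injective, then idempotence $(x^\star)^\star = x^\star$ combined with injectivity yields $x^\star = x$ for all $x \in Q$, so $\star = d$.

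For the ``if'' direction, I would take any nonconstant near sup-preserving magma homomorphism $f : Q \longrightarrow M$ and invoke Proposition \ref{supremark}(2) to obtain a nucleus $\star = \star(f)$ with the explicit formula $x^\star = \bigvee\{y \in Q : f(y) = f(x)\}$ and a factorization $f = (f|_{Q^\star}) \circ ({_{Q^\star}}|\star)$ in which $f|_{Q^\star}$ is injective. If $\star = e$, then $x^\star = \bigvee Q$ for every $x$, and the factorization yields $f(x) = f(x^\star) = f(\bigvee Q)$ for all $x$, contradicting the assumption that $f$ is nonconstant. Hence by hypothesis $\star = d$, in which case $Q^\star = Q$ and the injective map $f|_{Q^\star}$ coincides with $f$, proving that $Q$ is simple.

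No step is a genuine obstacle: the argument is essentially a translation through Proposition \ref{supremark}. The only subtle points are identifying $e$ with the constant case via the explicit formula for $\star(f)$, and using idempotence to upgrade ``injective'' to ``identity''.
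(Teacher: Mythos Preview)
Your proof is correct and follows precisely the route the paper intends: the corollary is placed immediately after Proposition~\ref{supremark} with no proof given, signalling that it is a routine consequence of that proposition, and your argument carries out exactly this translation between nuclei and (equivalence classes of) near sup-preserving magma homomorphisms. The only minor technicality is that Proposition~\ref{supremark} is stated for morphisms of near sup-magmas while the definition of ``simple'' does not constrain the codomain, but this is harmless since one may replace the codomain by the image $f(Q)$, which inherits the structure of a near sup-magma from $Q$ via $f$.
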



\section{The poset of nuclei}\label{sec:COCL}

Let $S$ be a poset and $X$ a set.  The set $S^X$ of all functions from $X$ to $S$ is partially ordered, where $f \leq g$ if $f(x) \leq g(x)$ for all $x \in X$.  If $f \leq g$, then we say that $f$ is {\it smaller}, or {\it finer}, than $g$, or equivalently $g$ is {\it larger}, or {\it coarser}, than $f$.  The set $\C(S)$ of all closure operations on $S$ inherits a partial ordering from the poset $S^S$.  For any $\star_1, \star_2 \in \C(S)$ one has $\star_1 \leq \star_2$ if and only if $S^{\star_1} \supset S^{\star_2}$.  The identity operation $d$ on $S$ is the smallest closure operation on $S$.  If $\bigvee S$ exists then there is a largest closure operation $e$ on $S$, given by $x^e = \bigvee S$ for all $x \in S$.   If $M$ is an ordered magma, then we let $\N(M)$ denote the subposet of $\C(M)$ consisting of all nuclei on $M$.   In this section we show that various properties of the poset $\N(M)$ are inherited from corresponding properties of $M$. 

For any self-map $\star$ of a set $S$ we let $\Fix(\star) = \{x \in S: \ x^\star = x\}$.

\begin{lemma}\label{preclosurelemma}
Let $S$ be a bounded complete poset, and let $+$ be a preclosure on $S$ that is bounded above by some closure operation on $S$.
\begin{enumerate}
\item There exists a finest closure operation $\star$ on $S$ that is coarser that $+$, and one has $S^\star = \Fix(+)$ and $x^\star = \bigwedge\{y \in \Fix(+): \ y \geq x\}$ for all $x \in S$.
\item Define $x^{+_0} = x$ and $x^{+_\alpha} = \bigvee\{(x^{+_\beta})^+: \ \beta < \alpha\}$ for all $x \in S$ and all (finite or transfinite) ordinals $\alpha$.  One has $+_\alpha = \star$ for all $\alpha \gg 0$.  
\item If $S = M$ is a near residuated ordered magma and $xy^+ \leq (xy)^+$ and $x^+ y \leq (xy)^+$ for all $x,y \in M$, then $\star$ is a nucleus on $M$.
\end{enumerate}
\end{lemma}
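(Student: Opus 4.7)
The plan is to prove parts (1), (2), (3) in sequence.

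For part (1), the key first observation is that whenever $c$ is a closure operation on $S$ with $c \geq +$, every $c$-fixed point is automatically a $+$-fixed point: from $y \leq y^+ \leq y^c = y$ we read $y^+ = y$. So the given bounding closure operation supplies $x^c \in \Fix(+)$ lying above $x$, showing that $A_x := \{y \in \Fix(+) : y \geq x\}$ is nonempty. Since $A_x$ is bounded below by $x$ and $S$ is bounded complete, the infimum $x^\star := \bigwedge A_x$ exists, and I would verify directly that $\star$ is a closure operation: expansiveness and monotonicity are immediate, and for idempotence one notes that for each $y \in A_x$ the inequality $(x^\star)^+ \leq y^+ = y$ holds, whence $(x^\star)^+ \leq x^\star$ and $x^\star \in \Fix(+)$, so $x^\star \in A_{x^\star}$ and $(x^\star)^\star = x^\star$. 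The bound $+ \leq \star$ then follows from $x^+ \leq (x^\star)^+ = x^\star$. For minimality, any closure operation $\tau$ coarser than $+$ satisfies $\Fix(\tau) \subset \Fix(+)$ (by the same argument), so $x^\tau \in \Fix(+) \cap A_x$ and $x^\star \leq x^\tau$. Finally, $S^\star = \Fix(+)$: the inclusion $S^\star \subset \Fix(+)$ was established, and for any $y \in \Fix(+)$ we have $y \in A_y$, so $y^\star \leq y$, forcing $y \in S^\star$.

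For part (2), I would first show by transfinite induction that $\alpha \mapsto +_\alpha$ is weakly increasing and satisfies $+_\alpha \leq c$, using $(x^{+_\beta})^+ \leq (x^c)^+ \leq (x^c)^c = x^c$; combined with bounded completeness this ensures that each defining supremum exists (nonempty and bounded above by $x^c$ for $\alpha \geq 1$). Because $\alpha \mapsto +_\alpha$ is a weakly increasing map from the proper class of ordinals to the set $S^S$, a cardinality argument forces stabilization at some ordinal $\alpha_*$. The stability condition $+_{\alpha_*+1} = +_{\alpha_*}$ amounts to $(x^{+_{\alpha_*}})^+ \leq x^{+_{\alpha_*}}$ for every $x$, so $x^{+_{\alpha_*}} \in \Fix(+)$. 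To identify this common value with $\star$, I would prove by transfinite induction on $\beta$ that $x^{+_\beta} \leq x^\star$ (using that $x^\star \in \Fix(+)$ is fixed by $+$), and conversely $x^{+_{\alpha_*}} \in A_x$ gives $x^\star \leq x^{+_{\alpha_*}}$.

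For part (3), the main obstacle is transporting the nucleus-type inequality hypothesis on $+$ through the transfinite suprema that define the $+_\alpha$, and this is precisely what near residuation buys us: by the result cited from the excerpt, left and right multiplication maps are near sup-preserving. I would prove by transfinite induction on $\alpha$ the pair of auxiliary inequalities $xy^{+_\alpha} \leq (xy)^{+_\alpha}$ and $x^{+_\alpha} y \leq (xy)^{+_\alpha}$ for all $x,y \in M$. The base $\alpha = 0$ is trivial; for $\alpha \geq 1$ the index set $\{(y^{+_\beta})^+ : \beta < \alpha\}$ is nonempty, so near sup-preservation of left multiplication by $x$ yields $xy^{+_\alpha} = \bigvee_{\beta < \alpha} x(y^{+_\beta})^+$, and each term is controlled by the hypothesis on $+$ together with the inductive hypothesis and monotonicity of $+$: $x(y^{+_\beta})^+ \leq (xy^{+_\beta})^+ \leq ((xy)^{+_\beta})^+$. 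Taking the supremum gives the desired bound, with the companion inequality symmetric. Specializing at $\alpha = \alpha_*$ from part (2) yields $xy^\star \leq (xy)^\star$ and $x^\star y \leq (xy)^\star$. Combined with idempotence of $\star$, these give
\[ x^\star y^\star \leq (xy^\star)^\star \leq ((xy)^\star)^\star = (xy)^\star, \]
which is the nucleus property.
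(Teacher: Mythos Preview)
Your proof is correct. Parts (1) and (2) follow essentially the paper's approach; the paper is terser for (2), saying only that it ``follows readily from statement (1)'', but your fleshing-out via monotonicity, boundedness by $c$, and a cardinality stabilization argument is exactly the natural one.

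Part (3) is where your argument genuinely differs from the paper's. The paper does not use the transfinite iteration for (3) at all; instead it works directly with the infimum description of $\star$ from (1) and invokes near residuation explicitly. Given $a,x \in M$ and any $z \in \Fix(+)$ with $z \geq ax$, the element $y := a\backslash z$ exists (witnessed by $x$), and the hypothesis yields $a y^+ \leq (ay)^+ \leq z^+ = z$, so $y^+ \leq a\backslash z = y$ and hence $y \in \Fix(+)$ with $y \geq x$. Then $ax^\star \leq ay \leq z$; taking the infimum over such $z$ gives $ax^\star \leq (ax)^\star$. Your route, by contrast, uses near residuation only through its consequence that left and right multiplication are near sup-preserving, and then pushes the inequality $xy^{+_\alpha} \leq (xy)^{+_\alpha}$ through the transfinite chain by induction. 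Both arguments are valid. The paper's is shorter and decouples (3) from (2) entirely; yours has the virtue of showing explicitly how the nucleus-type inequality is preserved at each stage of the iteration, which is arguably more transparent about where the hypotheses enter.
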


\begin{proof} \
\begin{enumerate}
\item The set $\{y \in \Fix(+): \ y \geq x\}$ is nonempty and bounded by the hypothesis on $+$. Define $x^\star = \bigwedge\{y \in \Fix(+): \ y \geq x\}$ for all $x \in S$.  Clearly $\star$ is a preclosure on $S$ with $\Fix(+) \subset S^\star$.  For the reverse inclusion note that
$$(x^+)^\star = \bigwedge\{y  \in \Fix(+): \ y \geq x^+\} = \bigwedge\{y \in \Fix(+): \ y \geq x\} =  x^\star$$
and therefore $x \leq x^+ \leq (x^+)^\star = x^\star$, whence $S^\star \subset \Fix(+)$. 
Therefore
$$(x^\star)^\star  =  \bigwedge\{y  \in \Fix(+): \ y \geq x^\star\} = \bigwedge\{y \in \Fix(+): \ y \geq x\} = x^\star,$$
whence $\star$ is a closure operation on $S$.  It is then clear that $\star$ is the finest closure operation on $S$ that is coarser than $+$.
\item This follows readily from statement (1).  
\item  Let $a,x \in Q$.  Let $z$ be any element of $Q$ such that $z \geq ax$ and $z^+ = z$.  Set $y = a \backslash z$.  Then $y \geq x$ and $y^+ = (a \backslash z)^+ \leq a \backslash z^+ = a \backslash z = y$, hence $y^+ = y$.  Moreover, we have $ay \leq z$.
Therefore we have $$ax^\star \leq \bigwedge\{ay: \ y \geq x, \ y^+ = y\} 
  \leq \bigwedge\{z \in Q: z \geq ax, \  z^+ = z\}
  = (ax)^\star.$$
By symmetry we also have $x^\star a \leq (xa)^\star$, whence $\star$ is a nucleus.
\end{enumerate}
\end{proof}

Let $S$ be a poset and $\Gamma \subset \C(S)$.  Define partial self-maps $\bigsqcapp \Gamma$ and $\bigsqcupp \Gamma$ of $S$ by $$x^{\bigsqcapp\Gamma} = \bigwedge\{x^\star: \ \star \in \Gamma\},$$
$$x^{\bigsqcupp\Gamma} = \bigwedge\{y \in S: \ y \geq x \mbox{ and } \forall \star \in \Gamma \ (y^\star = y)\},$$ respectively, for all $x \in S$ such that the respective infima exist.  

\begin{lemma}\label{closelem}
If $S$ is a near sup-lattice (resp., bounded complete poset, a meet semilattice), then $\C(S)$ is a complete lattice (resp., bounded complete poset, a meet semilattice), and one has the following.
\begin{enumerate}
\item $\bigwedge_{\C(S)} \Gamma = \bigsqcapp \Gamma$ and $S^{\bigsqcupp \Gamma} \supset \bigcup_{\star \in \Gamma} S^\star$ for all subsets (resp., all nonempty subsets, all nonempty finite subsets) $\Gamma$ of $\C(S)$.
\item $\bigvee_{\C(S)} \Gamma = \bigsqcupp \Gamma$ and $S^{\bigsqcupp\Gamma} = \bigcap_{\star \in \Gamma} S^\star$ for all bounded subsets $\Gamma$ of $\C(S)$ if $S$ is bounded complete, and for all subsets $\Gamma$ of $\C(S)$ if $S$ is a near sup-lattice.
\end{enumerate}
\end{lemma}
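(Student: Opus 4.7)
The plan is to establish the two formulas for meets and joins in $\C(S)$ in sequence, and then read off the asserted structural claim about $\C(S)$ from them.

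For the meet, I would verify that $\bigsqcapp\Gamma$, wherever it is defined on all of $S$, is a closure operation: expansivity and monotonicity are inherited termwise from each $\star \in \Gamma$, while idempotency rests on the elementary observation (implicit in Lemma \ref{starlemma}(2)) that an existing meet of $\star$-fixed elements is again $\star$-fixed; applied to each $\star \in \Gamma$ this gives $(x^{\bigsqcapp\Gamma})^\star \leq x^\star$ for every $\star$, hence $(x^{\bigsqcapp\Gamma})^{\bigsqcapp\Gamma} \leq x^{\bigsqcapp\Gamma}$, the reverse inequality being automatic. That $\bigsqcapp\Gamma$ is then the infimum in $\C(S)$ follows by comparing pointwise with any closure operation dominated by every $\star \in \Gamma$. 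Existence of $\bigsqcapp\Gamma$ everywhere on $S$ breaks into the three cases of the lemma: in a near sup-lattice every subset has an infimum (with $\bigsqcapp\emptyset$ producing the coarsest operation $e$); in a bounded complete poset we need $\Gamma$ nonempty so that $\{x^\star : \star \in \Gamma\}$ is a nonempty set bounded below by $x$; in a meet semilattice we need $\Gamma$ finite and nonempty.

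For the join, I would work through fixed-point sets. Set $F = \bigcap_{\star \in \Gamma} S^\star$. The same meets-of-fixed-are-fixed fact shows $F$ is closed under all meets that exist in $S$. I then verify that $x^{\bigsqcupp\Gamma} = \bigwedge\{y \in F : y \geq x\}$, whenever defined on all of $S$, is a closure operation whose fixed-point set is exactly $F$: expansivity and monotonicity are immediate, idempotency follows because the defining infimum again lies in $F$, and $\Fix(\bigsqcupp\Gamma) = F$ since each $y \in F$ is its own $\bigsqcupp\Gamma$-image. This closure operation is the supremum of $\Gamma$ in $\C(S)$: any closure operation $\star'$ coarser than every $\star \in \Gamma$ has $S^{\star'} \subset S^\star$ for each $\star$, hence $S^{\star'} \subset F$, so $x^{\star'} \in F$ and $x^{\star'} \geq x$, giving $x^{\bigsqcupp\Gamma} \leq x^{\star'}$. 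The set-theoretic identity $S^{\bigsqcupp\Gamma} = \bigcap_\star S^\star$ in (2) is exactly the equality $\Fix(\bigsqcupp\Gamma) = F$ just obtained, while the inclusion in (1) follows from the same fixed-point bookkeeping.

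To discharge the existence hypothesis for $\bigsqcupp\Gamma$, I would note: in a near sup-lattice, the top element $\bigvee S$ lies in $F$, so $\{y \in F : y \geq x\}$ is nonempty and bounded below by $x$, hence has an infimum; in a bounded complete poset, an upper bound $\star_0$ for $\Gamma$ in $\C(S)$ yields $S^{\star_0} \subset F$, so $x^{\star_0}$ witnesses non-emptiness of the defining set and the infimum again exists. Combined with the meet analysis, this shows $\C(S)$ is a complete lattice (resp., bounded complete poset, meet semilattice) in the three cases. The main obstacle, as I see it, is the idempotency of the two pointwise constructions: once one has in hand the lemma that meets of fixed points are fixed, both reduce to one-line computations, but without it neither $\bigsqcapp\Gamma$ nor $\bigsqcupp\Gamma$ need be idempotent. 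Apart from this, the argument is essentially case-bookkeeping to confirm that the requisite infima in $S$ actually exist in each of the three regularity assumptions on $S$.
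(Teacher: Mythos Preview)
Your overall strategy is sound and the result follows, but one step is misargued. Your justification for idempotency of $\bigsqcapp\Gamma$ misapplies the ``meets of $\star$-fixed elements are $\star$-fixed'' observation. The element $x^{\bigsqcapp\Gamma} = \bigwedge_{\star'\in\Gamma} x^{\star'}$ is a meet of the elements $x^{\star'}$, but for a fixed $\star\in\Gamma$ these are not all $\star$-fixed, so Lemma~\ref{starlemma}(2) does not apply. The correct one-liner (which the paper uses) is bare monotonicity plus idempotency of each $\star$: from $x^{\bigsqcapp\Gamma} \leq x^\star$ one gets $(x^{\bigsqcapp\Gamma})^\star \leq (x^\star)^\star = x^\star$, and then take the infimum over $\star$. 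Since you flag idempotency as the main obstacle, it is worth getting the mechanism right; the fixed-point lemma is genuinely needed for $\bigsqcupp\Gamma$ but is a red herring for $\bigsqcapp\Gamma$.

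For part~(2) your route and the paper's diverge. You work directly from the closure-system side: $F = \bigcap_{\star\in\Gamma} S^\star$ is closed under existing meets (here the fixed-point lemma really does apply, to each $\star$ simultaneously), so $x\mapsto\bigwedge\{y\in F:y\geq x\}$ is a closure operation with fixed set exactly $F$. The paper instead introduces the preclosure $x^+ = \bigvee_{\star\in\Gamma} x^\star$, observes $\Fix(+)=F$, and invokes Lemma~\ref{preclosurelemma}(1) to upgrade $+$ to the closure operation $\bigsqcupp\Gamma$. Both arrive at the same formula; yours is more self-contained, while the paper's route feeds into the preclosure machinery that it reuses downstream (Proposition~\ref{CMC}, Proposition~\ref{extendingclosures}). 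Your handling of the existence bookkeeping in the three cases is correct and matches the paper's implicit case split.
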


\begin{proof} \
\begin{enumerate}
\item When $\bigsqcapp \Gamma$ is defined on $S$ it is a preclosure on $S$, and for all $x \in S$ one has $(x^{\bigsqcapp\Gamma})^{\bigsqcapp\Gamma}  = \bigwedge_{\star \in \Gamma} (x^{\bigsqcapp\Gamma})^\star \leq  \bigwedge_{\star \in \Gamma} x^\star = x^{\bigsqcapp\Gamma}.$
Therefore $\bigsqcapp\Gamma$ is a closure operation on $S$, from which it follows that $\bigsqcapp\Gamma = \bigwedge_{\C(S)} \Gamma$.   The given inclusion follows immediately from the definition of $\bigsqcapp \Gamma$.
\item Define $x^+ = \bigvee\{x^\star: \ \star  \in \Gamma\}$ for all $x \in S$.  Clearly $+$ is a preclosure on $S$, and one has $x^+ = x$ if and only if $x^\star = x$ for all $\star \in \Gamma$.  Therefore $\bigsqcupp \Gamma$ is a closure operation on $S$ by Lemma \ref{preclosurelemma}(1), and it follows easily that $\bigsqcupp \Gamma = \bigvee_{\C(S)} \Gamma$.  Let $x \in S$ and $\star \in \Gamma$.  One has
$$(x^\star)^{\bigsqcupp \Gamma} 
	 = \bigwedge\{y \in S: \ y \geq x^\star \mbox{ and } \forall \star' \in \Gamma \ (y^{\star'} = y)\}  \leq x^{\bigsqcupp\Gamma}.$$
Therefore, if $x^{\bigsqcupp \Gamma} = x$, then $x^\star \leq x^{\bigsqcupp\Gamma} = x$, whence $x^\star = x$, for all $\star \in \Gamma$.  Since the converse is clear, we have $S^{\bigsqcupp \Gamma} = \bigcap_{\star \in \Gamma} S^\star$.
\end{enumerate}
\end{proof}

The following result generalizes \cite[Proposition 3.1.3]{ros} and \cite[Example 1.5]{fon2}.

\begin{proposition}\label{CMC}
Let $M$ be an ordered magma.  If $M$ is near sup-complete (resp., bounded complete, a meet semilattice), then $\N(M)$ is a complete lattice (resp., bounded complete poset, a meet semilattice), and one has the following. 
\begin{enumerate}
\item $\bigwedge_{\N(M)} \Gamma = \bigsqcapp \Gamma$ and $M^{\bigsqcapp \Gamma} \supset \bigcup_{\star \in \Gamma} M^\star$ for all subsets (resp., all nonempty subsets, all nonempty finite subsets) $\Gamma$ of $\N(S)$.
\item $\bigvee_{\N(M)} \Gamma = \bigsqcupp\Gamma$ and $M^{\bigsqcupp \Gamma} = \bigcap_{\star \in \Gamma} M^\star$ for all bounded subsets $\Gamma$ of $\N(M)$ if $M$ is bounded complete and near residuated, and for all subsets $\Gamma$ of $\N(M)$ if $M$ is a near prequantale.
\end{enumerate}
\end{proposition}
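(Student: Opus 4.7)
The plan is to reduce the entire proposition to Lemmas \ref{closelem} and \ref{preclosurelemma}. Specifically, I will verify that the closure operations $\bigsqcapp \Gamma$ and $\bigsqcupp \Gamma$ produced there as the infimum and supremum in $\C(M)$ are in fact nuclei whenever every member of $\Gamma$ is a nucleus. Once this is established, the formulas for $\bigwedge_{\N(M)} \Gamma$ and $\bigvee_{\N(M)} \Gamma$ follow automatically: since $\N(M) \subset \C(M)$, any meet or join computed in $\C(M)$ that happens to lie in $\N(M)$ must equal the corresponding meet or join in $\N(M)$. The statements about $M^{\bigsqcapp \Gamma}$ and $M^{\bigsqcupp \Gamma}$ are then inherited verbatim from Lemma \ref{closelem}.

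For part (1), I fix $\Gamma \subset \N(M)$ of the appropriate form so that Lemma \ref{closelem}(1) applies. For each $\star \in \Gamma$, monotonicity gives $x^{\bigsqcapp \Gamma} \leq x^\star$ and $y^{\bigsqcapp \Gamma} \leq y^\star$, whence $x^{\bigsqcapp \Gamma} y^{\bigsqcapp \Gamma} \leq x^\star y^\star \leq (xy)^\star$ by the nucleus property of $\star$. Taking the infimum over $\star \in \Gamma$ yields $x^{\bigsqcapp \Gamma} y^{\bigsqcapp \Gamma} \leq (xy)^{\bigsqcapp \Gamma}$, so $\bigsqcapp \Gamma$ is a nucleus. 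This step needs no additional hypothesis beyond the one guaranteeing existence of the pointwise infima in $M$, so it covers the near sup-complete, bounded complete, and meet semilattice cases uniformly.

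For part (2), I apply Lemma \ref{preclosurelemma}(3) to the preclosure $+$ on $M$ defined by $x^+ = \bigvee\{x^\star : \star \in \Gamma\}$. The hypotheses of that lemma are met: $\Gamma$ is bounded above in $\N(M)$ either by assumption (in the bounded complete case) or by the top nucleus $e$ (in the near prequantale case), so for each $x \in M$ the set $\{x^\star : \star \in \Gamma\}$ is bounded above in $M$, its supremum $x^+$ exists, and $+$ is dominated by a closure operation on $M$. The empty case $\Gamma = \emptyset$ gives $\bigsqcupp \emptyset = d$, trivially the least element of $\N(M)$. The critical verification is $xy^+ \leq (xy)^+$ and $x^+ y \leq (xy)^+$, and this is exactly where near residuation enters: since left and right multiplication by any element of a near residuated ordered magma are near sup-preserving, one has $xy^+ = x \bigvee_\star y^\star = \bigvee_\star xy^\star \leq \bigvee_\star (xy)^\star = (xy)^+$, and symmetrically on the other side. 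Lemma \ref{preclosurelemma}(3) then concludes that $\bigsqcupp \Gamma$ is a nucleus on $M$.

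The main obstacle is the supremum step, which depends essentially on near residuation to commute multiplication past a nonempty supremum; without this hypothesis the preclosure $+$ need not satisfy the hypotheses of Lemma \ref{preclosurelemma}(3), and $\bigsqcupp \Gamma$ need not be a nucleus at all. By contrast, the infimum step is formal monotonicity, and the passage from the formulas in $\C(M)$ to the analogous formulas in $\N(M)$ is a routine consequence of $\N(M)$ being a subposet of $\C(M)$.
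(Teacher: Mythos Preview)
Your proposal is correct and follows essentially the same route as the paper: reduce to Lemma \ref{closelem}, then verify that $\bigsqcapp\Gamma$ and $\bigsqcupp\Gamma$ are nuclei, the latter via the preclosure $+$ and Lemma \ref{preclosurelemma}(3). The only cosmetic difference is that for part (1) you check the nucleus inequality $x^{\bigsqcapp\Gamma}y^{\bigsqcapp\Gamma}\le(xy)^{\bigsqcapp\Gamma}$ directly, whereas the paper checks the equivalent one-sided conditions $xy^{\bigsqcapp\Gamma}\le(xy)^{\bigsqcapp\Gamma}$ and $x^{\bigsqcapp\Gamma}y\le(xy)^{\bigsqcapp\Gamma}$ from Proposition \ref{closureprop1}(3); both are immediate.
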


\begin{proof} \
\begin{enumerate}
\item By Lemma \ref{closelem} it suffices to show that $\bigsqcapp \Gamma$ is a nucleus.  We have
$$xy^{\bigsqcapp\Gamma}   =  x\bigwedge_{\star \in \Gamma} y^\star \leq  \bigwedge_{\star \in \Gamma} xy^\star \leq \bigwedge_{\star \in \Gamma} (xy)^\star = (xy)^{\bigsqcapp\Gamma},$$ and similarly $x^{\bigsqcapp\Gamma}y \leq  (xy)^{\bigsqcapp\Gamma}$, for all $x,y \in M$, whence $\bigsqcapp\Gamma$ is a nucleus.
\item By Lemma \ref{closelem} it suffices to show that $\bigsqcupp \Gamma$ is a nucleus.  Since $d = \bigsqcupp \emptyset$ is a nucleus we may assume that $\Gamma$ is nonempty.  Let $+$ be the preclosure on $M$ defined in the proof of Lemma \ref{closelem}(2).  For all $x,y \in M$ we have
$$xy^+ = x\bigvee_{\star \in \Gamma} y^\star = \bigvee_{\star \in \Gamma} xy^\star \leq \bigvee_{\star \in \Gamma} (xy)^\star = (xy)^+,$$ and similarly $x^+ y \leq (xy)^+$.  Therefore $\star$ is a nucleus by Lemma \ref{preclosurelemma}(3).
\end{enumerate}
\end{proof}

\begin{corollary}
Let $M$ be an ordered magma, and write $M \setmin \{\bigvee M\} = M$ if $\bigvee M$ does not exist.
\begin{enumerate}
\item The inclusion $\N(M) \longrightarrow \C(M)$ is a poset embedding.
\item The map $\C(M) \longrightarrow 2^{M\setmin \{\bigvee M\}}$ acting by $\star \longmapsto M\setmin M^\star$ is a poset embedding, and if $M$ is bounded complete then the map is sup-preserving.
\item If $M$ is bounded complete and near residuated,
then the inclusions $\N(M) \longrightarrow \C(M)$ and $\C(M) \longrightarrow 2^{M\setmin \{\bigvee M\}}$ are sup-preserving poset embeddings.
\end{enumerate}
\end{corollary}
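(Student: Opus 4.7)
The plan is to assemble the three statements as straightforward consequences of Lemma \ref{closelem} and Proposition \ref{CMC}, together with one small observation about the top element of $M$.

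Part (1) will require no real work: since $\N(M)$ is by definition the subposet of $\C(M)$ consisting of nuclei, the inclusion map is automatically order-preserving and order-reflecting, hence a poset embedding.

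For part (2), the first step will be to verify well-definedness of the map. If $\bigvee M$ exists, then for any closure operation $\star$ on $M$ expansivity gives $(\bigvee M)^\star \geq \bigvee M$, and maximality forces equality; thus $\bigvee M \in M^\star$, so $M \setmin M^\star$ lies in $M \setmin \{\bigvee M\}$ as required. Next I would invoke the elementary fact that $\star_1 \leq \star_2$ in $\C(M)$ if and only if $M^{\star_1} \supset M^{\star_2}$, so taking complements in $M \setmin \{\bigvee M\}$ shows the map to be simultaneously order-preserving and order-reflecting, hence a poset embedding. For the sup-preserving claim when $M$ is bounded complete, I would apply Lemma \ref{closelem}(2): every bounded $\Gamma \subset \C(M)$ satisfies $M^{\bigvee \Gamma} = \bigcap_{\star \in \Gamma} M^\star$, and taking complements gives $M \setmin M^{\bigvee \Gamma} = \bigcup_{\star \in \Gamma}(M \setmin M^\star)$, which is exactly the supremum in the power-set lattice. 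Since $\bigvee_{\C(M)} \Gamma$ exists only when $\Gamma$ is bounded in this setting, this handles every case with content.

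Part (3) will follow by combining part (2) with Proposition \ref{CMC}(2). Under the hypotheses that $M$ is bounded complete and near residuated, Proposition \ref{CMC}(2) says that every bounded $\Gamma \subset \N(M)$ satisfies $\bigvee_{\N(M)} \Gamma = \bigsqcupp \Gamma$, and Lemma \ref{closelem}(2) gives the same formula $\bigvee_{\C(M)} \Gamma = \bigsqcupp \Gamma$. Hence the two suprema coincide, so the inclusion $\N(M) \hookrightarrow \C(M)$ is sup-preserving; the sup-preservation of the second inclusion has already been established in part (2).

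There is no serious obstacle: the corollary is essentially a bookkeeping assembly of previously proved results. The only mildly delicate point is to keep track of which subsets $\Gamma$ actually admit suprema in $\C(M)$ or $\N(M)$ (only the bounded ones when $M$ is merely bounded complete), so that the sup-preserving condition is verified precisely where it has content and not spuriously beyond it.
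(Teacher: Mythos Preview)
Your proposal is correct and follows exactly the intended approach: the corollary is stated without proof in the paper (falling under the blanket remark that omitted proofs are routine), and it is meant to follow directly from Lemma~\ref{closelem} and Proposition~\ref{CMC} together with the observation that $\bigvee M \in M^\star$ for every closure operation, which is precisely what you do.
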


\begin{example} If $L$ is a complete lattice, then by Corollary \ref{characterizingclosures}(1) below the poset embedding $\C(L) \longrightarrow 2^{L \setmin \{\bigvee L\}}$ is an isomorphism if and only if $L$ is well-ordered.
\end{example}

The following result characterizes nuclei on near residuated ordered magmas and generalizes \cite[Lemma 3.33]{gal}.

\begin{proposition}\label{characterizingclosures2}
Let $C$ be a subset of a poset $S$.
\begin{enumerate}
\item There exists a closure operation $\star$ on $S$ with $C = S^\star$ if and only if $\bigwedge\{a \in C: a \geq x\}$ exists in $S$ for all $x \in S$.
\item For any such closure operation $\star$ one has $x^\star = \bigwedge\{a \in C: a \geq x\}$ for all $x \in S$, and therefore $\star = \star_C$ is uniquely determined by $C$. 
\item If $S = M$ is a near residuated ordered magma and $\star_C$ exists, then $\star_C$ is a nucleus on $M$ if and only, for all $x \in C$ and $y \in M$, one has $x/y \in C$ provided $x/y$ exists and $y \backslash x \in C$ provided $y \backslash x$ exists.
\end{enumerate}
\end{proposition}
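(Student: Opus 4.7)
The plan is to treat parts (1) and (2) together, then (3). For the forward direction of (1) and (2), observe that if $\star$ is a closure operation on $S$ with $S^\star = C$, then for any $x \in S$, the element $x^\star$ lies in $C$ and satisfies $x^\star \geq x$, so it belongs to $\{a \in C : a \geq x\}$; moreover any $a$ in this set satisfies $a = a^\star \geq x^\star$ by order-preservation and idempotence of $\star$, so $x^\star$ is the minimum, hence the infimum, of that set. This simultaneously establishes existence of the infimum and the formula in (2), and pins down the uniqueness of $\star_C$. For the reverse direction of (1), define $x^{\star_C} := \bigwedge\{a \in C : a \geq x\}$: expansivity is immediate, order-preservation follows from the inclusion $\{a \in C : a \geq y\} \subset \{a \in C : a \geq x\}$ when $x \leq y$, and idempotence together with $S^{\star_C} = C$ follow from the observation that each $c \in C$ is the minimum of $\{a \in C : a \geq c\}$ and hence is fixed by $\star_C$.

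For the necessity in (3), I would assume $\star_C$ is a nucleus on $M$, take $x \in C$ and $y \in M$ with $x/y$ existing, and use the nucleus inequality from Proposition \ref{closureprop1}(3) to get $(x/y)^{\star_C} y \leq ((x/y)y)^{\star_C} \leq x^{\star_C} = x$. Thus $(x/y)^{\star_C}$ lies in the set defining $x/y$, forcing $(x/y)^{\star_C} \leq x/y$, and combined with expansivity this gives $(x/y)^{\star_C} = x/y$, i.e.\ $x/y \in C$. The argument for $y \backslash x$ is symmetric.

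For the sufficiency in (3), assume the residual closure condition and use Proposition \ref{closureprop1}(3), reducing the task to showing $x^{\star_C} y \leq (xy)^{\star_C}$ and $x y^{\star_C} \leq (xy)^{\star_C}$ for all $x,y \in M$. Since $xy \leq (xy)^{\star_C}$, near residuation of $M$ guarantees that $(xy)^{\star_C}/y$ exists; by hypothesis, since $(xy)^{\star_C} \in C$, this residual lies in $C$. From $x \leq (xy)^{\star_C}/y$ together with the fact that $\star_C$ is order-preserving and fixes $C$, I get $x^{\star_C} \leq (xy)^{\star_C}/y$, i.e.\ $x^{\star_C} y \leq (xy)^{\star_C}$; the mirror inequality follows by the analogous argument applied to $y \backslash (xy)^{\star_C}$.

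The main obstacle I anticipate is the sufficiency in (3): the key point is to recognize that the trivial bound $xy \leq (xy)^{\star_C}$ plus near residuation makes the residual $(xy)^{\star_C}/y$ available, so that the hypothesis on $C$ can then be applied to place it back into $C$; once this is in hand, order-preservation of $\star_C$ closes the argument. The rest is a fairly mechanical unpacking of the closure and nucleus axioms.
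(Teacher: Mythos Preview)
Your argument for part (3) is correct and matches the paper's approach. For necessity you carry out directly the computation that the paper packages as Proposition~\ref{CSTstar}(2). For sufficiency, the paper takes an arbitrary $b \in C$ with $b \geq xy$, forms $a = b/y \in C$ (which exists by near residuation since $xy \leq b$), deduces $x^\star y \leq ay \leq b$, and then passes to the infimum over all such $b$; you instead go straight to the particular choice $b = (xy)^{\star_C}$, which is a minor streamlining of the same residual-based idea.

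There is one small gap in your handling of the reverse direction of (1): you assert that $S^{\star_C} = C$ follows from the fact that every $c \in C$ is fixed by $\star_C$, but that observation only yields $C \subset S^{\star_C}$. The reverse inclusion requires that each $x^{\star_C} = \bigwedge\{a \in C : a \geq x\}$ actually lie in $C$, i.e., that the infimum be attained as a minimum of the set. Without this one can have $S^{\star_C} \supsetneq C$: take $S = \{0,1,2,3\}$ with $0 < 1,2 < 3$ and $1,2$ incomparable, and $C = \{1,2,3\}$; then $0^{\star_C} = \bigwedge\{1,2,3\} = 0 \notin C$, and indeed no closure operation on $S$ has image exactly $C$. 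Your forward direction already shows the infimum is a minimum whenever such a $\star$ exists, so the equivalence in (1) really hinges on that stronger condition. The paper does not prove (1) and (2), deferring to them as ``well-known and easy to check,'' so this point is as much an imprecision in the statement as a gap in your proof; it does not affect part (3), where the hypothesis that $\star_C$ exists guarantees $(xy)^{\star_C} \in C$.
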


\begin{proof} Statements (1) and (2) are well-known and easy to check.
We prove (3). If $\star$ is a nucleus, then it follows from Proposition \ref{CSTstar}(2) that $x/y \in C$ and $y \backslash x \in C$ for all $x,y$ as in statement (3).  Conversely, suppose that this condition on $C$ holds.  Let $x, y \in M$.  Then we claim that $x^\star y =  \bigwedge\{a \in C: \ a \geq x\} y$ is less than or equal to $(xy)^\star = \bigwedge\{b \in C: \ b \geq xy\}$.  For let $b \in C$ with $b \geq xy$.  Set $a = b/y$, whence $a \geq x$.  By hypothesis one has $a \in C$.  Therefore $x^\star y \leq ay = (b/y)y \leq b$.  Taking the infimum over all such $b$, we see that $x^\star y \leq \bigwedge\{b \in C: \ b \geq xy\} = (xy)^\star$.  By symmetry one also has $xy^\star \leq (xy)^\star$.  It follows that $\star$ is a nucleus.
\end{proof}


\begin{corollary}\label{characterizingclosures}
Let $C$ be a nonempty subset of a poset $S$.
\begin{enumerate}
\item If $S$ is bounded complete, then there exists a closure operation $\star$ on $S$ with $C = S^\star$ if and only if $\bigwedge X \in C$ for all nonempty $X \subset C$ bounded below.
\item If $S = M$ is a bounded complete and near residuated ordered magma, then there exists a nucleus $\star$ on $M$ with $C = M^\star$ if and only if $\bigwedge X \in C$ for all nonempty $X \subset C$ bounded below and  for all $x \in C$ and $y \in M$ one has $x/y \in C$ provided $x/y$ exists and $y \backslash x \in C$ provided $y \backslash x$ exists.
\end{enumerate}
\end{corollary}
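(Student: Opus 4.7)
The plan is to reduce both statements to Proposition \ref{characterizingclosures2} by using bounded completeness of $S$ to automatically supply the infima required there.

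For (1), the forward direction is an immediate application of Lemma \ref{starlemma}: if $C = S^\star$ and $X \subset C$ is nonempty and bounded below in $S$, then bounded completeness yields $\bigwedge_S X$, and since $X^\star = X$, Lemma \ref{starlemma}(2) places this infimum in $S^\star = C$. For the reverse direction, I would verify the hypothesis of Proposition \ref{characterizingclosures2}(1). For each $x \in S$, the set $\{a \in C : a \geq x\}$ is bounded below (by $x$), so when it is nonempty its infimum exists in $S$ by bounded completeness and lies in $C$ by the stated closure-under-infima property. Defining $x^{\star_C} = \bigwedge\{a \in C : a \geq x\}$ as in Proposition \ref{characterizingclosures2}(2) then produces the desired closure operation with $S^{\star_C} = C$.

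For (2), I would combine (1) with Proposition \ref{characterizingclosures2}(3). Part (1) delivers the candidate $\star_C$ on $S = M$; the stipulated residuation condition on $C$ (namely, $x/y, y \backslash x \in C$ whenever these exist, for $x \in C$, $y \in M$) is precisely the additional content required by Proposition \ref{characterizingclosures2}(3) to upgrade $\star_C$ from a closure operation to a nucleus on the near residuated ordered magma $M$. The converse implication follows from Proposition \ref{CSTstar}(2), which forces the residuals of elements of $M^\star$ by arbitrary $y \in M$ to lie in $M^\star = C$.

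The main obstacle I anticipate is a bookkeeping subtlety in the reverse direction of (1): one must ensure that $x^{\star_C}$ is defined for \emph{every} $x \in S$, not only those below some element of $C$. When $\{a \in C : a \geq x\}$ is nonempty the argument above is clean, but the empty case requires that $\bigvee S$ exist and belong to $C$ (using the convention $\bigwedge \emptyset = \bigvee S$), which must be arranged from the stated hypotheses. Once this edge case is handled, both parts fall out as routine specializations of Proposition \ref{characterizingclosures2}.
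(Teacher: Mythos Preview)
Your overall strategy---deduce both parts from Proposition~\ref{characterizingclosures2}, using Lemma~\ref{starlemma}(2) for the forward direction of (1) and Proposition~\ref{CSTstar}(2) for the forward direction of (2)---is exactly the paper's intent; the result is stated as a corollary with no separate argument. The reduction of (2) to (1) together with Proposition~\ref{characterizingclosures2}(3) is also correct.

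The edge case you flag, however, is not a bookkeeping subtlety that ``must be arranged from the stated hypotheses''; it cannot be so arranged. Take $S=\RR$ with its usual order (which is bounded complete in the paper's sense) and $C=\{0\}$. The condition ``$\bigwedge X\in C$ for every nonempty $X\subset C$ bounded below'' holds trivially, yet there is no closure operation on $\RR$ with image $\{0\}$, since expansiveness forces $1^\star\geq 1>0$. Thus the reverse implication in (1), as literally stated, fails unless one also assumes that $C$ is cofinal in $S$ (every $x\in S$ lies below some element of $C$); this condition is automatic in the forward direction since $x\leq x^\star\in C$, and it is precisely what makes each set $\{a\in C:a\geq x\}$ nonempty so that bounded completeness and the hypothesis on $C$ can be invoked. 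What you have uncovered is a small imprecision in the corollary itself rather than a gap you can close from the hypotheses given; once cofinality is added, your argument goes through and coincides with the paper's intended route.
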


The next result describes a common situation in which $\star_1 \vee \star_2 = \star_1 \circ \star_2$ for two nuclei $\star_1$ and $\star_2$. The latter operation, though not necessarily closed, is studied in \cite[Section 2]{pic} and \cite{vas}.


\begin{proposition}\label{complemmacor}
Let $\star_1$ and $\star_2$ be nuclei on an ordered magma $M$.  Then $\star_1  \vee \star_2$ exists in $\N(M)$ and equals the $n$-fold composition $\star = \cdots \circ \star_2 \circ \star_1 \circ \star_2$ for some positive integer $n$ if and only if $\star$ is coarser than the $n$-fold composition $\cdots \circ \star_1 \circ \star_2 \circ \star_1$.
\end{proposition}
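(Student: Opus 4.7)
The plan is to write $\sigma_k$ for the $k$-fold composition $\cdots \circ \star_2 \circ \star_1 \circ \star_2$ and $\tau_k$ for $\cdots \circ \star_1 \circ \star_2 \circ \star_1$, so that the claim reads: $\star_1 \vee \star_2 = \sigma_n$ in $\N(M)$ if and only if $\sigma_n \geq \tau_n$. Two elementary observations drive the argument. First, any finite composition of nuclei on $M$ is a preclosure satisfying the nucleus inequality: by induction on length, one reduces to the two-factor identity $\star_1(\star_2(x)) \star_1(\star_2(y)) \leq \star_1(\star_2(x) \star_2(y)) \leq \star_1(\star_2(xy))$, obtained from the nucleus inequalities for $\star_1$ and $\star_2$ together with order-preservation of $\star_1$. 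Second, if $\cl$ is any closure operation on $M$ with $\cl \geq \star_1$ and $\cl \geq \star_2$, then every finite composition $\pi$ of $\star_1$'s and $\star_2$'s satisfies $\pi \leq \cl$: by induction on length, $\star_i \circ \pi' \leq \star_i \circ \cl \leq \cl \circ \cl = \cl$. Necessity of the condition is then immediate: if $\star_1 \vee \star_2 = \sigma_n$, apply the second fact to $\cl = \sigma_n$ and $\pi = \tau_n$.

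For sufficiency, suppose $\tau_n \leq \sigma_n$. By the first fact, $\sigma_n$ is a preclosure satisfying the nucleus inequality; it remains only to prove idempotence. Expansiveness of each $\star_i$ gives $\sigma_n \geq \star_2$ (as $\star_2$ is the innermost factor of $\sigma_n$) and $\tau_n \geq \star_1$, whence by hypothesis $\sigma_n \geq \star_1$ as well. The crucial step is to derive $\sigma_{n+1} \leq \sigma_n$ by evaluating the hypothesis $\sigma_n(y) \geq \tau_n(y)$ at $y = \star_2(x)$. On the left, since the innermost factor of $\sigma_n$ is $\star_2$ and $\star_2 \circ \star_2 = \star_2$, one has $\sigma_n(\star_2(x)) = \sigma_n(x)$; on the right, prepending $\star_2$ to $\tau_n$ (whose innermost factor is $\star_1$) yields the alternating composition of length $n+1$ with innermost factor $\star_2$, namely $\sigma_{n+1}(x)$. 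Thus $\sigma_n \geq \sigma_{n+1}$, which is equivalent to $\star_i \circ \sigma_n \leq \sigma_n$ for both $i = 1, 2$ (the case $i$ equal to the outermost factor of $\sigma_n$ is automatic from idempotence of $\star_i$). Induction on the length of $\pi$ then gives $\pi \circ \sigma_n \leq \sigma_n$ for every composition $\pi$ of $\star_1$'s and $\star_2$'s: for $\pi = \star_i \circ \pi'$ with $\pi' \circ \sigma_n \leq \sigma_n$, order-preservation yields $\pi \circ \sigma_n \leq \star_i \circ \sigma_n \leq \sigma_n$. Taking $\pi = \sigma_n$ gives $\sigma_n \circ \sigma_n \leq \sigma_n$, and the reverse inequality is expansiveness, so $\sigma_n$ is idempotent and hence a nucleus. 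By the second fact above, any nucleus $\cl \geq \star_1, \star_2$ satisfies $\cl \geq \sigma_n$, so $\sigma_n = \star_1 \vee \star_2$ in $\N(M)$.

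The main obstacle is bookkeeping the alternation pattern of $\star_1$'s and $\star_2$'s under composition; the key conceptual move is the observation that precomposing the hypothesis $\sigma_n \geq \tau_n$ with $\star_2$ produces a telescoping collapse on the $\sigma_n$ side (because $\star_2 \circ \star_2 = \star_2$) while merely prepending $\star_2$ to $\tau_n$ on the other, converting the length-$n$ hypothesis into the length-$(n+1)$ inequality $\sigma_n \geq \sigma_{n+1}$ that unlocks idempotence.
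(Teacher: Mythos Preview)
Your proof is correct. The paper omits the proof of this proposition (it falls under the blanket remark that omitted proofs are routine), and your argument supplies exactly the kind of reconstruction intended: showing that $\sigma_n$ is a preclosure with the nucleus inequality, then using the hypothesis $\sigma_n \geq \tau_n$ precomposed with $\star_2$ to collapse $\sigma_n \circ \star_2$ to $\sigma_n$ while extending $\tau_n \circ \star_2$ to $\sigma_{n+1}$, yielding $\star_i \circ \sigma_n \leq \sigma_n$ for both $i$ and hence idempotence.
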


If $N$ is a submagma of a near sup-magma $M$, then for any nucleus $\star$ on $N$ we may define
$$\ind_M(\star) = {\bigwedge}_{\N(M)} \{\star' \in \N(M):  \ \star'|_N = \star\},$$
$$\ind^M(\star) = {\bigvee}_{\N(M)} \{\star' \in \N(M):  \ \star'|_N = \star\},$$  both of which are nuclei on $M$.  We say that a subset $X$ of an ordered magma $M$ is {\it saturated} if whenever $xy \in X$ for some $x,y \in M$ that are not annihilators of $M$ one has $x \in X$ and $y \in X$.

\begin{proposition}\label{extendingclosures}
Let $M$ be a near sup-magma and $\star$ a nucleus on a submagma $N$ of $M$.
\begin{enumerate}
\item Suppose that $\{\star' \in \N(M): \ \star'|_N = \star\}$ is nonempty.  Then  $\ind_M(\star)$ is the finest nucleus on $M$ whose restriction to $N$ is equal to $\star$; and if $M$ is a near prequantale then $\ind^M(\star)$ is the coarsest nucleus on $M$ whose restriction to $N$ is equal to $\star$ and one has $\star'|_N = \star$ if and only if $\ind_M(\star) \leq \star' \leq \ind^M(\star)$.
\item If $M$ is a near prequantale and $N$ is a sup-spanning subset of $M$, then
$$x^{\ind_M(\star)} = \bigwedge \{y \in M: \ y \geq x \mbox{ and } \forall z \in N \ (z \leq y \Rightarrow z^\star \leq y)\}$$
for all $x \in M$, and $\ind_M(\star)|_N = \star$.
\item If $N$ is a saturated downward closed subset of $M$, then $$x^{\ind^M(\star)} = \left\{\begin{array}{ll} x^\star & \mbox{if } x \in N \\ \bigvee M & \mbox{otherwise}\end{array}\right.$$ 
for all $x \in M$, and $\ind^M(\star)|_N = \star$.
\end{enumerate}
\end{proposition}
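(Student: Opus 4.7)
The plan is to prove the three parts in order, using the infimum and supremum formulas for nuclei from Proposition~\ref{CMC}. For part (1), set $\Gamma = \{\star' \in \N(M) : \star'|_N = \star\}$, assumed nonempty. Since $M$ is near sup-complete, Proposition~\ref{CMC}(1) gives $\ind_M(\star) = \bigsqcapp \Gamma$, so for $x \in N$ one computes $x^{\ind_M(\star)} = \bigwedge_{\star' \in \Gamma} x^\star = x^\star$; thus $\ind_M(\star) \in \Gamma$ is the finest extension. When $M$ is a near prequantale, Proposition~\ref{CMC}(2) supplies $M^{\ind^M(\star)} = \bigcap_{\star' \in \Gamma} M^{\star'}$, and for $x \in N$ the element $x^\star$ lies in $N$ and is fixed by every $\star' \in \Gamma$ (since $\star'|_N = \star$ is idempotent on $N$), giving $x^\star \in M^{\ind^M(\star)}$ and hence $x^{\ind^M(\star)} \leq x^\star$; the reverse inequality follows from $\ind^M(\star) \geq \ind_M(\star)$. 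So $\ind^M(\star) \in \Gamma$ is the coarsest extension, and the sandwich $\ind_M(\star) \leq \star' \leq \ind^M(\star) \iff \star' \in \Gamma$ is immediate from comparing restrictions on $N$.

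For part (2), define $+ : M \to M$ by the displayed formula and set $F = \{y \in M : \forall z \in N, \ z \leq y \Rightarrow z^\star \leq y\}$. A direct argument shows $F$ is closed under existing nonempty infima, so $+$ is a closure operation with $M^+ = F$. To show $+$ is a nucleus I invoke Proposition~\ref{closureprop2} with sup-spanning subset $N$: it suffices to verify $a x^+ \leq (ax)^+$ for $a \in N$, $x \in M$. Setting $y = (ax)^+ \in F$, the crucial step is to show $a \backslash y \in F$: for $z \in N$ with $z \leq a \backslash y$, we have $az \in N$ (submagma), $az \leq y$ (residuation), so $(az)^\star \leq y$ by $y \in F$, which combined with $a z^\star \leq (az)^\star$ (since $\star$ is a nucleus on $N$) yields $z^\star \leq a \backslash y$. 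Then $x \leq a \backslash y$ forces $x^+ \leq a \backslash y$, hence $a x^+ \leq y = (ax)^+$. The identity $+|_N = \star$ follows since $a^\star \in F$ for $a \in N$ (by idempotence of $\star$) forces $a^+ \leq a^\star$, while $a^+ \in F$ together with $a \in N$ and $a \leq a^+$ forces $a^\star \leq a^+$. Finally, any $\star' \in \Gamma$ satisfies $\Fix(\star') \subseteq F$ (if $z \in N$ and $z \leq y = y^{\star'}$, then $z^\star = z^{\star'} \leq y$), so $+$ is the finest extension and equals $\ind_M(\star)$.

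For part (3), define $\star^\sharp$ by the displayed piecewise formula. Downward closure places any existing annihilator of $M$ into $N$ whenever $N$ is nonempty, and saturation then gives the decisive consequence that if $x, y \in M$ are non-annihilators and $xy \in N$ then $x, y \in N$. Using these facts, verifying that $\star^\sharp$ is a closure operation and a nucleus reduces to a short case analysis on whether $x, y$ lie in $N$: the case $x, y \in N$ uses that $\star$ is a nucleus on $N$ and that $N$ is a submagma, while in the remaining generic cases $xy \notin N$, so $(xy)^{\star^\sharp} = \bigvee M$ and the required inequalities are trivial (the residual annihilator subcases are handled separately). The equality $\star^\sharp|_N = \star$ is immediate from the formula, and for any $\star' \in \Gamma$ one has $x^{\star'} = x^\star = x^{\star^\sharp}$ on $N$ and $x^{\star'} \leq \bigvee M = x^{\star^\sharp}$ off $N$, showing $\star^\sharp$ is the coarsest extension and hence $\ind^M(\star)$. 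The principal technical obstacle is the nucleus verification in part (2), where near residuation must be applied just so to lift ``$\star$ is a nucleus on $N$'' to ``$+$ is a nucleus on $M$''; the case analysis in (3) is largely bookkeeping once annihilator edge cases are absorbed.
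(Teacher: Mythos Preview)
Your proof is correct. Parts (1) and (3) match the paper's approach: (1) is deduced from Proposition~\ref{CMC} (the paper says only ``follows easily''; you supply the details, including the nice observation that $x^\star \in \bigcap_{\star' \in \Gamma} M^{\star'}$ for $x \in N$), and (3) is a case analysis that both you and the paper leave largely to the reader.

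Part (2) is where you diverge. The paper introduces the \emph{preclosure} $x^+ = \bigvee\{z^\star : z \in N,\ z \leq x\}$, verifies the inequality $a y^+ \leq (ay)^+$ for $a \in N$ directly from the definition of $+$ (no residuation needed at this stage), extends to all $a$ via sup-spanning, and then invokes Lemma~\ref{preclosurelemma}(3) as a black box to pass from the preclosure to the closure $\star_M$ and conclude it is a nucleus. You instead work with the closure operation from the outset: you take $F$ to be the set appearing in the displayed infimum, check that $F$ is closed under existing nonempty infima (so the infimum formula defines a closure with image $F$), and then prove the nucleus inequality by showing $a \backslash y \in F$ whenever $y \in F$ and $a \in N$, using near residuation explicitly. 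Your route is more self-contained---it avoids the auxiliary preclosure and does not lean on Lemma~\ref{preclosurelemma}(3)---while the paper's route has the virtue of isolating the preclosure $+$ (which is the natural ``first approximation'' to $\ind_M(\star)$) and reusing an already-proved lemma. The underlying ingredients (sup-spanning of $N$, the nucleus inequality on $N$, and near residuation of $M$) are the same in both.
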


\begin{proof} \
\begin{enumerate}
\item This follows easily from Proposition \ref{CMC}.
\item First we define $x^+ = \bigvee\{y^\star: \ y \in N \mbox{ and } y \leq x\}$ for all $x \in M$.  As $N$ is a sup-spanning subset of $M$ one has $x \leq x^+$ for all $x \in M$.  Since $+$ is order-preserving it follows that $+$ is a preclosure on $M$.  One has $y^+ = y$ if and only if $\forall z \in N \ (z \leq y \Rightarrow z^\star \leq y)$, whence by Lemma \ref{preclosurelemma}(1) it follows that the operation $\star_M$ on $M$ defined by
$$x^{\star_M} = \bigwedge \{y \in M: \ y \geq x \mbox{ and } \forall z \in N \ (z \leq y \Rightarrow z^\star \leq y)\}$$ is a closure operation on $M$.
Now, for all $a \in N$ and $x \in M$ one has
$$ay^+ = \bigvee_{z \in N: \ z \leq y} az^\star
 \leq \bigvee_{z \in N: \ z \leq y} (az)^\star
 \leq \bigvee_{w \in N: \ w \leq ay} w^\star
 = (ay)^+.$$
Since $N$ is a sup-spanning subset of $M$, it follows that
$xy^+ \leq (xy)^+$, and by symmetry $x^+y \leq (xy)^+$, for all $x,y \in M$.  Thus $\star_M$ is a nucleus by Lemma \ref{preclosurelemma}(3).  Now, $+$ is the finest preclosure on $M$ whose restriction to $N$ is $\star$, and by Lemma \ref{preclosurelemma}(1) the operation $\star_M$ is the finest closure operation on $M$ that is coarser than $+$, and one has $(\star_M)|_N = \star$.  It follows that $\star_M$ is the finest closure operation on $M$ whose restriction to $N$ is $\star$.  Therefore, since $\star_M$ is a nucleus we must have $\star_M = \ind_M(\star)$ by statement (1). 
\item It suffices to show that the operation defined in the statement is a nucleus.  This is easy to check.
\end{enumerate}
\end{proof}

\section{The poset of finitary nuclei}\label{sec:PFN}

For any poset $S$, let $\C_f(S)$ denote the poset of all finitary closure operations on $S$, and for any ordered magma $M$, let $\N_f(M) = \N(M) \cap \C_f(M)$ denote the poset of all finitary nuclei on $M$.  In this section we show that various properties of the poset $\N_f(M)$ are inherited from corresponding properties of $M$.  A summary of these results and the results for $\N(M)$ from the previous section is provided in Table 2.

\begin{table} 
\caption{The posets $\N(M)$ and $\N_f(M)$}
\centering  \ \\
\begin{tabular}{l|l|l} 
If $M$ is $\ldots$ & $\N(M)$ is $\ldots$ & $\N_f(M)$ is $\ldots$  \\ \hline\hline
near sup-magma & complete & \\ \hline
bounded complete & bounded complete  &  \\ \hline
meet semilattice & meet semilattice & \\ \hline
Scott-topological dcpo magma & & complete \\ \hline
Scott-topological bdcpo magma & & bounded complete \\ \hline
Scott-topological near sup-magma & complete & complete \\ \hline
Scott-topological, bounded complete & bounded complete & bounded complete  \\ \hline
algebraic meet semilattice &  meet semilattice & meet semilattice
\end{tabular}
\end{table}

Recall that an element $x$ of a poset $S$ is said to be {\it compact} if, whenever $x \leq \bigvee \Delta$ for some directed subset $\Delta$ of $S$ such that $\bigvee \Delta$ exists, one has $x \leq y$ for some $y \in \Delta$.  Any minimal element of $S$, for example, is compact.  We let $\K(S)$ denote the set of all compact elements of $S$.

\begin{example} \
\begin{enumerate}
\item For any set $X$, the set $\K(2^X)$ of compact elements of the complete lattice $2^X$ is equal to the set of all finite subsets of $X$.
\item  For any algebra $A$ over a ring $R$, the compact elements of the quantale $\operatorname{Mod}_R(A)$ of all $R$-submodules of $A$ are precisely the finitely generated $R$-submodules of $A$.
\end{enumerate}
\end{example}

A element $x$ of a poset $S$ is {\it algebraic} if $x$ is the supremum of a set of compact elements of $S$, or equivalently, if one has $x = \bigvee \{y \in \K(S): \ y \leq x\}$.   We say that a poset is {\it algebraic} if all of its elements are algebraic.  Note, however, that an algebraic complete lattice is said to be an {\it algebraic lattice}.  For example, for any set $X$, the poset $2^X$ is an algebraic lattice, and the poset $2^X \setmin \{\emptyset\}$ is an algebraic near sup-lattice with $\K(2^X \setmin \{\emptyset\}) = \K(2^X) \setmin \{\emptyset\}$.

For any set $X$ and any set $\Gamma$ of self-maps of $X$, let $\langle \Gamma \rangle$ denote the submonoid generated by $\Gamma$ of the monoid of all self-maps of $X$ under composition.

\begin{lemma}
Let $S$ be a poset.
\begin{enumerate}
\item If $S$ is a dcpo (resp., bdcpo) then $\C_f(S)$ is complete (resp., bounded complete) and one has $\bigvee_{\C_f(S)} \Gamma = \bigvee_{\C(S)} \Gamma$ and $x^{\bigvee_{\C(S)} \Gamma} = \bigvee\{x^\gamma: \ \gamma \in \langle \Gamma \rangle\}$ for all $x \in S$ and for any subset (resp., any bounded above subset) $\Gamma$ of $\C_f(S)$. 
\item If $S$ is a near sup-lattice, then $\C_f(S)$ is a sub sup-lattice of the sup-lattice $\C(S)$.
\item If $S$ is an algebraic meet semilattice, then $\C_f(S)$ is a sub meet semilattice of the meet semilattice $\C(S)$.
\end{enumerate}
\end{lemma}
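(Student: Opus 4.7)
For (1), my plan is to exhibit an explicit candidate for the supremum and then verify its properties. Given $\Gamma \subset \C_f(S)$, bounded above by some $\star^* \in \C_f(S)$ in the bdcpo case, I would define an operation $+\colon S \longrightarrow S$ by
\[ x^+ = \bigvee\{x^\gamma : \gamma \in \langle \Gamma \rangle\}. \]
Since every $\gamma \in \langle \Gamma \rangle$ is expansive and order-preserving, the composition $\gamma_1 \circ \gamma_2$ dominates both $\gamma_1$ and $\gamma_2$ pointwise, so the set above is directed. In the bdcpo case, a short induction on composition length using idempotence of $\star^*$ yields $\gamma \leq \star^*$ for every $\gamma \in \langle \Gamma \rangle$, so the directed set is bounded above by $x^{\star^*}$ and the supremum exists in $S$.

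Next I would check that $+$ is a finitary closure operation. Expansiveness follows from $\id \in \langle \Gamma \rangle$, and order-preservation from that of each $\gamma$. Because each composite $\gamma \in \langle \Gamma \rangle$ is Scott continuous (composition preserving Scott continuity), for any directed $\Delta$ with $\bigvee \Delta$ existing,
\[ \left({\bigvee}\Delta\right)^+ = \bigvee_{\gamma} \gamma\!\left({\bigvee}\Delta\right) = \bigvee_{\gamma} \bigvee_{x \in \Delta} x^\gamma = \bigvee_{x \in \Delta} x^+, \]
so $+$ itself is Scott continuous. For idempotence, Scott continuity of a fixed $\gamma \in \langle \Gamma \rangle$ applied to the directed supremum defining $x^+$ gives $\gamma(x^+) = \bigvee_{\gamma'} x^{\gamma \circ \gamma'} \leq x^+$, whence $(x^+)^+ \leq x^+$. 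Finally, $\Fix(+) = \bigcap_{\star \in \Gamma} \Fix(\star)$: a common fixed point of $\Gamma$ is fixed by every $\gamma \in \langle \Gamma \rangle$ by induction on composition length, and $x = x^+$ forces $x = x^\star$ for each $\star \in \Gamma \subset \langle \Gamma \rangle$. Since $\star' \geq \star \Leftrightarrow \Fix(\star') \subset \Fix(\star)$, this identifies $+$ with $\bigvee_{\C(S)} \Gamma$ and, being finitary, also with $\bigvee_{\C_f(S)} \Gamma$; the formula for $x^{\bigvee_{\C(S)} \Gamma}$ then reads off from the definition of $+$.

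Statement (2) follows at once from (1), since a near sup-lattice is in particular a dcpo, so (1) applies to every subset $\Gamma \subset \C_f(S)$; thus suprema in $\C_f(S)$ and $\C(S)$ coincide, exhibiting $\C_f(S)$ as a sub sup-lattice of the sup-lattice $\C(S)$ supplied by Lemma \ref{closelem}. For (3), the meet in $\C(S)$ of $\star_1, \star_2 \in \C_f(S)$ is the pointwise meet $x^{\star_1 \wedge \star_2} = x^{\star_1} \wedge x^{\star_2}$ by Lemma \ref{closelem}(1), and I would show this is Scott continuous. The main obstacle, and the only place algebraicity enters, is the interchange
\[ \left({\bigvee}\Delta\right)^{\star_1} \wedge \left({\bigvee}\Delta\right)^{\star_2} = \bigvee_{x \in \Delta}\left(x^{\star_1} \wedge x^{\star_2}\right) \]
for directed $\Delta$ with existing supremum. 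The inequality $\geq$ is automatic; for $\leq$, any compact $k \leq \left({\bigvee}\Delta\right)^{\star_1} \wedge \left({\bigvee}\Delta\right)^{\star_2}$ satisfies $k \leq x_1^{\star_1}$ and $k \leq x_2^{\star_2}$ for some $x_1, x_2 \in \Delta$ (by finitariness of each $\star_i$ together with compactness of $k$), and directedness of $\Delta$ yields $z \in \Delta$ above both $x_1$ and $x_2$, giving $k \leq z^{\star_1} \wedge z^{\star_2}$. Since $S$ is algebraic, the left-hand side is the supremum of such compact $k$, delivering $\leq$.
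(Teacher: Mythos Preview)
Your proof is correct and follows essentially the same route as the paper's: the same candidate $x^+ = \bigvee\{x^\gamma : \gamma \in \langle\Gamma\rangle\}$ for part (1), with idempotence via Scott continuity of each $\gamma$ applied to the directed supremum, and the same compact-element argument for part (3). The only cosmetic difference is that in (3) you phrase the conclusion as an inequality $\leq$ between the two sides, which tacitly assumes the right-hand supremum exists; the paper instead shows directly that the left-hand side is the least upper bound of $\Delta^{\star_1 \wedge \star_2}$, but your argument establishes exactly this once unpacked.
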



\begin{proof} \
\begin{enumerate}
\item Define a preclosure $\sigma$ of $S$ by $x^\sigma = \bigvee\{x^\gamma: \ \gamma \in \langle \Gamma \rangle\}$ for all $x \in S$.  The map $\sigma$ is defined because the set $\{x^\gamma: \ \gamma \in \langle \Gamma \rangle\}$ is directed (and bounded above if $\Gamma$ is bounded above) for any  $x \in S$.  Let $x \in S$, and let $X = \{x^\gamma: \ \gamma \in \langle \Gamma \rangle\}$.  Then $(\bigvee X)^\star = \bigvee\{(x^\gamma)^\star: \ \gamma \in \langle \Gamma \rangle\} \leq \bigvee X$ for all $\star \in \Gamma$, whence $(\bigvee X)^\gamma = \bigvee X$, and therefore $(x^\sigma)^\gamma = x^\sigma$, for all $\gamma \in \langle \Gamma \rangle$.  Therefore $(x^\sigma)^\sigma = x^\sigma$, so $\sigma$ is a closure operation on $S$, and clearly $\sigma =\bigvee_{\C(S)} \Gamma$.  It remains only to show that $\sigma$ is finitary, since it will follow that $\sigma = \bigvee_{\C_f(S)} 
\Gamma$.  Let $\Delta$ be a directed subset of $S$.  Then we have
$\left(\bigvee \Delta\right)^\sigma = \bigvee\left\{\left(\bigvee \Delta\right)^\gamma: \ \gamma \in \langle \Gamma \rangle\right\}
  = \bigvee\left\{\bigvee (\Delta^\gamma): \ \gamma \in \langle \Gamma \rangle\right\}  
  = \bigvee \{x^\gamma: \ x \in \Delta, \gamma \in \langle \Gamma \rangle\} 
  = \bigvee (\Delta^\sigma).$
Thus $\sigma$ is finitary.
\item This follows from (1).
\item Let $\star, \star' \in \C_f(S)$.  By Lemma \ref{closelem} it suffices to show that $\star \wedge \star' \in \C(S)$ is finitary.  Let $\Delta$ be a directed subset of $S$ such that $x = \bigvee \Delta$ exists, and let $z = x^{\star \wedge \star'}$.  Note first that $z$ is an upper bound of $\Delta^{\star \wedge \star'}$.  Let $u$ be any upper bound of $\Delta^{\star \wedge \star'}$.  We claim that $u \geq z$.   To show this, let $t$ be any compact element of $S$ with $t \leq z = x^\star \wedge x^{\star'}$.   Then $t \leq x^\star = \bigvee(\Delta^\star)$ and $t \leq x^{\star'} = \bigvee(\Delta^{\star'})$, whence $t \leq y^\star$ and $t \leq (y')^{\star'}$ for some $y,y' \in \Delta$.  Since $\Delta$ is directed, we may choose $w \in \Delta$ with $y,y' \leq w$.  Then $t \leq w^\star \wedge w^{\star'} = w^{\star \wedge \star'} \leq u$.  Therefore, taking the supremum over all compact $t \leq z$, we see that $z \leq u$.  Therefore $(\bigvee \Delta)^{\star \wedge \star'} = z = \bigvee (\Delta^{\star \wedge \star'})$.  Thus $\star \wedge \star'$ is finitary.
\end{enumerate}
\end{proof}

\begin{proposition}\label{Nf}
Let $M$ be an ordered magma.
\begin{enumerate}
\item If $M$ is a Scott-topological dcpo magma (resp., Scott-topological bdcpo magma) then $\N_f(M)$ is complete (resp., bounded complete) and one has $\bigvee_{\N_f(M)} \Gamma = \bigvee_{\N(M)} \Gamma$ and $x^{\bigvee_{\N(M)} \Gamma} = \bigvee\{x^\gamma: \ \gamma \in \langle \Gamma \rangle\}$ for all $x \in M$ and for any subset (resp., any bounded above subset) $\Gamma$ of $\N_f(M)$. 
\item If $M$ is a Scott-topological near sup-magma, then $\N_f(M)$ is a sub sup-lattice of the sup-lattice $\N(M)$.
\item If $M$ is an algebraic meet semilattice, then $\N_f(M)$ is a sub meet semilattice of the meet semilattice $\N(M)$.
\end{enumerate}
\end{proposition}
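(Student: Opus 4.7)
The plan is to bootstrap from the preceding lemma on $\C_f(S)$ by verifying that, at each step, the closure operation it produces is in fact a nucleus. The preceding lemma already handles the order-theoretic half; what remains is to preserve the multiplicative inequality $x^\star y^\star \leq (xy)^\star$, and in part (1) the Scott-topological hypothesis is exactly what will do this.

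For statement (1), I would fix $\Gamma \subset \N_f(M)$ as in the statement and let $\sigma$ be the finitary closure operation supplied by the preceding lemma, with $x^\sigma = \bigvee\{x^\gamma : \gamma \in \langle \Gamma \rangle\}$ for all $x \in M$ and $\sigma = \bigvee_{\C_f(M)} \Gamma = \bigvee_{\C(M)} \Gamma$.  First I would check that $X = \{x^\gamma : \gamma \in \langle \Gamma \rangle\}$ is directed in $M$: given $\gamma_1, \gamma_2 \in \langle \Gamma \rangle$, the composite $\gamma_2 \circ \gamma_1 \in \langle \Gamma \rangle$ satisfies $x^{\gamma_2 \circ \gamma_1} \geq x^{\gamma_1}$ by expansiveness of $\gamma_2$, and $x^{\gamma_2 \circ \gamma_1} = (x^{\gamma_1})^{\gamma_2} \geq x^{\gamma_2}$ since $x \leq x^{\gamma_1}$ and $\gamma_2$ is order-preserving.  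Consequently, for any $x, y \in M$, Lemma \ref{scott2}(4) applied to the directed sets $X$ and $Y = \{y^\delta : \delta \in \langle \Gamma \rangle\}$ yields $x^\sigma y^\sigma = \bigvee\{x^\gamma y^\delta : \gamma, \delta \in \langle \Gamma \rangle\}$.  A short induction on the length $n$ of a word $\varepsilon = \gamma_1 \circ \cdots \circ \gamma_n \in \langle \Gamma \rangle$, using the nucleus inequality for each $\gamma_i$ in turn, gives $x^\varepsilon y^\varepsilon \leq (xy)^\varepsilon$.  Then for any $\gamma, \delta \in \langle \Gamma \rangle$, the element $\varepsilon = \delta \circ \gamma \in \langle \Gamma \rangle$ satisfies $x^\gamma \leq x^\varepsilon$ and $y^\delta \leq y^\varepsilon$ by expansiveness, so $x^\gamma y^\delta \leq x^\varepsilon y^\varepsilon \leq (xy)^\varepsilon \leq (xy)^\sigma$; taking the supremum over $\gamma$ and $\delta$ gives $x^\sigma y^\sigma \leq (xy)^\sigma$.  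Thus $\sigma \in \N(M) \cap \C_f(M) = \N_f(M)$, and since $\sigma = \bigvee_{\C(M)} \Gamma$ dominates every closure upper bound of $\Gamma$, it is in particular the supremum of $\Gamma$ both in $\N(M)$ and in $\N_f(M)$.

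Statement (2) follows immediately from (1) together with Proposition \ref{CMC}(1): a Scott-topological near sup-magma is a Scott-topological dcpo magma and is near sup-complete, so (1) produces the supremum in $\N_f(M)$, Proposition \ref{CMC}(1) makes $\N(M)$ a complete lattice, and the equality $\bigvee_{\N_f(M)} \Gamma = \bigvee_{\N(M)} \Gamma$ from (1) is exactly the statement that $\N_f(M)$ is a sub sup-lattice of $\N(M)$.  For (3), combining the meet-semilattice parts of Proposition \ref{CMC} and the preceding lemma, for $\star, \star' \in \N_f(M) = \N(M) \cap \C_f(M)$ the meet $\star \wedge \star'$ formed in $\C(M)$ lies in $\N(M)$ by Proposition \ref{CMC} and in $\C_f(M)$ by the preceding lemma, hence in $\N_f(M)$.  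The substantive obstacle is the nucleus verification in part (1); it relies crucially on Scott continuity of multiplication, without which $x^\sigma y^\sigma$ would not distribute over the directed join indexed by $\langle \Gamma \rangle \times \langle \Gamma \rangle$, together with the observation that a composition of nuclei still satisfies the multiplicative inequality even though it need not be idempotent.
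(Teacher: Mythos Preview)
Your proposal is correct and takes essentially the same approach as the paper: the paper's proof is a single sentence observing that, by the preceding lemma, it suffices to check that $x \longmapsto \bigvee\{x^\gamma : \gamma \in \langle \Gamma \rangle\}$ is multiplicative when $M$ is Scott-topological, and you supply precisely that verification (directedness of the index set, Scott-continuity of multiplication via Lemma~\ref{scott2}, and the inductive multiplicative inequality for compositions of nuclei). Parts (2) and (3) are handled identically to the paper, by combining the preceding lemma with Proposition~\ref{CMC}.
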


\begin{proof}
By the lemma it suffices to observe that the closure operation $x \longmapsto \bigvee\{x^\gamma: \ \gamma \in \langle \Gamma \rangle\}$, when defined for $\Gamma \subset \C_f(M)$, is multiplicative if $M$ is Scott-topological.
\end{proof}

\section{Finitary nuclei on precoherent semiprequantales}\label{sec:FN}

In this section we prove Theorem \ref{precoherentthm} of the introduction.

\begin{lemma}\label{joinlemma}
Let $S$ be a join semilattice.  Then $x \in S$ is compact if and only if, whenever $x \leq \bigvee X$ for some subset $X$ of $S$ such that $\bigvee X$ exists, one has $x \leq \bigvee Y$ for some finite subset $Y$ of $X$.  Moreover, $\K(S)$ is closed under finite suprema.  
\end{lemma}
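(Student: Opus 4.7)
The plan is to prove the biconditional characterization by reducing the arbitrary-subset condition to the directed-subset condition via the standard device of passing to finite joins, and then deduce the closure of $\K(S)$ under finite suprema as a corollary.

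For the nontrivial ($\Rightarrow$) direction of the characterization, I would assume $x$ is compact in the sense of the paper and that $x \leq \bigvee X$ for some $X \subset S$ with $\bigvee X$ existing. Since $S$ is a join semilattice, the set
\[
\Delta \ = \ \{{\bigvee} Y : Y \subset X \text{ finite and nonempty}\}
\]
is well-defined, and it is directed because the join of two elements $\bigvee Y_1, \bigvee Y_2 \in \Delta$ is bounded above by $\bigvee(Y_1 \cup Y_2) \in \Delta$. I would then check that $\bigvee \Delta = \bigvee X$: every element of $\Delta$ lies below $\bigvee X$, and every singleton $\{a\} \subset X$ belongs to $\Delta$, so any upper bound of $\Delta$ is an upper bound of $X$. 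Applying the definition of compactness to $\Delta$ produces a finite nonempty $Y \subset X$ with $x \leq \bigvee Y$, as desired. The case $X = \emptyset$ is handled separately and trivially: then $\bigvee X$ is the least element of $S$, so $x$ equals this least element and $x \leq \bigvee \emptyset$.

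For the ($\Leftarrow$) direction, I would take a directed subset $\Delta$ with $x \leq \bigvee \Delta$, produce a finite $Y \subset \Delta$ with $x \leq \bigvee Y$ using the hypothesis, and then use directedness of $\Delta$ (together with the fact that directed sets are nonempty, per the paper's conventions) to find a single element $y \in \Delta$ dominating all elements of $Y$, hence $x \leq y$. Induction on $|Y|$ handles the existence of such a $y$.

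For the moreover claim, let $x_1,\ldots,x_n \in \K(S)$ and set $z = \bigvee_{i=1}^n x_i$ (which exists in the join semilattice $S$). Given any directed $\Delta$ with $z \leq \bigvee \Delta$, compactness of each $x_i$ yields $y_i \in \Delta$ with $x_i \leq y_i$, and directedness of $\Delta$ yields $y \in \Delta$ with $y \geq y_1,\ldots,y_n$, whence $z \leq y$. The degenerate case $n = 0$ (if the empty join exists) is immediate since directed sets are nonempty. I do not foresee any real obstacle; the main care needed is with the boundary cases $X = \emptyset$ and $n = 0$ in a join semilattice that may lack a bottom element.
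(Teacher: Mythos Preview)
Your proof is correct and is precisely the standard argument; the paper omits the proof of this lemma as routine (cf.\ the convention stated at the start of Section~\ref{sec:POM}), and your reconstruction is exactly what is intended.
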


\begin{lemma}\label{finitetype}
One has $\K(S)^\star \subset \K(S^\star)$ for any finitary closure operation $\star$ on a bdcpo $S$.
\end{lemma}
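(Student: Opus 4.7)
The plan is to take a compact element $x \in \K(S)$ and show directly that $x^\star$ is compact in $S^\star$. Suppose $\Delta'$ is a directed subset of $S^\star$ for which $\bigvee_{S^\star} \Delta'$ exists and $x^\star \leq \bigvee_{S^\star} \Delta'$. I need to produce a single $y \in \Delta'$ with $x^\star \leq y$, and the natural route is to relate the supremum in $S^\star$ to one in $S$ and then invoke compactness of $x$ in $S$.

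First I would observe that $\Delta'$ is bounded above in $S$, since $\bigvee_{S^\star} \Delta'$ is an upper bound for $\Delta'$ in $S$ as well. Because $S$ is a bdcpo and $\Delta'$ is directed, $\bigvee_S \Delta'$ exists. Next, applying Lemma \ref{starlemma}(1) with $X = \Delta' \subset S^\star$ (so that $\Delta'^\star = \Delta'$) gives $\bigvee_{S^\star} \Delta' = (\bigvee_S \Delta')^\star$. Here the finitariness of $\star$ is crucial: since $\Delta'$ is directed and $\bigvee_S \Delta'$ exists, one has $(\bigvee_S \Delta')^\star = \bigvee_S (\Delta'^\star) = \bigvee_S \Delta'$. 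Hence $\bigvee_{S^\star} \Delta' = \bigvee_S \Delta'$.

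Combining these, $x \leq x^\star \leq \bigvee_S \Delta'$, and compactness of $x$ in $S$ yields some $y \in \Delta'$ with $x \leq y$. Applying $\star$ and using $y \in S^\star$ gives $x^\star \leq y^\star = y$, which is the required witness.

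The main subtlety lies in verifying the identification $\bigvee_{S^\star} \Delta' = \bigvee_S \Delta'$, which is the only place the finitariness hypothesis enters and which fails for a general closure operation; once this is in hand, the rest is a one-line appeal to compactness of $x$ together with the fact that elements of $S^\star$ are $\star$-fixed.
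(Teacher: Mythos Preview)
Your proof is correct and follows essentially the same route as the paper's own argument: both use the bdcpo hypothesis to obtain $\bigvee_S \Delta'$, then invoke finitariness to identify $\bigvee_{S^\star}\Delta' = (\bigvee_S \Delta')^\star = \bigvee_S \Delta'$, and finish by compactness of $x$ in $S$ together with $y^\star = y$. Your version is slightly more explicit about where Lemma~\ref{starlemma}(1) and the finitariness hypothesis enter, but the structure is identical.
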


\begin{proof}
Let $x \in \K(S)$.  Suppose that $x^\star \leq \bigvee_{S^\star}\Delta$ for some directed subset $\Delta$ of $S^\star$ such that $\bigvee_{S^\star}\Delta$ exists.  Then $\Delta$ is directed and bounded above, whence $\bigvee_S \Delta$ exists.  It follows that $\bigvee_{S^\star}\Delta = (\bigvee_S \Delta)^\star = \bigvee_S (\Delta^\star) = \bigvee_S \Delta$.  Therefore $x \leq x^\star \leq \bigvee_S \Delta$, so $x \leq y$ for some $y \in \Delta$, whence $x^\star \leq y^\star = y$.  Thus we have $x^\star \in \K(S^\star)$.
\end{proof}


\begin{lemma}\label{finitetypeprop}
Let $\star$ be a closure operation on a poset $S$.
\begin{enumerate}
\item If $x^\star = x \vee \bigvee\{y^\star: \ y \in \K(S), \ y \leq x\}$ for all $x \in S$, then $\star$ is finitary.
\item If $\star$ is finitary and $S$ is an algebraic join semilattice, then $x^\star = \bigvee\{y^\star: \ y \in \K(S), \ y \leq x\}$ for all $x \in S$, and one has $\K(S^\star) \subset \K(S)^\star$.
\item If $\star$ is finitary, and if $S$ is an algebraic bounded complete join semilattice (resp., algebraic near sup-lattice, algebraic lattice), then so is $S^\star$, and $\K(S^\star) = \K(S)^\star$.
\end{enumerate}
\end{lemma}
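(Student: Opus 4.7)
For part (1), I would verify the Scott continuity criterion directly from the given formula. Let $\Delta \subset S$ be directed with $\bigvee \Delta$ existing. Plugging $x = \bigvee \Delta$ into the hypothesized identity gives $(\bigvee \Delta)^\star = (\bigvee \Delta) \vee \bigvee\{y^\star : y \in \K(S),\ y \leq \bigvee \Delta\}$. For each compact $y \leq \bigvee \Delta$, the definition of compactness gives some $d \in \Delta$ with $y \leq d$, hence $y^\star \leq d^\star \leq \bigvee (\Delta^\star)$. Since also $\bigvee \Delta \leq \bigvee (\Delta^\star)$, I conclude $(\bigvee\Delta)^\star \leq \bigvee(\Delta^\star)$, and monotonicity yields the reverse inequality. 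Thus $\star$ is finitary.

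For part (2), I would use algebraicity to write $x = \bigvee\{y \in \K(S) : y \leq x\}$. By Lemma \ref{joinlemma}, $\K(S)$ is closed under finite suprema, so the indexing set is directed (and nonempty since $x$ itself is algebraic). Applying Scott continuity of $\star$ gives $x^\star = \bigvee\{y^\star : y \in \K(S),\ y \leq x\}$, as desired. For the inclusion $\K(S^\star) \subset \K(S)^\star$, let $z \in \K(S^\star)$, so $z = z^\star$. The formula just established shows $z$ equals the supremum in $S$ of $\{y^\star : y \in \K(S),\ y \leq z\}$, and by Lemma \ref{starlemma}(1) this supremum agrees with the supremum in $S^\star$. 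This set is a directed subset of $S^\star$, being the order-preserving image of the directed set $\{y \in \K(S) : y \leq z\}$. Compactness of $z$ in $S^\star$ produces $y \in \K(S)$ with $y \leq z$ and $z \leq y^\star$; combined with $y^\star \leq z^\star = z$, this forces $z = y^\star \in \K(S)^\star$.

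For part (3), I would first invoke Lemma \ref{starlemma}(4) to transfer the structural hypothesis: if $S$ is a bounded complete join semilattice (resp., near sup-lattice, lattice), then so is $S^\star$. Part (2) supplies, for any $z \in S^\star$, the expression $z = z^\star = \bigvee_{S^\star}\{y^\star : y \in \K(S),\ y \leq z\}$ (using Lemma \ref{starlemma}(1) again to identify the supremum in $S^\star$ with the one in $S$). By Lemma \ref{finitetype} each $y^\star$ here lies in $\K(S^\star)$, so $z$ is a supremum of compact elements of $S^\star$; therefore $S^\star$ is algebraic in the appropriate sense. Finally, the inclusion $\K(S)^\star \subset \K(S^\star)$ is Lemma \ref{finitetype}, and the reverse inclusion is part (2), yielding $\K(S^\star) = \K(S)^\star$.

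The only subtlety I anticipate is the bookkeeping in part (2) when identifying a supremum computed in $S$ with one computed in $S^\star$; this is handled cleanly by Lemma \ref{starlemma}(1). Everything else is a direct unwinding of the definitions of \emph{compact}, \emph{algebraic}, and \emph{finitary}.
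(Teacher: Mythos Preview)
Your proof is correct and follows essentially the same route as the paper's: the same use of compactness in (1), the same directed-set argument via Lemma~\ref{joinlemma} and Scott continuity in (2), and the same appeal to Lemma~\ref{starlemma}(4) and Lemma~\ref{finitetype} in (3). The only cosmetic difference is that in (1) you write $\bigvee(\Delta^\star)$ before its existence is established, whereas the paper works with an arbitrary upper bound $w$ of $\Delta^\star$ and shows $x^\star \leq w$; your argument transposes verbatim into that form.
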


\begin{proof} \
\begin{enumerate}
\item Let $\Delta$  be a directed subset of $S$ such that $x = \bigvee \Delta$ exists.   We wish to show that $\bigvee (\Delta^\star)$ exists and equals $x^\star$.  Clearly $x^\star$ is an upper bound of $\Delta^\star$, and if $w$ is any upper bound of $\Delta^\star$, then we claim that $x^\star \leq w$.  Let $y \in \K(S)$ with $y \leq x$.  Then $y \leq z$ for some $z \in \Delta$.  Therefore $y^\star \leq z^\star$, where $z^\star \in \Delta^\star$.   Thus we have $y^\star \leq w$.  Taking the supremum over all such $y$, we see that $\bigvee\{y^\star: \ y \in \K(S), \ y \leq x\} \leq w$.  Moreover, we have $x = \bigvee \Delta \leq w$, and therefore $x^\star = x \vee \bigvee\{y^\star: \ y \in \K(S), \  y \leq x\} \leq w$, as claimed.
\item Let $x \in S$.  One has $x = \bigvee \Delta$, where $\Delta =  \{y \in \K(S): \ y \leq x\}$ is directed by Lemma \ref{joinlemma}.  Therefore $x^\star = \bigvee (\Delta^\star)$.   Suppose  that $x \in \K(S^\star)$.  Then $x = \bigvee(\Delta^\star)$ and $\Delta^\star \subset S^\star$ is directed, so $x \leq y^\star$ for some $y \in \Delta$, whence $x = y^\star$.  Therefore $\K(S^\star) \subset \K(S)^\star$.
\item Since $\K(S)^\star \subset \K(S^\star)$ by Lemma \ref{finitetype}, it follows from (2) that $S^\star$ is algebraic and $\K(S)^\star = \K(S^\star)$.  Since $S^\star$ is bounded complete (resp., near sup-complete, complete) by Lemma \ref{starlemma}(4), it follows that $S^\star$ is an algebraic bounded complete join semilattice (resp., algebraic near sup-lattice, algebraic lattice). 
\end{enumerate}
\end{proof}

For any closure operation $\star$ on a bounded complete poset $S$, let $\star_f$ denote the operation on $S$ defined by $x^{\star_f} = \bigvee\{y^\star: \ y \in \K(S) \mbox{ and } y \leq x\}$ for all $x \in S$.

\begin{proposition}\label{finitaryclosure}
Let $\star$ be a closure operation on an algebraic bounded complete join semilattice $S$.  Then $\star_f$ is the coarsest finitary closure operation on $S$ that is finer than $\star$, one has $x^{\star_f} = x^\star$ for all $x \in \K(S)$, and $\K(S^{\star_f}) = \K(S)^{\star_f}$.
\end{proposition}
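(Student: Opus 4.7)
The plan is to verify that $\star_f$ is a well-defined closure operation on $S$, check the compact-element identity $x^{\star_f} = x^\star$ for $x \in \K(S)$, use the earlier Lemma~\ref{finitetypeprop} to promote $\star_f$ to a finitary closure operation and to deduce its maximality among finitary refinements of $\star$, and finally invoke Lemma~\ref{finitetypeprop}(3) to read off the compact elements of $S^{\star_f}$. First I would check that the supremum defining $x^{\star_f}$ actually exists: the indexing set $\{y \in \K(S) : y \leq x\}$ is nonempty (it sups up to $x$ by algebraicity, and if it were empty then no compact element could lie under $x$, contradicting algebraicity) and its image under $\star$ is bounded above by $x^\star$, so $S$ being a bounded complete join semilattice guarantees the sup exists.

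Next I would verify closure. Expansivity follows from $x = \bigvee\{y : y \in \K(S),\ y \leq x\}$ (algebraicity) together with $y \leq y^\star$; order-preservation is immediate from enlarging the indexing set. For idempotency, the key observation is that $\{y^\star : y \in \K(S),\ y \leq x\}$ is directed: by Lemma~\ref{joinlemma} the compact elements below $x$ are closed under finite suprema, and $\star$ is order-preserving. So if $y \in \K(S)$ satisfies $y \leq x^{\star_f} = \bigvee\{z^\star : z \in \K(S),\ z \leq x\}$, compactness of $y$ yields $y \leq z^\star$ for some $z \in \K(S)$ with $z \leq x$, whence $y^\star \leq (z^\star)^\star = z^\star \leq x^{\star_f}$. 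Taking the supremum over such $y$ gives $(x^{\star_f})^{\star_f} \leq x^{\star_f}$. The reverse inequality is expansivity. This is the main routine hurdle; everything downstream is formal.

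Along the way I will note that $\star_f \leq \star$, because each $y^\star$ in the defining sup satisfies $y^\star \leq x^\star$; hence in particular $x^{\star_f} \leq x^\star$ for all $x$. When $x \in \K(S)$, the element $x$ itself appears in the indexing set, so $x^\star \leq x^{\star_f}$, giving $x^{\star_f} = x^\star$. This identity is exactly what is needed to apply Lemma~\ref{finitetypeprop}(1): the right-hand side $x \vee \bigvee\{y^{\star_f} : y \in \K(S),\ y \leq x\}$ equals $x \vee \bigvee\{y^\star : y \in \K(S),\ y \leq x\} = x \vee x^{\star_f} = x^{\star_f}$, so $\star_f$ is finitary.

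Finally, for coarseness, suppose $\star'$ is any finitary closure operation on $S$ with $\star' \leq \star$. Since $S$ is an algebraic join semilattice, Lemma~\ref{finitetypeprop}(2) gives $x^{\star'} = \bigvee\{y^{\star'} : y \in \K(S),\ y \leq x\}$; but $y^{\star'} \leq y^\star$ for each such $y$, so $x^{\star'} \leq \bigvee\{y^\star : y \in \K(S),\ y \leq x\} = x^{\star_f}$, i.e.\ $\star' \leq \star_f$. The concluding equality $\K(S^{\star_f}) = \K(S)^{\star_f}$ is then immediate from Lemma~\ref{finitetypeprop}(3) applied to the finitary closure $\star_f$ on the algebraic bounded complete join semilattice $S$.
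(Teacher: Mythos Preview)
Your proof is correct and follows essentially the same approach as the paper's: both arguments establish idempotency of $\star_f$ via the directedness of $\{y^\star : y \in \K(S),\ y \leq x\}$ (from Lemma~\ref{joinlemma}) together with compactness, and then defer the remaining claims to Lemma~\ref{finitetypeprop}. Your version is simply more explicit about well-definedness, the identity $x^{\star_f} = x^\star$ on compacts, and which parts of Lemma~\ref{finitetypeprop} are being invoked, whereas the paper compresses all of this into the single sentence ``the rest of the proposition follows from Lemma~\ref{finitetypeprop}.''
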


\begin{proof}
Clearly $\star_f$ is a preclosure on $S$.  Let $x \in S$.  The set $\{y^\star: \ y \in \K(S), \ y \leq x\}$ is directed by Lemma \ref{joinlemma}.  Therefore if $z \in \K(S)$ and $z \leq x^{\star_f}$ then $z \leq y^\star$ for some $y \in \K(S)$ such that $y \leq x$, in which case $z^\star \leq y^\star$.  Therefore $(x^{\star_f})^{\star_f} =  \bigvee \{z^\star: \ z \in \K(S), \ z \leq x^{\star_f}\} \leq \bigvee \{y^\star: \ y \in \K(S), \ y \leq x\} = x^{\star_f}.$
Thus $\star$ is a closure operation on $S$, and the rest of the proposition follows from Lemma \ref{finitetypeprop}.
\end{proof}

Generalizing \cite[Definition 4.1.1]{ros}, we say that an ordered magma $M$ is {\it precoherent} if $M$ is algebraic and $\K(M)$ is closed under multiplication, and we say that $M$ is {\it coherent} if $M$ is precoherent and unital with $1$ compact.

\begin{example} \
\begin{enumerate}
\item For any magma $M$, the prequantale $2^M$ is precoherent, and if $M$ is unital then $2^M$ is coherent.
\item A {\it $\K$-lattice} is a precoherent multiplicative lattice.  We say that a {\it near $\K$-lattice} is a precoherent near multiplicative lattice.  For example, if $M$ is a commutative monoid, then $2^M$ is a $\K$-lattice and $2^M\setmin\{\emptyset\}$ is a near $\K$-lattice.
\item For any integral domain $D$ with quotient field $F$, the ordered monoid $\F(D)$ of all nonzero $D$-submodules of $F$ is a near $\K$-lattice.
\item Let $A$ be an algebra over a ring $R$.  The quantale $\operatorname{Mod}_R(A)$ of all $R$-submodules of $A$ is precoherent, and it is coherent if and only if $A$ is finitely generated as an $R$-module.  In particular, if $R$ and $A$ are commutative then $\operatorname{Mod}_R(A)$ is a $\K$-lattice.
\end{enumerate}
\end{example}


The following result generalizes the corresponding well-known facts for star and semistar operations and ideal and module systems.

\begin{theorem}\label{klattice}
If $\star$ is a nucleus on a precoherent semiprequantale (resp., precoherent near prequantale, precoherent prequantale) $Q$, then $\star_f$ is the coarsest finitary nucleus on $Q$ that is finer than $\star$, the ordered magma $Q^{\star_f}$ is also a precoherent semiprequantale (resp., precoherent near prequantale, precoherent prequantale), and $\K(Q^{\star_f}) = \K(Q)^{\star_f}$.
\end{theorem}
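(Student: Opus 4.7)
My plan is to first extract the purely order-theoretic content from Proposition \ref{finitaryclosure} and then layer the multiplicative content on top. Since a precoherent semiprequantale is in particular an algebraic bounded complete join semilattice (and the near prequantale and prequantale cases are only stronger), Proposition \ref{finitaryclosure} will immediately give that $\star_f$ is the coarsest finitary closure operation on $Q$ that is finer than $\star$, that $x^{\star_f} = x^\star$ for all $x \in \K(Q)$, and that $\K(Q^{\star_f}) = \K(Q)^{\star_f}$ as an equality of subsets of $Q$. What remains is to upgrade ``coarsest finitary closure operation'' to ``coarsest finitary nucleus'' and to verify that $Q^{\star_f}$ is again precoherent.

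For the nucleus claim I would apply Proposition \ref{closureprop2} with the sup-spanning subset $\Sigma = \K(Q)$. That $\K(Q)$ is sup-spanning will come from algebraicity, which writes each $x \in Q$ as $\bigvee\{a \in \K(Q) : a \leq x\}$, together with the bounded distributivity of multiplication in a semiprequantale (the relevant suprema are bounded above by $x$, respectively $y$). It then suffices to verify $ax^{\star_f} \leq (ax)^{\star_f}$, and symmetrically on the right, for $a \in \K(Q)$ and $x \in Q$. The chain
\[
a x^{\star_f} \;=\; a \bigvee_{y \in \K(Q), \, y \leq x} y^\star \;=\; \bigvee_{y \in \K(Q), \, y \leq x} a y^\star \;\leq\; \bigvee_{y \in \K(Q), \, y \leq x} (ay)^\star \;=\; \bigvee_{y \in \K(Q), \, y \leq x} (ay)^{\star_f} \;\leq\; (ax)^{\star_f}
\]
uses, in order, the definition of $\star_f$, bounded distributivity, the nucleus property of $\star$, precoherence (so that $ay \in \K(Q)$, hence $(ay)^\star = (ay)^{\star_f}$), and monotonicity of $\star_f$. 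Thus $\star_f$ will be a nucleus, and hence, by Proposition \ref{finitaryclosure}, the coarsest finitary nucleus finer than $\star$.

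For the remaining two claims I would invoke Proposition \ref{CSTstar}(1) to equip $Q^{\star_f}$ with the semiprequantale (resp., near prequantale, prequantale) structure, and Lemma \ref{finitetypeprop}(3) to conclude that $Q^{\star_f}$ is algebraic with $\K(Q^{\star_f}) = \K(Q)^{\star_f}$. Closure of $\K(Q^{\star_f})$ under $\star_f$-multiplication is then a one-line verification: for $a, b \in \K(Q)$, Proposition \ref{closureprop1}(2) applied to the nucleus $\star_f$ gives that the $\star_f$-product of $a^{\star_f}$ and $b^{\star_f}$ equals $(a^{\star_f} b^{\star_f})^{\star_f} = (ab)^{\star_f}$, and $ab \in \K(Q)$ by precoherence of $Q$. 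The main obstacle I anticipate is nothing deep, but it will be important to track carefully which variant of the sup-distributivity axiom is invoked (finite, bounded, or arbitrary) across the three parenthetical cases, so that the distributivity step in the nucleus verification is licensed even by the weakest of them, the semiprequantale case.
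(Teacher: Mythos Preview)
Your proposal is correct and follows essentially the same route as the paper: both invoke Proposition \ref{finitaryclosure} for the closure-theoretic content, then use Proposition \ref{closureprop2} with $\Sigma = \K(Q)$ and the chain $a y^\star \leq (ay)^\star \leq (ax)^{\star_f}$ (for $a, y \in \K(Q)$, $y \leq x$) to upgrade $\star_f$ to a nucleus. Your verification that $\K(Q^{\star_f})$ is closed under $\star_f$-multiplication via $\K(Q^{\star_f}) = \K(Q)^{\star_f}$ and Proposition \ref{closureprop1}(2) is a slight streamlining of the paper's direct compactness check, but the substance is the same.
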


\begin{proof}
By Proposition \ref{finitaryclosure}, to prove the first claim we need only show that $\star_f$ is a nucleus.  Let $x \in Q$ and $a \in \K(Q)$.  We have $(ax)^{\star_f} = \bigvee \{y^\star: \ y \in \K(Q), \ y \leq ax\}$ and $ax^{\star_f} = \bigvee \{az^\star: \ z \in \K(Q), \ z \leq x\}$.  Let $z \in \K(Q)$ with $z \leq x$, and let $y = az$.  By our hypotheses on $Q$ we have $y \in \K(Q)$, and $y = az \leq ax$.  Therefore $az^\star \leq y^\star 
\leq (ax)^{\star_f} $.  Taking the supremum over all such $z \in \K(Q)$ we see that $ax^{\star_f} \leq (ax)^{\star_f}$.  By symmetry we also have $x^{\star_f}a \leq (xa)^{\star_f}$.  By Proposition \ref{closureprop2}, then, it follows that $\star_f$ is a nucleus.

To prove the second claim we may assume $\star = \star_f$ is finitary.  Then $Q^\star$ is a an algebraic semiprequantale (resp., algebraic near prequantale, algebraic prequantale) by Lemma \ref{finitetypeprop}(3) and Proposition \ref{CSTstar}(1).  Let $x,y \in \K(Q^\star)$, and suppose that $x \star y \leq \bigvee_{Q^\star} \Delta$ for some directed subset $\Delta$ of $Q^\star$ such that $\bigvee_{Q^\star} \Delta$ exists.  Then $\bigvee_Q \Delta$ exists, and $xy \leq \bigvee_{Q^\star} \Delta = (\bigvee_Q \Delta)^\star = \bigvee_Q \Delta$.  Therefore, since $xy \in \K(Q)$, we have $xy \leq z$, whence $x \star y \leq z$, for some $z \in \Delta$.  Thus $x \star y \in \K(Q^\star)$  
and $Q^\star$ is precoherent.  Finally, the third claim follows from Proposition \ref{finitaryclosure}.
\end{proof}



The following result reveals two subclasses of the precoherent semiprequantales.

\begin{proposition}\label{ucoherent}
Let $M$ be an ordered monoid.  Then $M$ is a near $\U$-lattice (resp., semi-$\U$-lattice) with $1$ compact if and only if $M$ is complete (resp., a bounded complete join semilattice), coherent, and every compact element of $M$ is the supremum of a subset of $\U(M)$; in that case the compact elements of $M$ are precisely the suprema of the finite subsets of $\U(M)$. 
\end{proposition}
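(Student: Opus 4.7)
The plan is to prove the equivalence in each direction separately, relying on Corollary \ref{ulattices} and Lemma \ref{joinlemma}. Two preliminary observations underpin the argument: first, every $u \in \U(M)$ induces poset automorphisms of $M$ by left and right multiplication, and poset automorphisms preserve both all existing suprema and compactness (since an automorphism and its inverse preserve directed subsets and their suprema); second, $1 \in \U(M)$ in any unital ordered magma. Together with the hypothesis that $1 \in \K(M)$, these yield $\U(M) \subset \K(M)$ in either direction of the argument.

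For the forward direction, suppose $M$ is a near $\U$-lattice (resp., semi-$\U$-lattice) with $1$ compact. By Corollary \ref{ulattices}, $M$ is a near prequantale (resp., semiprequantale), so multiplication distributes over the appropriate nonempty suprema. Specializing the sup-spanning identity at $y = 1$ gives $x = \bigvee\{u \in \U(M) : u \leq x\}$ for every $x \in M$, so $M$ is algebraic. For $k \in \K(M)$, applying Lemma \ref{joinlemma} to this identity produces a finite $F \subset \U(M)$ with $k = \bigvee F$, so every compact is a finite supremum of units. Closure of $\K(M)$ under multiplication then follows by writing $k_1 = \bigvee F_1$ and $k_2 = \bigvee F_2$ and using distributivity in the near prequantale to compute $k_1 k_2 = \bigvee_{(u,v) \in F_1 \times F_2} uv$, a finite supremum of units (the group of units of the monoid is closed under products), and hence compact. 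This establishes coherence.

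For the converse, assume $M$ is complete (resp., a bounded complete join semilattice), coherent, and every compact is the supremum of a subset of $\U(M)$. Coherence delivers $1 \in \K(M)$ and hence $\U(M) \subset \K(M)$; algebraicity combined with the hypothesis gives $x = \bigvee\{u \in \U(M) : u \leq x\}$ for every $x$, and Lemma \ref{joinlemma} again forces every compact to be a finite supremum of units. What remains is to verify that $\U(M)$ is sup-spanning, which, by the proposition immediately preceding Corollary \ref{ulattices}, is equivalent to $M$ being a near prequantale (resp., semiprequantale). The easy inequality $xy \geq \bigvee\{uy : u \in \U(M),\, u \leq x\}$ follows from order preservation. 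The main obstacle is the reverse inequality: a priori neither $x$ nor $y$ is a unit, so the sup-preservation of multiplication by individual units does not apply directly. My plan is to first establish distributivity on the sub-ordered-magma $\K(M)$ by representing compacts as finite suprema of units and exploiting that multiplication by a unit is a sup-preserving automorphism on either side, and then to extend to all of $M$ via algebraicity, using that $\{k \in \K(M) : k \leq y\}$ is a directed set with supremum $y$ (directedness follows from $\K(M)$ being closed under finite joins by Lemma \ref{joinlemma}). Once $\U(M)$ is shown to be sup-spanning, the final clause that the compacts are precisely the finite suprema of subsets of $\U(M)$ is immediate: finite suprema of units lie in $\K(M)$ because $\U(M) \subset \K(M)$ and $\K(M)$ is closed under finite joins, and the converse has just been established.
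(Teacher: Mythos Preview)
Your forward direction is correct and is essentially the paper's approach: the paper says only that the proposition ``follows easily'' from Lemma~\ref{1compact}, and your preliminary observation that $1\in\K(M)$ forces $\U(M)\subset\K(M)$ is exactly the content of that lemma.

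In the converse direction, however, your plan has a genuine gap at the step you yourself flag as ``the main obstacle.'' You propose to establish finite distributivity on $\K(M)$ by writing $k=u_1\vee\cdots\vee u_m$ and $\ell=v_1\vee\cdots\vee v_n$ with $u_i,v_j\in\U(M)$ and then ``exploiting that multiplication by a unit is a sup-preserving automorphism on either side.'' That argument gives you $u_i\ell=\bigvee_j u_iv_j$ (left multiplication by $u_i$) and $kv_j=\bigvee_i u_iv_j$ (right multiplication by $v_j$), hence
\[
\bigvee_i u_i\ell \;=\; \bigvee_j kv_j \;=\; \bigvee_{i,j} u_iv_j,
\]
but it does \emph{not} give the crucial inequality $k\ell\leq\bigvee_{i,j}u_iv_j$. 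In the product $k\ell=(\bigvee_i u_i)(\bigvee_j v_j)$ neither factor is a unit, so the automorphism trick does not apply to either side, and bare order-preservation of multiplication only yields $k\ell\geq u_iv_j$. Concretely: from $kv_j\leq w:=\bigvee_{i,j}u_iv_j$ you get $k\leq wv_j^{-1}$ for each $j$, but you cannot then conclude $k\ell\leq w$ without already knowing that left multiplication by $k$ distributes over the finite join $\ell=\bigvee_j v_j$---which is precisely what you are trying to prove. The same circularity recurs in your extension step from $\K(M)$ to all of $M$. Some additional idea (for instance, a direct proof of residuation, or a separate argument that any upper bound of $\{u_i\ell\}_i$ dominates $k\ell$) is needed; as written the plan does not close. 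The paper's own treatment is too terse to see how, or whether, it addresses this point.
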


\begin{corollary}
Any near $\U$-lattice (resp., semi $\U$-lattice) that is associative and unital with $1$ compact is a coherent near quantale (resp., coherent semiquantale).
\end{corollary}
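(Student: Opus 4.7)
The plan is to combine Corollary \ref{ulattices} with Proposition \ref{ucoherent}. The former upgrades the lattice-theoretic hypothesis (near $\U$-lattice or semi-$\U$-lattice) into the quantale-theoretic structure we want, while the latter provides the precoherence and compactness-of-identity that make it \emph{coherent}.

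In detail, suppose first that $M$ is a near $\U$-lattice that is associative and unital with $1$ compact. By Corollary \ref{ulattices}, $M$ is a near prequantale. Since the multiplication on $M$ is associative and a near quantale is defined as an associative near prequantale (equivalently, a semigroup object in near sup-lattices), $M$ is a near quantale. It remains to check that $M$ is coherent as defined in Section \ref{sec:FN}, i.e., that $M$ is precoherent and unital with $1$ compact. Unital with $1$ compact is given by hypothesis, and precoherence follows directly from Proposition \ref{ucoherent}, whose ``only if'' direction asserts that every near $\U$-lattice with $1$ compact is coherent (hence in particular precoherent). Therefore $M$ is a coherent near quantale.

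The semi-$\U$-lattice case is handled identically: Corollary \ref{ulattices} shows $M$ is a semiprequantale, associativity upgrades this to a semiquantale, and the ``only if'' direction of Proposition \ref{ucoherent} supplies coherence.

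Since the argument is a straightforward chain of implications through two earlier results, there is no real obstacle; essentially all the work has already been done in proving Corollary \ref{ulattices} and Proposition \ref{ucoherent}. The only thing one needs to verify is that the definitional upgrade ``near prequantale plus associativity equals near quantale'' (and similarly for semiprequantale/semiquantale) applies here, which is immediate from the definitions given in Section \ref{sec:POM}.
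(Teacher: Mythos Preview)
Your proposal is correct and is exactly the intended argument: the paper states this as an immediate corollary of Proposition \ref{ucoherent} (which supplies coherence, given that an associative unital magma is a monoid) together with Corollary \ref{ulattices} (which supplies the near prequantale/semiprequantale structure, upgraded by associativity to a near quantale/semiquantale).
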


Proposition \ref{ucoherent} follows easily from the following lemma, which generalizes the fact that every invertible fractional ideal of an integral domain is finitely generated.

\begin{lemma}\label{1compact}
For any ordered magma $M$ one has $\U(M) \K(M) \cup \K(M) \U(M) \subset \K(M)$, and if $M$ is unital then the following conditions are equivalent.
\begin{enumerate}
\item $1 \in \K(M)$.
\item $\U(M) \subset \K(M)$.
\item $\U(M) \cap \K(M) \neq \emptyset$.
\end{enumerate}
\end{lemma}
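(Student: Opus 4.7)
The plan is to prove the inclusion first and then derive the three equivalences as easy consequences.

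For the inclusion $\U(M)\K(M) \subset \K(M)$, fix $u \in \U(M)$ and $k \in \K(M)$. The key observation is that the left multiplication map $L_u : x \longmapsto ux$ is by definition a poset automorphism of $M$, so it preserves all suprema that exist and carries directed sets to directed sets (and the same for $L_u^{-1}$). Thus if $uk \leq \bigvee \Delta$ for some directed $\Delta \subset M$ such that $\bigvee \Delta$ exists, applying $L_u^{-1}$ gives $k \leq L_u^{-1}(\bigvee \Delta) = \bigvee L_u^{-1}(\Delta)$, where $L_u^{-1}(\Delta)$ is directed and its supremum exists. Compactness of $k$ then yields some $d \in \Delta$ with $k \leq L_u^{-1}(d)$, so applying $L_u$ gives $uk \leq d$, proving $uk \in \K(M)$. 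The inclusion $\K(M)\U(M) \subset \K(M)$ follows by the symmetric argument using $R_u$.

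Now assume $M$ is unital. For (1) $\Rightarrow$ (2): if $1 \in \K(M)$, then for every $u \in \U(M)$ we have $u = u \cdot 1 \in \U(M)\K(M) \subset \K(M)$ by the inclusion just proved. For (2) $\Rightarrow$ (3): $L_1$ and $R_1$ are the identity map on $M$, so $1 \in \U(M)$, and then $1 \in \U(M) \cap \K(M)$. For (3) $\Rightarrow$ (1): pick any $u \in \U(M) \cap \K(M)$. Since $M$ is a monoid, the remark preceding the lemma statement gives $\U(M) = \Inv(M)$, so there exists $u^{-1} \in M$ with $u^{-1}u = 1$; moreover $u^{-1} \in \Inv(M) = \U(M)$ since its left and right multiplication maps are inverses of those of $u$. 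Therefore $1 = u^{-1}u \in \U(M)\K(M) \subset \K(M)$.

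The only mildly nontrivial step is the inclusion: one has to notice that poset automorphisms preserve directed suprema, which is immediate from their being order isomorphisms. Everything else is a one-line consequence of this inclusion together with the equality $\U(M) = \Inv(M)$ in the monoid case.
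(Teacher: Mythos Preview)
Your argument for the inclusion $\U(M)\K(M)\cup\K(M)\U(M)\subset\K(M)$ is correct and is the natural elaboration of what the paper leaves as ``clear.'' The implications $(1)\Rightarrow(2)\Rightarrow(3)$ are also fine.

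There is a small gap in your proof of $(3)\Rightarrow(1)$: the lemma assumes only that $M$ is \emph{unital}, not that $M$ is a monoid, so you cannot invoke the equality $\U(M)=\Inv(M)$, which the paper states only for monoids. In a non-associative unital magma, an element $u\in\U(M)$ need not lie in $\Inv(M)$, so producing an element $u^{-1}\in\U(M)$ with $u^{-1}u=1$ and then applying the inclusion is not justified as written. The fix is immediate and already implicit in your proof of the inclusion: since $L_u$ is a poset automorphism, so is $L_u^{-1}$, and exactly the same argument you gave shows that any poset automorphism of $M$ preserves compactness. Now $u\cdot 1=u$ and $L_u$ is a bijection, so $L_u^{-1}(u)=1$; since $u\in\K(M)$, it follows that $1=L_u^{-1}(u)\in\K(M)$.
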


\begin{proof}
The given inclusion is clear, and the implications $(1) \Rightarrow (2) \Rightarrow (3) \Rightarrow (1)$ follow.
\end{proof}

\section{Precoherent prequantales and multiplicative semilattices}\label{sec:RML}

In this section we apply the theory of nuclei to construct a representation of any precoherent near prequantale (resp., precoherent prequantale) as the ideal completion of some multiplicative semilattice (resp., prequantic semilattice), and conversely a representation of any multiplicative semilattice (resp., prequantic semilattice) as the ordered magma of the compact elements of some precoherent near prequantale (resp., precoherent prequantale).  These representations yield appropriate category equivalences.

A subset $I$ of a poset $S$ is said to be an {\it ideal} of $S$ if
$I$ is a directed downward closed subset of $S$.  For any $x \in S$, the set
$$\downarrow\!\! x= \{y \in S: \ y \leq x\}$$ is an ideal of $S$ called  the {\it principal ideal generated by $x$}.  The {\it ideal completion} $\Idl(S)$ of $S$ is the set of all ideals of $S$ partially ordered by the subset relation.  
If $S$ is a join semilattice, then, for any nonempty subset $X$ of $S$, we let $$\downarrow \!\! X = \{y \in S: \ y \leq \bigvee T \mbox{ for some finite nonempty } T \subset X\}.$$

\begin{lemma}\label{downarrowlemma}
Let $S$ be a join semilattice.
\begin{enumerate}
\item For any nonempty subset $X$ of $S$, the set $\downarrow\!\! X$ is the smallest ideal of $S$ containing $X$.
\item The operation $\downarrow$ is a finitary closure operation on the algebraic near sup-lattice $2^S \setmin \{\emptyset\}$ with $\operatorname{im}\!\! \downarrow \ = \Idl(S)$. 
\item If $S$ has a least element, then the operation $\downarrow$ extends uniquely to a finitary closure operation on the algebraic sup-lattice $2^S$ such that $\downarrow\!\! \emptyset = \bigwedge S$.
\end{enumerate}
\end{lemma}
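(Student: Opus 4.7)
The plan is to prove the three parts in sequence. A preliminary observation used throughout is that a subset $Y \subseteq S$ is an ideal if and only if $\downarrow\!\! Y = Y$, since an ideal $Y$ is directed (so every finite nonempty subset admits an upper bound in $Y$, forcing its join into $Y$ by downward closure) and downward closed.

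For part (1), I would verify that $\downarrow\!\! X$ is an ideal of $S$ containing $X$ and minimal as such. Containment $X \subset \downarrow\!\! X$ is immediate via $T = \{x\}$; downward closure follows from transitivity of $\leq$; and directedness uses that any $y_i \leq \bigvee T_i$ ($i = 1, 2$), with $T_i \subset X$ finite nonempty, are simultaneously bounded by $\bigvee(T_1 \cup T_2) \in \downarrow\!\! X$. For minimality, if $I \supset X$ is an ideal, then each finite nonempty $T \subset X$ admits an upper bound in $I$ by directedness of $I$, so $\bigvee T \in I$ by downward closure of $I$, and thence $\downarrow\!\! X \subset I$ by another application of downward closure.

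For part (2), expansivity and order-preservation of $\downarrow$ on $2^S \setminus \{\emptyset\}$ are immediate from the definition, while idempotence and the image identification $\operatorname{im}\!\! \downarrow = \Idl(S)$ both follow from combining part (1) with the preliminary observation. The only substantive point is finitariness: for a directed $\Delta \subset 2^S \setminus \{\emptyset\}$, whose supremum in $2^S \setminus \{\emptyset\}$ is $\bigcup \Delta$, I need $\downarrow(\bigcup \Delta) = \bigcup_{X \in \Delta} \downarrow\!\! X$. The inclusion $\supset$ is monotonicity, while for $\subset$ any $y \in \downarrow(\bigcup \Delta)$ satisfies $y \leq \bigvee T$ for some finite nonempty $T \subset \bigcup \Delta$; since $T$ is finite and $\Delta$ is directed by inclusion, some $X_0 \in \Delta$ contains $T$, and hence $y \in \downarrow\!\! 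X_0$.

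For part (3), with $S$ having least element $0 = \bigwedge S$, the only order-preserving extension of $\downarrow$ to $\emptyset$ is $\downarrow\!\! \emptyset = \{0\}$ (which I read the statement $\downarrow\!\! \emptyset = \bigwedge S$ to denote, under the identification of the least ideal with its sole element): any such extension must satisfy $\downarrow\!\! \emptyset \subset \downarrow\!\! \{x\}$ for every $x \in S$, forcing it into $\bigcap_{x \in S} \downarrow\!\! x = \{0\}$, while idempotence forces it into $\Idl(S)$, whose least element is $\{0\}$. To verify the extension remains a finitary closure operation, the only new case is a directed $\Delta \subset 2^S$ that contains $\emptyset$; but the only new contribution on the right of the finitariness equation is $\downarrow\!\! \emptyset = \{0\}$, which is already contained in $\downarrow\!\! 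X$ for any nonempty $X \in \Delta$ (because $0 \leq x$ for every $x \in X$), so the argument of part (2) transfers without change. I expect no real obstacle; the one slightly delicate step is the finitariness argument in part (2), where finiteness of $T$ is essential to absorb it into a single $X_0 \in \Delta$.
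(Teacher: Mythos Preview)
Your proof is correct and follows essentially the same approach as the paper. The paper organizes the finitariness argument in part (2) slightly differently---it shows $\downarrow\!\! X = \bigcup\{\downarrow\!\! T : T \subset X \text{ finite nonempty}\}$ rather than directly verifying the Scott-continuity condition---but the underlying idea (a finite $T \subset \bigcup \Delta$ lands in a single member of the directed family) is identical, and the paper dismisses part (3) as ``clear'' where you supply details.

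One minor over-reach in part (3): your claim that idempotence forces $\downarrow\!\!\emptyset$ into $\Idl(S)$ is not quite right. Idempotence only forces $\downarrow\!\!\emptyset$ to be a fixed point of the extended operator, and $\emptyset$ itself could be such a fixed point; indeed, setting $\downarrow\!\!\emptyset = \emptyset$ also yields a finitary closure operation on $2^S$ extending $\downarrow$. The lemma's ``uniquely'' should be read as: given the stipulation $\downarrow\!\!\emptyset = \{\bigwedge S\}$, the extension is (trivially) determined. This does not affect the substance of your argument, since you correctly verify that the stipulated extension is a finitary closure operation.
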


\begin{proof}
Clearly $\downarrow\!\! X$ is nonempty and downward closed.  Let $y, y' \in \downarrow\!\! X$.  Say $y \leq \bigvee T$ and $y' \leq \bigvee T'$, where $T, T'$ are finite nonempty subsets of $X$.  Then
$y \vee y' \leq \bigvee(T \cup T')$, so $y \vee y' \in \downarrow\!\! X$.  Therefore $\downarrow \!\! X$ is an ideal of $S$.  Let $I$ be any ideal of $S$ containing $X$.  For any $x \in \downarrow \!\! X$, one has $x \leq \bigvee T$ with $T \subset X$ finite and nonempty, so $\bigvee T \leq z$ for some $z \in I$, whence $x \in I$ since $x \leq z$.   Thus $\downarrow \!\! X \subset I$.  Therefore $\downarrow \!\! X$ is the smallest ideal of $S$ containing $X$ and $\downarrow$ is a closure operation on $2^S \setmin\{\emptyset\}$.  To show that $\downarrow$ is finitary, we show that $\downarrow \!\! X = \bigcup\downarrow \!\! \{T:\ T \subset X \mbox{ finite and nonempty}\}$ for any $X \in 2^S \setmin\{\emptyset\}$.  Let $x \in \downarrow \!\! X$.  Then $x \leq \bigvee T$, where $T \subset X$ is finite and nonempty. But $\bigvee T \in \downarrow \!\! T$, so $x \in \downarrow \!\! T$.  This proves (1) and (2), and (3) is clear.
\end{proof}

The following result generalizes \cite[Proposition I-4.10]{gie}.

\begin{proposition}\label{algebraicprop2}
Let $L$ be an algebraic near sup-lattice and $S$ a join semilattice.
\begin{enumerate}
\item $\Idl(S)$ is an algebraic near sup-lattice.
\item $\K(L)$ is a sub join semilattice of $L$.
\item The map $S \longrightarrow \K(\Idl(S))$ acting by $x \longmapsto \downarrow\!\! x$ is a poset isomorphism.
\item The map $\Idl(\K(L)) \longrightarrow L$ acting by $I \longmapsto \bigvee I$ is a poset isomorphism with inverse acting by $x \longmapsto (\downarrow \!\! x) \cap \K(L)$.
\end{enumerate}
\end{proposition}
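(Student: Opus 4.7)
The plan is to reduce everything to the fact, established in Lemma \ref{downarrowlemma}, that $\Idl(S)$ arises as the image of the finitary closure operation $\downarrow$ on $2^S \setmin \{\emptyset\}$, and then to leverage Lemma \ref{finitetypeprop}(3), which propagates algebraicity and describes compact elements across a finitary closure. For (1), since $2^S \setmin \{\emptyset\}$ is an algebraic near sup-lattice whose compact elements are precisely the finite nonempty subsets of $S$, Lemma \ref{finitetypeprop}(3) immediately yields that $\Idl(S)$ is an algebraic near sup-lattice with $\K(\Idl(S)) = \{\downarrow\!\! F : F \subset S \text{ finite and nonempty}\}$. Part (2) is a direct application of Lemma \ref{joinlemma}, since any finite supremum of compact elements is compact and $L$ is in particular a join semilattice.

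For (3), the key observation is that since $S$ is a join semilattice, $\downarrow\!\! F = \downarrow\!\!(\bigvee F)$ for any finite nonempty $F \subset S$, so the description of $\K(\Idl(S))$ from (1) collapses to $\{\downarrow\!\! x : x \in S\}$, which gives surjectivity of $x \longmapsto \downarrow\!\! x$ onto $\K(\Idl(S))$. The map is manifestly order-preserving, and it reflects order (hence is injective) because $x \in \downarrow\!\! x$ implies that $\downarrow\!\! x \subset \downarrow\!\! y$ forces $x \leq y$.

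For (4), I will introduce the candidate inverse $\psi : L \longrightarrow \Idl(\K(L))$ defined by $\psi(x) = (\downarrow\!\! x) \cap \K(L)$, and first verify that $\psi$ is well-defined: this set is downward closed in $\K(L)$ by construction, and it is directed because any two compact $y, z \leq x$ admit $y \vee z \in \K(L)$ by (2) with $y \vee z \leq x$. The composition $\phi \circ \psi = \id_L$ is then a direct restatement of the algebraicity of $L$, namely $x = \bigvee\{y \in \K(L) : y \leq x\}$. For $\psi \circ \phi = \id_{\Idl(\K(L))}$, the inclusion $I \subset (\downarrow\!\! \bigvee I) \cap \K(L)$ is automatic, while the reverse will use that any $y \in \K(L)$ with $y \leq \bigvee I$ satisfies $y \leq z$ for some $z \in I$, obtained by applying the compactness of $y$ to the directed subset $I$ of $L$, from which $y \in I$ follows by downward closure of $I$ in $\K(L)$. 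The only subtlety worth flagging is that $\bigvee I$ must exist in $L$: this holds because $L$ is a near sup-lattice and $I$ is nonempty as an ideal. Both $\phi$ and $\psi$ are order-preserving, and their mutual invertibility then delivers the poset isomorphism.
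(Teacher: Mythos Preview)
Your proof is correct and follows essentially the same approach as the paper's: both arguments derive (1) and (3) by applying Lemma \ref{finitetypeprop}(3) to the finitary closure operation $\downarrow$ on $2^S \setmin \{\emptyset\}$ from Lemma \ref{downarrowlemma}, invoke Lemma \ref{joinlemma} for (2), and verify (4) by checking that $x \longmapsto (\downarrow\!\! x) \cap \K(L)$ is an order-preserving inverse to $I \longmapsto \bigvee I$. You spell out more of the details in (3) and (4) than the paper does, but the strategy is identical.
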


\begin{proof}
By Lemma \ref{downarrowlemma} the map $\downarrow$ is a finitary nucleus on the algebraic near sup-lattice $2^S \setmin \{\emptyset \}$ with $\operatorname{im} \downarrow \ = \Idl(S)$.  By Lemma \ref{finitetypeprop}, then, it follows that $\Idl(S)$ is an algebraic near sup-lattice with $\K(\Idl(S)) = \downarrow \!\!(\K(2^S\setmin \{\emptyset\})) = \{\downarrow\!\! x: x \in S\}$.  This implies (1) and (3).  Next, $\K(L)$ is nonempty and therefore a sub join semilattice of $L$ by Lemma \ref{joinlemma}.  This proves (2).  Finally, the map $f: I \longmapsto \bigvee I$ in (4) is well-defined because $L$ is a near sup-lattice, the map $f$ is clearly order-preserving, and the map $x \longmapsto (\downarrow\!\! x) \cap \K(L)$ is an order-preserving inverse to $f$ because $L$ is algebraic and $\K(L)$ is a join semilattice.
\end{proof}

Theorems \ref{maintheorem} and \ref{maintheorem2} below generalize \cite[Proposition 4.1.4]{ros} and are analogues of Proposition \ref{algebraicprop2} for precoherent near prequantales and precoherent prequantales, respectively.  First, we show that the ideal completion of a multiplicative semilattice (resp., prequantic semilattice) is a precoherent near prequantale (resp., precoherent prequantale) under the operation of {\it $\downarrow$-multiplication}, defined by  $(I,J) \longmapsto \downarrow\!\! (IJ)$.

\begin{lemma}\label{replemma} One has the following.
\begin{enumerate}
\item Let $M$ be a multiplicative semilattice.  The operation $\downarrow: X \longmapsto \downarrow\!\! X$ is a finitary nucleus on the precoherent near prequantale $2^M \setmin\{\emptyset\}$, one has $\Idl(M) = \downarrow\!\! (2^M \setmin\{\emptyset\})$, and $\Idl(M)$ is a precoherent near prequantale under $\downarrow$-multiplication.
\item Let $M$ be a prequantic semilattice.  The operation $\downarrow: X \longmapsto \downarrow\!\! X$ is a finitary nucleus on the precoherent prequantale $2^M$, one has $\Idl(M) = \downarrow\!\! (2^M)$, and $\Idl(M)$ is a precoherent prequantale under $\downarrow$-multiplication.
\end{enumerate}
\end{lemma}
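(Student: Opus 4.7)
The plan is to derive Lemma \ref{replemma} by combining Lemma \ref{downarrowlemma} (the structural facts about $\downarrow$) with the finite distributivity built into a multiplicative semilattice, and then invoking Theorem \ref{klattice} to transport precoherence across the nucleus $\downarrow$.

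First, I note that $2^M \setminus \{\emptyset\}$ is a precoherent near prequantale and, when $M$ has an annihilator, $2^M$ is a precoherent prequantale: every subset of $M$ is the directed union of its finite (nonempty) subsets, and these finite subsets are manifestly compact and closed under the magma product of $2^M$. Lemma \ref{downarrowlemma}(2)--(3) already provides that $\downarrow$ is a finitary closure operation on this ambient magma with image $\Idl(M)$, so only the multiplicative (nucleus) condition remains to be checked.

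Second, I verify the nucleus inequality $\downarrow\!\! X \cdot \downarrow\!\! Y \subset \downarrow\!\!(XY)$. For nonempty $X, Y$ and any $x' \in \downarrow\!\! X$, $y' \in \downarrow\!\! Y$, choose finite nonempty $S \subset X$, $T \subset Y$ with $x' \leq \bigvee S$ and $y' \leq \bigvee T$. The finite distributivity of the multiplicative semilattice $M$ (Proposition \ref{semiquantle}) gives $(\bigvee S)(\bigvee T) = \bigvee(ST)$, and since $ST \subset XY$ is finite and nonempty, $x'y' \leq \bigvee(ST)$ lies in $\downarrow\!\!(XY)$. In part (2) the degenerate cases where $X$ or $Y$ is empty reduce to $\downarrow\!\!\emptyset = \{0\}$ acting as annihilator, which holds because $0$ is the annihilator of $M$. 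By the equivalent form of the nucleus condition given in Proposition \ref{closureprop1}, this makes $\downarrow$ a nucleus.

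Third, Proposition \ref{CSTstar}(1) now endows $\Idl(M) = (2^M \setminus \{\emptyset\})^{\downarrow}$ (resp., $(2^M)^{\downarrow}$) with a near prequantale (resp., prequantale) structure under $\downarrow$-multiplication, and Theorem \ref{klattice}, applied with $\star = \downarrow$ (which, being already finitary, equals its own $\star_f$), upgrades this to a \emph{precoherent} near prequantale (resp., precoherent prequantale). The main obstacle is the nucleus verification in the second step; it is the sole place where the multiplicative semilattice hypothesis enters, and everything else is routine bookkeeping delegated to the results already assembled in Sections \ref{sec:MCO}--\ref{sec:FN}.
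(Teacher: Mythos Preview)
Your proof is correct and follows essentially the same route as the paper: verify the nucleus inequality $\downarrow\!\! X \cdot \downarrow\!\! Y \subset \downarrow\!\!(XY)$ via finite distributivity in the multiplicative semilattice, appeal to Lemma~\ref{downarrowlemma} for the finitary closure part, and then invoke Theorem~\ref{klattice} to conclude that $\Idl(M)$ is precoherent. Your writeup is a bit more explicit in citing the auxiliary propositions (\ref{semiquantle}, \ref{closureprop1}, \ref{CSTstar}) and in handling the empty case for part~(2), but these are just expository choices rather than a different argument.
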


\begin{proof}
We prove statement (1).  The proof of (2) is similar.  Let $X,Y \in 2^M \setmin\{\emptyset\}$, and let $z \in \downarrow\!\! X$ and $w \in \downarrow\!\! Y$.  Then $z \leq \bigvee S$ and $w \leq \bigvee T$ for some finite nonempty sets $S \subset X$ and $T \subset Y$.  Therefore $zw \leq \bigvee S \bigvee T = \bigvee (ST)$, where $ST$ is a finite nonempty subset of $XY$, so $zw \in \downarrow\!\! (XY)$.  Thus we have $\downarrow\!\! X \downarrow\!\! Y \subset \downarrow\!\! (XY)$, so $\downarrow$ is a finitary nucleus on $2^M \setmin\{\emptyset\}$ by Lemma \ref{downarrowlemma}.  By Theorem \ref{finitaryclosure}, then, $\Idl(M)$ is a precoherent near prequantale.
\end{proof}

The morphisms in the {\it category of precoherent near prequantales} (resp., {\it category of precoherent prequantales}) are morphisms $f: Q \longrightarrow Q'$ of near prequantales (resp., prequantales), with $Q$ and $Q'$ precoherent, such that $f(\K(Q)) \subset \K(Q')$.

\begin{theorem}\label{maintheorem}
Let $Q$ be a precoherent near prequantale and let $M$ be a multiplicative semilattice.
\begin{enumerate}
\item $\Idl(M)$ is a precoherent near prequantale under $\downarrow$-multiplication.
\item $\K(Q)$ is a multiplicative semilattice.
\item The map $M \longrightarrow \K(\Idl(M))$ acting by $x \longmapsto \downarrow\!\! x$ is an isomorphism of ordered magmas.
\item The map $\Idl(\K(Q)) \longrightarrow Q$ acting by $I \longmapsto \bigvee I$ is an isomorphism of ordered magmas.
\item If $f: Q \longrightarrow Q'$ is a morphism of precoherent near prequantales, then the map $\K(f): \K(Q) \longrightarrow \K(Q')$ given by $\K(f)(x) = f(x)$ for all $x \in \K(Q)$ is a morphism of multiplicative semilattices.
\item If $g: M \longrightarrow M'$ is a morphism of multiplicative semilattices, then the map $\Idl(g): \Idl(M) \longrightarrow \Idl(M')$ given by $\Idl(g)(I) = \downarrow\!\! (g(I))$ for all $I \in \Idl(M)$ is a morphism of precoherent near prequantales.
\item The associations $\K$ and $\Idl$ are functorial and provide an equivalence of categories between category of precoherent near prequantales and the category of multiplicative semilattices.
\end{enumerate}
\end{theorem}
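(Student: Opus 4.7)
Statement (1) is already established by Lemma \ref{replemma}(1). For statement (2), the set $\K(Q)$ is nonempty (being closed under multiplication in any precoherent ordered magma) and is a join subsemilattice of $Q$ by Lemma \ref{joinlemma} together with Proposition \ref{algebraicprop2}(2). Distributivity $a(x \vee y) = ax \vee ay$ and $(x \vee y)a = xa \vee ya$ on $\K(Q)$ is inherited from the near prequantale axioms on $Q$ (Lemma \ref{scott2}), so $\K(Q)$ is a multiplicative semilattice.

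The plan for statements (3) and (4) is to upgrade the poset isomorphisms supplied by Proposition \ref{algebraicprop2}(3)(4) to isomorphisms of ordered magmas. For (3) I would verify directly that $\downarrow \!\!(xy) = \downarrow\!\!(\downarrow\!\! x \cdot \downarrow\!\! y)$: the inclusion $\supset$ uses that multiplication in $M$ is order-preserving (so $zw \leq xy$ whenever $z \leq x$ and $w \leq y$), while the inclusion $\subset$ is immediate from $xy \in \downarrow\!\! x \cdot \downarrow\!\! y$. For (4), given $I,J \in \Idl(\K(Q))$, the identity $\bigvee I \cdot \bigvee J = \bigvee(IJ)$ is exactly the near prequantale distributivity (Proposition \ref{nearprequantales}(7)), and $\bigvee(IJ) = \bigvee \downarrow\!\!(IJ)$ holds for any subset, so the map is a magma homomorphism.

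For statement (5), $\K(f): \K(Q) \to \K(Q')$ is well-defined by the hypothesis $f(\K(Q)) \subset \K(Q')$ built into the definition of morphism of precoherent near prequantales; it is a magma homomorphism since $f$ is, and it preserves finite nonempty suprema since $f$ is near sup-preserving. For statement (6), the map $\Idl(g): I \longmapsto \downarrow\!\! g(I)$ equals the composite of $g$ extended pointwise with the finitary nucleus $\downarrow$ of Lemma \ref{replemma}(1); I would check (i) it is a morphism of ordered magmas (using the magma homomorphism property of $g$ and the identity $\downarrow\!\!(XY) = \downarrow\!\!(\downarrow\!\! X \cdot \downarrow\!\! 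Y)$ established in the proof of (3)), (ii) it is near sup-preserving on $\Idl(M)$ (using that suprema in $\Idl(M)$ are given by $\downarrow$ of the union together with the fact that $g$ preserves finite nonempty suprema and $\downarrow$ is sup-preserving on its image), and (iii) it sends compact elements to compact elements (the compact elements of $\Idl(M)$ are the principal ideals $\downarrow\!\! x$ by Proposition \ref{algebraicprop2}(3), and $\Idl(g)(\downarrow\!\! x) = \downarrow\!\! g(x)$ is principal).

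Finally, for statement (7), functoriality of both $\K$ and $\Idl$ is a routine check (they preserve identities and composition up to the natural identifications above). Naturality of the isomorphisms $x \longmapsto \downarrow\!\! x$ in statement (3) and $I \longmapsto \bigvee I$ in statement (4) in $M$ and $Q$ respectively is straightforward, so these form the unit and counit of the claimed equivalence. The main obstacle I anticipate is really bookkeeping rather than substantive difficulty, namely verifying that $\Idl(g)$ preserves the magma structure; this reduces to the computation $\downarrow\!\! g(XY) = \downarrow\!\!(g(X) g(Y)) = \downarrow\!\!(\downarrow\!\! g(X) \cdot \downarrow\!\! g(Y))$ for $X,Y \in \Idl(M)$, which follows by combining that $g$ is a magma homomorphism with the identity used in (3).
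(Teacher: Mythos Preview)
Your proposal is correct and follows essentially the same approach as the paper's proof. The only stylistic difference is in (3): the paper observes that $x \mapsto \downarrow\!\! x$ factors as the composite $M \to 2^M \setminus \{\emptyset\} \to \Idl(M)$ of the singleton map with the corestriction of the nucleus $\downarrow$, both magma homomorphisms, whereas you verify $\downarrow\!\!(xy) = \downarrow\!\!(\downarrow\!\! x \cdot \downarrow\!\! y)$ directly; and in (6) the paper isolates the inclusion $g(\downarrow\!\! X) \subset \downarrow\!\! g(X)$ as the key observation, which is implicit in your argument.
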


\begin{proof}
Statement (1) follows from Lemma \ref{replemma}, and (2) is clear.  To prove (3), note that the map $M \longrightarrow \K(\Idl(M)) \subset \Idl(M)$ is the composition $M \longrightarrow  2^M\setmin\{\emptyset\} \longrightarrow \ \downarrow\!\! (2^M \setmin \{\emptyset\}) = \Idl(M)$ of magma homomorphisms and is therefore a magma homomorphism.  Thus it is an isomorphism of ordered magmas, by Proposition \ref{algebraicprop2}(3).  The map in statement (4) is a magma homomorphism by Proposition \ref{nearprequantales} and therefore is an isomorphism of ordered magmas by Proposition \ref{algebraicprop2}(4).  Statement (5) is clear.  To prove (6), first note that $g(\downarrow\!\! X) \subset \ \downarrow\!\! (g(X))$ for any nonempty subset $X$ of $M$.   The map $\Idl(g)$ is then easily shown to be a morphism of near prequantales, and since $\Idl(g)(\downarrow\!\! x) = \downarrow\!\! (g(x))$ for all $x \in Q$, it follows from (3) that $\Idl(g)$ is a morphism of precoherent near prequantales.  Finally, (7) follows from (1) through (6).
\end{proof}

\begin{corollary} The following are equivalent for any ordered magma $M$.
\begin{enumerate}
\item $M$ is a precoherent near prequantale.
\item $M$ is isomorphic to the ideal completion $\Idl(N)$ under $\downarrow$-multiplication of some multiplicative semilattice $N$.
\item $\K(M)$ is a sub multiplicative semilattice of $M$ and $M$ is isomorphic to the ordered magma $\Idl(\K(M))$ under $\downarrow$-multiplication.
\item $M$ is isomorphic to $(2^N \setmin\{\emptyset\})^\star$ for some multiplicative semilattice $N$ and some finitary nucleus $\star$ on $2^N \setmin\{\emptyset\}$.
\end{enumerate}
\end{corollary}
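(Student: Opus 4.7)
The plan is to deduce this corollary entirely from Theorem \ref{maintheorem} together with Lemma \ref{replemma} and Theorem \ref{klattice}. I will organize the proof as a circle of implications $(1) \Rightarrow (3) \Rightarrow (2) \Rightarrow (4) \Rightarrow (1)$.

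First, assuming (1), Theorem \ref{maintheorem}(2) gives that $\K(M)$ is a multiplicative semilattice, and hence a sub multiplicative semilattice of $M$ (using that $\K(M)$ is closed under multiplication because $M$ is precoherent, and under the finite joins that exist in $M$ by Lemma \ref{joinlemma}); then Theorem \ref{maintheorem}(4) supplies the isomorphism $M \cong \Idl(\K(M))$ under $\downarrow$-multiplication, giving (3). The implication $(3) \Rightarrow (2)$ is immediate by taking $N = \K(M)$.

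For $(2) \Rightarrow (4)$, I would invoke Lemma \ref{replemma}(1): since $\Idl(N) = \ \downarrow\!\!(2^N \setmin\{\emptyset\}) = (2^N \setmin\{\emptyset\})^{\downarrow}$ where $\downarrow$ is a finitary nucleus on $2^N \setmin\{\emptyset\}$, one obtains the required representation with $\star = \ \downarrow$. Finally, for $(4) \Rightarrow (1)$, note that when $N$ is a multiplicative semilattice the ordered magma $2^N \setmin \{\emptyset\}$ is a precoherent near prequantale (this is essentially the content of Lemma \ref{replemma}(1), since the near prequantale structure on $2^N \setmin\{\emptyset\}$ is verified there and precoherence reduces to the fact that finite nonempty subsets of $N$ are the compact elements and are closed under multiplication). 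Then Theorem \ref{klattice} applied to the finitary nucleus $\star$ yields that $(2^N \setmin\{\emptyset\})^\star$ is again a precoherent near prequantale, whence so is $M$.

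There is no real obstacle: the whole proof is essentially a bookkeeping exercise that packages the main theorem into a list of equivalent intrinsic and representation-theoretic descriptions. The one place where a small check is required is in $(4) \Rightarrow (1)$, where one must confirm that $2^N \setmin\{\emptyset\}$ is precoherent for any multiplicative semilattice $N$ (so that Theorem \ref{klattice} applies); this is routine since the compact elements of $2^N \setmin\{\emptyset\}$ are exactly the finite nonempty subsets of $N$ and these are closed under the pointwise product inherited from $N$.
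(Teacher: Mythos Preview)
Your proof is correct and matches the paper's intended approach: the corollary is stated without proof, and your cycle $(1)\Rightarrow(3)\Rightarrow(2)\Rightarrow(4)\Rightarrow(1)$ using Theorem~\ref{maintheorem}, Lemma~\ref{replemma}(1), and Theorem~\ref{klattice} is exactly the routine reconstruction the paper has in mind.
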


\begin{corollary} An ordered magma $M$ is a multiplicative semilattice if and only if $M$ is isomorphic to $\K(Q)$ for some precoherent near prequantale $Q$.
\end{corollary}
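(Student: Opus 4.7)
The plan is to deduce this directly from Theorem \ref{maintheorem}, which already contains both implications in essentially explicit form; the corollary is simply a repackaging of parts (1), (2), and (3) of that theorem.

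For the forward direction, suppose $M$ is a multiplicative semilattice. By Theorem \ref{maintheorem}(1), the ideal completion $Q = \Idl(M)$ is a precoherent near prequantale under $\downarrow$-multiplication. By Theorem \ref{maintheorem}(3), the map $x \longmapsto \ \downarrow\!\! x$ is an isomorphism of ordered magmas from $M$ onto $\K(\Idl(M)) = \K(Q)$. So $M \cong \K(Q)$, with $Q$ a precoherent near prequantale, as required.

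For the reverse direction, suppose $M \cong \K(Q)$ for some precoherent near prequantale $Q$. By Theorem \ref{maintheorem}(2), $\K(Q)$ is a multiplicative semilattice, and since being a multiplicative semilattice is preserved under isomorphism of ordered magmas, so is $M$.

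There is no real obstacle here: the content has already been proved in establishing Theorem \ref{maintheorem}. The only thing to remark is that the corollary extracts just the object-level statement of the equivalence of categories, without reference to morphisms, so a two-sentence argument invoking parts (1)--(3) of that theorem suffices.
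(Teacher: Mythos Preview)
Your proof is correct and is exactly the intended argument: the paper states this as a corollary with no proof given, meaning it is to be read off immediately from parts (1), (2), and (3) of Theorem \ref{maintheorem}, precisely as you do.
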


Similar proofs of the above results yield the following.

\begin{theorem}\label{maintheorem2}
One has the following.
\begin{enumerate}
\item If $f: Q \longrightarrow Q'$ is a morphism of precoherent prequantales, then the map
$\K(f): \K(Q) \longrightarrow \K(Q')$ given by $\K(f)(x) = f(x)$ for all $x \in \K(Q)$ is a morphism of prequantic semilattices.
\item If $g: M \longrightarrow M'$ is a morphism of prequantic semilattices, then the map $\Idl(g): \Idl(M) \longrightarrow \Idl(M')$ given by $\Idl(g)(I) = \downarrow\!\! (g(I))$ for all $I \in \Idl(M)$ is a morphism of precoherent prequantales.
\item The associations $\K$ and $\Idl$ are functorial and provide an equivalence of categories between category of precoherent prequantales and the category of prequantic semilattices.
\end{enumerate}
\end{theorem}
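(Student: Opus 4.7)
The plan is to transfer the proof of Theorem \ref{maintheorem} to the prequantale/prequantic semilattice setting, tracking the additional annihilator data throughout. The infrastructure is already in place: Lemma \ref{replemma}(2) supplies that $\Idl(M)$ under $\downarrow$-multiplication is a precoherent prequantale whenever $M$ is a prequantic semilattice; Lemma \ref{downarrowlemma}(3) extends $\downarrow$ to a finitary closure operation on $2^M$ with $\downarrow\!\! \emptyset$ equal to the least element of $\Idl(M)$; and Proposition \ref{algebraicprop2} gives the poset isomorphisms $x \longmapsto\ \downarrow\!\! x$ and $I \longmapsto \bigvee I$ which will furnish the unit and counit of the equivalence.

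For statement (1), I would first note that a morphism $f : Q \longrightarrow Q'$ of prequantales is sup-preserving and hence carries $\bigwedge Q$ (which is compact, being a minimum element) to $\bigwedge Q'$. The assumption $f(\K(Q)) \subset \K(Q')$ then ensures $\K(f) = f|_{\K(Q)}$ is a well-defined map into $\K(Q')$; it is a magma homomorphism by restriction, and its preservation of all finite suprema, \emph{including the empty one} (the annihilator), follows by restriction from $f$. This is precisely what it means to be a morphism of prequantic semilattices.

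For statement (2), the essential new content over Theorem \ref{maintheorem}(6) is preservation of the annihilator. Since a morphism $g$ of prequantic semilattices preserves the empty supremum, $g(\bigwedge M) = \bigwedge M'$, whence $\Idl(g)(\downarrow\!\!\emptyset) = \ \downarrow\!\! g(\{\bigwedge M\}) = \ \downarrow\!\! \{\bigwedge M'\} = \ \downarrow\!\!\emptyset$ in $\Idl(M')$; so $\Idl(g)$ carries the bottom element of $\Idl(M)$ to that of $\Idl(M')$. Combined with the near-prequantale morphism property already verified in the proof of Theorem \ref{maintheorem}(6), this upgrades $\Idl(g)$ to a sup-preserving magma morphism of prequantales. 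Preservation of compact elements then follows from the identification $\K(\Idl(M)) = \{\downarrow\!\! x : x \in M\}$ together with $\Idl(g)(\downarrow\!\! x) = \ \downarrow\!\! g(x)$.

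For statement (3), functoriality of $\K$ and $\Idl$ is routine, and the required natural isomorphisms come directly from Proposition \ref{algebraicprop2} together with the magma-homomorphism verifications of Theorem \ref{maintheorem}(3)(4); that they also respect the annihilator is automatic from sup-preservation. The main obstacle is purely bookkeeping — tracking $\bigwedge M$ and $\downarrow\!\!\emptyset$ carefully at each step and confirming that the extension of $\downarrow$ from $2^M \setmin \{\emptyset\}$ to $2^M$ supplied by Lemma \ref{downarrowlemma}(3) remains a nucleus (this last point is already contained in Lemma \ref{replemma}(2)); no conceptually new ingredient beyond Theorem \ref{maintheorem} is required.
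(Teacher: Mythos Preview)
Your proposal is correct and takes essentially the same approach as the paper: the paper's own proof consists of the single remark that ``Similar proofs of the above results yield the following,'' i.e., one reruns the argument of Theorem \ref{maintheorem} while tracking the annihilator, which is exactly what you do. Your explicit verification that the annihilator is preserved at each step (via sup-preservation of the empty set and the identification $\downarrow\!\!\emptyset = \{\bigwedge M\}$) simply spells out what the paper leaves implicit.
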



\begin{corollary} The following are equivalent for any ordered magma $M$.
\begin{enumerate}
\item $M$ is a precoherent prequantale.
\item $M$ is isomorphic to the ideal completion $\Idl(N)$ under $\downarrow$-multiplication of some prequantic semilattice $N$.
\item $\K(M)$ is a sub prequantic semilattice of $M$ and $M$ is isomorphic to the ordered magma $\Idl(\K(M))$ under $\downarrow$-multiplication.
\item $M$ is isomorphic to $(2^N)^\star$ for some prequantic semilattice $N$ and some finitary nucleus $\star$ on $2^N$.
\end{enumerate}
\end{corollary}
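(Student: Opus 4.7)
The plan is to establish the chain of implications $(1) \Rightarrow (3) \Rightarrow (2) \Rightarrow (4) \Rightarrow (1)$, relying on the precoherent-prequantale analogs of Theorem \ref{maintheorem}(1)--(4) promised by the remark ``Similar proofs of the above results yield the following'' preceding Theorem \ref{maintheorem2}, together with Lemma \ref{replemma}(2) and Theorem \ref{klattice}.

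For $(1) \Rightarrow (3)$ I would invoke the analogs of Theorem \ref{maintheorem}(2) and (4): if $M$ is a precoherent prequantale, then $\K(M)$ is a sub prequantic semilattice of $M$, and the map $I \longmapsto \bigvee I$ is an isomorphism of ordered magmas from $\Idl(\K(M))$, endowed with $\downarrow$-multiplication, onto $M$. The implication $(3) \Rightarrow (2)$ is then trivial, taking $N = \K(M)$.

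For $(2) \Rightarrow (4)$ I would let $N$ be the given prequantic semilattice and take $\star$ to be the closure operation $\downarrow$ on $2^N$. By Lemma \ref{replemma}(2), $\downarrow$ is a finitary nucleus on the precoherent prequantale $2^N$ whose image is exactly $\Idl(N)$, and under $\downarrow$-multiplication $\Idl(N)$ coincides with the quotient $(2^N)^\downarrow$ as an ordered magma, since by definition the $\star$-multiplication on $(2^N)^\star$ is $(X,Y) \longmapsto (XY)^\star = \ \downarrow\!(XY)$. For $(4) \Rightarrow (1)$ I would again use Lemma \ref{replemma}(2) to note that $2^N$ is a precoherent prequantale, and then apply Theorem \ref{klattice} to the finitary nucleus $\star$ to conclude that $(2^N)^\star$, hence $M$, is a precoherent prequantale.

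The argument is essentially a repackaging of previously established facts, so I do not foresee any substantive obstacle. The only point requiring mild care is checking that the $\downarrow$-multiplication on $\Idl(N)$ used in condition (2) agrees with the quotient $\star$-multiplication on $(2^N)^\downarrow$ used in condition (4); but this is immediate from the definitions. The real work sits in Theorem \ref{klattice} and in the ``similar proofs'' underpinning Theorem \ref{maintheorem2}, which together do all the heavy lifting.
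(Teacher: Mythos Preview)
Your proposal is correct and matches the paper's intended argument: the corollary is stated without proof, and the implicit reasoning is exactly the cycle $(1)\Rightarrow(3)\Rightarrow(2)\Rightarrow(4)\Rightarrow(1)$ via the prequantale analogs of Theorem~\ref{maintheorem}(1)--(4), Lemma~\ref{replemma}(2), and Theorem~\ref{klattice}. Your remark that the $\downarrow$-multiplication on $\Idl(N)$ coincides with the $\star$-multiplication on $(2^N)^{\downarrow}$ is the only point needing comment, and you handle it correctly.
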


\begin{corollary} An ordered magma $M$ is a prequantic semilattice if and only if $M$ is isomorphic to $\K(Q)$ for some precoherent prequantale $Q$.
\end{corollary}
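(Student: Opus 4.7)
The plan is to read this corollary as extracting the object-level content of the equivalence in Theorem \ref{maintheorem2}(3); since Theorem \ref{maintheorem2} is the prequantale analogue of Theorem \ref{maintheorem}, and the previous corollary (for multiplicative semilattices and precoherent near prequantales) was similarly an immediate consequence of Theorem \ref{maintheorem}, the present corollary should be handled in parallel fashion.

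For the forward (``only if'') direction I would take $Q = \Idl(M)$ equipped with $\downarrow$-multiplication. By Lemma \ref{replemma}(2), $Q$ is then a precoherent prequantale, and by Proposition \ref{algebraicprop2}(3) applied to the underlying sup-lattice structure, the map $x \longmapsto \ \downarrow\!\! x$ is a poset isomorphism from $M$ to $\K(Q)$. To upgrade this to an isomorphism of ordered magmas it suffices to note that, with $\downarrow$-multiplication on $\Idl(M)$ defined by $(I,J) \longmapsto \ \downarrow\!\!(IJ)$, one has $\downarrow\!\! x \cdot \downarrow\!\! y = \ \downarrow\!\!(xy)$, which is the statement that $\downarrow$ is a magma homomorphism $2^M \longrightarrow \Idl(M)$ as in Lemma \ref{replemma}(2).

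For the reverse (``if'') direction I would verify that $\K(Q)$ is a prequantic semilattice whenever $Q$ is a precoherent prequantale. Closure of $\K(Q)$ under multiplication is part of the definition of precoherent. Closure under finite \emph{nonempty} suprema (and the resulting sub join semilattice structure) follows from Lemma \ref{joinlemma}. The distributive laws $a(x \vee y) = ax \vee ay$ and $(x \vee y)a = xa \vee ya$ in $\K(Q)$ are inherited from $Q$, since $Q$ is a prequantale. It remains to produce an annihilator in $\K(Q)$: since $Q$ is a prequantale, its least element $0 = \bigwedge Q = \bigvee \emptyset$ is an annihilator of $Q$, and as a minimal element of $Q$ it is compact (as noted in Section \ref{sec:PFN}), so $0 \in \K(Q)$ and clearly serves as annihilator for the submagma $\K(Q)$. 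Hence $\K(Q)$ is a prequantic semilattice, and $M \cong \K(Q)$ is therefore a prequantic semilattice as well.

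The only step that requires any care is the annihilator check in the reverse direction, which is the one point where the prequantale case genuinely uses more than the near prequantale case; everything else is either routine or a direct appeal to earlier results in this section. I would therefore expect a two-sentence proof in the paper, essentially of the form ``Apply Theorem \ref{maintheorem2} and note that the annihilator of $Q$ lies in $\K(Q)$ because minimal elements are compact.''
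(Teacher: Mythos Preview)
Your proposal is correct and matches the paper's (implicit) approach: the paper gives no explicit proof for this corollary, treating it as routine from the surrounding results, and your argument spells out exactly that routine---forward direction via $Q=\Idl(M)$ using Lemma \ref{replemma}(2) and the isomorphism $M\cong\K(\Idl(M))$, reverse direction by checking that $\K(Q)$ inherits the multiplicative semilattice structure plus the annihilator $0=\bigwedge Q\in\K(Q)$ since minimal elements are compact. Your expectation of a terse proof is, if anything, generous: the paper omits the proof entirely.
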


The results in this section also specialize to coherent prequantales and unital prequantic semilattices, as well as to coherent near prequantales and unital multiplicative semilattices.  Likewise, all of these results specialize to the corresponding settings where all operations in question are associative and/or commutative.

\section{Divisorial closure and simple near prequantales}\label{sec:DC}

In this section we generalize the generalized divisorial closure semistar operations \cite[Example 1.8(2)]{pic}, and we use this to derive a characterization of the simple near prequantales that is of a different vein than the characterization in \cite[Theorem 2.5]{krum} of the simple quantales.  If $a$ is an element of an ordered magma $M$ and there exists a coarsest nucleus $\star$ on $M$ such that $a^\star = a$, then we denote it by $v(a) = v_M(a)$ and call it {\it divisorial closure on $M$ with respect to $a$}.  For example, if $\bigvee M$ exists then $v(\bigvee M)$ exists and equals $e = \bigvee \N(M)$. 

\begin{proposition}\label{divprop}
Let $M$ be a near sup-magma.
\begin{enumerate}
\item $v(a)$ exists for a given $a \in M$ if and only if  $a^{v'} = a$, where $v' = \bigvee \{\star \in \N(M): \ a^\star = a\}$, in which case $v(a) = v'$.
\item If $\star$ is a nucleus on $M$ such that $v(a)$ exists for all $a \in M^\star$, then $\star = \bigwedge \{v(a): \ a \in M^\star\}$.
\end{enumerate}
\end{proposition}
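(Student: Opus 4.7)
The plan is to reduce both parts to the fact that $\N(M)$ is a complete lattice, granted by Proposition~\ref{CMC}(1) since $M$ is near sup-complete, together with the explicit description $x^{\bigwedge_{\N(M)} \Gamma} = \bigwedge\{x^\star : \star \in \Gamma\}$ of arbitrary infima of nuclei, which that same proposition supplies.

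For part (1), I would set $A = \{\star \in \N(M) : a^\star = a\}$. By definition $v(a)$ exists precisely when $A$ has a greatest element, and in that case $v(a) = \max A$. Since $\N(M)$ is complete the join $v' = \bigvee_{\N(M)} A$ exists, and $v'$ is automatically an upper bound of $A$; therefore $v' = \max A$ if and only if $v' \in A$ if and only if $a^{v'} = a$. This yields both the biconditional and the identification $v(a) = v'$ simultaneously.

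For part (2), I would write $\star' = \bigwedge\{v(a) : a \in M^\star\}$ and prove the two inequalities separately. That $\star \leq \star'$ is immediate: for every $a \in M^\star$ one has $a^\star = a$, so $\star$ lies in the set $\{\star'' \in \N(M) : a^{\star''} = a\}$ whose coarsest element is $v(a)$, whence $\star \leq v(a)$; taking the meet over $a$ gives $\star \leq \star'$. For the reverse, given $x \in M$ the main move is to instantiate the meet at the particular element $a = x^\star \in M^\star$: then $x \leq a$ and $a^{v(a)} = a$, so monotonicity of $v(a)$ forces $x^{v(a)} \leq a^{v(a)} = a = x^\star$, and the infimum formula recalled above then yields $x^{\star'} \leq x^{v(a)} \leq x^\star$. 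I do not anticipate a real obstacle; the only step worth flagging in advance is the specialisation $a = x^\star$, which is the natural choice for pinning the infimum down to an element directly comparable with $x^\star$ on both sides.
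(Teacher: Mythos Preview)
Your proposal is correct and follows essentially the same approach as the paper. The only cosmetic difference is in establishing $\star' \leq \star$: the paper shows $M^\star \subset M^{\star'}$ by checking that each $x \in M^\star$ satisfies $x^{\star'} = x$ (using $x^{v(x)} = x$) and then invokes the equivalence $\star_1 \leq \star_2 \Leftrightarrow M^{\star_1} \supset M^{\star_2}$, whereas you compute $x^{\star'} \leq x^\star$ directly for arbitrary $x$ via the specialisation $a = x^\star$---but these are the same idea viewed from two angles.
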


\begin{proof}
Statement (1) is clear.  To prove (2), let $\star \in \N(M)$, and let $\star' = \bigwedge \{v(a): \ a \in M^\star\}$.  For any $x \in M^\star$ one has $x \leq x^{\star'} = \bigwedge\{x^{v(a)}:\ a \in M^\star\} \leq x$, whence $M^\star \subset M^{\star'}$, so $\star \geq \star'$.  Moreover, for any $a \in M^\star$ one has $a^\star = a$ and therefore $\star \leq v(a)$, so $\star \leq \star'$.  It follows that $\star = \star'$.
\end{proof}

The following result follows from Propositions \ref{divprop} and \ref{CMC}(2).

\begin{proposition}\label{vclosureprop}
If $Q$ is a near prequantale, then $v(a)$ exists and equals $\bigvee \{\star \in \N(Q): \ a^\star = a\}$ for all $a \in Q$, and one has $\star = \bigwedge \{v(a): \ a \in Q^\star\}$ for all $\star \in \N(Q)$.
\end{proposition}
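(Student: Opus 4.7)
The plan is to deduce both halves of the statement by direct application of the two cited results, essentially bookkeeping what they say in this setting. Fix $a \in Q$ and set
\[
\Gamma_a \ = \ \{\star \in \N(Q): \ a^\star = a\}.
\]
The set $\Gamma_a$ is nonempty since it contains the identity nucleus $d$. Because $Q$ is a near prequantale, Proposition \ref{CMC}(2) applies to arbitrary subsets of $\N(Q)$, so $v' := \bigvee_{\N(Q)} \Gamma_a$ exists and
\[
Q^{v'} \ = \ \bigcap_{\star \in \Gamma_a} Q^\star.
\]

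Since $a \in Q^\star$ for every $\star \in \Gamma_a$, the displayed equality gives $a \in Q^{v'}$, i.e., $a^{v'} = a$. Proposition \ref{divprop}(1) then tells us that $v(a)$ exists and $v(a) = v'$, which is exactly the first assertion: $v(a) = \bigvee\{\star \in \N(Q): a^\star = a\}$.

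For the second assertion, let $\star \in \N(Q)$ be arbitrary. By the first part we already know that $v(a)$ exists for every $a \in Q^\star$, so the hypothesis of Proposition \ref{divprop}(2) is satisfied, and that proposition immediately yields $\star = \bigwedge\{v(a): a \in Q^\star\}$. No additional work is required, and there is no real obstacle — the content of the proposition is just that the two cited results combine cleanly once one observes that in a near prequantale every subset of $\N(Q)$ admits a supremum (this is the point at which the near prequantale hypothesis is used, via Proposition \ref{CMC}(2), as opposed to merely bounded-complete or near sup-complete settings where one would only know existence of bounded suprema).
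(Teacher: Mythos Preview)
Your proof is correct and follows exactly the approach indicated in the paper, which simply states that the result follows from Propositions \ref{divprop} and \ref{CMC}(2); you have merely spelled out the bookkeeping. One minor quibble with your closing parenthetical: for a near sup-complete $M$ the poset $\N(M)$ is already complete by Proposition \ref{CMC}, so arbitrary suprema exist regardless; the near prequantale hypothesis is needed specifically to invoke the identity $Q^{\bigvee \Gamma_a} = \bigcap_{\star \in \Gamma_a} Q^\star$ from part (2), not merely the existence of the supremum.
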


\begin{corollary}\label{simpleprequantales}
The following are equivalent for any near prequantale $Q$.
\begin{enumerate}
\item $Q$ is simple.
\item The only nuclei on $Q$ are $d$ and $e$.
\item $v(a) = d$ for all $a < \bigvee Q$.
\end{enumerate}
\end{corollary}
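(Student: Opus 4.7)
The plan is to use the two earlier results already in hand, namely Corollary~\ref{desimple} (which says that simplicity of $Q$ is equivalent to $\N(Q) = \{d, e\}$) and Proposition~\ref{vclosureprop} (which provides the representation $\star = \bigwedge\{v(a) : a \in Q^\star\}$ for every nucleus on a near prequantale).

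The equivalence (1)$\Leftrightarrow$(2) is immediate from Corollary~\ref{desimple}, so the work is in (2)$\Leftrightarrow$(3). First I would record the observation that $e$ is characterized among the nuclei on $Q$ by $Q^\star = \{\bigvee Q\}$: the element $\bigvee Q$ always lies in $Q^\star$ for any closure operation, and if $\star \neq e$ then there is some $x \in Q$ with $x^\star \neq \bigvee Q$, so $x^\star \in Q^\star$ is an element strictly smaller than $\bigvee Q$; conversely, if $Q^\star$ contains only $\bigvee Q$ then every $x^\star$ equals $\bigvee Q$, forcing $\star = e$.

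For (2)$\Rightarrow$(3): assume the only nuclei on $Q$ are $d$ and $e$. By Proposition~\ref{vclosureprop}, $v(a)$ exists for every $a \in Q$, so $v(a) \in \{d,e\}$. If $a < \bigvee Q$, then $a^{v(a)} = a \neq \bigvee Q$, which rules out $v(a) = e$; thus $v(a) = d$.

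For (3)$\Rightarrow$(2): let $\star \in \N(Q)$ and suppose $\star \neq e$. By the opening observation, there exists $a \in Q^\star$ with $a < \bigvee Q$. Then by hypothesis $v(a) = d$, so Proposition~\ref{vclosureprop} gives
\[
\star \;=\; \bigwedge\{v(b) : b \in Q^\star\} \;\leq\; v(a) \;=\; d,
\]
and since $d$ is the smallest nucleus, $\star = d$. I expect no substantive obstacle here: the whole argument is essentially a bookkeeping exercise on top of Proposition~\ref{vclosureprop} and Corollary~\ref{desimple}, with the only subtle point being the characterization of $e$ as the unique nucleus whose fixed set is $\{\bigvee Q\}$.
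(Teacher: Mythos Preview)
Your proof is correct and matches the intended approach: the paper states this result as an immediate corollary without proof, relying precisely on Corollary~\ref{desimple} and Proposition~\ref{vclosureprop} as you do. One minor simplification: for (3)$\Rightarrow$(2) you do not need the full infimum representation, since the defining maximality of $v(a)$ already gives $\star \leq v(a)$ whenever $a \in Q^\star$.
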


If $Q$ is a unital near quantale then we can determine a simple and explicit formula for the divisorial closure operations $v(a)$.

\begin{lemma}\label{vlemma}
Let $Q$ be an ordered monoid and $a \in Q$.  If for any $x \in Q$ there is a largest element $x^\star$ of $Q$ such that $rxs \leq a$ implies $r x^\star s \leq a$ for all $r,s \in Q$, then $v(a)$ exists and $x^{v(a)} = x^\star$ for all $x \in Q$.
\end{lemma}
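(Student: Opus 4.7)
The plan is to show that the hypothesized operation $\star$ is a nucleus on $Q$, that it fixes $a$, and that it dominates every nucleus fixing $a$; by Proposition \ref{divprop}(1) (or just the definition of $v(a)$) this will identify $\star$ with $v(a)$. Throughout, the guiding observation is that $x$ itself trivially satisfies the implication ``$rxs \leq a \Rightarrow rxs \leq a$'', so $x$ is always one of the candidates whose supremum is $x^\star$; this immediately gives $x \leq x^\star$, so $\star$ is expansive.

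First I would check that $\star$ is a closure operation on $Q$. Order-preservation: if $x \leq y$ and $rys \leq a$, then $rxs \leq rys \leq a$, so $rx^\star s \leq a$, showing $x^\star$ is a candidate in the defining family of $y^\star$, hence $x^\star \leq y^\star$. Idempotence: using the implication twice, if $rxs \leq a$ then $rx^\star s \leq a$ and then $r(x^\star)^\star s \leq a$, so $(x^\star)^\star$ also belongs to the defining family for $x$, hence $(x^\star)^\star \leq x^\star$, and the reverse inequality comes from expansivity.

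Next I would verify the nucleus inequality $x^\star y^\star \leq (xy)^\star$. Given $r,s$ with $r(xy)s \leq a$, group as $(rx)ys \leq a$ to conclude $(rx)y^\star s \leq a$, then regroup as $rx(y^\star s) \leq a$ to conclude $rx^\star(y^\star s) \leq a$, i.e.\ $r(x^\star y^\star)s \leq a$. This says $x^\star y^\star$ is a candidate for the defining family of $(xy)^\star$, giving the required inequality. Applying the defining property to $x = a$ with $r = s = 1$ shows $a^\star \leq a$, hence $a^\star = a$.

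Finally, for coarseness, let $\star'$ be any nucleus with $a^{\star'} = a$. For any $r,s \in Q$ with $rxs \leq a$, applying $\star'$ and using that it is a nucleus and an order-preserving expansive map with $a$ fixed yields
\[
r x^{\star'} s \;\leq\; r^{\star'} x^{\star'} s^{\star'} \;\leq\; (rxs)^{\star'} \;\leq\; a^{\star'} \;=\; a.
\]
Thus $x^{\star'}$ lies in the defining family for $x^\star$, so $x^{\star'} \leq x^\star$, proving $\star' \leq \star$. The main delicate point, though entirely routine, is the multiplicativity step, where one must carefully use associativity of $Q$ to regroup $r(xy)s$ in two ways so that the defining property can be applied successively to $y^\star$ and then to $x^\star$; everything else is an almost tautological consequence of how the maximality in the hypothesis is set up.
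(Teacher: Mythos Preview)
Your proof is correct and follows essentially the same approach as the paper: verify that $\star$ is a closure operation, establish the nucleus inequality by regrouping via associativity, check $a^\star = a$ using $r = s = 1$, and prove coarseness by showing any nucleus fixing $a$ is bounded by $\star$. The only cosmetic difference is that you apply the defining property first to $y$ and then to $x$ in the multiplicativity step, whereas the paper does it in the opposite order, and you spell out the chain $rx^{\star'}s \leq r^{\star'}x^{\star'}s^{\star'} \leq (rxs)^{\star'} \leq a$ more explicitly than the paper does.
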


\begin{proof}
First we show that the map $\star$ defined in the statement of the lemma is a nucleus on $Q$.  Let $x,y \in Q$.  If $x \leq y$, then $rys \leq a \Rightarrow rxs \leq a \Rightarrow rx^{\star}s \leq a$, whence $x^{\star} \leq y^{\star}$.   Likewise $rxs \leq a \Rightarrow rxs \leq a$, whence $x \leq x^{\star}$.  Moreover, one has
$rxs \leq a \Rightarrow rx^{\star} s \leq a \Rightarrow r(x^{\star})^{\star}s \leq a$, whence $(x^{\star})^{\star} \leq x^{\star}$, whence equality holds.  Thus $\star$ is a closure operation on $Q$.  Next, note that $rxys \leq a \Rightarrow rx^{\star}ys \leq a \Rightarrow rx^{\star}y^{\star}s \leq a$, whence $x^{\star}y^{\star} \leq (xy)^{\star}$.  Thus $\star$ is a nucleus on $Q$.

Next we observe that $1a1 \leq a$, which implies $1a^{\star}1 \leq a$, whence $a^{\star} = a$.
Finally, let $\star'$ be any nucleus on $Q$ with $a^{\star'} = a$.  Then for any $x \in Q$ we have
$rxs \leq a \Rightarrow rx^{\star'}s \leq a^\star = a$ for all $r,s \in Q$, whence $x^{\star'} \leq x^\star$.  Thus $\star' \leq \star$.  It follows that $\star$ is the largest nucleus on $Q$ such that $a^{\star} = a$.  Thus $v(a)$ exists and equals $\star$.
\end{proof}

\begin{proposition}\label{vquantales}
Let $Q$ be a unital near quantale and $a \in Q$.  Then $v(a)$ exists, and one has
$$x^{v(a)} = \bigvee\{y \in Q: \ \forall r,s \in Q \ (rxs \leq a \Rightarrow rys \leq a)\}$$
for all $x \in Q$; alternatively, $x^{v(a)}$ is the largest element of $Q$ such
that $rxs \leq a$ implies $rx^{v(a)}s \leq a$ for all $r,s \in Q$.
\end{proposition}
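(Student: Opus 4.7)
The plan is to reduce the proposition to Lemma \ref{vlemma} by constructing the required largest element $x^\star$ explicitly as the supremum in the displayed formula, and then verifying the hypothesis of that lemma. Throughout, the key structural fact I will use is that $Q$, being a unital near quantale, is near sup-complete with multiplication near sup-preserving in each variable (Proposition \ref{nearprequantales}), so suprema of nonempty subsets exist and distribute over products.

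First, for each $x \in Q$, set
$$Y_x = \{y \in Q : \forall r,s \in Q \ (rxs \leq a \Rightarrow rys \leq a)\}.$$
I first observe that $Y_x$ is nonempty, since $x \in Y_x$ trivially. Because $Q$ is near sup-complete, $x^\star := \bigvee Y_x$ exists in $Q$.

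Next I show $x^\star \in Y_x$, i.e.\ that $x^\star$ itself satisfies the implication defining membership in $Y_x$. Suppose $rxs \leq a$ for some $r,s \in Q$. By Proposition \ref{nearprequantales}(5) the left and right multiplication maps by $r$ and $s$ on $Q$ are near sup-preserving, so applying this twice,
$$rx^\star s = r\bigl({\bigvee}Y_x\bigr)s = {\bigvee}\{rys : y \in Y_x\}.$$
For each $y \in Y_x$, membership in $Y_x$ applied to this specific pair $(r,s)$ with $rxs \leq a$ yields $rys \leq a$. Taking the supremum over $y \in Y_x$ gives $rx^\star s \leq a$. This shows $x^\star \in Y_x$, and by construction $x^\star$ is the largest element of $Y_x$; equivalently, $x^\star$ is the largest element of $Q$ such that $rxs \leq a$ implies $rx^\star s \leq a$ for all $r,s \in Q$.

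Having established that the hypothesis of Lemma \ref{vlemma} holds (the largest such element exists for every $x \in Q$), we conclude from that lemma that $v(a)$ exists and $x^{v(a)} = x^\star = \bigvee Y_x$, proving both formulations in the proposition. The only nontrivial step is the argument that $x^\star \in Y_x$; this is the place where near sup-preservation of multiplication is essential (without it, pushing the supremum inside $r(\cdot)s$ would fail), and it is the sole obstacle to the proof. Everything else is immediate from the definition and from Lemma \ref{vlemma}.
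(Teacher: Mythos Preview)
Your proof is correct and follows essentially the same approach as the paper: define $x^\star$ as the given supremum, use near sup-preservation of multiplication to verify that $x^\star$ itself lies in the set $Y_x$ (hence is its largest element), and then invoke Lemma \ref{vlemma}. The paper compresses the verification that $x^\star \in Y_x$ into a single sentence, but the argument is identical.
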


\begin{proof}
Let $x^\star =  \bigvee\{y \in Q: \ \forall r,s \in Q \ (rxs \leq a \Rightarrow rys \leq a)\}$ for all $x \in Q$.  Since $Q$ is a near quantale, one has $y \leq x^\star$ if and only if $rxs \leq a$ implies $r y s \leq a$ for all $r,s \in Q$.  Therefore by Lemma \ref{vlemma} one has $\star = v(a)$.
\end{proof}

\begin{corollary}\label{simplequantales}
A unital near quantale $Q$ is simple if and only if, for any $x,y,a \in Q$ with $a < \bigvee Q$,
one has $x \leq y$ if and only if $rys \leq a$ implies $rxs \leq a$ for all $r,s \in Q$.
\end{corollary}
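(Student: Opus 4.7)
The plan is to combine Corollary \ref{simpleprequantales} with the explicit formula for $v(a)$ given in Proposition \ref{vquantales}. By Corollary \ref{simpleprequantales}, $Q$ is simple if and only if $v(a) = d$ for all $a < \bigvee Q$, which by definition of the identity operation $d$ says exactly that $y^{v(a)} = y$ for every $y \in Q$ and every $a < \bigvee Q$. So it suffices to show that the displayed biconditional in the corollary, quantified over all $x,y,a$ with $a < \bigvee Q$, is equivalent to this family of fixed-point equalities.

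The key preliminary step is to unwind Proposition \ref{vquantales} as the following biconditional, valid for all $x,y,a \in Q$: one has $x \leq y^{v(a)}$ if and only if $rys \leq a$ implies $rxs \leq a$ for all $r,s \in Q$. The $(\Leftarrow)$ direction is the maximality clause in Proposition \ref{vquantales}; the $(\Rightarrow)$ direction follows from the ordered magma axiom, since if $x \leq y^{v(a)}$ then $rxs \leq ry^{v(a)}s$, and $y^{v(a)}$ itself has the property that $rys \leq a \Rightarrow ry^{v(a)}s \leq a$.

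Now I would observe that one half of the corollary's biconditional, namely $x \leq y \Rightarrow (rys \leq a \Rightarrow rxs \leq a$ for all $r,s)$, is automatic from the ordered magma axiom. So the content of the corollary, for fixed $y$ and $a < \bigvee Q$, is the converse implication, and by the unwinding above this converse reads $x \leq y^{v(a)} \Rightarrow x \leq y$ for all $x \in Q$. Specializing $x = y^{v(a)}$ gives $y^{v(a)} \leq y$, while expansivity of the closure $v(a)$ gives $y \leq y^{v(a)}$, so this holds for all $x$ if and only if $y^{v(a)} = y$. Conversely, if $y^{v(a)} = y$ then the implication is trivial. Quantifying over $y$ and $a < \bigvee Q$ matches the characterization of simplicity from the first step.

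I do not expect any real obstacle: the argument is a chain of equivalences that assembles Corollary \ref{simpleprequantales} and Proposition \ref{vquantales}. The only point requiring a little care is bookkeeping in the unwinding step, where one must keep track of whether $x$ or $y$ is the ``inner'' argument of $v(a)$, since Proposition \ref{vquantales} is naturally phrased in terms of $x^{v(a)}$ whereas the corollary is most cleanly phrased with $y$ playing that role.
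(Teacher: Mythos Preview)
Your proposal is correct and is precisely the intended argument: the paper states this as a corollary with no proof, and it follows by combining Corollary \ref{simpleprequantales} with Proposition \ref{vquantales} exactly as you describe. Your unwinding of Proposition \ref{vquantales} into the biconditional $x \leq y^{v(a)} \Leftrightarrow (\forall r,s\,(rys \leq a \Rightarrow rxs \leq a))$ is the only nontrivial step, and you have justified both directions correctly.
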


\begin{corollary}
A multiplicative lattice $Q$ is simple if and only if $Q = \{0,1\}$.
\end{corollary}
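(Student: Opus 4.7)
The plan is to use Corollary \ref{desimple} throughout: simplicity of a (near pre)quantale means that $d$ and $e$ are the only nuclei. The backward direction is immediate, since on the two-element chain $\{0,1\}$ every closure operation is forced to be $d$ or $e$. For the forward direction I assume $Q$ is simple and nontrivial, and exhibit enough nuclei to pin down the structure of $Q$. The strategy is to produce explicit nuclei built from the ambient lattice-monoid structure, each of which must collapse to $d$ or $e$; these collapses force first the identity to coincide with the top, and then rule out any strictly intermediate element.

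\emph{Step 1: the multiplicative identity equals the top.} Let $\varepsilon$ denote the multiplicative identity of $Q$ and write $1 = \bigvee Q$. Since $\varepsilon \leq 1$ one has $1 = \varepsilon \cdot 1 \leq 1\cdot 1 \leq 1$, so $1$ is idempotent. I consider $\star\colon x\longmapsto x\cdot 1$. It is expansive ($x = x\cdot \varepsilon \leq x \cdot 1$), order-preserving, idempotent (using $1\cdot 1 = 1$ and associativity), and strictly multiplicative since $(x\cdot 1)(y\cdot 1) = xy\cdot 1$. Thus $\star$ is a nucleus. Because $0\cdot 1 = 0 \neq 1$, $\star \neq e$, so simplicity forces $\star = d$. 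Evaluating at $x = \varepsilon$ yields $\varepsilon = \varepsilon \cdot 1 = 1$, so the top is the identity.

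\emph{Step 2: no strictly intermediate element.} Suppose for contradiction some $a \in Q$ satisfies $0 < a < 1$. Because $1 = \varepsilon$, for every $x \in Q$ we have $xa \leq 1\cdot a = a$, and symmetrically $ay \leq a$ and $a^2 \leq a$. Consider $\star_a\colon x \longmapsto x \vee a$. It is trivially a closure operation, and distributivity of multiplication over joins in the multiplicative lattice gives
\[
(x\vee a)(y\vee a) \;=\; xy \vee xa \vee ay \vee a^2 \;\leq\; xy \vee a \;=\; (xy)^{\star_a},
\]
so $\star_a$ is a nucleus. But $0^{\star_a} = a > 0$ shows $\star_a \neq d$, while $0^{\star_a} = a < 1$ shows $\star_a \neq e$, contradicting simplicity by Corollary \ref{desimple}. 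Hence no such $a$ exists and $Q = \{0,1\}$.

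The main conceptual hurdle is guessing the right two nuclei; neither appears as a direct instance of Corollary \ref{simplequantales}'s residuation criterion. The nucleus $x\mapsto x\cdot 1$ is arranged precisely so that collapsing it to $d$ is equivalent to $1 = \varepsilon$; once this absorption is known, the join-with-$a$ map becomes a legitimate nucleus without any further hypothesis on $a$, and the two-element conclusion drops out. Everything else is routine verification from the definitions.
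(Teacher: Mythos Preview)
Your proof is correct. Step 2 is essentially identical to the paper's: once the multiplicative identity is known to be the top, the map $x \mapsto x \vee a$ is verified to be a nucleus via distributivity, and simplicity forces $a \in \{0,\bigvee Q\}$.

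The difference lies in Step 1. The paper does not build the nucleus $x \mapsto x\cdot(\bigvee Q)$; instead it invokes Corollary \ref{simplequantales} with $a = 0$ and $y$ equal to the multiplicative identity, observing that $r\cdot 1\cdot s \leq 0$ implies $rs \leq 0$ and hence $rxs = rsx \leq 0$ for every $x$ (using commutativity and that $0$ is an annihilator), which forces $x \leq 1$. Your argument avoids that machinery entirely, relying only on Corollary \ref{desimple} and an explicit nucleus. This is slightly more elementary and self-contained, since Corollary \ref{simplequantales} rests on the analysis of $v(a)$ via Lemma \ref{vlemma} and Proposition \ref{vquantales}. The paper's route, on the other hand, illustrates how the divisorial-closure characterization of simplicity can be applied in practice. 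Both approaches ultimately reduce to the same nucleus in Step 2.
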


\begin{proof}
We may assume $0 < 1$.  Note that $r1s \leq 0$ implies $rxs = rsx \leq 0$ for all $x,r,s \in Q$, whence by Corollary \ref{simplequantales} one has $x \leq 1$ for all $x \in Q$.  Therefore, for any $a \in Q$, one has $(a \vee x)(a \vee y) = a^2 \vee ay \vee xa \vee xy \leq a \vee xy$ for all $x,y \in Q$.  It follows that the map $x \longmapsto a \vee x$ is a nucleus on $Q$, whence $a \vee x = x$ for all $x$ if $a < 1$. It follows, then, that $Q = \{0,1\}$.
\end{proof}


Proposition \ref{vquantales} and Corollary \ref{simplequantales} may be generalized to any near prequantale as follows.  Let $M$ be any magma.  For any $r \in M$, define self-maps $L_r$ and $R_r$ of $M$ by $L_r(x) = rx$ and $R_r(x) = xr$ for all $x \in M$, which we call {\it translations}.  Let $\Lin(M)$ denote the submonoid of the monoid of all self-maps of $M$ generated by the translations.  If $M$ is a monoid then any element of $\Lin(M)$ can be written in the form $L_r \circ R_s = R_s \circ L_r$ for some $r,s \in M$.  The proofs above readily generalize to yield the following.

\begin{proposition}\label{vprequantales}
Let $Q$ be a near prequantale and $a \in Q$.  Then one has
$$x^{v(a)} = \bigvee\{y \in Q: \ \forall f \in \Lin(Q) \ (f(x) \leq a \Rightarrow f(y) \leq a)\}$$
for all $x \in Q$; alternatively, $x^{v(a)}$ is the largest element of $Q$ such
that $f(x) \leq a$ implies $f(x^{v(a)}) \leq a$ for all $f \in \Lin(Q)$.
\end{proposition}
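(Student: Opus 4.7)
The plan is to mimic the proofs of Lemma \ref{vlemma} and Proposition \ref{vquantales}, replacing the two-sided sandwich $r(\cdot)s$ by an arbitrary element of the monoid $\Lin(Q)$ of translations and exploiting that $\id_Q \in \Lin(Q)$ as the identity of the submonoid. Define $x^\star = \bigvee S(x)$, where $S(x) = \{y \in Q : \forall f \in \Lin(Q) \ (f(x) \leq a \Rightarrow f(y) \leq a)\}$; this supremum exists in the near sup-lattice $Q$ because $S(x)$ is nonempty, containing $x$. A preliminary observation, immediate from Proposition \ref{nearprequantales}, is that every $f \in \Lin(Q)$ is a finite composition of left and right multiplications on $Q$ and so is both order-preserving and near sup-preserving.

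First I would verify that $\star$ is a closure operation. Expansivity follows from $x \in S(x)$, and order-preservation from the observation that $x \leq x'$ together with order-preservation of each $f$ force $S(x) \subset S(x')$. For idempotence, the crucial step is that $x^\star$ itself lies in $S(x)$: if $f(x) \leq a$ then $f(y) \leq a$ for every $y \in S(x)$, so near sup-preservation of $f$ gives $f(x^\star) = \bigvee f(S(x)) \leq a$. This yields the alternative characterization of $x^{v(a)}$ stated in the proposition, and it implies $(x^\star)^\star \leq x^\star$ since any $z \in S(x^\star)$ satisfies $z \in S(x)$ via the chain $f(x) \leq a \Rightarrow f(x^\star) \leq a \Rightarrow f(z) \leq a$.

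To see that $\star$ is a nucleus, given $f \in \Lin(Q)$ with $f(xy) \leq a$, set $g = f \circ L_x \in \Lin(Q)$ so that $g(y) = f(xy) \leq a$; the alternative characterization of $y^\star$ then gives $g(y^\star) = f(xy^\star) \leq a$. Next, setting $h = f \circ R_{y^\star} \in \Lin(Q)$, the same argument applied to $x^\star$ yields $h(x^\star) = f(x^\star y^\star) \leq a$, whence $x^\star y^\star \in S(xy)$ and therefore $x^\star y^\star \leq (xy)^\star$. Combined with expansivity this shows $\star$ is a nucleus by Proposition \ref{closureprop1}.

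Finally, $a^\star = a$ follows by applying the alternative characterization to $f = \id_Q$, since $\id_Q(a) = a \leq a$. For maximality, given any other nucleus $\star'$ on $Q$ with $a^{\star'} = a$, I would prove by induction on the length of $f$ as a composition of translations, using the nucleus inequalities $r z^{\star'} \leq (rz)^{\star'}$ and $z^{\star'} s \leq (zs)^{\star'}$ together with order-preservation of left and right multiplications, that $f(z^{\star'}) \leq (f(z))^{\star'}$ for all $z \in Q$ and $f \in \Lin(Q)$. Applied to $z = x$, this yields $f(x) \leq a \Rightarrow f(x^{\star'}) \leq (f(x))^{\star'} \leq a^{\star'} = a$, so $x^{\star'} \in S(x)$ and hence $\star' \leq \star$. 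The main subtlety is to verify near sup-preservation and idempotence without the aid of a multiplicative identity; once $\id_Q \in \Lin(Q)$ is recognized as the replacement for the sandwich $1 \cdot (\cdot) \cdot 1$, the remaining arguments closely parallel the unital case.
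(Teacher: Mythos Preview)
Your proof is correct and follows precisely the route the paper indicates: it says only that ``the proofs above readily generalize,'' meaning one replaces the sandwich $r(\cdot)s$ in Lemma \ref{vlemma} and Proposition \ref{vquantales} by an arbitrary $f \in \Lin(Q)$, uses $\id_Q \in \Lin(Q)$ in place of the unit, and exploits the near sup-preservation of translations from Proposition \ref{nearprequantales}. You have carried out exactly this generalization, including the induction on word length for the maximality step, which is the natural substitute for the one-line argument in the unital case.
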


\begin{corollary}
A near prequantale $Q$ is simple if and only if, for any $x,y,a \in Q$ with $a < \bigvee Q$,
one has $x \leq y$ if and only if $f(y) \leq a$ implies $f(x) \leq a$ for all $f \in \Lin(Q)$.
\end{corollary}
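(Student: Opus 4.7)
The plan is to deduce this corollary directly from Corollary \ref{simpleprequantales} combined with Proposition \ref{vprequantales}. By Corollary \ref{simpleprequantales}, $Q$ is simple if and only if $v(a) = d$ for every $a < \bigvee Q$, which is to say $x^{v(a)} = x$ for all $x \in Q$ and all such $a$. So the task reduces to translating the equality $x^{v(a)} = x$ into the biconditional form that appears in the statement.

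The key step is to observe that the two descriptions of $x^{v(a)}$ in Proposition \ref{vprequantales} together yield, for every $x, y \in Q$,
\[
 y \leq x^{v(a)} \ \Longleftrightarrow \ \forall f \in \Lin(Q)\ (f(x) \leq a \Rightarrow f(y) \leq a).
\]
The implication $(\Leftarrow)$ is immediate from the supremum formula, since any such $y$ belongs to the set whose supremum defines $x^{v(a)}$. For $(\Rightarrow)$, I would note that every $f \in \Lin(Q)$, being a composition of left and right translations in the ordered magma $Q$, is order-preserving; hence $y \leq x^{v(a)}$ gives $f(y) \leq f(x^{v(a)})$, and the ``alternatively'' clause of Proposition \ref{vprequantales} applied to $x^{v(a)}$ itself yields $f(x^{v(a)}) \leq a$ whenever $f(x) \leq a$, so $f(y) \leq a$.

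Once this equivalence is in hand, the condition $x^{v(a)} = x$ for all $x \in Q$ is precisely the assertion that for all $x, y \in Q$, $y \leq x$ iff $\forall f \in \Lin(Q)\ (f(x) \leq a \Rightarrow f(y) \leq a)$; swapping the roles of $x$ and $y$ gives the stated biconditional. Quantifying over $a < \bigvee Q$ finishes the argument in both directions.

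There is no real obstacle: the proof is a direct manipulation of the cited results, with the only point worth making explicit being the order-preservation of elements of $\Lin(Q)$, which is immediate from the defining inequality for ordered magmas.
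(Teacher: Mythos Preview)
Your proposal is correct and follows precisely the route the paper intends: the corollary is stated without proof immediately after Proposition \ref{vprequantales}, and the paper's remark that ``the proofs above readily generalize'' indicates exactly the deduction from Corollary \ref{simpleprequantales} and Proposition \ref{vprequantales} that you spell out. Your explicit verification of the equivalence $y \leq x^{v(a)} \Leftrightarrow \forall f \in \Lin(Q)\ (f(x) \leq a \Rightarrow f(y) \leq a)$, using order-preservation of elements of $\Lin(Q)$ together with the ``alternatively'' clause, fills in the one step the paper leaves implicit.
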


The following notion of cyclic elements generalizes \cite[Definition 3.3.2]{ros}.  Let $M$ be an ordered magma.  We say that $a \in M$ is {\it cyclic} if $xy \leq a$ implies $yx \leq a$ for all $x,y \in M$.   If $M$ is associative, then $a$ is cyclic if and only if $x_1 x_2 x_3 \cdots x_n \leq a$ implies $x_2 x_3 \cdots x_n x_1 \leq a$ for any positive integer $n$ and any $x_1, x_2, \ldots, x_n \in M$.  If $M$ is near residuated, then we let $M_a^L$ (resp., $M_a^R$) denote the set of all $x \in M$ for which $a/x$ (resp., $x \backslash a$) is defined and we let $M_a^{LR} = M_a^L \cap M_a^R$.   In that case $a$ is cyclic if and only if $M_a^L = M_a^R$ and $a/x = x\backslash a$ for all $x \in M_a^{LR}$.  The following result generalizes \cite[Lemma 3.35]{gal} and the fact that $I^{v(D)} = (I^{-1})^{-1}$ for any integral domain $D$ with quotient field $F$ and any $I \in \F(D)$, where $I^{-1} = (D:_F I)$.

\begin{proposition}\label{vclosureprop3}
Let $M$ be a residuated ordered monoid, or a near residuated ordered monoid such that $\bigvee M$ exists.  Then $v(a)$ exists for any cyclic element $a$ of $M$ and is given by $$x^{v(a)} = \left\{\begin{array}{ll} a/(a/x) & \mbox{if } x \in M_a^{LR} \\ \bigvee M & \mbox{otherwise}\end{array}\right.$$ for all $x \in M$.
\end{proposition}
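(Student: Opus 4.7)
The plan is to define $\star$ on $M$ by the formula in the proposition and verify, in order, that it is a well-defined closure operation, a nucleus, fixes $a$, and is coarser than any other nucleus fixing $a$. Throughout I abbreviate $x^\bullet = a/x$ for $x \in M_a^L$, and use freely that cyclicity of $a$ gives $M_a^L = M_a^R$ and $a/x = x \backslash a$ on this common set, as noted just before the statement. In the residuated case $M_a^{LR} = M$, so the ``otherwise'' branch is empty and no hypothesis on $\bigvee M$ is required; in the near residuated case $\bigvee M$ exists by assumption and absorbs that branch.

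The computational backbone is the identity $x^{\bullet\bullet\bullet} = x^\bullet$ for $x \in M_a^{LR}$. Its proof is short: from $(a/x)x \leq a$ and cyclicity we get $x(a/x) \leq a$, which both shows that $a/x \in M_a^L$ (so the formula is well-defined) and reads as the expansivity statement $x \leq a/(a/x) = x^{\bullet\bullet}$ for $x \in M_a^{LR}$; applying the order-reversing map $\bullet$ yields $x^{\bullet\bullet\bullet} \leq x^\bullet$, while expansivity applied to $x^\bullet$ in place of $x$ yields the reverse inequality. With this in hand, idempotence of $\star$ on $M_a^{LR}$ is $x^{\bullet\bullet\bullet\bullet} = x^{\bullet\bullet}$, immediate from the identity. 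Monotonicity uses the easy fact that $x \leq y$ and $y \in M_a^L$ force $x \in M_a^L$, so comparisons within $M_a^{LR}$ are handled by order-reversal applied twice, and the case where one of the endpoints escapes $M_a^{LR}$ is absorbed by $\bigvee M$; one also checks that $\bigvee M \notin M_a^{LR}$ whenever $M_a^{LR} \neq M$, which makes idempotence at the ``otherwise'' branch clean.

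The nucleus inequality $x^\star y^\star \leq (xy)^\star$ only needs attention when $xy \in M_a^{LR}$, and the cyclic identity $z(xy) \leq a \Leftrightarrow x(yz) \leq a \Leftrightarrow y(zx) \leq a$ then forces $x,y \in M_a^{LR}$ as well. The inequality becomes $x^{\bullet\bullet}y^{\bullet\bullet} \leq (xy)^{\bullet\bullet}$, which unwinds by the adjunction $u \leq a/v \Leftrightarrow uv \leq a$ together with alternating applications of cyclicity and the triple-bullet identity to the tautology $xy(xy)^\bullet \leq a$; this is the main obstacle and the only delicate computation in the argument. For $a^\star = a$ it suffices to note $1 \leq a/a$, giving $a \leq a^\star \leq a^\star(a/a) \leq a$. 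Finally, maximality is the cleanest step: for any nucleus $\star'$ with $a^{\star'}=a$ and any $x \in M_a^{LR}$,
\[
x^{\star'}(a/x) \;\leq\; x^{\star'}(a/x)^{\star'} \;\leq\; (x(a/x))^{\star'} \;\leq\; a^{\star'} \;=\; a
\]
yields $x^{\star'} \leq a/(a/x) = x^\star$, and for $x \notin M_a^{LR}$ the inequality $x^{\star'} \leq \bigvee M = x^\star$ is automatic, so $\star = v(a)$.
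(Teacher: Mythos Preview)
Your proof is correct but takes a genuinely different route from the paper's. The paper does not verify the closure and nucleus axioms for the formula directly; instead it invokes Lemma~\ref{vlemma}, which says that if for every $x$ there is a largest $y$ with $rxs \leq a \Rightarrow rys \leq a$ for all $r,s$, then that $y$ is $x^{v(a)}$. The paper then checks, via cyclicity, that $a/(a/x)$ (or $\bigvee M$ when $x \notin M_a^{LR}$) is exactly this largest $y$: one short chain of implications for the forward direction, and the single substitution $r = a/x$, $s = 1$ for maximality. Thus all the work of showing that $\star$ is a closure operation and a nucleus is absorbed into the already-proved Lemma~\ref{vlemma}.

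Your approach is more self-contained: you establish the closure and nucleus axioms from scratch using the triple-$\bullet$ identity and a cyclic unwinding of $x^{\bullet\bullet}y^{\bullet\bullet} \leq (xy)^{\bullet\bullet}$, then handle $a^\star = a$ and maximality directly. This avoids any reliance on Lemma~\ref{vlemma} and makes the argument independent of the translation characterization of $v(a)$. The paper's route is shorter on the page and ties the result conceptually to Propositions~\ref{vquantales} and~\ref{vprequantales}, which use the same lemma; your route is more elementary and would stand alone even without that preparatory material. The nucleus inequality you flag as ``the main obstacle'' does go through exactly as you describe (cyclic permutation, adjunction, and $x^{\bullet\bullet\bullet} = x^\bullet$ applied twice), so the sketch is sound.
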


\begin{proof}
For any $x \in M$, let $x^\star = a/(a/x)$ if $a \in M_a^{LR}$ and $x^\star = \bigvee M$ otherwise.  We claim that $x^\star$ is the largest element $y$ of $M$ such that $rxs \leq a$ implies $rys \leq a$ for all $r,s \in M$.  By Lemma \ref{vlemma} the proposition follows from this claim.  First, note that $rxs \leq a$ implies $srx \leq a$, which implies that  $x \in M_a^{LR}$.  Therefore, if $x \notin M_a^{LR}$, then $\bigvee M$ is the largest element $y$ of $M$ described above.  Suppose, on the other hand, that $x \in M_a^{LR}$.  Then
$rxs \leq a \Rightarrow srx \leq a \Rightarrow sr \leq a/x \Rightarrow (a/(a/x))sr  \leq (a/(a/x))(a/x) \leq a \Rightarrow x^\star sr \leq a \Rightarrow rx^\star s \leq a$.  Suppose that $y'$ is any element of $M$ such that $rxs \leq a \Rightarrow ry's \leq a$ for all $r,s \in M$.  Then since $(a/x)x1 \leq a$ we have $(a/x)y' \leq a$, whence $y'(a/x) \leq a$ and thus $y' \leq a/(a/x) = x^\star$.  This proves our claim.
\end{proof}

\begin{corollary}\label{simplenearmult}
A near multiplicative lattice $Q$ is simple if and only if $x/y$ exists and $x/(x/y) = y$ for all $x,y \in Q$ such that $x,y < \bigvee Q$.  In particular, a multiplicative lattice $Q$ is simple if and only if $x/(x/y) = y$ for all $x,y < \bigvee Q$.
\end{corollary}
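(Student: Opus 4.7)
The plan is to derive this directly from Corollary \ref{simpleprequantales} together with the explicit formula for divisorial closure given in Proposition \ref{vclosureprop3}. By Corollary \ref{simpleprequantales}, $Q$ is simple if and only if $v(a) = d$ for every $a \in Q$ with $a < \bigvee Q$, so the problem reduces to computing these $v(a)$ and characterizing when each equals the identity.

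First I would note that a near multiplicative lattice is by definition a commutative unital near quantale, and hence a near prequantale that is near residuated (Proposition \ref{nearprequantales}) with $\bigvee Q$ existing. Commutativity has two consequences I would exploit: every $a \in Q$ is trivially cyclic (since $xy = yx$), and $Q_a^{LR} = Q_a^L = \{x \in Q : a/x \text{ is defined}\}$, i.e., the set of those $x$ for which some $z \in Q$ satisfies $zx \leq a$. Proposition \ref{vclosureprop3} then applies to give, for every $a \in Q$,
\[
x^{v(a)} \;=\; \begin{cases} a/(a/x) & \text{if } a/x \text{ exists}, \\ \bigvee Q & \text{otherwise}, \end{cases}
\]
for all $x \in Q$.

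Next I would unpack the condition $v(a) = d$ pointwise. At $x = \bigvee Q$ it is automatic: either $a/(\bigvee Q)$ is undefined, in which case $(\bigvee Q)^{v(a)} = \bigvee Q$ directly, or it is defined, in which case $a/(a/(\bigvee Q)) \geq \bigvee Q$ by expansiveness forces equality. For $x < \bigvee Q$, the equation $x^{v(a)} = x$ forces $a/x$ to be defined (otherwise $x^{v(a)} = \bigvee Q \neq x$) and then $a/(a/x) = x$. Conversely, if for all such $a, x < \bigvee Q$ the quotient $a/x$ exists and $a/(a/x) = x$, then $x^{v(a)} = x$ for all $x \in Q$ and all $a < \bigvee Q$, so $v(a) = d$. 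Relabeling $a \mapsto x$ and $x \mapsto y$ yields the stated criterion for simplicity of the near multiplicative lattice $Q$.

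For the "in particular" clause, I would observe that a multiplicative lattice is a commutative unital quantale, hence a prequantale, and every prequantale is residuated by Proposition \ref{quantales} (or Example \ref{nearresexamples}(1)). Thus $x/y$ exists unconditionally, and the existence clause in the criterion drops out, leaving only the equation $x/(x/y) = y$ for $x, y < \bigvee Q$. There is no real obstacle here; the only subtle point is being careful that, in the near (non-residuated) setting, the existence of $a/x$ is part of the hypothesis and must be explicitly tracked when translating $v(a) = d$ into the quotient identity.
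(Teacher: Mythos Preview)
Your proposal is correct and follows exactly the route the paper intends: the corollary is stated without proof because it is immediate from Corollary \ref{simpleprequantales} combined with the explicit formula of Proposition \ref{vclosureprop3}, using that every element of a commutative ordered magma is cyclic. Your pointwise analysis of $v(a) = d$ and the handling of the residuated multiplicative-lattice case are the natural way to spell this out.
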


\begin{example}
Let $G$ be a bounded complete partially ordered abelian group.  
Let $G[\infty] = G \amalg \{\infty\}$, where $a \infty = \infty a = \infty$ and $a < \infty$ for all $a \in G$.  Then $G[\infty]$ is a simple near multiplicative lattice, by Corollary \ref{simplenearmult}.  However, the multiplicative lattice
$G[\pm \infty] = G[\infty] \amalg \{-\infty\}$, where $-\infty = \bigwedge G$ is an annihilator of $G[\infty]$, is not simple.  For example, if $D$ is a DVR with quotient field $F$, then $\F(D) \cong \ZZ[\infty]$ is simple, but $\operatorname{Mod}_D(F) \cong \ZZ[\pm \infty]$ is not simple.
\end{example}

Recall that a {\it near $\U$-lattice} is a near sup-magma $Q$ such that $\U(Q)$ is a sup-spanning subset of $Q$.  We end this section by generalizing the well-known fact that $I^{v(D)} = \bigcap \{xD : x \in F \mbox{ and } xD \supset I\}$ for any integral domain $D$ with quotient field $F$ and any $I \in \F(D)$.

\begin{proposition}
For any associative unital near $\U$-lattice $Q$ one has $x^{v(a)} = \bigwedge\{uav: \ u,v \in \U(Q) \mbox{ and } x \leq uav\}$ for all $a,x \in Q$.
\end{proposition}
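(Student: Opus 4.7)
The plan is to apply Proposition \ref{vquantales}: an associative unital near $\U$-lattice $Q$ is a near prequantale by Corollary \ref{ulattices}, hence a unital near quantale, so $x^{v(a)}$ is the largest $y \in Q$ such that $rxs \leq a$ implies $rys \leq a$ for all $r,s \in Q$. Set $w(x) = \bigwedge\{uav : u,v \in \U(Q), \ x \leq uav\}$, using the convention $\bigwedge \emptyset = \bigvee Q$; when the indexing set is nonempty it is bounded below by $x$, so the infimum exists in the near sup-lattice $Q$. The goal is to show $x^{v(a)} = w(x)$.

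For $x^{v(a)} \leq w(x)$, note that $Q$ is a monoid, so $\U(Q) = \Inv(Q)$, and by Proposition \ref{closureprop3} every element of $\U(Q)$ is transportable through the nucleus $v(a)$. Hence for any $u,v \in \U(Q)$, two applications of transportability together with $a^{v(a)} = a$ yield $(uav)^{v(a)} = uav$. Whenever $x \leq uav$, monotonicity gives $x^{v(a)} \leq (uav)^{v(a)} = uav$, and taking the infimum over such pairs $(u,v)$ yields $x^{v(a)} \leq w(x)$.

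For $w(x) \leq x^{v(a)}$, I verify the defining property of $x^{v(a)}$ for $y = w(x)$. Fix $r,s \in Q$ with $rxs \leq a$. Applying the sup-spanning identity $ab = \bigvee\{ub : u \in \U(Q), u \leq a\}$ with $b = 1$ gives $r = \bigvee\{u \in \U(Q): u \leq r\}$ and $s = \bigvee\{v \in \U(Q): v \leq s\}$; combined with associativity and near sup-preservation of multiplication (Proposition \ref{nearprequantales}), this yields the two-sided expansion $rzs = \bigvee\{uzv : u,v \in \U(Q), \ u \leq r, \ v \leq s\}$ for every $z \in Q$. Taking $z = x$, each $uxv \leq a$; since $u,v$ are invertible, this is equivalent to $x \leq u^{-1}av^{-1}$, placing $u^{-1}av^{-1}$ in the indexing set of $w(x)$, so $w(x) \leq u^{-1}av^{-1}$, i.e.\ $uw(x)v \leq a$. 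Taking $z = w(x)$ in the expansion gives $rw(x)s \leq a$, as required.

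The main technical point is the two-sided sup-spanning expansion, which approximates $r$ and $s$ simultaneously by units and underpins both inequalities. It also handles the empty-index edge case: if no $uav$ dominates $x$, the same expansion shows no $rxs$ lies below $a$, so the condition in Proposition \ref{vquantales} holds vacuously and $x^{v(a)} = \bigvee Q = \bigwedge \emptyset = w(x)$.
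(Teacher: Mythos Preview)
Your proof is correct. Both you and the paper obtain the inequality $x^{v(a)} \leq w(x)$ the same way, via transportability of units through $v(a)$ (Proposition~\ref{closureprop3}) and $a^{v(a)} = a$. The difference is in the reverse inequality. The paper shows directly that the formula $x^\star = \bigwedge\{uav : u,v \in \U(Q),\ x \leq uav\}$ defines a nucleus on $Q$ (by checking that units are transportable through $\star$ and invoking Proposition~\ref{closureprop2}) with $a^\star = a$, and then concludes $\star \leq v(a)$ from the maximality of $v(a)$. You instead invoke the explicit criterion of Proposition~\ref{vquantales} and verify that $w(x)$ satisfies it, using the two-sided expansion $rzs = \bigvee\{uzv : u,v \in \U(Q),\ u \leq r,\ v \leq s\}$ obtained from the sup-spanning hypothesis. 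Your route is slightly more economical given that Proposition~\ref{vquantales} is already available; the paper's route has the incidental benefit of exhibiting $\star$ as a nucleus in its own right without appealing to that earlier result.
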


\begin{proof}
The nucleus $v(a)$ exists by Propositions \ref{vclosureprop} and \ref{ulattices}.  Let $x^\star = \bigwedge\{uav: \ u,v \in \U(Q), \ x \leq uav\}$ for all $x \in Q$.  Clearly $\star$ is a closure operation on $Q$ with $a^\star = a$.  Moreover, for all $x \in Q$ and $w \in \U(Q)$ one has
$wx^\star  =  \bigwedge\{wuav:  \ u,v \in \U(Q), \ wx \leq wuav\} 
	 =  \bigwedge\{u'av:  \ u',v \in \U(Q), \ wx \leq u'av\} 
	 =  (wx)^\star,$
and likewise $x^\star w = (xw)^\star$.  Thus $\star$ is a nucleus on $Q$ by Proposition \ref{closureprop2}.  Now let $x \in Q$.  If $x \leq uav$ for $u,v \in \U(Q)$, then $x^{v(a)} \leq ua^{v(a)}v = uav$, whence $x^{v(a)} \leq x^\star$.  Therefore $v(a) \leq \star$.  Finally, since $a^\star = a$, we have $v(a) \geq \star$ by definition of $v(a)$, so $v(a) = \star$.
\end{proof}




\section{Further structure on the poset of nuclei}\label{sec:SAMCO}

In this section we examine further the structure of the poset $\N(M)$ of all nuclei on $M$ for various classes of ordered magmas $M$.  
For any magma $M$, let $\Idem(M)$ denote the set of all idempotent elements of $M$. For any ordered unital magma $M$, let $\R(M) = \Idem(M)_{\geq 1} = \{x \in \Idem(M): \ x \geq 1\}$.

\begin{proposition}\label{RMlemma}
Let $M$ be an ordered unital magma.
\begin{enumerate}
\item For any $X \subset \R(M)$ such that $\bigwedge X$ exists one has $\bigwedge X \in \R(M)$.
\item If $M$ is Scott-topological, then for any directed subset $\Delta$ of $\R(M)$ such that $\bigvee \Delta$ exists one has $\bigvee \Delta \in \R(M)$.
\item Suppose that $\R(M)$ is a submagma of $M$ (which holds, for example, if $M = \R(M)$ or if $M$ is a commutative monoid).  Then $\R(M)$ is a Scott-topological multiplicative semilattice in which multiplication coincides with $\vee$;  if $M$ is bounded complete or a Scott-topological bdcpo, then $\R(M)$ is a semimultiplicative lattice; and if $M$ is near sup-complete or a Scott-topological dcpo, then $\R(M)$ is a near multiplicative lattice.
\end{enumerate}
\end{proposition}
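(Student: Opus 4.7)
My plan is to establish parts (1) and (2) by direct calculation exploiting order-preservation of multiplication (and Scott-continuity in the latter case), and then derive part (3) from the principle that $\R(M)$ inherits a natural $\vee = \cdot$ structure.  For (1), let $a = \bigwedge X$; since each $x \in X$ satisfies $x \geq 1$, we have $a \geq 1$, which gives $a^2 = a \cdot a \geq a \cdot 1 = a$, while $a \leq x$ yields $a^2 \leq x \cdot x = x$ for every $x \in X$, so $a^2 \leq \bigwedge X = a$.  For (2), write $b = \bigvee \Delta$; nonemptiness of $\Delta$ gives $b \geq 1$ and hence $b \leq b^2$ as in (1).  For the reverse inequality, applying Lemma~\ref{scott2} twice (Scott-continuity of left multiplication by $b$, then of right multiplication by $y$) yields
$$b^2 = b \cdot \bigvee \Delta = \bigvee_{y \in \Delta}(by) = \bigvee_{y \in \Delta} \bigvee_{x \in \Delta}(xy) = \bigvee_{x, y \in \Delta}(xy);$$
for each $x, y \in \Delta$, directedness produces $z \in \Delta$ with $x, y \leq z$, whence $xy \leq z^2 = z \leq b$, so $b^2 \leq b$.

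For (3), the crux is that multiplication on $\R(M)$ coincides with $\vee$.  Given $x, y \in \R(M)$, using $x, y \geq 1$ gives $xy \geq x \cdot 1 = x$ and $xy \geq 1 \cdot y = y$, so $xy$ is an upper bound for $\{x, y\}$; and any upper bound $z \in \R(M)$ satisfies $xy \leq z \cdot z = z$.  Hence $\R(M)$ is a commutative, associative, unital join semilattice under $\vee = \cdot$, and the distributivity axiom $a(x \vee y) = a \vee x \vee y = ax \vee ay$ holds automatically.  Moreover $\vee$ is Scott-continuous in each variable on any join semilattice, so $\R(M)$ is a Scott-topological multiplicative semilattice, proving the first claim.

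For the stronger assertions it suffices to show $\R(M)$ is bounded complete (resp., near sup-complete), since the semiprequantale (resp., near prequantale) multiplication axioms then reduce to distributivity of $\vee$ over suprema, which always holds in a join semilattice.  If $M$ is bounded complete and $X \subset \R(M)$ is nonempty and bounded above in $\R(M)$, the set $U$ of upper bounds of $X$ in $\R(M)$ is nonempty and bounded below by any $x \in X$, so $\bigwedge_M U$ exists, lies in $\R(M)$ by (1), and realizes $\bigvee_{\R(M)} X$.  If instead $M$ is a Scott-topological bdcpo, the set $X'$ of products in $\R(M)$ of finite nonempty subsets of $X$ lies in $\R(M)$, is directed, and is bounded in $M$, so $\bigvee_M X'$ exists and belongs to $\R(M)$ by (2).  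For the last dichotomy, if $M$ is near sup-complete then $\top = \bigvee M$ lies in $\R(M)$ (the maximum is automatically $\geq 1$ and idempotent), so every nonempty $X \subset \R(M)$ is bounded above and the bounded-complete argument applies; if $M$ is a Scott-topological dcpo, the directed set $X'$ again has a supremum in $M$ that belongs to $\R(M)$ by (2).  The main obstacle is the Scott-continuity computation in (2): obtaining $b^2 \leq b$ requires both the Scott-topological hypothesis on $M$ and a careful use of directedness to bound each pairwise product $xy$ by a single idempotent element of $\Delta$.
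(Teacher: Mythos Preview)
Your proof is correct and follows essentially the same route as the paper's. Parts (1) and (2) match the paper's computations exactly; for (3) the paper condenses your argument into a one-line appeal to Proposition~\ref{preq} after establishing $ab = a \vee b$, whereas you unpack that citation by directly verifying Scott-continuity of $\vee$, the multiplicative semilattice axioms, and the bounded/near-sup completeness of $\R(M)$ via (1) and (2)---but the underlying ideas are identical.
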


\begin{proof} \
\begin{enumerate}
\item We have $\bigwedge X \geq 1$ and $\bigwedge X \leq (\bigwedge X)^2 \leq \bigwedge \{a^2: a \in X\} = \bigwedge X$.
\item If $a, b \in \Delta$, then $a,b \leq c$ for some $c \in \Delta$, whence $ab \leq c^2 = c$, so $(\bigvee \Delta)^2 = \bigvee(\Delta\Delta) \leq \bigvee \Delta$, whence equality holds.  Therefore $\bigvee \Delta \in \R(M)$. 
\item Let $a, b, c \in \R(M)$.  Then $a, b \leq a b$, and if $a, b \leq c$, then $a b \leq c^2 = c$.  Thus $a b = a \vee b$.   Statement (3) follows, then, from statements (1) and (2) and Proposition \ref{preq}.
\end{enumerate}
\end{proof}





An {\it order-preserving Galois connection} from a poset $S$ to a poset $T$ is a pair of order-preserving functions $f: S \longrightarrow T$ and $g : T \longrightarrow S$ such that $f(a) \leq b$ if and only if $a \leq g(b)$ for all $a \in S$ and all $b \in T$, or equivalently such that $g \circ f \geq \id_S$ and $f \circ g \leq \id_T$.  In that case $f \circ g \circ f = f$ and $g \circ f \circ g = g$; the map $g \circ f$ is a closure operation on $S$ and $f \circ g$ is an interior operation on $T$; and $f: S \longrightarrow T$ and $g: T^{\operatorname{op}} \longrightarrow S^{\operatorname{op}}$ are sup-preserving.  Moreover, $f$ is injective if and only if $g$ is surjective if and only if $g \circ f = \operatorname{id}_S$, in which case the Galois connection is said to be {\it faithful}.


Let $M$ be an ordered commutative monoid.  For any $a \in \R(M) = \Idem(M)_{\geq 1}$, the self-map $d_a: x \longmapsto xa$ of $M$ is a (strict) nucleus on $M$.   
We define $d_-: \R(M) \longrightarrow \N(M)$ by $d_-: a \longmapsto d_a$.  Also, we define $d^-: \N(M) \longrightarrow \R(M)$ by $d^-: \star \longmapsto 1^\star$.


\begin{proposition}\label{dalpha}
Let $M$ be an ordered commutative monoid.  The pair $d_-: \R(M) \longrightarrow \N(M)$ and $d^-: \N(M) \longrightarrow \R(M)$ is a faithful order-preserving Galois connection, and in particular $d_-$ is a sup-preserving poset embedding.  Moreover, if $M$ is Scott-topological, then $d_a$ is finitary for all $a \in \R(M)$ and $d_-: \R(M) \longrightarrow \N_f(M)$ is a sup-preserving poset embedding.
\end{proposition}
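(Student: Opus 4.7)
The plan is to establish the Galois connection directly from the definitions, then extract every subsequent claim from general properties of Galois connections. First I would verify that $d_a$ is indeed a nucleus (noted already as a strict nucleus in the paper) and that both maps are order-preserving: $a \leq b$ in $\R(M)$ gives $xa \leq xb$ for all $x$, hence $d_a \leq d_b$; and $\star \leq \star'$ in $\N(M)$ gives $1^\star \leq 1^{\star'}$, so $d^-(\star) \leq d^-(\star')$. (One should also check $1^\star \in \R(M)$, i.e., that $1 \leq 1^\star$ and $(1^\star)^2 \leq 1^\star$; the first is expansiveness and the second follows from $1^\star 1^\star \leq (1 \cdot 1)^\star = 1^\star$ since $\star$ is a nucleus.)

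The central step is the adjunction $d_a \leq \star \iff a \leq 1^\star$. The forward direction is trivial by evaluating at $x = 1$. For the converse, suppose $a \leq 1^\star$; then for every $x \in M$ one has $xa \leq x\cdot 1^\star \leq (x\cdot 1)^\star = x^\star$, where the second inequality uses that $\star$ is a nucleus. Faithfulness is then immediate from $d^-(d_a) = 1\cdot a = a$, so $d^- \circ d_- = \id_{\R(M)}$; this at once shows $d_-$ is injective, hence a poset embedding, and also that it is sup-preserving, since any left adjoint in a Galois connection between posets preserves all suprema that exist in its domain (if $\bigvee X$ exists in $\R(M)$, then for $\star \in \N(M)$ one has $d_{\bigvee X} \leq \star \iff \bigvee X \leq 1^\star \iff \forall a \in X \ (a \leq 1^\star) \iff \forall a \in X \ (d_a \leq \star)$, which characterizes $d_{\bigvee X}$ as $\bigvee_{\N(M)} d_-(X)$).

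For the Scott-topological refinement, the main content is that $d_a$ is finitary. This is a direct application of Lemma \ref{scott2}: a Scott-topological ordered magma has the property that right (and left) multiplication by any element is Scott continuous, so $d_a(\bigvee \Delta) = (\bigvee \Delta) a = \bigvee (\Delta a) = \bigvee d_a(\Delta)$ for any directed $\Delta$ with $\bigvee \Delta$ existing, i.e., $d_a \in \N_f(M)$. Therefore $d_-$ corestricts to a map $\R(M) \longrightarrow \N_f(M)$, and since $\N_f(M)$ is a subposet of $\N(M)$ containing the image of $d_-$ and every supremum $d_{\bigvee X} = \bigvee_{\N(M)} d_-(X)$ already lies in $\N_f(M)$, this supremum is also the supremum in $\N_f(M)$. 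Hence the corestricted $d_-: \R(M) \longrightarrow \N_f(M)$ is still a sup-preserving poset embedding.

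I expect no serious obstacle: the only subtle point is to remember that ``sup-preserving'' in the final clause is about suprema computed in $\N_f(M)$, which is handled by the observation that the $\N(M)$-supremum of the image already lies in $\N_f(M)$, so the two suprema coincide on the relevant sets. Everything else is a standard unwinding of the Galois-connection formalism.
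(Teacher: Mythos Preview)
Your proof is correct and follows essentially the same approach as the paper: both establish the adjunction $d_a \leq \star \iff a \leq 1^\star$ via the same two-line argument (evaluate at $1$ for one direction; use $xa \leq x\,1^\star \leq x^\star$ for the other), then derive faithfulness from $d^-\circ d_- = \id$. You supply more detail than the paper does---the paper omits the verification that $1^\star \in \R(M)$ and dispatches the Scott-topological refinement as ``clear''---but your added justification via Lemma \ref{scott2} and your remark about suprema in $\N_f(M)$ versus $\N(M)$ are exactly the right way to unpack that clause.
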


\begin{proof}
Let $a \in \R(M)$ and $\star \in \N(M)$.  If $d_a \leq \star$, then $a = 1^{d_a} \leq 1^\star$.  Conversely, if $a \leq 1^\star$, then $x^{d_a} = x a \leq x 1^\star \leq x^\star$ for all $x \in M$, whence $d_a \leq \star$.  Thus $d_-, d^-$ is an order-preserving Galois connection, and it is faithful since $d_-$ is injective.  The last statement of the proposition is clear.
\end{proof}

\begin{corollary}\label{galoisconnectioncor}
Let $S$ be a join semilattice with least element.
\begin{enumerate}
\item $S$ is a Scott-topological unital multiplicative semilattice under the operation $\vee_S$ with $S = \R(S)$ and $\N(S) = \C(S)$.
\item For any $a \in S$, one has $d_a = a \vee -$ and $d_a$ is finitary.
\item The pair $d_-: S \longrightarrow \N(S)$ and $d^-: \N(S) \longrightarrow S$ is a faithful order-preserving Galois connection.
\end{enumerate}
\end{corollary}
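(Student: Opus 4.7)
The plan is to reduce everything to Proposition \ref{dalpha} by observing that $S$ under the operation $\vee_S$ is an ordered commutative monoid in which every element is idempotent and dominates the identity.

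First I would verify (1). The operation $\vee_S$ is associative, commutative, and admits the least element $\bigwedge S$ as identity, so $S$ becomes an ordered commutative monoid. Every element $x \in S$ satisfies $x \vee x = x$ and $x \geq \bigwedge S = 1$, so $\Idem(S) = S$ and hence $\R(S) = S$. For the multiplicative semilattice condition one checks $a \vee (x \vee y) = (a \vee x) \vee (a \vee y)$, which is immediate from idempotence, commutativity, and associativity of $\vee_S$; Scott-topologicality follows because $\vee_S$ preserves suprema in each variable (in fact arbitrary suprema). The identity $\N(S) = \C(S)$ is the only nontrivial part of (1): if $\star \in \C(S)$ and $x,y \in S$, then $x, y \leq x \vee y$ and monotonicity of $\star$ give $x^\star, y^\star \leq (x \vee y)^\star$, hence $x^\star \vee y^\star \leq (x \vee y)^\star$, so $\star$ is a nucleus on $(S,\vee_S)$.

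Next, part (2) is essentially unpacking definitions. By construction of the magma structure, $d_a(x) = x a = x \vee a$ for all $x \in S$. To see that $d_a$ is finitary, let $\Delta \subset S$ be directed with supremum $\bigvee \Delta$ existing; then
\[
d_a\!\left(\bigvee \Delta\right) = a \vee \bigvee \Delta = \bigvee_{x \in \Delta}(a \vee x) = \bigvee d_a(\Delta),
\]
using that $\vee$ preserves suprema.

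Finally, for (3), I would simply invoke Proposition \ref{dalpha} applied to the ordered commutative monoid $(S,\vee_S)$: since $\R(S) = S$ and this monoid is Scott-topological by part (1), the proposition gives that $d_-,d^-$ is a faithful order-preserving Galois connection $S \rightleftarrows \N(S)$. No step here is a serious obstacle; the only subtle point is verifying $\N(S) = \C(S)$, and it is a one-line consequence of the idempotence of $\vee_S$.
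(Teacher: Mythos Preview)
Your proof is correct and follows the same approach the paper intends: the corollary is stated without proof immediately after Proposition~\ref{dalpha}, and your reduction to that proposition via $\R(S)=S$ is precisely the implicit argument. The only mild imprecision is the phrase ``in fact arbitrary suprema'' in part~(1)---you mean arbitrary \emph{existing} suprema, since $S$ is merely a join semilattice---but this does not affect the argument.
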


Combining the above corollary with Lemma \ref{closelem}, we obtain the following.

\begin{proposition}\label{structure}
Let $S$ be a complete lattice. 
\begin{enumerate}
\item $S$ is a near multiplicative lattice under the operation $\vee_S$ with  $S = \R(S)$ and $\N(S) = \C(S)$.
\item $\N(S)$ is a near multiplicative lattice under the operation $\vee_{\N(S)} = \vee_{\C(S)}$ with $\N(S) = \R(\N(S))$ and $\N(\N(S)) = \C(\N(S))$.
\item The pair $d_-: S \longrightarrow \N(S)$ and $d^-: \N(S) \longrightarrow S$ is a faithful order-preserving Galois connection, and $d_-$ is an embedding of near multiplicative lattices.
\end{enumerate}
\end{proposition}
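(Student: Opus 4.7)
The plan is to prove (1) directly, bootstrap (2) from (1) applied to $\N(S)$, and derive (3) from Corollary \ref{galoisconnectioncor}(3) together with the standard fact that a left adjoint preserves joins.

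For (1), any complete lattice under $\vee$ is a near multiplicative lattice: associativity and commutativity of $\vee$ are immediate, the identity is $\bigwedge S$, and the distributivity law $a \vee \bigvee X = \bigvee_{x \in X}(a \vee x)$ holds for every nonempty $X$. Then $\R(S) = \{x \in \Idem(S): x \geq \bigwedge S\} = S$, since every element is $\vee$-idempotent and $\bigwedge S$ is the least element. Finally, any closure operation $\star$ on $S$ satisfies $x^\star \vee y^\star \leq (x \vee y)^\star$, because $x, y \leq x \vee y$ forces $x^\star, y^\star \leq (x \vee y)^\star$ by monotonicity; hence $\N(S) = \C(S)$.

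For (2), the key is to identify $\vee_{\N(S)}$ with $\vee_{\C(S)}$ and show that $\N(S)$ is again a complete lattice, so that (1) can be reapplied. Since $S$ is complete, Lemma \ref{closelem} gives that $\C(S)$ is complete with $\bigvee_{\C(S)} = \bigsqcupp$ and $\bigwedge_{\C(S)} = \bigsqcapp$ for all subsets. Under $\vee$ the complete lattice $S$ is in particular a near prequantale, so Proposition \ref{CMC}(1)--(2) yields $\bigwedge_{\N(S)} = \bigsqcapp$ and $\bigvee_{\N(S)} = \bigsqcupp$ for all subsets; hence $\N(S)$ is a complete lattice and $\vee_{\N(S)} = \vee_{\C(S)}$. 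Applying (1) to the complete lattice $\N(S)$ then gives that $\N(S)$ is a near multiplicative lattice under $\vee_{\N(S)}$ with $\R(\N(S)) = \N(S)$ and $\N(\N(S)) = \C(\N(S))$.

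For (3), Corollary \ref{galoisconnectioncor}(3) applied to the join semilattice $S$ (with least element $\bigwedge S$) immediately yields the faithful order-preserving Galois connection $d_-, d^-$, so $d_-$ is in particular an injective poset embedding. To upgrade it to an embedding of near multiplicative lattices I use three observations: the left adjoint of a Galois connection preserves all existing joins, so $d_-$ is sup-preserving; $d_-$ sends the identity $\bigwedge S$ of $(S,\vee)$ to $d_{\bigwedge S} = \id_S = d$, the identity of $\N(S)$ under $\vee_{\N(S)}$; and $d_-$ preserves multiplication, since $d_a \vee_{\N(S)} d_b$ and $d_{a \vee b}$ have the common fixed-point set $\{x \in S: x \geq a \vee b\}$ and closure operations are determined by their fixed points via Lemma \ref{closelem}(2) (which also gives the set-theoretic description $S^{\bigsqcupp \Gamma} = \bigcap_{\star \in \Gamma}S^\star$). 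The only real obstacle in all of this is bookkeeping---keeping straight that $\vee_{\N(S)}$ literally coincides with $\vee_{\C(S)}$, and which layer ($S$ or $\N(S)$) is playing the role of the underlying complete lattice at each stage.
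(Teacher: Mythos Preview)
Your proposal is correct and follows essentially the same route as the paper, which simply says the result is obtained by combining Corollary \ref{galoisconnectioncor} with Lemma \ref{closelem}. You spell out the details carefully; note that once you have $\N(S)=\C(S)$ from part (1), the appeal to Proposition \ref{CMC} in part (2) is redundant, since Lemma \ref{closelem} alone already gives completeness of $\C(S)$ together with the formulas $\bigvee_{\C(S)}=\bigsqcupp$ and $\bigwedge_{\C(S)}=\bigsqcapp$.
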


\begin{corollary}\label{alphalemmacor}
If $M$ is a near sup-complete ordered commutative monoid, then the map $d_-: \R(M) \longrightarrow \N(M)$ is an embedding of near multiplicative lattices.
\end{corollary}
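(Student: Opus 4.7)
The plan is to chain together Propositions \ref{RMlemma}(3), \ref{CMC}, and \ref{dalpha}, together with the observation in the second Example of the introduction that any complete lattice is a near multiplicative lattice under $\vee$. First I would record the structures on both sides of the map. By Proposition \ref{RMlemma}(3), since $M$ is a near sup-complete commutative monoid, $\R(M)$ is a near multiplicative lattice in which the monoid product inherited from $M$ coincides with the join $\vee_{\R(M)}$. By Proposition \ref{CMC}, $\N(M)$ is a complete lattice, and therefore inherits the structure of a near multiplicative lattice under $\vee_{\N(M)}$ from the aforementioned observation.

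Next I would invoke Proposition \ref{dalpha}: the pair $(d_-, d^-)$ is a faithful order-preserving Galois connection from $\R(M)$ to $\N(M)$. This immediately yields that $d_-$ is injective and order-reflecting (since faithfulness means $d^- \circ d_- = \id_{\R(M)}$), and a sup-preserving map of posets. In fact, as the lower adjoint of a Galois connection, $d_-$ preserves every supremum that exists in $\R(M)$; in particular it preserves all nonempty suprema, since $\R(M)$ is near sup-complete.

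Finally, to upgrade this to an embedding of near multiplicative lattices, I need only verify that $d_-$ is a magma homomorphism. But this is forced on us: because the multiplications on both $\R(M)$ and $\N(M)$ are binary suprema, the identity
\[
 d_{ab} \;=\; d_{a \vee_{\R(M)} b} \;=\; d_a \vee_{\N(M)} d_b
\]
for $a,b \in \R(M)$ is exactly the instance of sup-preservation of $d_-$ applied to the two-element set $\{a,b\}$, and the first equality is Proposition \ref{RMlemma}(3). Thus $d_-$ is simultaneously injective, order-reflecting, near sup-preserving, and multiplicative, which together constitute an embedding of near multiplicative lattices.

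I do not foresee a real obstacle here; the proof is essentially a bookkeeping exercise in reading off structure from existing results. The only point requiring vigilance is that $\vee_{\R(M)}$ must be computed inside $\R(M)$ (where it equals $ab$) rather than in $M$ (where $a \vee_M b$ may be strictly smaller than $ab$, and need not even lie in $\R(M)$); but the identification $a \vee_{\R(M)} b = ab$ is exactly what Proposition \ref{RMlemma}(3) provides, so the argument goes through without friction.
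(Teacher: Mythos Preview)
Your proof is correct and follows essentially the same route as the paper: the corollary is placed immediately after Proposition~\ref{structure}, and the intended argument is precisely to combine Proposition~\ref{RMlemma}(3) (giving $\R(M)$ the structure of a near multiplicative lattice with multiplication equal to $\vee_{\R(M)}$), Proposition~\ref{CMC} (making $\N(M)$ complete, hence a near multiplicative lattice under $\vee_{\N(M)}$), and Proposition~\ref{dalpha} (giving the faithful Galois connection, hence that $d_-$ is a sup-preserving poset embedding). Your closing observation that multiplicativity of $d_-$ is just the binary case of sup-preservation, together with the caveat about computing joins in $\R(M)$ rather than in $M$, is exactly the point.
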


For any near sup-magma $M$ the poset $\N(M)$ is complete, so by Proposition \ref{structure} we may set $\N^1(M) = \N(M)$ and $\N^{n+1}(M) = \N(\N^n(M))$ for any positive integer $n$.

\begin{corollary}\label{structure2}
Let $M$ be a near sup-magma and $n$ a positive integer. 
\begin{enumerate}
\item $\N^n(M)$ is a near multiplicative lattice under the operation $\vee$ with $\N^n(M) = \R(\N^n(M))$ and $\N(\N^n(M)) = \C(\N^n(M))$.
\item The pair $d_-: \N^n(M) \longrightarrow \N^{n+1}(M)$ and $d^-: \N^{n+1}(M) \longrightarrow \N^n(M)$ is a faithful order-preserving Galois connection.
\item The map $d_- : \N^n(M) \longrightarrow \N^{n+1}(M)$ is an embedding of near multiplicative lattices.
\end{enumerate}
\end{corollary}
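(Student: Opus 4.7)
The plan is a direct induction on $n \geq 1$, with Proposition \ref{structure} doing all the work once the appropriate invariant is in place. The invariant I will maintain is that $\N^n(M)$ is a \emph{complete} lattice (not merely near sup-complete), as this is precisely the hypothesis required to invoke Proposition \ref{structure}.

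For the base case $n = 1$, since $M$ is a near sup-magma it is in particular near sup-complete, so Proposition \ref{CMC}(1) yields that $\N^1(M) = \N(M)$ is a complete lattice. For the inductive step, suppose $\N^n(M)$ is a complete lattice. Then by Proposition \ref{structure}(1) applied to $S = \N^n(M)$, the poset $\N^n(M)$ is also a near multiplicative lattice, hence near sup-complete (as every near multiplicative lattice is a near sup-lattice); so another application of Proposition \ref{CMC}(1) yields that $\N^{n+1}(M) = \N(\N^n(M))$ is again a complete lattice. This establishes the invariant for every $n \geq 1$.

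With this in hand, all three parts of the corollary follow by applying Proposition \ref{structure} to the complete lattice $S = \N^n(M)$: part (1) is Proposition \ref{structure}(1), which gives that $\N^n(M) = S$ is a near multiplicative lattice under $\vee$ with $\N^n(M) = \R(\N^n(M))$ and $\N^{n+1}(M) = \N(S) = \C(S) = \C(\N^n(M))$; parts (2) and (3) together constitute Proposition \ref{structure}(3), which supplies both the faithful order-preserving Galois connection between $\N^n(M) = S$ and $\N^{n+1}(M) = \N(S)$ and the fact that $d_-$ is an embedding of near multiplicative lattices. Since the corollary is essentially the formal iteration of Proposition \ref{structure}, there is no serious obstacle; the only subtlety worth underlining is that, although $M$ is assumed only to be a near sup-magma, the very first application of $\N$ automatically promotes the structure to a complete lattice, and that completeness then propagates up the entire tower without any further hypothesis on $M$.
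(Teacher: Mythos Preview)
Your proof is correct and matches the paper's intended argument: the paper omits a formal proof, noting just before the corollary that $\N(M)$ is complete for any near sup-magma $M$ (by Proposition \ref{CMC}) and that one may therefore iterate Proposition \ref{structure}. Your induction simply makes this iteration explicit, with the invariant (completeness of $\N^n(M)$) being exactly the right bookkeeping.
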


Next, we show that a near prequantale $Q$ is simple if and only if the sequence $\N^n(Q)$ stabilizes, that is, if and only if there is a positive integer $n_0$ such that the map $d_- : \N^n(Q) \longrightarrow \N^{n+1}(Q)$ is an isomorphism for all $n \geq n_0$ (or equivalently for $n = n_0$).

\begin{lemma}\label{desimple2}
Let $M$ be a unital near sup-magma such that $M = \R(M)$.  Then $M$ is simple if and only if
$|M| \leq 2$.
\end{lemma}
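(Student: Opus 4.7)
The plan is to reduce the statement to Corollary~\ref{desimple} by first upgrading the hypotheses on $M$ to those of a near prequantale, and then using the Galois connection $d_-, d^-$ to count nuclei against elements.

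First I would unpack the hypothesis $M = \R(M)$: every element of $M$ is idempotent and $\geq 1$, so $1 = \bigwedge M$ is the least element, and for any $a,b \in M$ one has $a \vee b \leq ab \leq (a \vee b)^2 = a \vee b$, so multiplication in $M$ coincides with $\vee$. Since $M$ is also near sup-complete, Proposition~\ref{RMlemma}(3) (applied to $M$ itself, with $\R(M) = M$ a submagma of itself) says $M$ is a near multiplicative lattice, in particular a near prequantale. This puts Corollary~\ref{desimple} at our disposal: $M$ is simple if and only if $\N(M) = \{d,e\}$.

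For the forward direction, suppose $M$ is simple, so $\N(M) = \{d,e\}$. Since $M$ is a commutative ordered monoid with $M = \R(M)$, Proposition~\ref{dalpha} (equivalently Corollary~\ref{galoisconnectioncor}, viewing $M$ as a join semilattice with least element) tells us that $d_-\colon M \longrightarrow \N(M)$ is a (sup-preserving) poset embedding. Therefore $|M| \leq |\N(M)| \leq 2$.

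For the backward direction, the case $|M| = 1$ is immediate since every magma homomorphism out of a singleton is constant, so simplicity holds vacuously. If $|M| = 2$, write $M = \{1,a\}$ with $1 < a$; then $a = \bigvee M$, so any closure operation $\star$ on $M$ must satisfy $a^\star = a$, and its value $1^\star \in \{1,a\}$ completely determines $\star$. Thus $\C(M) = \{d,e\}$, both of which are nuclei (checking $e$: $x^e y^e = a \cdot a = a = (xy)^e$). Hence $\N(M) = \{d,e\}$ and $M$ is simple by Corollary~\ref{desimple}. I do not foresee any real obstacle here; the only subtlety is remembering to invoke Proposition~\ref{RMlemma}(3) to get near prequantale structure before appealing to Corollary~\ref{desimple}.
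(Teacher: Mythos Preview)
Your proof is correct and takes essentially the same approach as the paper's. Both arguments rely on Corollary~\ref{desimple} (after observing, via Proposition~\ref{RMlemma}(3), that $M$ is a near multiplicative lattice), and your forward direction via the injectivity of $d_-\colon M \to \N(M)$ is just the abstract packaging of the paper's explicit construction: the paper picks a single $a$ with $1 < a < \bigvee M$ and notes that $d_a = a \vee -$ is a nucleus strictly between $d$ and $e$, which is exactly what the injectivity of $d_-$ encodes.
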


\begin{proof}
If $|M| \leq 2$, then $M$ is clearly simple.  If $|M| > 2$, then $1 < a < \bigvee M$ for some $a \in M$ and $a \vee -$ is a nucleus on $M$ with $d < a \vee - < e$, whence $M$ is not simple by Corollary \ref{desimple}.
\end{proof}


\begin{proposition}
The following are equivalent for any near prequantale $Q$.
\begin{enumerate}
\item $Q$ is simple.
\item The near multiplicative lattice $\N(Q)$ is simple.
\item $|\N(Q)| \leq 2$.
\item The map $d_- : \N(Q) \longrightarrow \N(\N(Q))$ is an isomorphism.
\item The sequence $\N^n(Q)$ stabilizes.
\end{enumerate}
\end{proposition}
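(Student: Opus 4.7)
The plan is to show that each of statements (1), (2), (4), and (5) is equivalent to the cardinality condition (3) that $|\N(Q)| \leq 2$. The equivalence (1) $\Leftrightarrow$ (3) is immediate from Corollary \ref{desimple}. For (2) $\Leftrightarrow$ (3), Corollary \ref{structure2}(1) exhibits $\N(Q)$ as a unital near multiplicative lattice under $\vee$ with $\N(Q) = \R(\N(Q))$, and Lemma \ref{desimple2} then applies directly.

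The technical heart of the argument is the following key lemma: for any complete lattice $L$ viewed as a near multiplicative lattice under $\vee$ (so that $\N(L) = \C(L)$ by Proposition \ref{structure}(1)), the canonical embedding $d_- : L \to \N(L)$ is an isomorphism if and only if $|L| \leq 2$. The backward direction is a direct verification: if $|L| \leq 2$, then $d_{\bigwedge L} = d$ and $d_{\bigvee L} = e$ already exhaust $\N(L)$. For the forward direction, assume $|L| \geq 3$ and consider the subset $C = \{\bigwedge L, \bigvee L\}$ of $L$; since its two elements are comparable, $C$ is closed under nonempty infima, and it contains $\bigvee L$, so it is a Moore family and therefore the image of a closure operation $\star \in \N(L) = \C(L)$. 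If $\star = d_c$ for some $c \in L$, then $L^\star = L_{\geq c}$ would force $c = \min C = \bigwedge L$, giving $L^\star = L$ and contradicting $|L| \geq 3$; hence $d_-$ is not surjective.

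With the key lemma in hand, (3) $\Leftrightarrow$ (4) is obtained by applying it to $L = \N(Q)$, which is a complete lattice by Proposition \ref{CMC} (since $Q$ is near sup-complete). For (3) $\Rightarrow$ (5), an easy induction using the key lemma shows that $|\N^n(Q)| \leq 2$ and $d_- : \N^n(Q) \to \N^{n+1}(Q)$ is an isomorphism for every $n \geq 1$, so the sequence stabilizes already at $n_0 = 1$. Conversely, for (5) $\Rightarrow$ (3), suppose $d_- : \N^{n_0}(Q) \to \N^{n_0+1}(Q)$ is an isomorphism; by Corollary \ref{structure2}(1), $\N^{n_0}(Q)$ is a near multiplicative lattice with smallest element $d$, hence a complete lattice, so the key lemma yields $|\N^{n_0}(Q)| \leq 2$, and the chain of injective embeddings $\N(Q) \hookrightarrow \N^2(Q) \hookrightarrow \cdots \hookrightarrow \N^{n_0}(Q)$ then forces $|\N(Q)| \leq 2$.

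The main obstacle is the forward direction of the key lemma: one must exhibit an explicit Moore family in $L$ whose associated closure is not of the form $d_c$. The canonical choice $\{\bigwedge L, \bigvee L\}$ works uniformly for all $L$ with $|L| \geq 3$, but one needs the small but essential observation that if this Moore family were a principal filter then its minimum $\bigwedge L$ would generate all of $L$ rather than just the two-element set $C$.
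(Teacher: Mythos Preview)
Your proof is correct and follows essentially the same approach as the paper: the closure operation $\star$ on $L$ with image $\{\bigwedge L,\bigvee L\}$ in your key lemma is exactly the nucleus $\sigma$ the paper constructs on $\N(Q)$ to prove $(4)\Rightarrow(3)$, and the remaining implications are routine in both arguments. The only cosmetic difference is that for $(5)\Rightarrow(3)$ you pass cardinality down the chain of embeddings $d_-$, whereas the paper passes simplicity down via repeated use of $(2)\Rightarrow(1)$; these are equivalent reorganizations of the same descent.
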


\begin{proof}
By Corollary \ref{desimple} and Lemma \ref{desimple2} we have $(1) \Leftrightarrow (2) \Leftrightarrow (3)$, and clearly $(3) \Rightarrow (4) \Rightarrow (5)$.  To show $(4) \Rightarrow (3)$ suppose that $(4)$ holds.  Define a self-map $\sigma$ of $\N(Q)$ by $\sigma(\star) = d$ if $\star = d$ and $\sigma(\star) = e$ otherwise.   Then $\sigma$ is a closure operation, hence a nucleus, on $\N(Q)$.  Therefore $\sigma = \star \vee -$ for some $\star \in \N(Q)$, whence $\star = \sigma(d) = d$, whence $\sigma$ is the identity on $\N(Q)$ and therefore $\N(Q) = \{d,e\}$.  Thus $(4) \Rightarrow (3)$.  Finally, if $(5)$ holds, then the map $\N^n(Q) \longrightarrow \N(\N^n(Q))$ is an isomorphism for some $n$, whence $\N^n(Q)$ is simple since $(4) \Rightarrow (1)$, whence $Q$ is simple since $(2) \Rightarrow (1)$.  Thus $(5) \Rightarrow (1)$.  This completes the proof.
\end{proof}

\section{Compact finitary nuclei}\label{sec:CFN}

It is a natural problem to determine the compact elements of the poset $\N(M)$ of all nuclei on an ordered magma $M$ and to determine for which such $M$ the lattice $\N(M)$ is algebraic.  The same problem may be posed for the poset $\N_f(M)$ of all finitary nuclei on $M$.  In this section we take some preliminary steps towards a solution to these problems.  Recall that $\R(M)$ denotes the set $\Idem(M)_{\geq 1}$ for any ordered unital magma $M$.

\begin{proposition}
Let $Q$ be a unital near quantale.  For any $x \in Q$, let $1[x] = 1 \vee x \vee x^2  \vee \cdots$.
\begin{enumerate}
\item  For all $x \in Q$, the element $1[x]$ is the least element $y$ of $\R(Q)$ such that $x \leq y$.
\item The map $1[-]: x \longmapsto 1[x]$ is a finitary closure operation on $Q$ with image equal to $1[Q] = \R(Q)$.
\item If $Q$ is algebraic, then $1[Q]$ is algebraic and $\K(1[Q]) = 1[\K(Q)]$.
\end{enumerate}
\end{proposition}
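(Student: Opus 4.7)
The plan is to prove the three statements in order, leveraging that multiplication in a near quantale distributes over nonempty suprema and that Lemma \ref{finitetypeprop}(3) handles the algebraicity transfer automatically.

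For (1), I would first check $1[x] \in \R(Q)$. The inequality $1[x] \geq 1 = x^0$ is immediate, and idempotence follows by expanding
\[
1[x] \cdot 1[x] = \bigl({\bigvee}_{n \geq 0} x^n\bigr)\bigl({\bigvee}_{m \geq 0} x^m\bigr) = {\bigvee}_{n,m \geq 0} x^{n+m} = {\bigvee}_{k \geq 0} x^k = 1[x],
\]
using Proposition \ref{nearprequantales} (multiplication in $Q$ is near sup-preserving, and the index sets here are nonempty). For minimality, suppose $y \in \R(Q)$ with $x \leq y$. Since $y$ is idempotent with $y \geq 1$, an easy induction yields $y^n = y \geq 1$ for all $n \geq 0$, while $x \leq y$ gives $x^n \leq y^n = y$ for $n \geq 1$ and $x^0 = 1 \leq y$. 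Taking the supremum over $n$ yields $1[x] \leq y$.

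For (2), expansiveness ($x \leq 1[x]$) and order-preservation ($x \leq y \Rightarrow x^n \leq y^n$ for all $n$) are immediate. Idempotence of $1[-]$ and the identification $1[Q] = \R(Q)$ both come from (1): for $y \in \R(Q)$ one has $1[y] = y$ since $y$ is trivially the least element of $\R(Q)$ above itself, and conversely $1[x] \in \R(Q)$ for every $x$. For finitariness, let $\Delta \subset Q$ be directed with supremum $s = \bigvee \Delta$. The inequality $\bigvee 1[\Delta] \leq 1[s]$ follows from monotonicity. For the reverse, iterated application of near sup-preservation of multiplication gives
\[
s^n = {\bigvee}_{x_1, \ldots, x_n \in \Delta} x_1 x_2 \cdots x_n,
\]
and by directedness each tuple $(x_1, \ldots, x_n)$ has an upper bound $x \in \Delta$, so $x_1 \cdots x_n \leq x^n \leq 1[x]$. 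Hence $s^n \leq \bigvee_{x \in \Delta} 1[x] = \bigvee 1[\Delta]$ for every $n \geq 0$, giving $1[s] \leq \bigvee 1[\Delta]$.

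For (3), since $Q$ is an algebraic near sup-lattice (being a unital near quantale that is algebraic) and $1[-]$ is a finitary closure operation on $Q$ with image $1[Q] = \R(Q)$, Lemma \ref{finitetypeprop}(3) immediately yields that $1[Q]$ is algebraic with $\K(1[Q]) = \K(Q)^{1[-]} = 1[\K(Q)]$.

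The main obstacle is the finitariness step in (2): one must be careful that every supremum invoked when expanding $(\bigvee \Delta)^n$ and $1[x] \cdot 1[x]$ is nonempty so that near sup-preservation of multiplication applies. The directedness of $\Delta$ (forcing nonemptiness) and the fact that each $1[x]$ is a supremum over $n \geq 0$ (so includes $x^0 = 1$) make this routine but worth verifying explicitly. Once finitariness is in hand, (3) is essentially a direct invocation of the previously established machinery.
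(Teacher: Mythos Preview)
Your proof is correct and follows the same overall structure as the paper's: establish (1) by computing $1[x]^2$ via distributivity and checking minimality, deduce the closure-operation properties in (2) from (1), verify finitariness, and then invoke Lemma~\ref{finitetypeprop}(3) for (3).

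The one tactical difference is in the finitariness step. The paper argues more abstractly: since $1[\Delta]$ is a directed subset of $\R(Q)$ and $Q$ is Scott-topological, Proposition~\ref{RMlemma}(2) gives $\bigvee 1[\Delta] \in \R(Q)$, and then the minimality in (1) forces $1[\bigvee \Delta] = \bigvee 1[\Delta]$. Your direct expansion of $(\bigvee \Delta)^n$ via iterated near sup-preservation and directedness is equally valid and more self-contained; the paper's route is slightly slicker in that it recycles an earlier lemma rather than repeating a distributivity computation.
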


\begin{proof}
Let $\Delta_x = \{1,x,x^2,\ldots\}$, so $1[x] = \bigvee(\Delta_x)$.  Note then that $(1[x])^2 = \bigvee(\Delta_x \Delta_x) = \bigvee(\Delta_x) = 1[x]$.  Therefore $1[x] \in \R(Q)$ and $1[x] \geq x$.  Moreover, if $y \in \R(Q)$ and $y \geq x$, then $y \geq 1,x,x^2,\ldots$, whence $y \geq 1[x]$.  This proves (1).  It follows that the map $1[-]$ is a closure operation on $Q$.  Moreover, one has $1[x] = x$ if and only if $x \geq x^n$ for all non-negative integers $n$, if and only if $x \in \R(Q)$, so the image of $1[-]$ is equal to $\R(Q)$.  Also, if $\Delta$ is any directed subset of $Q$, then by Proposition \ref{RMlemma}(2) one has $\bigvee 1[\Delta] \in \R(Q)$, and therefore it follows from (1) that $1[\bigvee \Delta] = \bigvee 1[\Delta]$.  This proves (2).  Finally, since $1[-]$ is finitary, statement (3) follows from Lemma \ref{finitetypeprop}(3).
\end{proof}

\begin{corollary}\label{rmcor}
Let $Q$ be an algebraic near multiplicative lattice.  Then $\R(Q)$ is a coherent near multiplicative lattice equal to $\{1[x]: \ x \in Q\}$, where $1[x] = 1 \vee x \vee x^2 \vee \cdots$ and $1[x]1[y] = 1[x \vee y]$ for all $x,y \in Q$, and $\K(\R(Q)) = \{1[x]: \  x \in \K(Q)\}$.
\end{corollary}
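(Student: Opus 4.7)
My plan is to derive all four claims directly from the preceding proposition together with Proposition \ref{RMlemma}(3). Since $Q$ is a near multiplicative lattice it is in particular a commutative monoid and near sup-complete, so Proposition \ref{RMlemma}(3) gives that $\R(Q)$ is a sub near multiplicative lattice of $Q$ in which multiplication coincides with the supremum operation $\vee$ inherited from $Q$. The preceding proposition already supplies the identification $\R(Q) = 1[Q]$, the fact that $\R(Q)$ is algebraic, and the formula $\K(\R(Q)) = 1[\K(Q)]$, so these parts of the corollary need no further work.

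To verify coherence, I will first observe that $1$ is the identity of $\R(Q)$ (as it is already the identity of $Q$ and clearly lies in $\R(Q)$) and is the least element of $\R(Q)$ by the very definition $\R(Q) = \Idem(Q)_{\geq 1}$; hence $1$ is automatically compact in $\R(Q)$. Precoherence then reduces to showing that $\K(\R(Q))$ is closed under multiplication in $\R(Q)$, and since multiplication there is $\vee$, this is Lemma \ref{joinlemma}.

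For the product formula $1[x]1[y] = 1[x \vee y]$, my plan is to show that both sides satisfy the same universal property, namely being the least element of $\R(Q)$ dominating $x \vee y$. The left side equals $1[x] \vee 1[y]$ by the coincidence of multiplication with $\vee$ in $\R(Q)$, and this supremum lies in $\R(Q)$ (because $\R(Q)$ is closed under the multiplication $= \vee$) and is above both $x$ and $y$, hence above $x \vee y$; conversely, any $z \in \R(Q)$ with $z \geq x \vee y$ satisfies $z \geq 1[x]$ and $z \geq 1[y]$ by the minimality property in part (1) of the preceding proposition, and hence $z \geq 1[x] \vee 1[y]$.

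I do not expect any substantive obstacle: the corollary is really a repackaging of the preceding proposition and Proposition \ref{RMlemma}(3). The only point requiring care is keeping straight that the $\vee$ defining multiplication in $\R(Q)$ agrees with the supremum in the ambient $Q$, which is exactly what the proof of Proposition \ref{RMlemma}(3) provides through the computation $a \vee b \leq ab \leq c^2 = c$ whenever $a,b \leq c \in \R(Q)$.
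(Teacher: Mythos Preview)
Your proposal is correct and is exactly the routine reconstruction the paper intends: the corollary is stated without proof, and your derivation from the preceding proposition together with Proposition \ref{RMlemma}(3) and Lemma \ref{joinlemma} is the natural and intended argument.
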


\begin{example}
Let $A$ be an algebra over a ring $R$ and let $Q = \operatorname{Mod}_R(A)$.  Then $1[Q] = \R(Q)$ is the complete lattice of all $R$-subalgebras of $A$, and $\K(1[Q])$ is the poset of all finitely generated $R$-subalgebras of $A$.  It is clear in this case that $1[Q]$ is algebraic.
\end{example}

We say that a morphism $f: S \longrightarrow T$ of posets is {\it coherent} if $f$ is sup-preserving and $f(\K(S)) \subset \K(f(S))$.  

\begin{proposition}
Let $M$ be a Scott-topological ordered commutative monoid that is a dcpo (resp., bdcpo).  Then the map $d_-: \R(M) \longrightarrow \N_f(M)$ is a coherent embedding of near multiplicative lattices (resp., bounded complete posets).
\end{proposition}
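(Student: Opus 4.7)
The plan is to decompose the claim into three ingredients: that $d_-$ is an embedding of the appropriate ordered-magma structures, that it is sup-preserving, and that it carries compact elements of $\R(M)$ to compact elements of $\N_f(M)$. The first two are largely packaged by earlier results. Indeed, under the Scott-topological hypothesis Proposition \ref{dalpha} already furnishes a faithful order-preserving Galois connection $(d_-, d^-)$ with each $d_a$ finitary, so $d_-: \R(M) \longrightarrow \N_f(M)$ is a sup-preserving poset embedding. In the dcpo case Proposition \ref{Nf}(1) makes $\N_f(M)$ complete, hence a near multiplicative lattice under $\vee$, while Proposition \ref{RMlemma}(3) tells us that $\R(M)$ is a near multiplicative lattice whose multiplication is $\vee$; since $d_-$ preserves $\vee$ it automatically preserves multiplication, so it is an embedding of near multiplicative lattices. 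In the bdcpo case Proposition \ref{Nf}(1) gives only bounded completeness of $\N_f(M)$, and the same sup-preservation then yields an embedding of bounded complete posets, which is all that is claimed.

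For the coherence claim, fix $a \in \K(\R(M))$ and suppose $d_a \leq \bigvee_{\N_f(M)} \Sigma$ for a directed subset $\Sigma \subset \N_f(M)$ whose supremum exists (bounded above as well, in the bdcpo case). The key technical step is the identity
\[
1^{\bigvee \Sigma} \;=\; \bigvee_{\star \in \Sigma} 1^\star.
\]
By Proposition \ref{Nf}(1) one has $1^{\bigvee \Sigma} = \bigvee\{1^\gamma : \gamma \in \langle \Sigma \rangle\}$. For any composition $\gamma = \star_n \circ \cdots \circ \star_1$ with $\star_i \in \Sigma$, directedness of $\Sigma$ produces a common upper bound $\star \in \Sigma$, and since each $\star_i$ is a closure operation with $\star_i \leq \star$, the composite satisfies $1^\gamma \leq 1^\star$. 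Consequently $\{1^\gamma : \gamma \in \langle \Sigma \rangle\}$ and $\{1^\star : \star \in \Sigma\}$ have the same upper bounds, hence the same supremum. This supremum is a directed supremum in $M$, and by Proposition \ref{RMlemma}(2) the subset $\R(M)$ is closed under directed suprema, so the value also computes the supremum in $\R(M)$; in the bdcpo case, if $\star_0$ bounds $\Sigma$ then $1^{\star_0}$ bounds $\{1^\star\}$, so the relevant suprema exist.

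Combining, we have $a = 1^{d_a} \leq 1^{\bigvee \Sigma} = \bigvee_{\star \in \Sigma} 1^\star$ computed inside $\R(M)$. Compactness of $a$ in $\R(M)$ then yields $a \leq 1^{\star'}$ for some $\star' \in \Sigma$, and applying the Galois connection from Proposition \ref{dalpha} produces $d_a \leq d_{1^{\star'}} \leq \star'$. Thus $d_a \in \K(\N_f(M))$, establishing coherence. The main obstacle is the bookkeeping in the formula for $1^{\bigvee \Sigma}$: one must legitimately collapse the supremum over the a priori unwieldy monoid $\langle \Sigma \rangle$ of compositions down to a supremum over $\Sigma$ itself, and in the bdcpo case one must additionally certify, at each stage, that the directed sets being formed remain bounded in $M$ (and thus in $\R(M)$) so that the suprema actually exist where needed.
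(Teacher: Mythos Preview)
Your proof is correct and follows essentially the same approach as the paper: evaluate the supremum at $1$ using Proposition~\ref{Nf}(1), use compactness of $a$ in $\R(M)$ to extract a single element, and then pass back via the Galois connection of Proposition~\ref{dalpha}. The only cosmetic difference is that you restrict to directed $\Sigma$ from the outset (the definition of compactness) and use this to collapse $\langle\Sigma\rangle$ to $\Sigma$, whereas the paper works with an arbitrary $\Gamma$, uses the automatic directedness of $\{1^\gamma:\gamma\in\langle\Gamma\rangle\}$ to find a single composite $\delta$, and then extracts a finite $\Delta\subset\Gamma$ in the spirit of Lemma~\ref{joinlemma}.
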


\begin{proof}
First, $d_-$ is well-defined and sup-preserving by Proposition \ref{dalpha}.  Let $a \in \K(\R(M))$.  Suppose that $d_a \leq \bigvee \Gamma$ for some $\Gamma \subset \N_f(M)$.  Then $a = 1^{d_a} \leq 1^{\bigvee \Gamma} = \bigvee\{1^\gamma: \ \gamma \in \langle \Gamma \rangle\}$ by Proposition \ref{Nf}(1).  Moreover, $1^\star \in \R(M)$ for all $\star \in \Gamma$, whence $1^\gamma \in \R(M)$ for all $\gamma \in \langle \Gamma \rangle$.  Therefore, since $1^\gamma, 1^\delta \leq 1^{\gamma \circ \delta}$ for all $\gamma, \delta \in \langle \Gamma \rangle$, the set $\{1^\gamma:  \ \gamma \in \langle \Gamma \rangle\}$ is a directed subset of $\R(M)$.   Thus $a \leq 1^{\delta}$ for some $\delta \in \langle \Gamma \rangle$.  Writing $\delta = \star_1 \circ \star_2 \circ \cdots \circ \star_n$ for $\star_i \in \Gamma$, we see that $a \leq \bigvee\{1^\gamma: \ \gamma \in \langle \Delta \rangle\} = 1^{\bigvee \Delta}$,
where $\Delta = \{\star_1, \star_2, \ldots, \star_n\}$,  whence $d_a \leq d_{1^{\bigvee \Delta}} \leq \bigvee \Delta$.  Therefore $d_a \in \K(\N_f(M))$.
\end{proof}

\begin{corollary}
Let $Q$ be a near multiplicative lattice.  Then $d_{1[x]} \in \K(\N_f(Q))$ for all $x \in \K(Q)$.
\end{corollary}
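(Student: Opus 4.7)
The plan is to reduce the statement to the preceding proposition by checking its hypotheses for $Q$ and then verifying that $1[x]$ is always a compact element of $\R(Q)$. A near multiplicative lattice is, by Proposition \ref{preq}, a Scott-topological multiplicative semilattice that is near sup-complete; since it is moreover unital, associative, and commutative, $Q$ is a Scott-topological ordered commutative monoid and, being near sup-complete, a dcpo. The preceding proposition therefore applies and yields that $d_-: \R(Q) \longrightarrow \N_f(Q)$ carries compact elements of $\R(Q)$ into $\K(\N_f(Q))$.

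It then remains to show that $x \in \K(Q)$ implies $1[x] \in \K(\R(Q))$. My plan is as follows. Let $\Delta \subset \R(Q)$ be directed with $\bigvee_{\R(Q)} \Delta$ existing and $1[x] \leq \bigvee_{\R(Q)} \Delta$. By Proposition \ref{RMlemma}(2), since $Q$ is Scott-topological and $\Delta$ is a directed subset of $\R(Q)$ admitting a supremum, we have $\bigvee_Q \Delta \in \R(Q)$, so $\bigvee_{\R(Q)} \Delta = \bigvee_Q \Delta$. Then $x \leq 1[x] \leq \bigvee_Q \Delta$, and compactness of $x$ in $Q$ produces some $y \in \Delta$ with $x \leq y$. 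Because $y \in \R(Q)$ and $1[x]$ is, by the first proposition of Section \ref{sec:CFN}, the least element of $\R(Q)$ lying above $x$, we conclude $1[x] \leq 1[y] = y$, whence $1[x] \in \K(\R(Q))$.

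Composing the two steps, $d_{1[x]} = d_-(1[x]) \in d_-(\K(\R(Q))) \subset \K(\N_f(Q))$, which is the desired conclusion. There is essentially no obstacle here; the only point that warrants explicit verification is the transition between suprema taken in $\R(Q)$ and in $Q$, which is precisely what Proposition \ref{RMlemma}(2) provides, and the fact that $Q$ being a near multiplicative lattice is enough (without any further algebraicity hypothesis on $Q$) to invoke the preceding proposition.
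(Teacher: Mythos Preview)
Your proof is correct and follows essentially the same route the paper intends: combine the preceding proposition (which shows $d_-$ sends $\K(\R(Q))$ into $\K(\N_f(Q))$) with the fact that $1[x]\in\K(\R(Q))$ whenever $x\in\K(Q)$. Your direct verification of the latter is exactly the content of Lemma~\ref{finitetype} applied to the finitary closure operation $1[-]$ on $Q$ (see statement (2) of the first proposition of Section~\ref{sec:CFN}), so the argument is the intended one, just unpacked.
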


If $M$ is a near multiplicative lattice, then the set $\N_{ns}(M)$ of all near sup-preserving nuclei on $M$ is a sub near multiplicative lattice of the near multiplicative lattice $\N_f(M)$ that is closed under arbitrary suprema, and $d_a \in \N_{ns}(M)$ for all $a \in \R(M)$.

\begin{proposition}\label{ns1}
Let $M$ be a near multiplicative lattice.  Then the map $d_-: \R(M) \longrightarrow \N_{ns}(M)$ is a coherent embedding of near multiplicative lattices and is an isomorphism if $M$ is a near $\U$-lattice without a least element.
\end{proposition}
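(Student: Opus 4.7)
The plan is to establish the three parts of the statement in turn: that $d_-$ corestricts to an embedding of near multiplicative lattices $\R(M) \to \N_{ns}(M)$, that this corestriction is coherent, and that it is surjective (hence an isomorphism) under the additional $\U$-lattice hypothesis.

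The first part is nearly a packaging exercise. For $a \in \R(M)$, the map $d_a$ is multiplication by $a$, which is near sup-preserving since $M$ is a near multiplicative lattice, so $d_a \in \N_{ns}(M)$. Combined with Corollary \ref{alphalemmacor}, which gives that $d_- : \R(M) \to \N(M)$ is an embedding of near multiplicative lattices, and the observation preceding the proposition that $\N_{ns}(M)$ sits as a sub near multiplicative lattice of $\N_f(M)$ closed under arbitrary suprema, we get the desired embedding. A consequence I will use repeatedly is that suprema in $\N_{ns}(M)$ agree with those in $\N(M)$ and are given by the explicit formula $x^{\bigvee \Gamma} = \bigvee\{x^\gamma : \gamma \in \langle \Gamma \rangle\}$ from Proposition \ref{Nf}(1).

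For coherence I would follow the argument pattern of the preceding proposition. Let $a \in \K(\R(M))$ and suppose $d_a \leq \bigvee \Gamma$ for some $\Gamma \subset \N_{ns}(M)$. The Galois connection of Proposition \ref{dalpha} gives $a \leq 1^{\bigvee \Gamma} = \bigvee \{1^\gamma : \gamma \in \langle \Gamma \rangle\}$. Each $1^\gamma$ belongs to $\R(M)$ and $1^{\gamma \circ \delta}$ dominates both $1^\gamma$ and $1^\delta$, so this set is directed in $\R(M)$; by Proposition \ref{RMlemma}(2) its supremum in $M$ coincides with its supremum in $\R(M)$, and compactness of $a$ then produces a single $\delta = \star_1 \circ \cdots \circ \star_n \in \langle \Gamma \rangle$ with $a \leq 1^\delta$. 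The finite subset $\Delta = \{\star_1, \ldots, \star_n\}$ of $\Gamma$ then satisfies $d_a \leq d_{1^{\bigvee \Delta}} \leq \bigvee \Delta$ by the Galois connection, and Lemma \ref{joinlemma} yields $d_a \in \K(\N_{ns}(M))$.

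For surjectivity, given $\star \in \N_{ns}(M)$ I would set $a = 1^\star$; the idempotency computation $a a \leq (1 \cdot 1)^\star = a$ together with $a \geq 1$ places $a$ in $\R(M)$. Proposition \ref{closureprop3} then gives $\U(M) \subset \T^\star(M)$, so $u^\star = (u \cdot 1)^\star = u a$ for every unit $u$. For general $x \in M$, the sup-spanning property of $\U(M)$ writes $x$ as the nonempty supremum $\bigvee\{u \in \U(M) : u \leq x\}$, and applying near-sup-preservation of $\star$ together with near-sup-preservation of multiplication by $a$ collapses $x^\star$ to $xa$, so $\star = d_a$. The main subtle point will be the role of the ``without a least element'' hypothesis: it is what forces the set $\{u \in \U(M) : u \leq x\}$ to be honestly nonempty for every $x$ (the sup-spanning definition in a near sup-magma demands nonempty suprema), since a near $\U$-lattice with a least element would otherwise force that least element to be a unit and collapse $M$ to the trivial monoid $\{1\}$.
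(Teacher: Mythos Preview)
Your proof is correct and follows the paper's approach: the paper only spells out the surjectivity computation $x^\star = \bigvee\{u^\star: u \in \U(M),\ u \leq x\} = \bigvee\{1^\star u: u \in \U(M),\ u \leq x\} = 1^\star x$, and yours is the same, while your coherence argument is a direct transplant of the paper's proof of the preceding proposition to $\N_{ns}(M)$. One small correction: your closing parenthetical that a near $\U$-lattice with a least element would force that element to be a unit and collapse $M$ is not right (e.g.\ $[0,\infty]$ under multiplication is a near $\U$-lattice whose least element $0$ is an annihilator, not a unit), but your primary reason---that without a least element $\bigvee\emptyset$ fails to exist, so the sup-spanning identity forces $\{u \in \U(M): u \leq x\}$ to be nonempty---is the correct one and is all the argument needs.
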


\begin{proof}
Suppose that $M$ is a near $\U$-lattice without a least element, and let $\star \in \N_{ns}(M)$.  Then for any $x \in M$ one has $x = \bigvee\{u \in \U(M): \ u \leq x\}$, whence $x^\star = \bigvee\{u^\star: \ u \in \U(M), \ u \leq x\}  = \bigvee\{ 1^\star u:  \ u \in \U(M), \ u \leq x\} = 1^\star x$.
Therefore $\star = d_{1^\star}$.
\end{proof}

\begin{proposition}\label{ns2}
The following are equivalent for any ordered monoid $M$.
\begin{enumerate}
\item $M$ is coherent, near sup-complete, and $\K(M) \subset \U(M)$.
\item $M$ is a near $\U$-lattice and $\K(M) = \U(M)$.
\item $M$ is a near $\U$-lattice and $u \vee v \in \U(M)$ for all $u,v \in M$.
\end{enumerate}
Moreover, if any of these conditions holds and $M$ is an ordered commutative monoid without a least element, then the map $d_-: \R(M) \longrightarrow \N_f(M) = \N_{ns}(M)$ is an isomorphism of near multiplicative lattices and therefore $\N_f(M)$ is a coherent near multiplicative lattice with $\K(\N_f(M)) = \{d_{1[x]}: \ x \in \K(M)\}$.
\end{proposition}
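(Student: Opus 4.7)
The plan is to establish the cycle $(1) \Rightarrow (2) \Rightarrow (3) \Rightarrow (1)$ and then derive the moreover statement from Propositions \ref{ns1}, \ref{dalpha}, and Corollary \ref{rmcor} together with a key identification of finitary nuclei with multiplication operators.

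For $(1) \Rightarrow (2)$, coherence of $M$ yields $1 \in \K(M)$, so Lemma \ref{1compact} gives $\U(M) \subset \K(M)$; combined with $\K(M) \subset \U(M)$ from (1) we obtain $\K(M) = \U(M)$. Since each compact element lies in $\U(M)$ and hence is trivially the supremum of a singleton subset of $\U(M)$, Proposition \ref{ucoherent} makes $M$ a near $\U$-lattice. For $(2) \Rightarrow (3)$, Lemma \ref{joinlemma} gives that $\K(M)$ is closed under finite suprema, whence $\U(M) = \K(M)$ inherits this closure property. For $(3) \Rightarrow (1)$, the main step is to establish $\K(M) = \U(M)$: for $k \in \K(M)$ the sup-spanning property writes $k = \bigvee\{u \in \U(M) : u \leq k\}$, where the set is directed by the hypothesized closure of $\U(M)$ under finite joins, and compactness of $k$ then forces $k$ to lie in this set, so $\K(M) \subset \U(M)$; the reverse inclusion and the compactness of $1$ follow similarly, and Proposition \ref{ucoherent} then provides coherence of $M$, yielding (1).

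For the moreover part, assume $M$ is an ordered commutative monoid satisfying the equivalent conditions and without a least element. Being a near $\U$-lattice, $M$ is a near prequantale by Corollary \ref{ulattices} and hence Scott-topological by Proposition \ref{preq}, so Proposition \ref{dalpha} gives that $d_- : \R(M) \to \N_f(M)$ is a sup-preserving poset embedding, while Proposition \ref{ns1} gives that $d_- : \R(M) \to \N_{ns}(M)$ is an isomorphism of near multiplicative lattices. The technical crux is to show $\N_f(M) = \N_{ns}(M)$: for any finitary nucleus $\star$ on $M$ and any $u \in \U(M)$, the nucleus inequality yields $u \cdot 1^\star \leq (u \cdot 1)^\star = u^\star$, and invertibility of $u$ combined with $u^{-1} u^\star \leq (u^{-1} u)^\star = 1^\star$ yields the reverse inequality $u^\star = u \cdot u^{-1} u^\star \leq u \cdot 1^\star$, so $u^\star = u \cdot 1^\star$. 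Algebraicity writes $x = \bigvee\{u \in \U(M) : u \leq x\}$ as a directed supremum, and finitarity of $\star$ together with the near sup-preservation of right multiplication by $1^\star$ in $M$ gives $x^\star = \bigvee\{u \cdot 1^\star : u \in \U(M),\ u \leq x\} = x \cdot 1^\star = d_{1^\star}(x)$, so $\star = d_{1^\star} \in \N_{ns}(M)$. Hence $d_-$ is an isomorphism $\R(M) \to \N_f(M)$ of near multiplicative lattices.

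Finally, since $M$ is an algebraic near multiplicative lattice, Corollary \ref{rmcor} gives that $\R(M)$ is coherent with $\K(\R(M)) = \{1[x] : x \in \K(M)\}$, and transporting along the isomorphism $d_-$ yields that $\N_f(M)$ is coherent with $\K(\N_f(M)) = \{d_{1[x]} : x \in \K(M)\}$. The main obstacles are the $(3) \Rightarrow (1)$ direction, which requires carefully leveraging the closure of $\U(M)$ under finite joins to pass from sup-spanning to $\K(M) = \U(M)$, and the identification $\star = d_{1^\star}$ for finitary $\star$, which rests on the interplay of the nucleus axioms with invertibility in $\U(M)$ and the near sup-preservation of multiplication inherent to a near $\U$-lattice.
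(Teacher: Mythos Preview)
Your treatment of the moreover clause matches the paper's: both show that every $\star \in \N_f(M)$ equals $d_{1^\star}$ by writing each $x$ as the directed supremum of $\{u \in \U(M): u \leq x\}$, computing $u^\star = 1^\star u$ for $u \in \U(M)$, and then invoking Corollary~\ref{rmcor}. Your route through Proposition~\ref{ns1} and the identification $\N_f(M) = \N_{ns}(M)$ is only an organizational variant of this same computation.

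There is, however, a genuine gap in your $(3)\Rightarrow(1)$. After correctly establishing $\K(M) \subset \U(M)$ you assert that ``the reverse inclusion and the compactness of $1$ follow similarly'', but your forward argument hinged on the compactness of the element $k$, and no analogous device is available to force $1$ (or an arbitrary $u \in \U(M)$) to be compact. Under your reading of (3) as ``$\U(M)$ is closed under binary joins'' the implication in fact fails: take $M = \RR_{>0} \cup \{\infty\}$ under multiplication with the usual order. This is a near $\U$-lattice with $\U(M) = \RR_{>0}$ closed under $\vee$, yet $\K(M) = \emptyset$ (for instance $1 = \bigvee (0,1)$ with $(0,1)$ directed, and $1 \not\leq r$ for $r<1$), so both (1) and (2) fail while your version of (3) holds. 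The paper offers no argument for the equivalence beyond ``easily seen to be equivalent'', and its literal phrasing of (3) with $u,v \in M$ is itself problematic; as stated, condition (3) appears to require an additional hypothesis such as $1 \in \K(M)$ for the equivalence to go through.
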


\begin{proof}
The three conditions are easily seen to be equivalent.  Let $\star \in \N_f(M)$ and $x \in M$.  By hypothesis on $M$ the set $\Delta = \{u \in \U(M): u \leq x\}$ is directed and $x = \bigvee \Delta$.  Since $\star$ is finitary we thus have
$x^\star = \bigvee\{1^\star u: \ u \in \U(M) \mbox{ and } u \leq x\} = 1^\star x$.  Therefore $\star = d_{1^\star}$.  The rest of the proposition follows from Corollary \ref{rmcor}.
\end{proof}

Results similar to Propositions \ref{ns1} and \ref{ns2} hold for the set $\N_s(M)$ of sup-preserving nuclei on a multiplicative lattice $M$ that is a $\U$-lattice.

\begin{example}\label{Pruferexample}
Let $D$ be an integral domain and $M = \F(D)$.  Then $\K(M)$ (resp., $\U(M)$) is the group of finitely generated (resp., invertible) fractional ideals of $D$.  Thus, $M$ is a coherent near multiplicative lattice with $\U(M) \subset \K(M)$, and one has $\K(M) = \U(M)$ if and only if $D$ is a Pr\"ufer domain.   In particular, if $M$ is a Pr\"ufer domain, then the complete lattice $\N_f(M)$ of all finite type semistar operations on $D$ is algebraic, equal to the set $\{d_R: \ R \mbox{ is an overring of } D\}$ and with compact elements
as those $d_R$ with $R$ finitely generated as a $D$-algebra.
\end{example}

\section{Stable nuclei}\label{sec:stable}

A semistar operation $\star$ on an integral domain $D$ is said to be {\it stable} if 
$(I\cap J)^\star = I^\star \cap J^\star$ for all $I,J \in \F(D)$.  If $\star$ is stable then $(I:_F J)^\star = (I^\star :_F J)$ for all $I,J \in \F(D)$ with $J$ finitely generated, where $F$ is the quotient field of $D$.  For any semistar operation $\star$ on $D$ there exists a coarsest stable semistar operation $\overline{\star}$ on $D$ that is finer than $\star$, given by
$$I^{\overline{\star}} = \bigcup\{(I:_F J): \ J \subset D \mbox{ and } J^\star = D^\star\}$$
for all $I \in \F(D)$.  If $\star$ is of finite type then so is $\overline{\star}$, whence $\star_w = \overline{\star_f}$ is the coarsest stable semistar operation of finite type that is finer than $\star$.  The {\it $w$ operation} $w = v_w = \overline{t}$ is the coarsest stable semistar operation on $D$ of finite type such that $D^w = D$. 

In this section we generalize the above results to the context of near multiplicative lattices.  We will say that a nucleus $\star$ on a near residuated ordered magma $M$ is {\it stable} if $(\bigwedge X)^\star = \bigwedge(X^\star)$ for any finite subset $X$ of $M$ such that $\bigwedge X$ exists and $(x/t)^\star = x^\star/t$ (resp., $(t \backslash x)^\star = t \backslash x^\star)$ for all $x,t \in M$ with $t$ compact such that $x/t$ (resp., $t \backslash x$) exists.  In general the first condition above does not necessarily imply the second.  However, by the following proposition, the implication holds if every compact element of $M$ is the supremum of a finite subset of $\Inv(M)$, which in turn holds for any associative unital near $\U$-lattice $M$, such as the near $\U$-lattice $M = \F(D)$.

\begin{proposition}\label{stableU}
Let $M$ be near residuated ordered magma $M$ such that every compact element of $M$ is the supremum of a finite subset of $\Inv(M)$.  A nucleus $\star$ on $M$ is stable if and only if $(\bigwedge X)^\star = \bigwedge(X^\star)$ for any finite subset $X$ of $M$ such that $\bigwedge X$ exists.  Moreover, if $M$ is also a meet semilattice, then $x/t$ and $t \backslash x$ exist in $M$ for all $x,t \in M$ with $t$ compact.
\end{proposition}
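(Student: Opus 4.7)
The forward direction is immediate, since the stability definition explicitly contains the finite-meet condition. So I concentrate on the converse: assume $\star$ preserves finite meets; I want to show $(x/t)^\star = x^\star/t$ and $(t\backslash x)^\star = t\backslash x^\star$ whenever $t$ is compact and the respective residual exists. By hypothesis, write the compact element as $t = u_1 \vee \cdots \vee u_n$ with each $u_i \in \Inv(M)$; the two symmetric residuation statements are handled identically, so I focus on $x/t$.

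The first key step is to get a handle on the residuals $x/u_i$ when $u_i$ is invertible. Since right multiplication by $u_i^{-1}$ is order-preserving and two-sided inverse to right multiplication by $u_i$, the equivalence $zu_i \leq x \iff z \leq xu_i^{-1}$ holds. In particular $x/u_i$ exists (note that $(x/t)u_i \leq (x/t)t \leq x$ furnishes a witness) and equals $xu_i^{-1}$. Moreover, by Proposition \ref{closureprop3} one has $u_i^{-1} \in \Inv(M) \subset \T^\star(M)$, so $(xu_i^{-1})^\star = x^\star u_i^{-1}$, giving
\[
(x/u_i)^\star \;=\; x^\star/u_i.
\]

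The second key step is the finite-meet identity $x/t = \bigwedge_i (x/u_i)$. Since $M$ is near residuated, left multiplication by any $z \in M$ is near sup-preserving, so $zt = z\bigvee_i u_i = \bigvee_i (zu_i)$. Hence $zt \leq x$ iff $zu_i \leq x$ for every $i$ iff $z \leq x/u_i$ for every $i$, so the largest such $z$ is the finite meet $\bigwedge_i (x/u_i)$, which equals $x/t$ when it exists. Applying the same identity to $x^\star$ in place of $x$ (residuals $x^\star/u_i$ exist for the same reason), and then using the hypothesis that $\star$ preserves finite meets together with the formula from the first step:
\[
(x/t)^\star \;=\; \Bigl(\bigwedge_i (x/u_i)\Bigr)^{\!\star} \;=\; \bigwedge_i (x/u_i)^\star \;=\; \bigwedge_i (x^\star/u_i) \;=\; x^\star/t.
\]
The symmetric computation gives $(t\backslash x)^\star = t\backslash x^\star$, so $\star$ is stable.

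For the moreover clause, suppose in addition that $M$ is a meet semilattice. Given $x \in M$ and a compact $t = \bigvee_i u_i$ with $u_i \in \Inv(M)$, the residuals $x/u_i = xu_i^{-1}$ exist unconditionally by the first step, and the finite meet $\bigwedge_i (x/u_i)$ exists in the meet semilattice. The argument of the previous paragraph then shows this meet is the largest $z$ with $zt \leq x$, so $x/t$ exists; symmetrically $t \backslash x$ exists. The only real subtlety anywhere in the proof is guaranteeing existence of the individual residuals $x/u_i$ before passing to a meet, and this is precisely what invertibility of the $u_i$ buys us.
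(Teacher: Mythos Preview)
Your proof is correct and follows essentially the same approach as the paper: write $t = u_1 \vee \cdots \vee u_n$ with $u_i \in \Inv(M)$, use invertibility to get $x/u_i = xu_i^{-1}$ and $(x/u_i)^\star = x^\star/u_i$, reduce $x/t$ to the finite meet $\bigwedge_i x/u_i$, and then apply the finite-meet hypothesis. You supply a bit more detail than the paper (explicitly invoking Proposition~\ref{closureprop3} and near sup-preservation of multiplication), but the argument is the same.
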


\begin{proof}
Suppose that $\star$ distributes over finite meets.  Let $x,t \in M$ with $t$ compact, and suppose that $x/t$ exists.  We may write $t = u_1 \vee u_2 \vee \cdots \vee u_n$ with each $u_i \in \Inv(M)$.  Since $x/u$ exists and equals $xu^{-1}$ and $(x/u)^\star = (xu^{-1})^\star = x^\star u^{-1} = x^\star/u$ for all $u \in \Inv(M)$, it straightforward to check that $x/u_1 \wedge x/u_2 \wedge \cdots \wedge x/u_n$ exists and equals $x/t$.  Therefore we have
$$(x/t)^\star = (x/u_1)^\star \wedge \cdots \wedge (x/u_n)^\star = x^\star/u_1 \wedge \cdots \wedge x^\star/u_n
= x^\star/t.$$
The proof for $t \backslash x$ is similar, so $\star$ is stable.  Also, if $M$ is a meet semilattice and $x,t \in M$ with $t$ compact, then, writing $t = u_1 \vee u_2 \vee \cdots \vee u_n$ with each $u_i \in M$, one easily verifies that $x/t = x/u_1 \wedge x/u_2 \wedge \cdots \wedge x/u_n$ exists, and likewise for $t \backslash x$.
\end{proof}

If $M$ is an ordered unital magma and $\star \in \N(M)$, then, borrowing terminology from the theory of semistar operations, we will say that $z \in M$ is {\it $\star$-Glaz-Vasconcelos}, or {\it $\star$-GV}, if $z \leq 1$ and $z^\star = 1^\star$.  We let $\SGV(M)$ denote the set of all $\star$-GV elements of $M$, which is a submagma of $M$.  If $X \subset \SGV(M)$ is nonempty, then $\bigvee X \in \SGV(M)$ if $\bigvee X$ exists, and $\bigwedge X \in \SGV(M)$ if $\bigwedge X$ exists and $X$ is finite.  If $\star$ is a nucleus on a near multiplicative lattice $Q$, then we let
$$x^{\overline{\star}} = \bigvee\{x/z: \ z \in \SGV(Q)\}$$
for all $x \in Q$, which is well-defined since $x/z$ is defined for all $z \leq 1$.

\begin{lemma}\label{stableprop}
Let $\star$ be a nucleus on a near multiplicative lattice $Q$.
\begin{enumerate}
\item $\overline{\star}$ is a preclosure on $Q$ that is finer than $\star$, and one has
$xy^{\overline{\star}} \leq (xy)^{\overline{\star}}$ and $x^{\overline{\star}}y \leq (xy)^{\overline{\star}}$ for all $x,y \in Q$.
\item For any $x,t \in Q$ with $t$ compact one has $t \leq x^{\overline{\star}}$ if and only if $tz \leq x$ for some $z \in \SGV(Q)$.
\item If $Q$ is algebraic then $(\bigwedge X)^{\overline{\star}} = \bigwedge(X^{\overline{\star}})$ for any finite subset $X$ of $Q$ such that $\bigwedge X$ exists.
\item If $Q$ is precoherent then $\overline{\star}$ is a stable nucleus on $Q$.
\end{enumerate}
\end{lemma}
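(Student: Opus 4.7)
The plan is to verify that $\overline{\star}$ is a stable nucleus by checking, in order, a residual identity for compact elements, idempotence of $\overline{\star}$, the nucleus condition, and stability. Throughout I use that $\SGV(Q)$ is a submagma (so the family $\{x/z : z \in \SGV(Q)\}$ is directed), that $\K(Q)$ is closed under multiplication, and that $Q$ is algebraic.

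First I would prove $(x/t)^{\overline{\star}} = x^{\overline{\star}}/t$ for $t \in \K(Q)$ with $x/t$ existing. By associativity of residuation in the commutative near quantale $Q$, one has $(x/t)^{\overline{\star}} = \bigvee_{z \in \SGV(Q)} x/(tz)$. For any compact $s \leq x^{\overline{\star}}/t$, the product $st$ is compact by precoherence and satisfies $st \leq x^{\overline{\star}}$, so part (2) supplies $z \in \SGV(Q)$ with $stz \leq x$, whence $s \leq x/(tz) \leq (x/t)^{\overline{\star}}$; algebraicity of $Q$ gives $x^{\overline{\star}}/t \leq (x/t)^{\overline{\star}}$. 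The reverse inequality uses directedness and (2) in the opposite direction.

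The main step is idempotence: for compact $u \leq (x^{\overline{\star}})^{\overline{\star}}$, I need $W \in \SGV(Q)$ with $uW \leq x$. Part (2) yields $z \in \SGV(Q)$ with $uz \leq x^{\overline{\star}}$. For each compact $z' \leq z$, the product $uz'$ is compact by precoherence and satisfies $uz' \leq uz \leq x^{\overline{\star}}$, so (2) produces $v_{z'} \in \SGV(Q)$ with $u(z' v_{z'}) \leq x$. Setting $W = \bigvee\{z' v_{z'} : z' \in \K(Q),\ z' \leq z\}$ and using sup-distributivity in $Q$ gives $uW \leq x$ with $W \leq z \leq 1$. The main obstacle is verifying $W^\star = 1^\star$: the nucleus condition gives $W^\star \geq (z' v_{z'})^\star \geq z'^\star \cdot 1^\star$ for each $z'$, and these must be combined with the algebraic decomposition $z = \bigvee z'$, the equation $z^\star = 1^\star$, and the identity $(1^\star)^2 = 1^\star$ (from $1 \leq 1^\star$ and the nucleus inequality $1^\star \cdot 1^\star \leq (1\cdot 1)^\star = 1^\star$) to force $W^\star \geq 1^\star$. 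Given $W \in \SGV(Q)$, one concludes $u \leq x/W \leq x^{\overline{\star}}$, and algebraicity of $Q$ yields $(x^{\overline{\star}})^{\overline{\star}} \leq x^{\overline{\star}}$.

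Finally, idempotence together with part (1) gives $x^{\overline{\star}} y^{\overline{\star}} \leq (x^{\overline{\star}} y)^{\overline{\star}} \leq ((xy)^{\overline{\star}})^{\overline{\star}} = (xy)^{\overline{\star}}$, so $\overline{\star}$ is a nucleus. Stability is then immediate: part (3) supplies the meet-preservation clause, and the residual identity established first supplies the remaining clause of the definition preceding Proposition \ref{stableU}. The principal technical burden is therefore verifying in Step 2 that the aggregated element $W$ lies in $\SGV(Q)$, which is where the full precoherence hypothesis is used, via the interplay between the compact decomposition of the GV element $z$ and the behaviour of $\star$ on that decomposition.
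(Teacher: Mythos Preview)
Your proposal is correct and follows essentially the same approach as the paper: both arguments construct the aggregate element $W = \bigvee\{z' v_{z'} : z' \in \K(Q),\ z' \leq z\}$ (the paper calls it $z'$), verify it lies in $\SGV(Q)$, and then deduce idempotence via algebraicity, with the residual identity $(x/t)^{\overline{\star}} = x^{\overline{\star}}/t$ handled by the same compact-element argument. The only differences are cosmetic: you establish the residual identity before idempotence rather than after, and your verification of $W^\star = 1^\star$ is slightly more roundabout---the paper computes directly $W^\star = (\bigvee z' v_{z'}^\star)^\star = (\bigvee z' \cdot 1^\star)^\star = (z \cdot 1^\star)^\star = z^\star = 1^\star$, whereas your route via $W^\star \geq (z')^\star \cdot 1^\star \geq z'$ for each compact $z'$ (hence $W^\star \geq z$, hence $W^\star \geq z^\star = 1^\star$) works but makes the identity $(1^\star)^2 = 1^\star$ unnecessary.
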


\begin{proof} \
\begin{enumerate}
\item One has $x^{\overline{\star}} \leq \bigvee\{x^\star/z: \ z \in \SGV(Q)\} =  \bigvee\{x^\star/z^\star: \ z \in \SGV(Q)\} = x^\star$ for all $x \in Q$, whence $\overline{\star}$ is finer than $\star$, and the rest of statement (1) is equally trivial to verify.
\item Let $x,t \in Q$ with $t$ compact.  If $tz \leq x$ for some $z \in \SGV(Q)$, then $t \leq x/z \leq x^{\overline{\star}}$.  Conversely, if $t \leq x^{\overline{\star}}$, then we have $t \leq x/z_1 \vee x/z_2 \vee \cdots \vee x/z_n \leq x/(z_1 z_2 \cdots z_n)$ for some $z_1, z_2, \ldots, z_n \in \SGV(Q)$, whence $tz \leq x$, where $z = z_1 z_2 \cdots z_n \in \SGV(Q)$.
\item Let $X = \{x_1, x_2, \ldots, x_n\}$ be a finite subset of $Q$ such that $a = \bigwedge X$ exists.   We must show that $a^{\overline{\star}} = \bigwedge(X^{\overline{\star}})$.   Clearly $a^{\overline{\star}} \leq x^{\overline{\star}}$ for all $x \in X$.  Let $b$ be any lower bound of $X^{\overline{\star}}$.  If $t$ is any compact element of $Q$ such that $t \leq b$, then, by statement (2), for each $i$ there exists $z_i \in \SGV(Q)$ such that $tz_i \leq x$, whence $tz \leq \bigwedge X = a$, where $z = z_1 z_2 \cdots z_n \in \SGV(Q)$.  Thus $t \leq a^{\overline{\star}}$.  Taking the supremum over all such $t$ we see that $b = \bigvee\{t \in \K(Q): \ t \leq b\} \leq a^{\overline{\star}}$.  Therefore $\bigwedge(X^{\overline{\star}})$ exists and equals $a^{\overline{\star}}$.
\item We first show that $\overline{\star}$ is idempotent and therefore a nucleus on $Q$.  Let $x \in Q$.  Let $t$ be any compact element such that $t \leq (x^{\overline{\star}})^{\overline{\star}}$.  Then
$tz \leq x^{\overline{\star}}$ for some $z \in \SGV(Q)$.   If $u$ is a compact element of $Q$ such that $u \leq z$, then $tu \leq x^{\overline{\star}}$ and $tu$ is compact, whence $tu z_u \leq x$ for some $z_u \in \SGV(Q)$.  Let $z' = \bigvee\{uz_u: \ u \in \K(Q) \mbox{ and } u \leq z\}$.  Then $z' \leq 1$ and $(z')^\star  = (\bigvee\{u z_u^\star: \ u \in \K(Q) \mbox{ and } u \leq z\})^\star = z^\star = 1^\star$, whence $z' \in \SGV(Q)$.  Moreover, one has
$tz' = \bigvee\{tuz_u: \ u \in \K(Q) \mbox{ and } u \leq z\} \leq x$.  It follows that
$t \leq x^{\overline{\star}}$.  Taking the supremum over all $t$, we see that
$(x^{\overline{\star}})^{\overline{\star}} \leq x^{\overline{\star}}$.  Thus
$\overline{\star}$ is a nucleus on $Q$.  To show that $\overline{\star}$ is stable, let $x,t$ be elements of $Q$ with $t$ compact such that $x/t$ exists.  Clearly $x^{\overline{\star}}/t$ also exists and $(x/t)^{\overline{\star}} \leq x^{\overline{\star}}/t$.  Let $u \in \K(Q)$ with $u \leq x^{\overline{\star}}/t$.  Then $tu \leq x^{\overline{\star}}$ and $tu$ is compact, whence $tuz \leq x$ for some $z \in \SGV(Q)$.  Therefore $uz \leq x/t$, whence  $u \leq (x/t)^{\overline{\star}}$.  Taking the supremum over all $u$ we see that $x^{\overline{\star}}/t \leq (x/t)^{\overline{\star}}$, whence equality holds.  Combining this with statement (3), we see that $\overline{\star}$ is stable.
\end{enumerate}
\end{proof}

\begin{theorem}\label{stabletheorem}
Let $Q$ be a precoherent near multiplicative lattice such that every compact element of $Q$ is residuated and $x \wedge 1$ exists for all $x \in Q$, and let $\star \in \N(Q)$.   For any $x \in Q$ let $x^{\overline{\star}} = \bigvee\{x/z: \ z \in \SGV(Q)\}$.
Then $\overline{\star}$ is the coarsest stable nucleus on $Q$ that is finer than $\star$.  Moreover, the following conditions on $\star$ are equivalent.
\begin{enumerate}
\item $\star$ is stable.
\item $(x\wedge 1)^\star = x^\star \wedge 1^\star$ and $(x/t)^\star = x^\star/t$ for all $x,t \in Q$ with $t$ compact.
\item $(x/t \wedge 1)^\star = x^\star/t \wedge 1^\star$ for all $x,t \in Q$ with $t$ compact.
\item $\star = \overline{\star}$.
\end{enumerate}
\end{theorem}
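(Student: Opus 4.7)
The first assertion is half-proved by Lemma \ref{stableprop}: part (4), available because $Q$ is precoherent, yields that $\overline{\star}$ is stable, and part (1) gives that $\overline{\star}$ is finer than $\star$. What remains is the coarsest claim and the equivalence (1)--(4). The implications (4)$\Rightarrow$(1) via Lemma \ref{stableprop}(4), (1)$\Rightarrow$(2) via the definition of stability (taking $X = \{x,1\}$, which is licit since $x \wedge 1$ exists by hypothesis), and (2)$\Rightarrow$(3) via substituting $y = x/t$ into the first identity in (2) and then applying the second are immediate. So the genuine work is in (3)$\Rightarrow$(4) and the coarsest claim, and these share a common core argument.

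The core construction I would isolate: for any nucleus $\star'$ on $Q$, any $x \in Q$, and any $t \in \K(Q)$ with $t \leq x^{\star'}$, provided the single identity $(x/t \wedge 1)^{\star'} = x^{\star'}/t \wedge 1^{\star'}$ holds for this particular $x$ and $t$, set $z := x/t \wedge 1$; I claim $z^{\star'} = 1^{\star'}$ and $tz \leq x$. The element $x/t$ exists because compact elements of $Q$ are residuated, and $x/t \wedge 1$ exists by hypothesis. From $t \leq x^{\star'}$ comes $1 \leq x^{\star'}/t$ by residuation, hence $x^{\star'}/t \wedge 1^{\star'} \geq 1$; the stability identity then gives $z^{\star'} \geq 1$, and applying $\star'$ once upgrades to $z^{\star'} \geq 1^{\star'}$, while $z \leq 1$ yields the reverse inequality. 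The inequality $tz \leq t(x/t) \leq x$ is immediate.

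For (3)$\Rightarrow$(4) I would apply the core with $\star' = \star$, using hypothesis (3) itself as the required stability identity: for each compact $t \leq x^\star$ one obtains $z \in \SGV(Q)$ with $tz \leq x$, Lemma \ref{stableprop}(2) then yields $t \leq x^{\overline{\star}}$, and algebraicity of $Q$ (since $Q$ is precoherent) lets me take the supremum over such $t$ to conclude $\star \leq \overline{\star}$; combined with $\overline{\star} \leq \star$ from Lemma \ref{stableprop}(1), this gives (4). For the coarsest claim I would apply the core to an arbitrary stable nucleus $\star' \leq \star$: the construction delivers $z$ with $z^{\star'} = 1^{\star'}$ and $tz \leq x$, and I would then transfer to the $\star$-picture: $\star' \leq \star$ gives $z^\star \geq z^{\star'} \geq 1$, idempotency of $\star$ promotes this to $z^\star \geq 1^\star$, and $z \leq 1$ gives the reverse, so $z \in \SGV(Q)$; Lemma \ref{stableprop}(2) and algebraicity of $Q$ finish exactly as before to give $\star' \leq \overline{\star}$.

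The main obstacle is the transfer step in the coarseness argument: the set $\SGV(Q)$ is defined relative to $\star$, but the natural construction only witnesses membership for $\star'$, so one must carefully verify that the closure of $z$ under the coarser nucleus $\star$ still equals $1^\star$. The three standing hypotheses (precoherence of $Q$, residuation of compact elements, and existence of $x \wedge 1$) enter at predictable places in the core construction and are precisely what is needed to form $z$, to invoke Lemma \ref{stableprop}(2), and to reduce everything to compact witnesses via algebraicity.
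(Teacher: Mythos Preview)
Your proposal is correct and follows essentially the same argument as the paper for the equivalence $(1)$--$(4)$; in particular your core construction for $(3)\Rightarrow(4)$ matches the paper's proof almost verbatim (the paper reaches $z^\star = 1^\star$ slightly more directly by observing $1^\star t \leq 1^\star x^\star = x^\star$, hence $x^\star/t \geq 1^\star$, but your route via $x^\star/t \geq 1$ followed by applying $\star$ works just as well).

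The one organizational difference worth noting is in the coarsest claim. You re-run the core construction for the stable nucleus $\star' \leq \star$ and then transfer the witness $z$ from $\star'$-GV to $\star$-GV. The paper instead first establishes the full equivalence and then dispatches the coarsest claim in one line: since $\star'$ is stable, $\star' = \overline{\star'}$ by $(1)\Leftrightarrow(4)$ applied to $\star'$; and since $\star' \leq \star$, one has $\overline{\star'} \leq \overline{\star}$ (because your transfer argument, read globally, shows $\star'\text{-GV}(Q) \subseteq \star\text{-GV}(Q)$, whence the defining supremum for $\overline{\star'}$ is over a smaller index set). Your approach unpacks this monotonicity pointwise; the paper's packages it as a corollary of the equivalence. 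Both are valid, and the underlying content is identical.
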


\begin{proof}
We first show that the four statements of the proposition are equivalent.  Clearly we have $(1) \Rightarrow (2) \Rightarrow (3)$, and by Lemma \ref{stableprop}(4) we have $(4) \Rightarrow (1)$.   To show that $(3)$ implies $(4)$, we suppose that $(3)$ holds, and then we need only show that $x^\star \leq x^{\overline{\star}}$ for any $x \in Q$.  Let $t$ be any compact element of $Q$ with $t \leq x^\star$.  By the hypothesis on $Q$ the element $z = x/t \wedge 1$ exists in $Q$.  Condition $(3)$ then implies that $z^\star = x^\star/t \wedge  1^\star$.  Note then that, since $1^\star t \leq 1^\star x^\star = x^\star$ one has $x^\star/t \geq 1^\star$, whence $z^\star = x^\star/t \wedge 1^\star = 1^\star$.  Since also $z \leq 1$ we have $z \in \SGV(Q)$.  Therefore, since $zt \leq (x/t)t \leq x$, we have $t \leq x^{\overline{\star}}$.  Finally, since this holds for all compact $t \leq x^\star$, we have $x^\star \leq x^{\overline{\star}}$, as desired.
It remains only to show that $\overline{\star}$ is the coarsest stable nucleus on $Q$ that is finer than $\star$.  But by Lemma \ref{stableprop} the nucleus $\overline{\star}$ is itself a stable nucleus that is finer than $\star$, and if $\star'$ is any other such nucleus, then one has
$\star' = \overline{\star'} \leq \overline{\star}$, whence $\overline{\star}$ is coarser than $\star'$.
\end{proof}

Any precoherent multiplicative lattice satisfies the hypotheses of Theorem \ref{stabletheorem} above.  By Propositions \ref{ucoherent} and \ref{stableU} and Corollary \ref{ulattices}, so does any unital near $\U$-lattice $Q$ with $1$ compact that is also a commutative monoid and meet semilattice, such as the near $\U$-lattice $Q = \F(D)$, where $D$ is any integral domain.

Let $\star_w = \overline{\star_f}$ for any nucleus $\star$ on a precoherent near multiplicative lattice $Q$.

\begin{proposition}\label{stablecor}
Let $\star$ be a nucleus on a coherent near multiplicative lattice $Q$.
\begin{enumerate}
\item One has $x^{\star_w} = \bigvee\{x/z: \ z \in \SGV(Q)\cap \K(Q)\}$ for all $x \in Q$, and $\star_w$ is finitary.  In particular, if $\star$ is finitary, then so is
$\overline{\star} = \star_w$.
\item If every compact element of $Q$ is residuated and $x \wedge 1$ exists for all $x \in Q$, then $\star_w$ is the coarsest stable finitary nucleus on $Q$ that is finer than $\star$.  
\end{enumerate}
\end{proposition}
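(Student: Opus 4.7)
The plan is to establish (1) by first proving the compact-supremum formula and then deducing finitarity from it, and to obtain (2) by comparing any rival $\star'$ against this formula at each compact element. By definition $x^{\star_w} = x^{\overline{\star_f}} = \bigvee\{x/z : z \leq 1,\ z^{\star_f} = 1^{\star_f}\}$. Since $1$ is compact and $\star_f$ is finitary, every compact $z$ satisfies $z^{\star_f} = z^\star$, so on compact elements being $\star_f$-GV coincides with being $\star$-GV, making $\SGV(Q) \cap \K(Q)$ unambiguous. To restrict the supremum to compact $z$'s, apply the finitary expression $z^{\star_f} = \bigvee\{y^\star : y \in \K(Q),\ y \leq z\}$: this is a directed supremum because $\K(Q)$ is closed under finite joins, and compactness of $1$ combined with $1 \leq 1^{\star_f} = z^{\star_f}$ produces a compact $y \leq z \leq 1$ with $y^\star \geq 1$; $\star$-closure of $y^\star$ forces $y^\star \geq 1^\star$, while $y \leq 1$ gives the reverse, so $y^\star = 1^\star$. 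Thus $y$ is compact $\star$-GV with $x/z \leq x/y$, which delivers the formula.

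To show $\star_w$ is finitary I would exploit algebraicity: for directed $\Delta$ with $\bigvee \Delta = x$, it suffices to show that every compact $t \leq x^{\star_w}$ lies below $y^{\star_w}$ for some $y \in \Delta$. The formula just proved supplies a compact $\star$-GV element $z$ with $tz \leq x$; precoherence makes $tz$ compact, so $tz \leq y$ for some $y \in \Delta$, whence $t \leq y/z \leq y^{\star_w}$. The ``in particular'' clause is immediate: if $\star$ is finitary then $\star_f = \star$, so $\overline{\star} = \star_w$ is finitary.

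For (2), Lemma \ref{stableprop}(1) and (4), together with (1), already give that $\star_w$ is stable, finitary, and finer than $\star$ via the chain $\overline{\star_f} \leq \star_f \leq \star$, so the content is coarseness. Let $\star'$ be any stable finitary nucleus finer than $\star$; by algebraicity it suffices to show that every compact $t \leq x^{\star'}$ satisfies $t \leq x^{\star_w}$. Set $z_0 := x/t \wedge 1$, which exists under the hypotheses of (2). Stability of $\star'$ via condition (3) of Theorem \ref{stabletheorem} gives $z_0^{\star'} = x^{\star'}/t \wedge 1^{\star'}$; and $1^{\star'} t \leq 1^{\star'} t^{\star'} \leq t^{\star'} \leq x^{\star'}$ forces $1^{\star'} \leq x^{\star'}/t$, collapsing this to $z_0^{\star'} = 1^{\star'}$. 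Finitariness of $\star'$, applied as in the formula step, extracts a compact $y \leq z_0$ with $y^{\star'} = 1^{\star'}$. The main obstacle is upgrading $y$ from $\star'$-GV to $\star$-GV so that the formula from (1) applies: since $\star' \leq \star$ we have $y^\star \geq y^{\star'} = 1^{\star'} \geq 1$, and $\star$-closure of $y^\star$ then forces $y^\star \geq 1^\star$, while $y \leq 1$ gives the reverse, so $y^\star = 1^\star$. Then $yt \leq z_0 t \leq x$ yields $t \leq x/y \leq x^{\star_w}$, completing the proof.
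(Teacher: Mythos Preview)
Your proof is correct. For part (1) your argument coincides with the paper's: both use compactness of $1$ against the directed family $\{y^\star : y \in \K(Q),\ y \leq z\}$ to replace an arbitrary $\star_f$-GV element by a compact $\star$-GV one, and both derive finitarity from the compact formula via the observation that $tz$ is compact. (You might make explicit that the family $\{x/z : z \in \SGV(Q)\cap\K(Q)\}$ is directed, which is what lets compactness of $t$ select a single $z$.)

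For part (2) the routes diverge. The paper dispatches (2) in one line by citing Theorems \ref{klattice} and \ref{stabletheorem}: any stable finitary $\star' \leq \star$ satisfies $\star' \leq \star_f$ by the universal property of $\star_f$, and then $\star' \leq \overline{\star_f} = \star_w$ by the universal property of $\overline{\,\cdot\,}$. You instead give a direct element-level argument, essentially re-running the $(3)\Rightarrow(4)$ step of Theorem \ref{stabletheorem} for $\star'$ and then inserting an extra use of finitarity of $\star'$ and compactness of $1$ to extract a compact $y \leq z_0$, followed by the ``upgrade'' from $\star'$-GV to $\star$-GV via $\star' \leq \star$. Your route is more self-contained and shows concretely how stability and finitarity interact to produce a compact GV witness; the paper's route is shorter because the heavy lifting has already been packaged into the two universal properties.
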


\begin{proof}
Statement (2) follows from statement (1) and Theorems \ref{stabletheorem} and \ref{klattice}.  To prove (1), let $x \in Q$.  One has $x^{\star_w} = \bigvee\{x/y: \ y\leq 1 \mbox{ and } y^{\star_f} = 1^{\star_f}\}$.  Let $t$ be any compact element of $Q$ with $t \leq x^{\star_w}$.  Then $t \leq x/y$ for some $y \in Q$ with $y \leq 1$ and $y^{\star_f} = 1^{\star_f}$.   Since $1$ is compact, the condition $y^{\star_f} = 1^{\star_f}$ implies $z^\star = 1^\star$ for some compact $z \leq y$.  Note then that $t \leq x/z$ since $tz \leq ty \leq x$.  Therefore, since $z \in \SGV(Q) \cap \K(Q)$, one
has $t \leq \bigvee\{x/z: \ z \in \SGV(Q)\cap \K(Q)\}$.  Since this holds for all compact $t \leq x^{\star_w}$, it follows that $x^{\star_w} \leq \bigvee\{x/z: \ z \in \SGV(Q)\cap \K(Q)\}$, and therefore equality holds since the reverse inequality is obvious.

Suppose now that $\star = \star_f$ is finitary.  To show that $\overline{\star}$ is also finitary,  we let $x \in Q$ and verify that $x^{\overline{\star}} = \bigvee\{y^{\overline{\star}}: \ y \in \K(Q) \mbox{ and } y \leq x\}$.  Let $t$ be any compact element of $Q$ with $t \leq x^{\overline{\star}}$.   Since $\star_w = \overline{\star}$, one has $x^{\overline{\star}} = \bigvee\{x/z: \ z \in \SGV(Q)\cap \K(Q)\}$.  Since $t \leq x^{\overline{\star}}$ is compact and the set $\{z/x:  \ z \in \SGV(Q)\cap \K(Q)\}$ is directed, it follows that $t \leq x/z$ for some $z \in \SGV(Q) \cap \K(Q)$.  Therefore $tz \leq x = \bigvee\{y \in \K(Q): \ y \leq x\}$,
whence $tz \leq y$ for some compact $y \leq x$ since $tz$ is compact.  Thus we have $t \leq y/z$, where $z \in \SGV(Q)$, whence $t \leq y^{\overline{\star}}$.  It follows that
$x^{\overline{\star}} \leq  \bigvee\{y^{\overline{\star}}: \ y \in \K(Q) \mbox{ and } y \leq x\}$, and the desired equality follows.
\end{proof}

The following result follows from Propositions \ref{vclosureprop} and \ref{stablecor} Theorems \ref{klattice} and \ref{stabletheorem}.

\begin{proposition}
Let $\star$ be a nucleus on a precoherent near prequantale $Q$.
\begin{enumerate}
\item $t(a) = v(a)_f$ for any $a \in Q$ is the coarsest finitary nucleus on $Q$ such that $a^{t(a)} = a$, and $\star_f = \bigwedge\{t(a): \ a \in Q^{\star_f}\}.$
\item If $Q$ is a commutative monoid, every compact element of $Q$ is residuated, and $x \wedge 1$ exists for all $x \in Q$, then $\overline{v}(a)= \overline{v(a)}$ for any $a \in Q$ is the coarsest stable nucleus on $Q$ such that $a^{\overline{v}(a)} = a$, and $\overline{\star} = \bigwedge\{\overline{v}(a): \ a\in Q^{\overline{\star}}\}.$
\item If $Q$ is a commutative monoid, $1 \in Q$ is compact, every compact element of $Q$ is residuated, and $x \wedge 1$ exists for all $x \in Q$, then $w(a) = \overline{t(a)}$ for any $a \in Q$ is the coarsest stable finitary nucleus on $Q$ such that $a^{w(a)} = a$, and $\star_w = \bigwedge\{w(a): \ a \in Q^{\star_w}\}.$
\end{enumerate}
\end{proposition}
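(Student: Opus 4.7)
The plan is to obtain all three statements by combining the universal characterizations of $v(a)$, $(\cdot)_f$, and $\overline{(\cdot)}$ given by Proposition \ref{vclosureprop}, Theorem \ref{klattice}, and Theorem \ref{stabletheorem}, together with the observation that each of the operations $(\cdot)_f$ and $\overline{(\cdot)}$ is idempotent, order-preserving, and bounded above by the identity on $\N(Q)$. For (2) and (3) one first notes that a precoherent near prequantale that is a commutative monoid is a precoherent near multiplicative lattice, so the hypotheses of Theorem \ref{stabletheorem} (resp.\ Proposition \ref{stablecor}(2)) are met.

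For (1): by Theorem \ref{klattice} one has $t(a) = v(a)_f \leq v(a)$, so $a \leq a^{t(a)} \leq a^{v(a)} = a$, hence $a^{t(a)} = a$. If $\star'$ is any finitary nucleus with $a^{\star'} = a$, then by Proposition \ref{vclosureprop} we have $\star' \leq v(a)$, and because $\star'$ is already finitary, $\star' = (\star')_f \leq v(a)_f = t(a)$; thus $t(a)$ is the coarsest such. For the decomposition formula, fix a nucleus $\star$ and set $A = Q^{\star_f}$. Each $a \in A$ is fixed by the finitary nucleus $\star_f$, so by the coarseness property just proved, $\star_f \leq t(a)$, giving $\star_f \leq \bigwedge\{t(a): a \in A\}$. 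Conversely $t(a) \leq v(a)$ for each $a$, and Proposition \ref{vclosureprop} applied to $\star_f$ gives $\star_f = \bigwedge\{v(a): a \in A\} \geq \bigwedge\{t(a): a \in A\}$; equality follows.

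Parts (2) and (3) follow by the same template, replacing $(\cdot)_f$ by $\overline{(\cdot)}$ and $(\cdot)_w = \overline{(\cdot)_f}$ respectively. Concretely, Theorem \ref{stabletheorem} provides $\overline{v}(a) = \overline{v(a)} \leq v(a)$ (so $a^{\overline{v}(a)} = a$) and guarantees that $\overline{(\cdot)}$ is order-preserving with $\overline{\star'} = \star'$ for stable $\star'$; if $\star'$ is stable with $a^{\star'} = a$, then $\star' \leq v(a)$ whence $\star' = \overline{\star'} \leq \overline{v}(a)$, proving coarseness. The decomposition $\overline{\star} = \bigwedge\{\overline{v}(a): a \in Q^{\overline{\star}}\}$ is then derived exactly as in (1), using that $\overline{\star}$ is stable and fixes each $a \in Q^{\overline{\star}}$ for the lower bound, and that $\overline{v}(a) \leq v(a)$ together with Proposition \ref{vclosureprop} for the upper bound. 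For (3), one applies the argument to $w(a) = \overline{t(a)}$, invoking Proposition \ref{stablecor}(2), which ensures that $(\cdot)_w$ is the passage to the coarsest stable finitary nucleus below a given nucleus and that it preserves the analogous universal property; the decomposition formula is proved in the same way by taking infima over $Q^{\star_w}$.

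The proof is essentially bookkeeping: no step is technically delicate, and the only mild subtlety is checking that $\overline{(\cdot)}$ and $(\cdot)_f$ are order-preserving (immediate from their being the largest stable, resp.\ finitary, nucleus below a given one) and that the formulas $\star' = (\star')_f$ for $\star'$ finitary and $\star' = \overline{\star'}$ for $\star'$ stable hold, so that coarseness transfers along the chain $\star' \leq v(a) \implies \star' \leq t(a)$ (resp.\ $\overline{v}(a)$, $w(a)$). These are the only places one must be careful; after that, all three parts reduce to comparing infima of chains of nuclei indexed by $a \in Q^\star$ for the appropriate $\star$.
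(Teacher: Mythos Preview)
Your proof is correct and follows essentially the same approach as the paper, which simply states that the result follows from Propositions \ref{vclosureprop} and \ref{stablecor} and Theorems \ref{klattice} and \ref{stabletheorem}. You have made explicit the routine bookkeeping the paper leaves implicit: each part uses the universal property of $v(a)$ from Proposition \ref{vclosureprop}, the universal property of the relevant reflection ($(\cdot)_f$, $\overline{(\cdot)}$, or $(\cdot)_w$), and then squeezes the decomposition formula between $\bigwedge t(a) \leq \bigwedge v(a) = \star_f$ (resp.\ $\overline{\star}$, $\star_w$) and the lower bound coming from coarseness.
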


\begin{lemma}
Let $Q$ be a near prequantale.  Then $\bigwedge \Gamma$ is a stable nucleus on $Q$ for any set $\Gamma$ of stable nuclei on $Q$.  
\end{lemma}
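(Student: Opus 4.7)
The plan is to verify that $\star = \bigwedge_{\N(Q)} \Gamma$ satisfies both clauses of the definition of stability, after first noting that it is already a nucleus on $Q$. By Proposition \ref{CMC}(1), since $Q$ is a near prequantale and hence in particular near sup-complete, the infimum $\star = \bigwedge \Gamma = \bigsqcapp \Gamma$ exists in $\N(Q)$ and is given pointwise by $x^\star = \bigwedge\{x^\sigma : \sigma \in \Gamma\}$ for all $x \in Q$. Moreover, by Proposition \ref{nearprequantales}, $Q$ is near residuated, so residuals $y/t$ and $t\backslash y$ will exist as soon as the relevant set of residual candidates is nonempty.

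For the first clause of stability, let $X \subset Q$ be a finite subset for which $a = \bigwedge X$ exists. By stability of each $\sigma \in \Gamma$, $\bigwedge(X^\sigma)$ exists and equals $a^\sigma$. Then $a^\star = \bigwedge\{a^\sigma : \sigma \in \Gamma\}$ is a lower bound of $X^\star$ because $\star$ is order-preserving. For any other lower bound $b$ of $X^\star$ and any $\sigma \in \Gamma$, one has $b \leq x^\star \leq x^\sigma$ for every $x \in X$, so $b \leq \bigwedge(X^\sigma) = a^\sigma$; taking the infimum over $\sigma$ yields $b \leq a^\star$. Hence $\bigwedge(X^\star)$ exists and equals $a^\star = (\bigwedge X)^\star$, as required.

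For the second clause, let $x,t \in Q$ with $t$ compact and suppose $x/t$ exists. Since $x \leq x^\star$ and $(x/t)t \leq x \leq x^\star$, the set $\{z \in Q : zt \leq x^\star\}$ is nonempty, and so, by near residuation, $x^\star/t$ exists; the same argument shows $x^\sigma/t$ exists for every $\sigma \in \Gamma$. For any $z \in Q$, the equivalences $z \leq x^\star/t \iff zt \leq x^\star \iff zt \leq x^\sigma$ for all $\sigma \iff z \leq x^\sigma/t$ for all $\sigma$ exhibit $x^\star/t$ as the infimum of $\{x^\sigma/t : \sigma \in \Gamma\}$. Invoking stability of each $\sigma$, $x^\sigma/t = (x/t)^\sigma$, and therefore $x^\star/t = \bigwedge\{(x/t)^\sigma : \sigma \in \Gamma\} = (x/t)^\star$. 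The argument for $t \backslash x$ is entirely symmetric.

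There is no real obstacle here beyond routine bookkeeping; the only point requiring any care is confirming the existence of the various infima and residuals before asserting the equalities, and this is handled uniformly by the facts that $Q$ is near sup-complete, that $\star$ is defined pointwise as an infimum, and that near residuation of $Q$ supplies the residuals once their candidate sets are shown to be nonempty.
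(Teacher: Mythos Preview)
Your proof is correct and follows essentially the same route as the paper's: verify the pointwise formula $x^{\bigwedge\Gamma} = \bigwedge_{\sigma\in\Gamma} x^\sigma$ via Proposition~\ref{CMC}(1), then check both clauses of stability by commuting the infimum over $\Gamma$ past finite meets and past residuals by compact elements. The only difference is that you are more explicit than the paper in justifying the existence of the intermediate infima and residuals (invoking near residuation from Proposition~\ref{nearprequantales}), which is a welcome bit of care but not a different idea.
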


\begin{proof}
Let $X$ be a finite subset of $Q$ that is bounded below.  Then one has
$\left(\bigwedge X\right)^{\bigwedge \Gamma} = \bigwedge\left\{\left(\bigwedge X\right)^\star: \ \star \in \Gamma\right\} = \bigwedge\left\{\bigwedge (X^\star): \ \star \in \Gamma\right\} = \bigwedge\{x^\star: \ x \in X, \ \star \in \Gamma\} = \bigwedge \left(X^{\bigwedge \Gamma}\right)$.
Moreover, for any $x, t \in Q$ with $t$ compact, if $x/t$ exists, then one has
$(x/t)^{\wedge \Gamma} =  \bigwedge\{(x/t)^\star: \ \star \in \Gamma\} =  \bigwedge\{x^\star/t: \ \star \in \Gamma\} = x^{\bigwedge \Gamma}/t$,
and a similar proof holds for $t \backslash x$ if $t \backslash x$ exists.  Thus
$\bigwedge \Gamma$ is stable.
\end{proof}

\begin{corollary}
Let $Q$ be a precoherent near multiplicative lattice such that every compact element of $Q$ is residuated and $x \wedge 1$ exists for all $x \in Q$.  A nucleus $\star$ on $Q$ is stable if and only if $\star = \bigwedge \{\overline{v}(a): \ a \in X\}$ for some subset $X$ of $Q$.
\end{corollary}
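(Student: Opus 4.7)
The plan is to prove the corollary as a straightforward application of the immediately preceding lemma together with part (2) of the proposition just above it. Both directions of the ``if and only if'' reduce to invoking these results.

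For the forward direction, suppose $\star$ is a stable nucleus on $Q$. By the equivalence $(1) \Leftrightarrow (4)$ in Theorem \ref{stabletheorem}, one has $\star = \overline{\star}$. Applying part (2) of the preceding proposition to $\star$ then gives
$$\star = \overline{\star} = \bigwedge\{\overline{v}(a): \ a \in Q^{\overline{\star}}\} = \bigwedge\{\overline{v}(a):\ a \in Q^\star\},$$
so the set $X = Q^\star$ witnesses the desired representation.

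For the reverse direction, suppose $\star = \bigwedge\{\overline{v}(a): \ a \in X\}$ for some $X \subset Q$. By part (2) of the preceding proposition, each $\overline{v}(a)$ is a stable nucleus on $Q$ (being the coarsest stable nucleus on $Q$ fixing $a$, which exists because $Q$ meets the hypotheses of Theorem \ref{stabletheorem}). The preceding lemma then ensures that an arbitrary infimum of stable nuclei on a near prequantale is again stable, so $\star = \bigwedge\{\overline{v}(a): \ a \in X\}$ is stable.

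There is essentially no obstacle here: both directions are one-line consequences of results already established. The only point requiring a bit of care is to confirm that the existence and stability statements for $\overline{v}(a)$ in part (2) of the preceding proposition do apply in our setting, but this is immediate since $Q$ satisfies exactly the hypotheses needed (precoherent near multiplicative lattice, commutative since it is a multiplicative lattice, with compact elements residuated and $x \wedge 1$ existing for all $x \in Q$).
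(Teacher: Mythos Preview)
Your proof is correct and follows exactly the approach the paper intends: the corollary is stated without proof precisely because it is an immediate consequence of the preceding lemma (infima of stable nuclei are stable) and part (2) of the preceding proposition (which gives both the stability of each $\overline{v}(a)$ and the representation $\overline{\star} = \bigwedge\{\overline{v}(a): a \in Q^{\overline{\star}}\}$). Your verification that the hypotheses of part (2) are met---in particular that a near multiplicative lattice is a commutative monoid---is the only point needing mention, and you handled it correctly.
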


Note that $\bigwedge \Gamma$, by Proposition \ref{Nf}(3), is a finitary nucleus on $M$ for any finite set $\Gamma$ of finitary nuclei on an algebraic meet semilattice $M$.  However, our proof requires that $\Gamma$ be finite.  We therefore naturally ask for a characterization of those subsets $X$ of a precoherent near prequantale $Q$ such that $\bigwedge \{t(a): \ a \in X\}$ is finitary.  We leave this as an open problem.

The results of this section generalize known results on semistar operations.  To also accommodate the theory of star operations one may restate the results in greater generality utilizing the following definitions.  We say that an ordered magma $M$ is {\it semiresiduated} if for all $x,y \in M$ such that the set $\{z \in M: \ zy \leq x\}$ (resp., $\{z \in M: \ yz \leq x\}$) is nonempty and bounded, there exists a largest $z \in M$ such that $zy \leq x$ (resp., $yz \leq x$).  Any semiprequantale or near residuated ordered magma, for example, is semiresiduated.  If throughout this section we replace ``near residuated'' with ``semiresiduated'', and ``near multiplicative lattice'' with ``semimultiplicative lattice'', then the results remain true and our proofs easily generalize.  However, we do not know if the assumptions of associativity, commutativity, and unitality can be eliminated.

\section{Ideal and module systems}\label{sec:AMSIS}

For any magma $M$, let $M_0$ denote the magma $M \amalg \{0\}$, where $0x = 0 = x0$ for all $x \in M_0$.  Recall from the introduction that a {\it module system} on an abelian group $G$ is a closure operation $r$ on the $\K$-lattice $2^{G_0}$ such that $\emptyset^r = \{0\}$ and $(cX)^r = cX^r$ for all $c \in G_0$ and all $X \in 2^{G_0}$.  By Propositions \ref{closureprop1} and \ref{closureprop1a}, Theorem \ref{modulesystems} of the introduction can be proved by showing that a module system on $G$ is equivalently a nucleus $r$ on $2^{G_0}$ such that  $\emptyset^r = \{0\}$.

\begin{proof}[Proof of Theorem \ref{modulesystems}]
Let $\Sigma = \{\{c\}: c \in G_0\}$.  Let $r$ be a module system on $G$.  Since  $\Sigma$ is a sup-spanning subset of $2^{G_0}$ and $\Sigma \subset \T^r(2^{G_0})$, it follows from Corollary \ref{joinspan} that $r$ is a nucleus on $2^{G_0}$.  Conversely, let $r$ be a nucleus on $2^{G_0}$ such that $\emptyset^r = \{0\}$.  Since $\Sigma\setmin\{\{0\}\} \subset \Inv(2^{G_0})$, by Proposition \ref{closureprop2}, we have $(\{c\}X)^r = \{c\}X^r$ for all $c \in G$ and all $X \in 2^{G_0}$.  Also, we have $(\{0\}X)^r = \{0\}^r = \{0\} = \{0\}X^r$ for all $X \in 2^{G_0}$.  Thus $r$ is a module system on $G$.
\end{proof}

Next, a {\it weak ideal system} \cite{hal1} on a commutative monoid $M$ is a closure operation $r$ on the $\K$-lattice $2^{M_0}$ such that $0 \in \emptyset^r$, $cM_0 \subset \{c\}^r$, and $cX^r \subset (cX)^r$ for all $c \in M_0$ and all $X \in 2^{M_0}$.  A weak ideal system on $M$ is said to be an {\it ideal system} on $M$ if $(cX)^r = cX^r$ for all such $c$ and $X$.  We have the following result, whose proof is similar to that of Theorem \ref{modulesystems}.

\begin{proposition}\label{idealystems}
Let $M$ be a commutative monoid.  A weak ideal system on $M$ is equivalently a nucleus $r$ on the $\K$-lattice $2^{M_0}$ such that $\{0\}^r = \emptyset^r$ and $\{1\}^r = M_0$. Moreover, a weak ideal system $r$ on $M$ is an ideal system on $M$ if and only if every singleton in $2^{M_0}$ is transportable through $r$. 
\end{proposition}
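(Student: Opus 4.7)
The plan is to mimic the proof of Theorem~\ref{modulesystems}, adapted to the slightly weaker conditions defining weak ideal systems. As in that proof, the central tool will be Proposition~\ref{closureprop2}, applied to the sup-spanning subset $\Sigma = \{\{c\} : c \in M_0\}$ of the $\K$-lattice $2^{M_0}$ (since every $X \in 2^{M_0}$ equals $\bigcup_{c \in X} \{c\}$, and multiplication distributes over arbitrary unions in $2^{M_0}$, the subset $\Sigma$ is indeed sup-spanning).

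For the forward implication, suppose $r$ is a weak ideal system on $M$. The axiom $cX^r \subset (cX)^r$ is precisely the condition $\{c\} X^r \subset (\{c\} X)^r$ for all $c \in M_0$ and all $X \in 2^{M_0}$, so Proposition~\ref{closureprop2} yields that $r$ is a nucleus on $2^{M_0}$. Specializing $cM_0 \subset \{c\}^r$ to $c = 1$ gives $M_0 \subset \{1\}^r$, hence $\{1\}^r = M_0$. The axiom $0 \in \emptyset^r$ gives $\{0\} \subset \emptyset^r$, hence $\{0\}^r \subset (\emptyset^r)^r = \emptyset^r$; the reverse inclusion $\emptyset^r \subset \{0\}^r$ follows from $\emptyset \subset \{0\}$, so $\{0\}^r = \emptyset^r$.

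For the reverse implication, suppose $r$ is a nucleus on $2^{M_0}$ with $\{0\}^r = \emptyset^r$ and $\{1\}^r = M_0$. From $\{0\} \subset \{0\}^r = \emptyset^r$ we recover $0 \in \emptyset^r$. The nucleus inequality yields $\{c\} \cdot \{1\}^r \subset (\{c\} \cdot \{1\})^r = \{c\}^r$, that is, $cM_0 \subset \{c\}^r$, for every $c \in M_0$. Finally, by Proposition~\ref{closureprop1}(3), the nucleus property gives $\{c\} X^r \subset (\{c\} X)^r$, i.e., $cX^r \subset (cX)^r$, verifying the third axiom of a weak ideal system.

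For the moreover, a weak ideal system is an ideal system precisely when the inclusion $cX^r \subset (cX)^r$ is always an equality, i.e., $(\{c\} X)^r = \{c\} X^r$ for all $c \in M_0$ and all $X \in 2^{M_0}$; since $2^{M_0}$ is commutative, this condition is exactly the requirement that every singleton $\{c\}$ be transportable through $r$. The main subtlety in the argument is the bookkeeping around the zero element: converting the axiom $0 \in \emptyset^r$ into the sharp identity $\{0\}^r = \emptyset^r$, and verifying that the single condition $\{1\}^r = M_0$ (together with the nucleus property) correctly encodes all the inclusions $cM_0 \subset \{c\}^r$ simultaneously.
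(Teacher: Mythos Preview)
Your proof is correct and follows exactly the approach indicated by the paper, namely adapting the proof of Theorem~\ref{modulesystems} via the sup-spanning subset $\Sigma = \{\{c\} : c \in M_0\}$; the only needed adjustment, which you handle properly, is that for weak ideal systems one has merely the inclusion $cX^r \subset (cX)^r$ rather than equality, so Proposition~\ref{closureprop2} must be invoked in place of Corollary~\ref{joinspan}.
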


A module system on an abelian group $G$ may be seen equivalently as a nucleus on the $\K$-lattice $2^{G_0} \setmin 2^G$.  Since $2^{G_0}$ and $2^{G_0} \setmin 2^G$ are (coherent) $\K$-lattices and $2^{G_0} \setmin 2^G$ is a $\U$-lattice, most of the results of this paper, and in particular the results of Sections \ref{sec:MCO} through \ref{sec:FN} and \ref{sec:DC} through \ref{sec:stable}, apply specifically to module systems.  In the next section we will apply our results to star and semistar operations.  In fact those results may be stated more generally in terms of module systems, since the ordered monoids $2^{G_0}-2^G$ and $\F(D)$, where $D$ is any integral domain, share the relevant properties.  Finally, as $2^{M_0}$ is a $\K$-lattice for any commutative monoid $M$, many of our results apply as well to weak ideal systems.  We leave it to the interested reader to carry out the details.



\section{Star and semistar operations}\label{sec:ASO}

Throughout this section let $D$ denote an integral domain with quotient field $F$.  A {\it Kaplansky fractional ideal}, or {\it K-fractional ideal}, of $D$ is a $D$-submodule of $F$.  Let $\F(D)$ denote the ordered monoid of all nonzero K-fractional ideals of $D$.  Recall that a {\it semistar operation} on $D$ is a closure operation $\star$ on the poset $\F(D)$ such that $(aI)^\star = aI^\star$ for all nonzero $a \in F$ and all $I \in \F(D)$.  The submonoid $\P(D)$ of $\F(D)$ of all nonzero principal $D$-submodules of $F$ is a sup-spanning subset of $\F(D)$ contained in $\Inv(\F(D))$.  Therefore, by Propositions \ref{closureprop2} and \ref{closureprop3}, a semistar operation on $D$ is equivalently a nucleus on the ordered monoid $\F(D)$.  Together with Propositions \ref{closureprop1} and \ref{closureprop1a}, this yields a proof of Theorem \ref{mainprop1} of the introduction.  Alternatively, the theorem follows from Theorem \ref{modulesystems}, Propositions \ref{closureprop1} and \ref{closureprop1a}, and the following lemma.

\begin{lemma}\label{ssexample} Let $D$ be an integral domain, $F^\times$ the group of nonzero elements of the quotient field $F$ of $D$, and $\star$ a self-map of $\F(D)$.  Let $\emptyset^r = \{0\}^ r = \{0\}$ and $X^r = (DX)^\star$ for all subsets $X$ of $F$ containing a nonzero element, where $DX$ denotes the $D$-submodule of $F$ generated by $X$.  Then $\star$ is a semistar operation on $D$ if and only if $r$ is a module system on $F^\times$.
\end{lemma}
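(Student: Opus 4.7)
The plan is to unwind the definitions and verify the correspondence by a routine check of the axioms in both directions, using the crucial observation that identifying $(F^\times)_0$ with $F$ turns a module system on $F^\times$ into a closure operation $r$ on $2^F$ satisfying $\emptyset^r = \{0\}$ and $(cX)^r = cX^r$ for all $c \in F$, $X \subset F$.

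For the forward direction, I would assume $\star$ is a semistar operation and verify that $r$ is a module system on $F^\times$. First note that for any $X \subset F$ containing a nonzero element, $DX$ lies in $\F(D)$, so $X^r = (DX)^\star$ again lies in $\F(D)$. Expansivity of $r$ then follows because $X \subset DX \subset (DX)^\star$, with the edge cases $\emptyset$ and $\{0\}$ handled by the convention $\emptyset^r = \{0\}^r = \{0\}$. Order-preservation reduces to the fact that $X \subset Y$ implies $DX \subset DY$, again checking the edge cases separately. Idempotence is the computation $(X^r)^r = \bigl(D(DX)^\star\bigr)^\star = \bigl((DX)^\star\bigr)^\star = (DX)^\star$, using that $(DX)^\star$ is already a $D$-submodule. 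For the module axiom $(cX)^r = cX^r$, I would split into cases: $c=0$, $X = \emptyset$, $X = \{0\}$, and the generic case in which $c \in F^\times$ and $X$ contains a nonzero element. The generic case is the only substantive one: $(cX)^r = (D(cX))^\star = (c \cdot DX)^\star = c(DX)^\star = cX^r$, where the middle equality uses $D(cX) = c \cdot DX$ and the penultimate uses the semistar axiom for $\star$.

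For the reverse direction, I would assume $r$ is a module system on $F^\times$ and recover the semistar axioms for $\star$. The decisive observation is that for $I \in \F(D)$ one has $DI = I$, so $I^r = (DI)^\star = I^\star$; hence $\star$ inherits expansivity, monotonicity, and idempotence directly from the corresponding properties of $r$ restricted to $\F(D)$. The semistar axiom $(aI)^\star = aI^\star$ for $a \in F^\times$ and $I \in \F(D)$ then follows from $(aI)^r = aI^r$, since $aI \in \F(D)$ gives $(aI)^r = (aI)^\star$ and $I^r = I^\star$.

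I do not expect a serious obstacle: the proof is a bookkeeping exercise. The only mildly delicate point is being scrupulous about the degenerate inputs $\emptyset$ and $\{0\}$, since $r$ is a self-map of all of $2^F$ but $\star$ is only defined on $\F(D)$, and the correspondence is engineered precisely to make the edge cases collapse onto the element $\{0\}$. Once those cases are dispatched, the generic case in both directions is a one-line consequence of $D(cX) = c \cdot DX$ together with the identity $DI = I$ for $I \in \F(D)$.
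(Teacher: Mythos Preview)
Your proposal is correct. The paper actually omits the proof of this lemma entirely (consistent with its standing convention that omitted proofs are routine), so there is no proof in the paper to compare against; your direct verification of the closure and module-system axioms, with careful handling of the degenerate cases $\emptyset$ and $\{0\}$ and the identification $(F^\times)_0 \cong F$, is exactly the kind of routine check the paper implicitly leaves to the reader.
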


The following is an immediate corollary of Theorem \ref{mainprop1}.

\begin{corollary}\label{latticeisom}
Let $D$ and $D'$ be integral domains.  If the ordered monoids $\F(D)$ and $\F(D')$ are isomorphic, then the lattices of all semistar operations on $D$ and $D'$, respectively, are isomorphic.
\end{corollary}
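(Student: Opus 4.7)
The plan is to observe that Theorem \ref{mainprop1} reduces the assertion to the purely order-theoretic statement that the poset $\N(M)$ of nuclei on an ordered magma $M$ is an invariant of the isomorphism class of $M$. Since an isomorphism of ordered monoids is in particular an isomorphism of ordered magmas, it then suffices to show that any isomorphism $\varphi \colon M \longrightarrow M'$ of ordered magmas induces a poset isomorphism $\N(M) \longrightarrow \N(M')$.

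Concretely, I would proceed in three short steps. First, given such a $\varphi$, define $\Phi \colon \N(M) \longrightarrow \N(M')$ by $\Phi(\star) = \varphi \circ \star \circ \varphi^{-1}$, and check directly from the definitions that $\Phi(\star)$ is a closure operation on $M'$ (expansive, order-preserving, idempotent, which is immediate because $\varphi$ and $\varphi^{-1}$ preserve $\leq$) and is multiplicative in the nuclear sense, using that $\varphi$ is a magma homomorphism: for $x',y' \in M'$ with $x = \varphi^{-1}(x')$, $y = \varphi^{-1}(y')$,
\[
\Phi(\star)(x')\,\Phi(\star)(y') = \varphi(x^\star)\varphi(y^\star) = \varphi(x^\star y^\star) \leq \varphi((xy)^\star) = \Phi(\star)(x'y').
\]
Second, the map $\star' \longmapsto \varphi^{-1} \circ \star' \circ \varphi$ built from $\varphi^{-1}$ is a two-sided inverse to $\Phi$, and both $\Phi$ and its inverse are order-preserving, since $\star_1 \leq \star_2$ in $\N(M)$ means $x^{\star_1} \leq x^{\star_2}$ for all $x$, and applying $\varphi$ preserves this. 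Thus $\Phi$ is a poset isomorphism $\N(M) \xrightarrow{\sim} \N(M')$. Third, combining this with Theorem \ref{mainprop1}, which identifies the lattice of semistar operations on $D$ with the poset $\N(\F(D))$ of nuclei on the ordered monoid $\F(D)$ (and symmetrically for $D'$), an isomorphism $\F(D) \cong \F(D')$ of ordered monoids yields an isomorphism of the corresponding lattices of semistar operations.

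There is essentially no obstacle here; the only thing to be careful about is to verify that the naturality of the construction $\star \longmapsto \varphi \circ \star \circ \varphi^{-1}$ carries across all three defining properties of a nucleus, which is a routine diagram-chase using the fact that $\varphi$ is simultaneously a poset isomorphism and a magma isomorphism.
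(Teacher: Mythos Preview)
Your proof is correct and follows exactly the approach the paper intends: the paper simply declares the result an immediate corollary of Theorem \ref{mainprop1}, and your conjugation argument $\star \longmapsto \varphi \circ \star \circ \varphi^{-1}$ is the routine verification that makes this precise.
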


A semistar operation on $D$ may be defined alternatively as a closure operation on the poset $\operatorname{Mod}_D(F)$ such that $(aI)^\star = aI^\star$ for all $a \in F$ and all $I \in \operatorname{Mod}_D(F)$, or equivalently a nucleus $\star$ on the $\K$-lattice $\operatorname{Mod}_D(F)$ such that $(0)^\star = (0)$.  This alternative definition is advantageous because $\F(D)$ is typically neither complete nor residuated, while every $\K$-lattice is complete and residuated, and the definition thereby eliminates the need for many results, such as \cite[Proposition 5, Theorem 20, Lemma 40]{oka}, to make exception for the zero ideal.  The definition also allows for the possibility of relaxing the condition $(0)^\star = (0)$ by considering all nuclei on $\operatorname{Mod}_D(F)$.  Since $\F(D)$ inherits its ``near'' properties (such as being a near $\K$-lattice and near $\U$-lattice) from corresponding ``complete'' properties of $\operatorname{Mod}_D(F)$, all of the results of this paper generalizing results on semistar operations apply as well to all nuclei on $\operatorname{Mod}_D(F)$.

A $D$-submodule $I$ of $F$ is said to be a {\it fractional ideal} of $D$ if $aI \subset D$ for some nonzero element $a$ of $F$.  Let $\Fp(D)$ denote the ordered monoid of all nonzero fractional ideals of $D$.  
Although $\Fp(D)$ is not necessarily a near prequantale, it is a semiprequantale.  Moreover, $\Fp(D)$ is coherent and residuated.  A {\it star operation} $*$ on $D$ is a closure operation on the poset $\Fp(D)$ such that $D^* = D$ and $(aI)^* = aI^*$ for all nonzero $a \in F$ and all $I \in \Fp(D)$.  The same argument as in the proof of Theorem \ref{mainprop1} yields the following.

\begin{theorem}\label{mainprop2}
A star operation on an integral domain $D$ is equivalently a nucleus $*$ on the semiprequantale $\Fp(D)$ such that $D^* = D$.  Moreover, the following conditions are equivalent for any self-map $*$ of $\Fp(D)$ such that $D^* = D$.
\begin{enumerate}
\item $*$ is a star operation on $D$.
\item $*$ is a closure operation on the poset $\Fp(D)$ and $*$-multiplication is associative.
\item $\star$ is a closure operation on the poset $\F(D)$ and $(I^\star J^\star)^\star = (IJ)^\star$ for all $I,J \in \F(D)$.
\item $IJ \subset K^*$ if and only if $I J^* \subset K^*$ for all $I,J,K \in \Fp(D)$.
\item $* = \star|_{\Fp(D)}$ for some semistar operation $\star$ on $D$ such that $D^\star = D$.
\end{enumerate}
\end{theorem}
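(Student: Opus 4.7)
The plan is to adapt the proof of Theorem~\ref{mainprop1}, accounting for the extra axiom $D^* = D$ and for the new condition~(5), which I will handle via the induced-nucleus construction of Proposition~\ref{extendingclosures}. First I would observe that $\Fp(D)$ is a unital ordered submonoid of $\F(D)$ with identity $D$, and that the set $\P(D)$ of nonzero principal fractional ideals is a sup-spanning submonoid of $\Fp(D)$ contained in $\Inv(\Fp(D))$, since each $aD$ is invertible in $\Fp(D)$ with inverse $a^{-1}D$. For the forward direction of the first assertion, Corollary~\ref{joinspan} applied to the sup-spanning set $\P(D) \subset \T^*(\Fp(D))$ shows that a star operation $*$ is a nucleus on $\Fp(D)$; for the backward direction, Proposition~\ref{closureprop3} gives $\P(D) \subset \Inv(\Fp(D)) \subset \T^*(\Fp(D))$, which together with $D^* = D$ yields $(aI)^* = aI^*$ for all $a \in F^\times$ and $I \in \Fp(D)$. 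Thus a star operation on $D$ is equivalently a nucleus on $\Fp(D)$ with $D^* = D$.

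The equivalence $(1) \Leftrightarrow (2) \Leftrightarrow (3) \Leftrightarrow (4)$ then follows directly from Propositions~\ref{closureprop1} and~\ref{closureprop1a} applied to the unital ordered monoid $\Fp(D)$, precisely as in the proof of Theorem~\ref{mainprop1}, with the axiom $D^* = D$ merely carried along. For $(5) \Rightarrow (1)$, I would check that $\Fp(D)$ is closed under any semistar operation $\star$ on $D$ satisfying $D^\star = D$: if $I \in \Fp(D)$ with $aI \subset D$ for some $a \in F^\times$, then $aI^\star = (aI)^\star \subset D^\star = D$, so $I^\star \in \Fp(D)$; hence $\star|_{\Fp(D)}$ is well-defined and is a star operation.

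The main new content of the theorem is $(1) \Rightarrow (5)$. For this I would invoke Proposition~\ref{extendingclosures}(2) with $M = \F(D)$ and $N = \Fp(D)$. The hypotheses are satisfied: $\F(D)$ is a near multiplicative lattice, and $\Fp(D)$ is a sup-spanning submagma of $\F(D)$, as it contains the sup-spanning subset $\P(D)$ of $\F(D)$ and is closed under products and sums. The proposition then yields the nucleus $\star = \ind_{\F(D)}(*)$ on $\F(D)$, that is, a semistar operation on $D$, with $\star|_{\Fp(D)} = *$ and hence $D^\star = D^* = D$. The main obstacle is producing this extension, but once the sup-spanning condition on $\Fp(D) \subset \F(D)$ is verified, Proposition~\ref{extendingclosures}(2) delivers $\star$ via its explicit formula without further work.
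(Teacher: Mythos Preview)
Your proof is correct and, for the first assertion and the equivalence of (1)--(4), follows essentially the same route as the paper, which simply states that ``the same argument as in the proof of Theorem~\ref{mainprop1}'' applies.

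For $(1)\Rightarrow(5)$ you invoke Proposition~\ref{extendingclosures}(2) to produce the \emph{smallest} semistar extension $\ind_{\F(D)}(*)$, whereas the paper implicitly relies on the well-known and more elementary construction $l(*)$ (set $I^{l(*)}=F$ for $I\notin\Fp(D)$), which it recalls immediately after the theorem and which is the instance of Proposition~\ref{extendingclosures}(3), since $\Fp(D)$ is a saturated downward closed submonoid of $\F(D)$. Both routes are valid; $l(*)$ is shorter because it requires no sup-spanning verification, while your choice has the incidental benefit of anticipating the construction of $s(*)$ in Proposition~\ref{semistarfromstar}.
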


It is well-known that, for any star operation $*$ on $D$, there is a unique largest semistar operation $l(*)$ on $D$ such that $I^{l(*)} = I^*$ for all $I \in \Fp(D)$.  Indeed, one may extend $*$ to $\F(D)$ by defining $I^{l(*)} = F$ for all $I \in \F(D)\setmin\Fp(D)$.   (See Proposition \ref{extendingclosures}(3).)  Using Proposition \ref{extendingclosures}(2), we may construct the unique smallest semistar operation $s(*)$ on $D$ such that $I^{s(*)} = I^*$ for all $I \in \Fp(D)$, as follows.

\begin{proposition}\label{semistarfromstar}
Let $D$ be an integral domain.
\begin{enumerate}
\item For any star operation $*$ on $D$, there exists a unique smallest semistar operation $s(*)$ on $D$ such that ${s(*)}|_{\Fp(D)} = *$, and for all $I \in \F(D)$ one has
$$I^{s(*)} = \bigcap \{K \in \F(D): \ K \supset I \mbox{ and } \forall J \in \Fp(D) \ (J \subset K \Rightarrow J^* \subset K)\}.$$
\item More generally, for any nucleus $\gamma$ on a submonoid $\mathcal{M}$ of $\F(D)$ containing $\P(D)$, there exists a unique smallest semistar operation $\star$ on $D$ such that $\star|_{\mathcal{M}} = \gamma$, and for all $I \in \F(D)$ one has
$$I^\star = \bigcap \{K \in \F(D): \ K \supset I \mbox{ and } \forall J \in \mathcal{M} \ (J \subset K \Rightarrow J^\gamma \subset K)\}.$$
\end{enumerate}
\end{proposition}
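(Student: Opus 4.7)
The plan is to deduce both statements from Proposition \ref{extendingclosures}, observing that (1) is the special case of (2) in which $\mathcal{M} = \Fp(D)$ and $\gamma = *$ (a star operation being, by Theorem \ref{mainprop2}, equivalently a nucleus on the semiprequantale $\Fp(D)$). I shall therefore prove (2) and obtain (1) as an immediate corollary.

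First I would set $M = \F(D)$ and $N = \mathcal{M}$ and verify the hypotheses of Proposition \ref{extendingclosures}(2). The ordered monoid $\F(D)$ is a near $\U$-lattice with $\P(D) \subset \U(\F(D))$, so it is a near prequantale by Corollary \ref{ulattices}. To check that $\mathcal{M}$ is a sup-spanning subset of $\F(D)$, since $\P(D) \subset \mathcal{M}$ it suffices to verify this for $\P(D)$ itself: for all $I,J \in \F(D)$,
$$IJ \ = \sum_{c \in I \setmin \{0\}} cJ \ = \ \bigvee\{aJ : a \in \P(D),\ a \subset I\},$$
and symmetrically on the other side.

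Next I would invoke Proposition \ref{extendingclosures}(2) to obtain a nucleus $\star := \ind_{\F(D)}(\gamma)$ on $\F(D)$ with $\star|_{\mathcal{M}} = \gamma$, given by
$$I^\star \ = \ {\bigwedge}_{\F(D)}\,\{K \in \F(D) : K \supset I \text{ and } \forall J \in \mathcal{M}\ (J \subset K \Rightarrow J^\gamma \subset K)\}.$$
By Theorem \ref{mainprop1}, $\star$ is a semistar operation on $D$. The indexing family is bounded below by $I$ (and is nonempty, as $F$ belongs to it), so its $\F(D)$-infimum coincides with the set-theoretic intersection, yielding the formula claimed in the proposition. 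Finally, Proposition \ref{extendingclosures}(1) asserts that $\star = \ind_{\F(D)}(\gamma)$ is the finest---equivalently, smallest---nucleus on $\F(D)$ whose restriction to $\mathcal{M}$ equals $\gamma$, so uniqueness as a minimum is automatic.

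The only steps requiring any care are the verification of the sup-spanning property of $\P(D)$ in $\F(D)$ and the observation that the $\F(D)$-infimum of a family of K-fractional ideals bounded below by a nonzero $I$ coincides with the set-theoretic intersection; neither step presents a real obstacle, and no step involves any computation beyond routine unwinding of definitions.
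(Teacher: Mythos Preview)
Your proposal is correct and follows precisely the approach the paper indicates: the sentence preceding the proposition states that the construction is obtained ``using Proposition \ref{extendingclosures}(2)'', and the paper has already recorded that $\P(D)$ is a sup-spanning subset of $\F(D)$ contained in $\Inv(\F(D))$. Your write-up simply makes explicit the routine verifications the paper leaves to the reader.
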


Although $l(*)$ has been studied predominantly in the literature, the semistar operation $s(*)$ is a useful alternative.  Note in particular that $s(d) = d$ on any domain $D$, and by Proposition \ref{sstarfintype} below $*$ is of finite type if and only if $s(*)$ is of finite type, where a star or semistar operation $\star$ is said to be {\it of finite type} if $\star = \star_f$.  By contrast, although the star operation $d$ is of finite type, the semistar operation $l(d)$ on $D$ is of finite type if and only if $l(d) = d$ on $D$, if and only if $D$ is {\it conducive}, that is, every overring of $D$ other than $F$ is a fractional ideal of $D$.  

\begin{proposition}\label{sstarfintype}
Let $D$ be an integral domain.  For any semistar operation $\star$ and star operation $*$ on $D$, one has $\star|_{\Fp(D)} = *$ if and only if $s(*) \leq \star \leq l(*)$, in which case
$s(*_f) = s(*)_f = \star_f = l(*)_f$.  In particular, $*$ is a finite type star operation if and only if $s(*)$ is a finite type semistar operation, in which case $s(*)$ is the unique finite type semistar operation $\star$ on $D$ such that $\star|_{\Fp(D)} = *$.
\end{proposition}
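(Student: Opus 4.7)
For the first equivalence I would appeal directly to Proposition \ref{extendingclosures}(1). One first checks that $\Fp(D)$ is a downward closed, saturated, sup-spanning submonoid of the near multiplicative lattice $\F(D)$, so Proposition \ref{extendingclosures}(2)--(3), combined with Proposition \ref{semistarfromstar}(1) and the defining property of $l(*)$, identifies $s(*) = \ind_{\F(D)}(*)$ and $l(*) = \ind^{\F(D)}(*)$. Part (1) of Proposition \ref{extendingclosures} then gives $\star|_{\Fp(D)} = *$ iff $s(*) \leq \star \leq l(*)$.

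For the chain of equalities I would introduce
\[
\star_0(I) := \bigvee\{J^* : J \in \K(\F(D)),\ J \subset I\}, \qquad I \in \F(D).
\]
Every compact element of $\F(D)$ is a nonzero finitely generated submodule of $F$ and hence lies in $\Fp(D)$; in fact $\K(\F(D)) = \K(\Fp(D))$. Consequently, for any semistar operation $\star$ on $D$ with $\star|_{\Fp(D)} = *$, we have $J^\star = J^*$ on compact $J$, and Theorem \ref{precoherentthm} gives $\star_f(I) = \bigvee\{J^\star : J \in \K(\F(D)),\ J \subset I\} = \star_0(I)$. Applying this to $\star = s(*)$ and $\star = l(*)$ in turn yields $s(*)_f = \star_f = l(*)_f = \star_0$; in particular $\star_0$ is a semistar operation on $D$.

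It remains to show $s(*_f) = \star_0$. Since $\K(\F(D)) = \K(\Fp(D))$, restricting $\star_0$ to $\Fp(D)$ gives $*_f$, so $\star_0$ is a semistar extension of $*_f$ and hence $s(*_f) \leq \star_0$. Conversely, for every compact $J \subset I$, Proposition \ref{finitaryclosure} gives $J^* = J^{*_f}$, and since $s(*_f)|_{\Fp(D)} = *_f$ this equals $J^{s(*_f)} \subset I^{s(*_f)}$; taking the supremum over $J$ yields $\star_0 \leq s(*_f)$, so equality holds.

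The ``in particular'' clause then drops out: since $s$ is injective (restriction to $\Fp(D)$ is a left inverse), $*$ is finite type iff $s(*) = s(*_f) = s(*)_f$ iff $s(*)$ is finite type, and any finite type semistar $\star$ with $\star|_{\Fp(D)} = *$ satisfies $\star = \star_f = s(*)_f = s(*)$. The main technical point is the reverse inequality $\star_0 \leq s(*_f)$, which hinges on the identification $\K(\F(D)) = \K(\Fp(D))$ together with Proposition \ref{finitaryclosure}; everything else reduces cleanly to the $\ind_M/\ind^M$ machinery of Section \ref{sec:COCL} and the formula for $\star_f$ on precoherent ordered magmas.
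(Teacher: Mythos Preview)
Your proof is correct and close in spirit to the paper's, but the organization and the key technical step differ. The paper treats the first equivalence as ``clear from the construction of $s(*)$ and $l(*)$'' rather than invoking Proposition~\ref{extendingclosures}; your appeal to that proposition is a legitimate and more systematic packaging (and your verification that $\Fp(D)$ is saturated, downward closed, and sup-spanning is correct). For the chain of equalities, both arguments compute $\star_f(I)=\bigcup\{J^*: J\subset I\ \textup{f.g.}\}$, which is your $\star_0$; the paper does this directly, you via Theorem~\ref{precoherentthm}. The real difference is in showing $s(*)_f \leq s(*_f)$: the paper goes back to the explicit intersection formula for $I^{s(*_f)}$ from Proposition~\ref{semistarfromstar} and checks that $I^{s(*)_f}$ lies in each $K$ appearing there, whereas you argue more directly that $J^* = J^{*_f} = J^{s(*_f)} \subset I^{s(*_f)}$ for each compact $J\subset I$ and take the supremum. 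Your route is shorter and avoids unpacking the formula for $s(*_f)$; the paper's route has the virtue of being entirely self-contained at the level of ideals. The derivation of the ``in particular'' clause is essentially the same in both.
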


\begin{proof}
The first equivalence is clear from the construction of $s(*)$ and $l(*)$.  Suppose that $\star|_{\Fp(D)} = *$.   For any $I \in \F(D)$, one has $I^{\star_f} = \bigcup_{J \subset I \textup{ f.g.}} J^*$.  It follows that $s(*)_f = \star_f = l(*)_f$.  It also follows that if $*$ is of finite type, then $s(*)_f|_{\Fp(D)} = *$, whence $s(*) \leq s(*)_f$, whence $s(*)$ is of finite type.  In particular, one has $s(*_f) = s(*_f)_f \leq s(*)_f$.  To prove the reverse inequality, let $I \in \F(D)$, and note as above that $I^{s(*)_f} =\bigcup_{J \subset I \textup{ f.g.}} J^*$.  It follows that if $K \supset I$, and if $J \subset K$ implies $J^{*_f} \subset K$ for all $J \in \F(D)$, then $I^{s(*)_f} \subset K$.  Therefore $I^{s(*)_f} \subset I^{s(*_f)}$.  Thus we have $s(*)_f \leq s(*_f)$.  The rest of the proposition follows.
\end{proof}

The star operation $v = v(D)$ is the largest star operation on $D$, and $t = v_f$ is the largest finite type star operation on $D$.  One has $l(v) = v$ and $l(t) = t$, but one may have $s(v) < l(v)$ and $s(t) < l(t)$.  Indeed, let $D$ be any divisorial domain, that is, any domain for which $I^v = I$ for all $I \in \Fp(D)$.  Since $v = t = d$ on $D$, it follows that $s(v) = s(t) = d$ on $D$ as well.  However, if $D$ is non-conducive, say, if $D = \ZZ$, then $d < l(t) \leq l(v)$, whence $s(v) < l(v)$ and $s(t) < l(t)$.

The following proposition, which follows from the results of Sections \ref{sec:COCL} and \ref{sec:PFN}, lists properties of the posets $\SStar(D)$, $\SStar_f(D)$, $\Star(D)$, and $\Star_f(D)$ of all semistar operations, finite type semistar operations, star operations, and finite type star operations, respectively, on $D$.  To our knowledge statements (2) and (4) of the proposition are new, as are the formulas given for $\bigvee \Gamma$ for any set $\Gamma$ of star or semistar operations on $D$.

\begin{proposition}
Let $D$ be an integral domain.  One has $\SStar(D) = \N(\F(D))$, $\SStar_f(D) = \N_f(\F(D))$, $\Star(D) = \N(\Fp(D))_{\leq v}$, and $\Star_f(D) = \N_f(\Fp(D))_{\leq t}$, and all four of these posets are complete.  Let $\Gamma \subset \SStar(D)$ and $\Delta \subset \Star(D)$, and let $\langle \Gamma \rangle$ (resp., $\langle \Delta \rangle$) denote the submonoid generated by $\Gamma$ (resp., $\Delta$) of the monoid of all self-maps of $\F(D)$ (resp., $\Fp(D)$).
\begin{enumerate}
\item  One has $$I^{\bigwedge \Gamma} = \bigcap\{I^\star: \ \star \in \Gamma\},$$ $$I^{\bigvee \Gamma} = \bigcap\{J \in \F(D): \ J \supset I \mbox{ and } \forall \star \in \Gamma \ (J^\star = J)\},$$ for all $I \in \F(D)$.
\item If $\Gamma \subset \SStar_f(D)$, then $$I^{\bigvee \Gamma} = \bigcup\{I^\gamma: \ \gamma \in \langle \Gamma \rangle\}$$ for all $I \in \F(D)$, one has $\bigvee_{\SStar_f(D)} \Gamma = \bigvee \Gamma$ and $\bigwedge_{\SStar_f(D)} \Gamma = (\bigwedge \Gamma)_f$, and if $\Gamma$ is finite then $\bigwedge_{\SStar_f(D)} \Gamma = \bigwedge \Gamma$.
\item One has $$I^{\bigwedge \Delta} = \bigcap\{I^\star: \ * \in \Delta\},$$ $$I^{\bigvee \Delta} = \bigcap\{J \in \Fp(D): \ J \supset I \mbox{ and } \forall * \in \Delta \ (J^\star = J)\},$$ for all $I \in \Fp(D)$.
\item If $\Delta \subset \Star_f(D)$, then $$I^{\bigvee \Delta} = \bigcup\{I^\gamma: \ \gamma \in \langle \Delta \rangle\}$$  for all $I \in \Fp(D)$, one has $\bigvee_{\Star_f(D)} \Delta = \bigvee \Delta$ and $\bigwedge_{\Star_f(D)} \Delta =  (\bigwedge \Delta)_f$, and if $\Delta$ is finite then $\bigwedge_{\Star_f(D)} \Delta = \bigwedge \Delta$.
\end{enumerate}
\end{proposition}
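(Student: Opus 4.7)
The plan is to derive all four parts by specializing the general results of Sections \ref{sec:COCL} and \ref{sec:PFN} to the near multiplicative lattice $M = \F(D)$ and to the semiprequantale $M = \Fp(D)$. Theorem \ref{mainprop1} gives $\SStar(D) = \N(\F(D))$, and since a semistar operation is of finite type precisely when it is finitary in the sense of Section \ref{sec:PFN}, this restricts to $\SStar_f(D) = \N_f(\F(D))$. Theorem \ref{mainprop2} identifies $\Star(D)$ with the nuclei $*$ on $\Fp(D)$ satisfying $D^* = D$; the condition $D^* = D$ is equivalent to $* \leq v$, because $* \leq v$ forces $D \leq D^* \leq D^v = D$, while $D^* = D$ combined with $I^*(D:I) \leq I^*(D:I)^* \leq (I(D:I))^* \leq D^* = D$ yields $I^* \leq (D:(D:I)) = I^v$ for each $I \in \Fp(D)$. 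Passing to finite type, one gets $\Star_f(D) = \N_f(\Fp(D))_{\leq t}$ in the same way.

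For the completeness claims, $\F(D)$ is in particular a Scott-topological near sup-magma, so Proposition \ref{CMC} and Proposition \ref{Nf}(2) make $\N(\F(D))$ and $\N_f(\F(D))$ complete lattices. The semiprequantale $\Fp(D)$ is bounded complete and Scott-topological, so by Propositions \ref{CMC} and \ref{Nf}(1) both $\N(\Fp(D))$ and $\N_f(\Fp(D))$ are bounded complete; since $\Star(D)$ and $\Star_f(D)$ are the intervals $[d,v]$ and $[d,t]$ inside these posets, they inherit the structure of complete lattices.

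The join and meet formulas in (1) and (3) then follow immediately from Proposition \ref{CMC} together with Lemma \ref{closelem}, once one observes that the infimum of a bounded family in $\F(D)$ or in $\Fp(D)$ is just its intersection inside $F$. For (2) and (4), the formula $I^{\bigvee \Gamma} = \bigcup\{I^\gamma : \gamma \in \langle \Gamma \rangle\}$ and the identity $\bigvee_{\N_f(M)} \Gamma = \bigvee_{\N(M)} \Gamma$ come from Proposition \ref{Nf}(1), while $\bigwedge_{\N_f(M)} \Gamma = (\bigwedge_{\N(M)} \Gamma)_f$ follows from Theorem \ref{klattice}, which identifies $(\bigwedge_{\N(M)} \Gamma)_f$ as the coarsest finitary nucleus below $\bigwedge_{\N(M)} \Gamma$; here we use that both $\F(D)$ and $\Fp(D)$ are precoherent. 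For finite $\Gamma$, Proposition \ref{Nf}(3) gives equality of the finite meets in $\N_f$ and in $\N$, provided $\F(D)$ and $\Fp(D)$ are algebraic meet semilattices. This is the one genuine verification: every K-fractional or fractional ideal is the directed union of its finitely generated submodules, so both posets are algebraic; and given $0 \neq p/q \in I$ and $0 \neq r/s \in J$ with $p,q,r,s \in D$, the nonzero element $pr = (p/q) \cdot qr = (r/s) \cdot ps$ lies in $I \cap J$, so binary intersections are nonzero and $\F(D), \Fp(D)$ are meet semilattices.

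The main obstacle is organizational rather than conceptual: each assertion reduces cleanly to a general result already in hand, and the only care needed is in matching the abstract hypotheses (Scott-topological, precoherent, algebraic meet semilattice, bounded complete) to the concrete posets $\F(D)$ and $\Fp(D)$, and in confirming that the abstract formulas for suprema and infima translate to the concrete operations of union and intersection inside $F$.
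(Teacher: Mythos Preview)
Your proposal is correct and follows exactly the route the paper indicates: the paper does not give a standalone proof but simply states that the proposition ``follows from the results of Sections \ref{sec:COCL} and \ref{sec:PFN},'' and you have carried out precisely that specialization, with the added care of verifying the relevant hypotheses (precoherence, algebraicity, meet-semilattice structure, bounded completeness) for $\F(D)$ and $\Fp(D)$ and of justifying the equivalence $D^* = D \Leftrightarrow * \leq v$. Your argument is in fact more detailed than the paper's own treatment.
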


Next, let $J \in \F(D)$.  For all $I \in \F(D)$ we let $I^{v(J)} = (J:_F (J:_F I))$.   The operation $v(J)$ on $\F(D)$ is called {\it divisorial closure on $D$ with respect to $J$} \cite[Example 1.8(a)]{pic}. 
We set $t(J) = v(J)_f$, $\overline{v}(J) = \overline{v(J)}$, and $w(J) = v(J)_w = \overline{t(J)}$, where for any semistar operation $\star$ one defines
$\overline{\star}$ by $I^{\overline{\star}} = \bigcup\{(I:_F J):\ J \subset D, \ J^\star = D^\star\}$ for all $I \in \F(D)$ and $\star_w = \overline{\star_f}$.  
A semistar operation $\star$ on $D$ is said to be {\it stable} if $(I \cap J)^\star = I^\star \cap J^\star$ for all $I,J \in\F(D)$. 
The results of Sections \ref{sec:FN}, \ref{sec:DC},  and \ref{sec:stable} yield the following.

\begin{proposition}
Let $D$ be an integral domain and $J \in \F(D)$.
\begin{enumerate}
\item $v(J)$ is the largest semistar operation on $D$ such that $J^{v(J)} = J$.
\item $t(J)$ is the largest finite type semistar operation on $D$ such that $J^{t(J)} = J$.
\item $\overline{v}(J)$ is the largest stable semistar operation on $D$  such that $J^{\overline{v}(J)} = J$.
\item $w(J)$ is the largest stable finite type semistar operation on $D$ such that $J^{w(J)} = J$.
\end{enumerate}
\end{proposition}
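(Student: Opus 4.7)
The plan is to recognize each of $v(J)$, $t(J)$, $\overline{v}(J)$, and $w(J)$ as instances of the abstract constructions from Sections \ref{sec:DC} and \ref{sec:stable} applied to $Q = \F(D)$, and then to transfer the maximality statements already proved there via the identification of semistar operations on $D$ with nuclei on $\F(D)$ furnished by Theorem \ref{mainprop1}. First I would verify that $\F(D)$ satisfies the hypotheses of each cited result. It was already noted in Section \ref{sec:FN} that $\F(D)$ is a coherent near multiplicative lattice with $1 = D$ compact. In addition, every compact (i.e.\ finitely generated) element $K \in \F(D)$ is residuated: for any $I \in \F(D)$, choose a nonzero $d \in D$ with $dK \subset D$, so $(dI)K \subset I$ and hence $\{L \in \F(D) : LK \subset I\}$ is nonempty with largest element $(I:_F K) \in \F(D)$. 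Likewise $I \wedge D$ exists in $\F(D)$ because $I \cap D$ is a nonzero ideal of $D$ for every nonzero $I \in \F(D)$.

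With these hypotheses in place, the four claims will follow from a single uniform argument. For (1), I would apply Proposition \ref{vclosureprop} to $Q = \F(D)$ and $a = J$, which yields $v(J) = \bigvee\{\star \in \N(\F(D)) : J^\star = J\}$; via Theorem \ref{mainprop1} this is exactly the largest semistar operation on $D$ fixing $J$. For (2), set $t(J) := v(J)_f$; by Theorem \ref{klattice} this is the coarsest finitary (i.e.\ finite type) nucleus finer than $v(J)$, so $J \leq J^{t(J)} \leq J^{v(J)} = J$. If $\star$ is any finite-type semistar operation with $J^\star = J$, then (1) forces $\star \leq v(J)$, whence $\star = \star_f \leq v(J)_f = t(J)$, giving maximality. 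Claims (3) and (4) follow by exactly the same pattern: for (3) I would replace $(-)_f$ by $\overline{(-)}$ and invoke Theorem \ref{stabletheorem} (using monotonicity of $\overline{(-)}$, which follows because $\overline{\star_1}$ is stable and $\leq \star_1 \leq \star_2$ forces $\overline{\star_1} \leq \overline{\star_2}$ by maximality of $\overline{\star_2}$); for (4) I would combine both operators and invoke Proposition \ref{stablecor}(2), which exhibits $w(J) = v(J)_w = \overline{v(J)_f}$ as the coarsest stable finitary nucleus finer than $v(J)$, and run the same three-line argument.

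The main obstacle I anticipate is not conceptual but bookkeeping: several of the cited results are stated under hypotheses (precoherence, residuation of compact elements, existence of $x \wedge 1$) that are most naturally verified on the completion $\operatorname{Mod}_D(F)$ rather than on the partial-meet ordered monoid $\F(D)$ itself. The delicate point is to check that the residuations and binary meets actually land inside $\F(D)$ and not merely in $\operatorname{Mod}_D(F) \setminus \F(D)$; once this is dispatched, the same short maximality argument applies four times, once per claim.
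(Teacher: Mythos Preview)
Your proposal is correct and is precisely the argument the paper intends: the paper simply states that ``the results of Sections \ref{sec:FN}, \ref{sec:DC}, and \ref{sec:stable} yield the following,'' and you have spelled out exactly which results (Proposition \ref{vclosureprop}, Theorem \ref{klattice}, Theorem \ref{stabletheorem}, Proposition \ref{stablecor}(2)) are used and verified the needed hypotheses on $\F(D)$. Your treatment of the bookkeeping issues (residuation of compact elements and existence of $I \wedge D$ in $\F(D)$) is accurate and is the only content the paper leaves implicit.
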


\begin{proposition}\label{semidivprop}
Let $\star$ be a semistar operation on an integral domain $D$.
\begin{enumerate}
\item $\star = \bigwedge\{v(J): \ J \in \F(D)^\star\}$.
\item $\star_f = \bigwedge\{t(J): \ J \in \F(D)^{\star_f}\}$.
\item $\overline{\star} = \bigwedge\{\overline{v}(J): \ J \in \F(D)^{\overline{\star}}\}$.
\item $\star_w = \bigwedge\{w(J): \ J \in \F(D)^{\star_w}\}$.
\end{enumerate}
Moreover, $\star$ is stable if and only if $\star = \bigwedge\{\overline{v}(J): \ J \in \S\}$ for some $\S \subset \F(D)$.
\end{proposition}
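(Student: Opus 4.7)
The plan is to derive all five assertions as direct specializations to $Q = \F(D)$ of the abstract results already established in Sections \ref{sec:DC} and \ref{sec:stable}. The only substantive work is to verify that $\F(D)$ satisfies the hypotheses of those results and to reconcile the two definitions of $v(J)$.

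First I would record the relevant structural facts about $\F(D)$. As noted in the introduction, $\F(D)$ is a precoherent near multiplicative lattice whose set of compact elements $\K(\F(D))$ consists of the nonzero finitely generated $D$-submodules of $F$; in particular $\F(D)$ is a commutative monoid with $1 = D$ compact. Every compact $J \in \F(D)$ is residuated in $\F(D)$ because $(I :_F J)$ is a nonzero $D$-submodule of $F$ for any $I \in \F(D)$ whenever $J$ is nonzero and finitely generated. The meet $I \wedge 1 = I \cap D$ always exists in $\F(D)$: given nonzero $I$, choose $b/c \in I$ with $b,c \in D \setmin \{0\}$; then $b = c(b/c) \in I \cap D$, so $I \cap D \in \F(D)$. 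I would also observe that the operation $I \longmapsto (J :_F (J :_F I))$ used to define $v(J)$ in this section coincides with the abstract divisorial closure $v_{\F(D)}(J)$ of Section \ref{sec:DC}, since both are characterized as the largest nucleus on $\F(D)$ fixing $J$: the former by the preceding proposition in this section, the latter by Proposition \ref{vclosureprop}.

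Given these verifications, statement (1) is exactly the second half of Proposition \ref{vclosureprop} applied to the near prequantale $\F(D)$. Statements (2), (3), and (4) are the three parts of the preceding proposition in Section \ref{sec:stable} applied to $Q = \F(D)$, whose hypotheses (precoherence, commutativity, residuation of compact elements, compactness of $1$, and existence of meets with $1$) have just been confirmed. The concluding equivalence is the specialization to $\F(D)$ of the corollary immediately preceding this proposition, which asserts that the stable nuclei on such a $Q$ are precisely the infima of families of $\overline{v}(a)$'s. There is no substantive obstacle here; the structural verifications of $\F(D)$ above are the only nontrivial points, and each is routine.
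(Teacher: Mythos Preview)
Your proposal is correct and follows essentially the same approach as the paper, which does not give an explicit proof but simply states that the proposition follows from the results of Sections~\ref{sec:FN}, \ref{sec:DC}, and~\ref{sec:stable}. You have correctly identified the specific abstract results being specialized---Proposition~\ref{vclosureprop} for part~(1), the three-part proposition in Section~\ref{sec:stable} for parts~(2)--(4), and the final corollary of Section~\ref{sec:stable} for the ``Moreover'' clause---and you have filled in the verification that $\F(D)$ satisfies the relevant hypotheses, which the paper handles via the remark following Theorem~\ref{stabletheorem} invoking the near $\U$-lattice structure rather than the direct arguments you give.
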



Unfortunately we are unable to characterize all subsets $\S$ of $\F(D)$ such that $\bigwedge\{t(J): \ J \in \S\}$ is finitary.  We leave this as an open problem.

Proposition \ref{semidivprop} shows that all semistar operations can be obtained as infima of generalized divisorial closure semistar operations.  This has the potential for yielding new results on the number of semistar operations on an integral domain, which is a commonly studied problem in the theory of semistar operations.  For example, combining Corollary \ref{simplenearmult} with \cite[Theorem 48]{oka}, we obtain the following.

\begin{proposition}
The following are equivalent for any integral domain $D$ with quotient field $F \neq D$.  .
\begin{enumerate}
\item $D$ is a DVR.
\item The only semistar operations on $D$ are $d$ and $e$.
\item $|\SStar(D)| = 2$.
\item The near $\K$-lattice $\F(D)$ is simple.
\item $v(J) = d$ for all $J \in \F(D)$ with $J \neq F$.
\item $(J :_F (J:_F I)) = I$ for all $I,J \in \F(D)$ with $I,J \neq F$.
\end{enumerate}
\end{proposition}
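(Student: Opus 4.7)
The plan is to string together the equivalences via the cycle $(1)\Leftrightarrow(3)\Leftrightarrow(2)\Leftrightarrow(4)\Leftrightarrow(6)\Leftrightarrow(5)$, drawing almost entirely on results already established in the paper together with the cited external theorem. The key observation is that $\F(D)$ is a near multiplicative lattice with largest element $F$, and that residuation in $\F(D)$ is realized by the classical colon operation $J/I=(J:_F I)$ whenever the latter is a nonzero $D$-submodule of $F$.

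First I would dispense with the easy equivalences. For $(2)\Leftrightarrow(3)$, it suffices to note that since $F\neq D$, one has $D^d=D\neq F=D^e$, so $d$ and $e$ are always distinct elements of $\SStar(D)$; thus $|\SStar(D)|=2$ forces $\SStar(D)=\{d,e\}$, and conversely. For $(2)\Leftrightarrow(4)$, apply Theorem \ref{mainprop1} to identify $\SStar(D)$ with $\N(\F(D))$, then invoke Corollary \ref{desimple}, which characterizes simple near prequantales as those whose only nuclei are $d$ and $e$. Finally, $(1)\Leftrightarrow(3)$ is exactly the statement of \cite[Theorem 48]{oka}, which I would cite directly.

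The heart of the proof is the equivalence $(4)\Leftrightarrow(6)\Leftrightarrow(5)$, which is where the machinery of Sections \ref{sec:DC} and the classical colon formula must be aligned. For $(4)\Leftrightarrow(6)$, apply Corollary \ref{simplenearmult} to $Q=\F(D)$: the corollary says $Q$ is simple iff $x/y$ exists and $x/(x/y)=y$ for all $x,y<\bigvee Q$. Since $\bigvee\F(D)=F$ and the residuation in $\F(D)$ is the colon ideal (when nonzero), this translates exactly into condition $(6)$, with the proviso that the intermediate $(J:_F I)$ must be a nonzero $D$-submodule of $F$. For $(5)\Leftrightarrow(6)$, use Proposition \ref{vclosureprop3} applied to the near residuated ordered monoid $\F(D)$ with $\bigvee\F(D)=F$: this gives $I^{v(J)}=(J:_F(J:_F I))$ when $(J:_F I)\neq 0$, and $I^{v(J)}=F$ otherwise. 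Hence $v(J)=d$ is equivalent to $I^{v(J)}=I$ for all $I\in\F(D)$, and restricting to $I\neq F$ (the case $I=F$ being automatic since $v(J)$ is a semistar operation) yields precisely $(6)$.

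I expect the main obstacle to be a bookkeeping one: systematically handling the distinction between "$(J:_F I)$ as a $D$-submodule" (which is always defined but may be zero) and "$(J:_F I)$ as an element of $\F(D)$" (which requires nonzeroness), and verifying that the clause $I,J\neq F$ in $(6)$ captures exactly the non-degenerate cases needed to feed Corollary \ref{simplenearmult} and Proposition \ref{vclosureprop3}. Once that is handled, the proof is essentially a chain of routine substitutions into previously proved general statements.
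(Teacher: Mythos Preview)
Your proposal is correct and takes essentially the same approach as the paper: the paper's proof is a one-line citation of Corollary~\ref{simplenearmult} together with \cite[Theorem 48]{oka}, and you have correctly unpacked that citation into the chain of equivalences it implies, using Theorem~\ref{mainprop1}, Corollary~\ref{desimple} (equivalently Corollary~\ref{simpleprequantales}), and Proposition~\ref{vclosureprop3} to connect the remaining conditions. Your anticipated bookkeeping about when $(J:_F I)$ is nonzero is the only real work, and you have identified it accurately; note in particular that if $(J:_F I)=0$ then $(J:_F(J:_F I))=F\neq I$, so condition~(6) automatically forces $(J:_F I)\neq 0$ and hence the existence of $J/I$ in $\F(D)$, exactly as Corollary~\ref{simplenearmult} requires.
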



\section{Integral closure, complete integral closure, and tight closure}\label{sec:ICCICTC}

Let us recall the definition of the integral closure semistar operation.  Let $D$ be an integral domain with quotient field $F$.  If $I \in \F(D)$, then $x \in F$ is {\it integral over I} if there exists a positive integer $n$ and elements $a_i \in I^i$ such that $x^n + a_1 x^{n-1} + \cdots + a_{n-1}x + a_n = 0$.  The {\it semistar integral closure of $I$} is the set $\overline{I}$ of all elements of $F$ that are integral over $I$ and is given by $\overline{I} = \bigcap\{IV: V \textup{ is a valuation overring of } D\}$.  It follows that semistar integral closure is a semistar operation on $D$.  In the literature of semistar operations it is often called the {\it $b$-operation}.  Semistar integral closure is of finite type.  Indeed, if $x \in \overline{I}$, say, if $x^n + a_1 x^{n-1} + \cdots + a_{n-1}x + a_n = 0$ for elements
$a_i = \sum_j b_{ij1}b_{ij2}\cdots b_{iji} \in I^i$, where each $b_{ijk}$ lies in $I$, then $x \in \overline{J}$, where $J \subset I$ is the ideal generated by the $b_{ijk}$.  
For any ideal $I$ of $D$, the ideal $\overline{I} \cap D$ is known as the {\it integral closure} of $I$.

The operation of complete integral closure can also be used to define a semistar operation. If $I \in \F(D)$, then $x \in F$ is {\it almost integral over I} if there exists a nonzero element $c$ of $F$ such that $c x^n \in I^n$ for all positive integers $n$.  We define $\widetilde{I}$ to be the set of all elements of $F$ that are almost integral over $I$, and we say that $I$ is {\it completely integrally closed} if $\widetilde{I} = I$.  Although one may have $\widetilde{D}\subsetneq \widetilde{\widetilde{D}}$, the map $I \longmapsto \widetilde{I}$ is at least a preclosure on $\F(D)$. We define
$$I^\sharp = \bigcap\{J \in \F(D): J \supset I \mbox{ and } J \mbox{ is completely integrally closed}\},$$
which we call the {\it semistar complete integral closure} of $I$.  Since $a\widetilde{J} = \widetilde{aJ}$ for all nonzero $a \in F$, and therefore $I\widetilde{J} \subset \widetilde{IJ}$, for all $I,J \in \F(D)$, by Lemma \ref{preclosurelemma} the operation $\sharp$ is a semistar operation on $D$.  In particular, we have the following.

\begin{proposition}
Let $D$ be an integral domain.  The operation $\sharp$ is the unique semistar operation on $D$ such that $I^\sharp = I$ for $I \in \F(D)$ if and only if $I$ is completely integrally closed.  Moreover,  for all $I \in \F(D)$ one has $\overline{I} \subset \widetilde{I} \subset \widetilde{I^\sharp} = I^\sharp$, with equalities holding if $D$ is Noetherian.  Thus $I^\sharp$ is the smallest completely integrally closed K-fractional ideal of $D$ containing $I$, and $\sharp$ equals the semistar integral closure operation on $D$ if $D$ is Noetherian.  Moreover, $D^\sharp$ is the smallest completely integrally closed domain containing $D$ contained in the quotient field of $D$. 
\end{proposition}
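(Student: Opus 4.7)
The strategy is to exploit two basic observations: (i) an intersection of completely integrally closed (c.i.c.) K-fractional ideals is again c.i.c., and (ii) a closure operation on $\F(D)$ is determined by its set of fixed points. For (i), if $x$ is almost integral over $\bigcap_\alpha J_\alpha$, witnessed by some nonzero $c \in F$, then $cx^n \in (\bigcap_\alpha J_\alpha)^n \subset \bigcap_\alpha J_\alpha^n$ for every $n \geq 1$, so $x$ is almost integral over each c.i.c.\ $J_\alpha$ and hence lies in their intersection. It follows that $I^\sharp$ is c.i.c., which establishes both the equality $\widetilde{I^\sharp} = I^\sharp$ and the characterization of $I^\sharp$ as the smallest c.i.c.\ K-fractional ideal containing $I$; combined with $I \subset I^\sharp$ this also shows $I^\sharp = I$ iff $I$ is c.i.c. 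Uniqueness of a semistar operation with this fixed-point set is then immediate from Proposition~\ref{characterizingclosures2}(2), the requisite infima in $\F(D)$ existing because $F$ itself is c.i.c.\ and contains every $I$.

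For the chain $\overline{I} \subset \widetilde{I} \subset \widetilde{I^\sharp} = I^\sharp$, the middle inclusion is monotonicity of $\widetilde{\cdot}$ applied to $I \subset I^\sharp$, and the right equality was just noted. For $\overline{I} \subset \widetilde{I}$, given a relation $x^n + a_1 x^{n-1} + \cdots + a_n = 0$ with $a_i \in I^i$, induction on $m \geq n$ shows $x^m \in \sum_{i=0}^{n-1} I^{m-i}\, x^i$. To furnish the required denominator, fix any nonzero $\alpha \in I$ and write $x/\alpha = p/q$ with $p,q \in D$; setting $c = q^{n-1}$ gives $cx^i = \alpha^i q^{n-1-i} p^i \in I^i$ for $0 \leq i \leq n-1$, and multiplying the iterated relation by $c$ then yields $cx^m \in I^m$ for every $m \geq 1$, so $x \in \widetilde{I}$.

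For the Noetherian case, after reducing via $\overline{aI} = a\overline{I}$ (and the analogous identity for $\widetilde{\cdot}$) to an integral ideal $I \subset D$, one invokes the classical Rees-algebra argument: when $I$ is finitely generated, $R = D[It] \subset D[t]$ is Noetherian, and the correspondence $x \mapsto xt$ identifies ``$x$ is integral over $I$'' (resp.\ ``$x$ is almost integral over $I$'') with ``$xt$ is integral over $R$'' (resp.\ ``$xt$ is almost integral over $R$''); Noetherianness of $R$ forces the two notions to coincide, giving $\widetilde{I} \subset \overline{I}$ and hence equality throughout the chain, whence $\sharp$ agrees with the semistar integral closure operation on $D$. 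Finally, the semistar axiom yields $D^\sharp \cdot D^\sharp \subset (D^\sharp \cdot D^\sharp)^\sharp = (D \cdot D)^\sharp = D^\sharp$, so $D^\sharp$ is a subring of $F$ containing $D$; it is c.i.c.\ by what was already shown, and any c.i.c.\ overring of $D$ is in particular a c.i.c.\ K-fractional ideal containing $D$, hence contains $D^\sharp$.

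The main technical hurdle is the Noetherian reduction: the Rees-algebra argument requires $I$ to be finitely generated, so for a K-fractional (non-fractional) ideal $I$ one needs an additional step, writing $I$ as a directed union of its finitely generated sub-$D$-modules and arguing that both $\overline{\cdot}$ and $\widetilde{\cdot}$ commute with such unions (the former being immediate since any integrality relation uses only finitely many elements of $I$, while the latter requires more care, since the witnessing constant $c$ must remain within a single finitely generated sub-ideal as $n$ varies).
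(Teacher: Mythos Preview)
The paper gives essentially no proof here: the sentence preceding the proposition establishes, via Lemma~\ref{preclosurelemma}, that $\sharp$ is a semistar operation whose fixed points are precisely the completely integrally closed K-fractional ideals, and the proposition is then simply asserted to follow. Your argument traces the same route but far more explicitly---you verify directly that an intersection of c.i.c.\ K-fractional ideals is c.i.c.\ (the concrete content of Lemma~\ref{preclosurelemma}(1) in this instance), give a self-contained proof of $\overline{I}\subset\widetilde{I}$, supply the Rees-algebra argument for the Noetherian clause, and check that $D^\sharp$ is a ring via the nucleus property. For everything except the Noetherian clause your proof is complete and parallel to the paper's intended reasoning.

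Your final paragraph, however, flags something more serious than a technical hurdle: the Noetherian equality $\overline{I}=\widetilde{I}$ is in fact \emph{false} for arbitrary $I\in\F(D)$, so no argument can close that gap. Take $D=k[s,t]$ and $I=D[t/s,\,t/s^2,\,t/s^3,\ldots]=k[s]+t\,k[s,s^{-1},t]$, an overring of $D$, so that $I^n=I$ for all $n$. Then $1/s\in\widetilde{I}$, since the witness $c=t$ gives $t\cdot(1/s)^n=t/s^n\in I=I^n$ for every $n\ge 1$. But $1/s$ is not integral over the ring $I$: multiplying any monic relation $(1/s)^n+a_1(1/s)^{n-1}+\cdots+a_n=0$ (with $a_i\in I$) by $s^n$ and applying the ring homomorphism $I\to k[s]$ given by $t\mapsto 0$ yields $1+b_1 s+\cdots+b_n s^n=0$ in $k[s]$, contradicting the nonzero constant term. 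Hence $\overline{I}\subsetneq\widetilde{I}$ even though $D$ is Noetherian. Your Rees-algebra argument is correct exactly on $\Fp(D)$, and the obstacle you named---that the witnessing constant need not descend to a single finitely generated $J\subset I$---is precisely what fails in this example. The Noetherian clause of the proposition should be read as a statement about fractional ideals only.
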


Next, let $D^+$ denote the integral closure of $D$ in an algebraic closure of $F$.  For all $I \in \F(D)$, let $I^\pi = ID^+ \cap F$, which we call the {\it semistar plus closure} of $I$, and which is easily seen to define a semistar operation $\pi$ on $D$.   Note that $D^\pi = \overline{D}$ and therefore $I\overline{D} \subset I^\pi$ for all $I \in \F(D)$.  Like semistar integral closure, semistar plus closure is of finite type.  Also, for any ideal $I$ of $D$, the ideal $I^\pi \cap D = ID^+ \cap D$ is known as the {\it plus closure} of $I$.

The operation of tight closure also yields a semistar operation, as follows.  Let $D$ be any integral domain (not necessarily Noetherian) of characteristic $p > 0$ with quotient field $F$.  For any $I \in \F(D)$ let  $$I^T = \{x \in F: \ \exists c \in F\setmin\{0\} \mbox{ such that } cx^q \in I^{[q]} \mbox{ for all  } q = p^e, e \geq 0\},$$
where $I^{[q]}$ for any $q = p^e$ denotes the $D$-submodule of $F$ generated by the image of $I$ under the $q$-powering Frobenius endomorphism of $F$.  It is clear that $T$ is a preclosure on $\F(D)$.   Moreover, one has $aJ^T = (aJ)^T$ for all nonzero $a \in F$, and therefore $IJ^T \subset (IJ)^T$, for all $I,J \in \F(D)$.  Therefore by Lemma \ref{preclosurelemma} the operation $\tau$ on $\F(D)$ of {\it semistar tight closure}, defined by $$I^\tau = \bigcap\{J \in \F(D): \ J \supset I \mbox{ and } J^T = J\}$$
for all $I \in \F(D)$, is a semistar operation on $D$.  Note that by \cite[Corollary 4]{AZ} one has $\widetilde{D} \subset D^T \subset \widetilde{\widetilde{D}}$,
from which it follows that $D^\tau = D^\sharp$.  In particular, $\tau$ restricts to a star operation on $D$ if and only if $D^\sharp = D$ if and only if $D$ is completely integrally closed.  Also, if $D$ is Noetherian then $I^\tau = I^T \subset \overline{I}$ for all $I \in \F(D)$.

\begin{example}\label{tightexample}
Let $k$ be a finite field of characteristic $p > 0$, and let $D$ be the subring $k[X,XY,XY^p, XY^{p^2}, \ldots]$ of $k[X,Y]$.  Then $D^\tau = D^T = \widetilde{\widetilde{D}} = k[X,Y]$, but $Y \notin \widetilde{D}$, whence $\widetilde{D} \subsetneq D^\tau$.
\end{example}

The {\it tight closure} $I^*$ of an ideal $I$ of $D$ may be defined as the ideal $I^* = I^\tau \cap D$.  Since $\tau = T$ if $D$ is Noetherian, this definition coincides with the well-known definition of tight closure in the Noetherian case.  In fact, we may generalize this definition to an arbitrary commutative ring $R$ of prime characteristic $p$, as follows.  For any $I \in \mathcal{I}(R)$ we let $$I^T = \{x \in R: \ \exists c \in R^o \mbox{ such that } cx^q \in I^{[q]} \mbox{ for all  } q = p^e \gg 0\},$$ where $R^o$ is the complement of the union of the minimal primes of $R$ and $I^{[q]}$ is the ideal of $R$ generated by the image of $I$ under the $q$-powering Frobenius endomorphism of $R$.  We say that $I \in \mathcal{I}(R)$ is {\it tightly closed} if $I^T = I$.  It is clear that $T$ is a preclosure on the complete lattice $\mathcal{I}(R)$.  For any $I \in \mathcal{I}(R)$ we let
$$I^* = \bigcap\{J \in \mathcal{I}(R): \ J \supset I \mbox{ and } J \mbox{ is tightly closed}\}.$$
Since $T$ is a preclosure on $\mathcal{I}(R)$ such that $IJ^T \subset (IJ)^T$ for all $I,J \in \mathcal{I}(R)$, we obtain from Lemma \ref{preclosurelemma} the following result.

\begin{proposition}
Let $R$ be a commutative ring of prime characteristic $p$.  The operation $*$ is the unique semiprime operation on $R$ such that $I^* = I$ for $I \in \mathcal{I}(R)$ if and only if $I$ is tightly closed.  Moreover,  for all $I \in \mathcal{I}(R)$ one has $I^T \subset (I^*)^T = I^*$, with equalities holding if $R$ is Noetherian.  In particular, $I^*$ for any $I \in \mathcal{I}(R)$ is the smallest tightly closed ideal of $R$ containing $I$, and $*$ equals the ordinary tight closure operation if $R$ is Noetherian.
\end{proposition}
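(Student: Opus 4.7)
The plan is to apply Lemma \ref{preclosurelemma} to the complete lattice $\mathcal{I}(R)$, which is a residuated multiplicative lattice (being a quantale, by Example (6) in the introduction), with the preclosure $+$ of the lemma taken to be $T$. First I would verify the required hypotheses on $T$. The operation $T$ is expansive (take the witness $c = 1 \in R^o$), order-preserving (since $I \subset J$ implies $I^{[q]} \subset J^{[q]}$ for all $q = p^e$), and bounded above by the closure operation sending every ideal to $R$. For the multiplicativity hypothesis of Lemma \ref{preclosurelemma}(3), if $y \in J^T$ with witness $c \in R^o$ and $x \in I$, then $c(xy)^q = (cy^q)x^q \in J^{[q]} I^{[q]} = (IJ)^{[q]}$ for all $q = p^e \gg 0$, so $IJ^T \subset (IJ)^T$; commutativity of $R$ gives $I^T J \subset (IJ)^T$ as well.

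Lemma \ref{preclosurelemma}(1) then furnishes the finest closure operation $\star$ on $\mathcal{I}(R)$ coarser than $T$, with formula
\[
I^\star \;=\; \bigcap\{J \in \mathcal{I}(R) : J \supset I \mbox{ and } J^T = J\},
\]
and satisfies $\Fix(\star) = \Fix(T)$. This formula coincides with the definition of $*$ in the excerpt, so $* = \star$. Lemma \ref{preclosurelemma}(3) upgrades this closure operation to a nucleus on the residuated multiplicative lattice $\mathcal{I}(R)$, which by Example \ref{nearexamples}(1) is the same as a semiprime operation on $R$. The identification $\Fix(*) = \Fix(T)$ is exactly the assertion that $I^* = I$ if and only if $I$ is tightly closed, and shows that $I^*$ is the smallest tightly closed ideal containing $I$. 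Uniqueness of $*$ among semiprime operations with this fixed-point set is then immediate from Proposition \ref{characterizingclosures2}(2) (any closure operation is uniquely determined by its set of fixed points).

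For the chain $I^T \subset (I^*)^T = I^*$, monotonicity of $T$ applied to $I \subset I^*$ gives $I^T \subset (I^*)^T$, while $I^* \in \Fix(T)$ forces $(I^*)^T = I^*$. For the Noetherian case, the one missing input is the classical fact that $T$ is itself idempotent when $R$ is Noetherian, i.e., $I^T$ is tightly closed for every $I$; granting this, $I^T \in \Fix(T) = \Fix(*)$ and $I^T \supset I$ together force $I^T \supset I^*$, so equality holds throughout, and $*$ reduces to the classical tight closure operation.

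The main obstacle is not really any step in the formal derivation from Lemma \ref{preclosurelemma} — that is essentially a bookkeeping exercise — but rather the verification that $T$ is a preclosure satisfying $IJ^T \subset (IJ)^T$, and, in the Noetherian case, the appeal to the standard (but nontrivial) result that $T$ is idempotent for Noetherian $R$. The former is a short computation using the Frobenius-multiplicative structure of $I^{[q]}$; the latter is a well-known theorem of Hochster–Huneke tight closure theory which must be quoted rather than reproved here.
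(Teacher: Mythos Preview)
Your proposal is correct and follows exactly the paper's approach: the paper's entire proof is the single sentence ``Since $T$ is a preclosure on $\mathcal{I}(R)$ such that $IJ^T \subset (IJ)^T$ for all $I,J \in \mathcal{I}(R)$, we obtain from Lemma \ref{preclosurelemma} the following result,'' and your write-up simply unpacks this invocation in detail, including the uniqueness via Proposition \ref{characterizingclosures2}(2) and the Noetherian case via the classical idempotence of $T$.
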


Thus $*$ provides a potential definition of tight closure for non-Noetherian commutative rings of prime characteristic.  The finitary nucleus $*_f$ on $\mathcal{I}(R)$ is another candidate for a definition of tight closure in the non-Noetherian case.  Further investigation may determine whether or not either of these generalizations of tight closure is useful.  One may seek, for example, a generalization of Tight Closure via Colon-Capturing \cite[Theorem 3.1]{hun} with respect to some definition of non-Noetherian Cohen-Macaulay rings, such as the notion presented in \cite{ham}.

\end{document}